\DeclarePairedDelimiterX{\inp}[2]{\langle}{\rangle}{#1, #2}
\crefname{prop}{Prop.}{Props.}
\newtheorem{defn}{Definition}[section]
\newtheorem{nota}{Notation}[section]
\newtheorem{theorem}{Theorem}[section]
\newtheorem{prop}{Proposition}[section]
\newtheorem{lemma}{Lemma}[section]
\newtheorem{coro}{Corollary}[section]
\newtheorem{remark}{Remark}[section]
\newcommand{\p}{\mathfrak{p}}
\renewcommand{\d}{\mathrm{d}}
\newcommand{\R}{\mathbb{R}}
\newcommand{\tg}{\text{tan}}
\newcommand{\esssup}{\operatorname{ess\,sup}}
\newcommand{\normmm}[1]{\left|\mkern-2mu\left|\mkern-2mu\left|#1\right|\mkern-2mu\right|\mkern-2mu\right|}
\numberwithin{equation}{section}
\DeclareMathOperator{\supp}{supp}
\DeclareMathOperator{\dist}{dist}
\DeclareMathOperator{\diag}{diag}
\begin{document}
    
    \title{Well-posedness of  initial boundary value problems for 2D compressible MHD equations in domains with corners
    }






\author{
    Wen GUO\thanks{
        Corresponding author. 
        School of Mathematical Sciences, Shanghai Jiao Tong University, Shanghai, China. Email: \href{mailto:guo-wen@sjtu.edu.cn}{guo-wen@sjtu.edu.cn}}
     \quad Ya-Guang WANG\thanks{
        School of Mathematical Sciences, MOE-LSC and SHL-MAC, Shanghai Jiao Tong University, Shanghai, China. Email:
        \href{mailto:ygwang@sjtu.edu.cn}{ygwang@sjtu.edu.cn}}
        }

\maketitle
    
\vspace{.1in}

    \selectfont
    \begin{abstract}
    In this paper, the well-posedness is studied for the initial boundary value problem of the two-dimensional compressible ideal magnetohydrodynamic (MHD) equations in bounded perfectly conducting domains with corners. The presence of corners yields intrinsic analytic obstacles: the lack of smooth tangential vectors to the boundary prevents the use of classical anisotropic Sobolev spaces, and due to the coupling of normal derivatives near corners, one can not follow the usual way to estimate the normal derivatives of solutions from the equations. To overcome these difficulties, a new class of anisotropic Sobolev spaces $H^m_*(\Omega)$ is introduced to treat corner geometries. Within this framework, the well-posedness theory is obtained for both linear and nonlinear problems of the compressible ideal MHD equations with the impermeable and perfectly conducting boundary conditions. The associated linearized problem is studied in several steps: first one deduces the existence of weak solutions by using a duality argument in high order tangential spaces, then verifies that it is indeed a strong solution by several smoothing procedures preserving traces to get a weak–strong uniqueness result, afterwards the estimates of normal derivatives are obtained by combining the structure of MHD equations with the Helmholtz-type decomposition for both of velocity and magnetic fields. 
    \end{abstract}

~\\{\textbf{\scriptsize{2020 Mathematics Subject Classification:}}\scriptsize{35L04, 76W05, 35A01}}.
~\\
\textbf{\scriptsize{Keywords:}} {\scriptsize{Compressible ideal MHD equations, initial-boundary value problem in domains with corners, well-posedness, anisotropic Sobolev spaces.}}

    \tableofcontents
    
    \newpage
    \section{Introduction}\label{section-intro}
    
    In this paper, we study the following initial boundary value problem for the two-dimensional compressible ideal magnetohydrodynamic (MHD) equations in $\{t>0, x\in \Omega\}$, with $\Omega$ being a bounded, simply connected domain of $\R^2$ with finitely many corners: 
    \begin{equation}\label{MHD}
        \begin{cases}
            \frac{1}{Q(p,s)} (\partial_t  + u \cdot \nabla) p + \nabla \cdot u = 0,\\
            R(p,s)(\partial_t  + u \cdot \nabla) u + \nabla p + b\times (\nabla\times b) = 0, \\
            (\partial_t  + u \cdot \nabla) b - b \cdot \nabla  u + b(\nabla \cdot u) = 0,\\
            (\partial_t  + u \cdot \nabla) s = 0,\\
            \nabla \cdot b = 0,
        \end{cases}
    \end{equation}
    supplemented with the initial condition
    \begin{equation}\label{icd}
        (u, b, p, s)|_{t=0} = (u_0(x),  b_0(x),  p_0(x),  s_0(x)), \quad x \in \Omega,
    \end{equation}
    and the impermeable and perfectly conducting wall condition
    \begin{equation}\label{bcd}
    u \cdot \nu = b \cdot \nu = 0, \quad \forall t>0 \text{ and } x\in \partial\Omega\backslash\{q_1, \ldots, q_N\},
    \end{equation}
   where $\nu$ denotes the outward unit normal vector to the smooth part of the boundary $\partial\Omega$, $\{q_j\}_{j=1}^N$ are the corner points of $\partial\Omega$, $u=(u_1,u_2)^t$, $b=(b_1,b_2)^t$,  $p$ and $s$ represent the velocity,  magnetic field, pressure and entropy in the flow respectively, while $R(p,s)$ denotes the density given by the equation of state, and $Q(p,s) = \frac{R}{\partial_p R}$. 
We assume throughout this paper that the equation of state $R(p,s)$ is a smooth function on $\mathcal{V}\subset \R^+\times \R$ with $R>0$ and $\partial_p R >0$.  

    The study of the above problem is not only important and challenging  in mathematical theory, but also has many applications, such as the model characterizes the motion of a plasma confined in a rigid, perfectly conducting wall, the cross-sections of fusion devices may exhibit polygonal structures, and metallic containers or ducts are frequently modeled by polygonal domains.
    The initial boundary value problems (IBVPs) for compressible MHD equations in domains with smooth boundaries have been extensively studied in the literature. 
    The local well-posedness in standard Sobolev spaces was obtained in \cite{yanagisawa1987initial} under the condition $b\times \nu = 0$ on $\partial\Omega$. On the other hand, if the magnetic field is tangential to the boundary, namely, $b\cdot \nu = 0$, it was shown in \cite{ohno1998initial} that the linearized problem near a non-zero magnetic field is ill-posed in standard Sobolev spaces.
    To avoid this, Yanagisawa and Matsumura \cite{yanagisawa1991fixed} established the local well-posedness in anisotropic Sobolev spaces (cf. \cite{chen1980initialF, chen2007initial}) under the impermeable and perfectly conducting wall boundary conditions.

   The existing theoretical analysis  of partial differential equations in  domains with corners mainly restricts to problems of elliptic or parabolic equations,
   e.g.,  Grisvard \cite{grisvard1985elliptic}, Kozlov \cite{kozlov1997elliptic}, Solonnikov
   \cite{solonnikov1986solvability}, and references therein. In recent years, there is also a few progress on the study of Navier-Stokes equations in domains with corners, see \cite{MR4719953, tolksdorf2018p} for example.
    However, the analysis of IBVPs of hyperbolic systems in  domains with corners is still at a very early stage and remains largely open, with only very little progress achieved so far on the linear hyperbolic systems.
   In the 1970s, Osher \cite{osher1973initial, osher1974initial} obtained an a priori estimate of solutions to linear hyperbolic systems defined in domains with corners via a Kreiss symmetrizer, but the conditions imposed there are difficult to verify, and there exists a non-explicit loss of regularity that prevents the proof of strong well-posedness.
   Huang and Temam in \cite{huang2014linearized} obtained the well-posedness in $L^2$ for the linearized shallow water equation at a constant background state in a rectangle, and in (\cite{huang2014hyperbolic}) they got the well-posedness in $L^2$ for a linear hyperbolic system with a structural assumption and a special boundary condition in a rectangle by applying the Hille-Yosida theorem.    
    Under a more restrictive structural assumption, Benoit in \cite{benoit2015problemes} extended the study of \cite{huang2014hyperbolic} to the problem with dissipative boundary conditions, and obtained the estimate of solutions in high-order weighted Sobolev spaces in \cite{benoit2024persistence}.

    For nonlinear hyperbolic problems in non-smooth domains, to our knowledge, the only existing result was obtained by Godin in \cite{godin20212d}, in which he established the local well-posedness of classical solutions to the two-dimensional compressible Euler system in cornered domains with impermeable boundary conditions, provided that the measure of angle at each corner is properly small.
    He derived the a priori estimates of solutions in the usual Sobolev spaces by studying the divergence and curl of velocity from the special structure of the equations.
    
The goal of this work is to establish the well-posedness of the IBVP \eqref{MHD}-\eqref{bcd} defined in a curvilinear polygon. 
The corners on the boundary of the domain bring intrinsic difficulties for  analysis in contrast to problems in the smooth boundary settings. 
In particular, near a corner, there exists neither a smooth nonvanishing vector field tangent to the smooth pieces of $\partial\Omega$ nor a normal one to them. As a result, the classical argument for studying quasilinear hyperbolic systems in smooth domains can no longer be applied directly. The estimates of normal derivatives of solutions shall be obtained by combining the structure of ideal MHD equations with the Helmholtz-type decomposition for both velocity and magnetic fields. 
Moreover, in contrast to the study given Godin \cite{godin20212d} for the compressible Euler system, due to the strong coupling between velocity and magnetic fields in the system \eqref{MHD}, we will see that to estimate one order normal derivative of unknowns one needs two order tangential derivatives.
Furthermore, the lack of smooth tangential vectors prevents the direct use of the anisotropic Sobolev spaces introduced in Chen \cite{chen1980initialF}. To address this, we shall introduce new anisotropic Sobolev spaces that are adapted to the corner geometries.
   
    
    To be more precise, first we introduce the following  assumptions and notations for the domain:

    \begin{nota}\label{nota-1}
    (1)	Assume that $\Omega$ is a bounded simply connected domain of $\R^2$, with the boundary $\partial \Omega$ being a curvilinear polygon of class $C^\infty$ with finitely many convex angles. 
    
    (2) Let $\{q_1,q_2, \ldots, q_N\}$ be the set of corner points of $\partial\Omega$, and the measure of each angle at $q_n~(1\le n\le N)$ is denoted by $\omega_n~(0< \omega_n <\pi)$. 
    
    (3)  For any fixed $\tilde{x}\in \partial\Omega\backslash\{q_1, \ldots, q_N\}$, let $V(\tilde{x})$ be a small neighborhood of $\tilde{x}$ in $\R^2$, and
    $\Phi(x)$ a smooth extension in $V(\tilde{x})$ of the function $\dist(x,\partial\Omega)$ defined in $V(\tilde{x})\cap \overline{\Omega}$, such that $\Omega\cap V(\tilde{x}) = \{x\in V(\tilde{x}), \Phi(x)>0\}$ and that $\Phi$ vanishes on $\partial\Omega\cap V(\tilde{x})$. 
Set $\tilde{\nu} = -\nabla \Phi$, a smooth extension of the outward normal vector $\nu$ of  $\partial\Omega$ at $\tilde{x}$.

(4) For any corner point $\tilde{x} = q_n$ with $1\le n\le N$, denote by $\gamma_1^n$ and $\gamma_2^n$ two legs of the angle at $q_n$, let $\Phi_l$ ($l=1,2$) be the smooth extension of $\dist(x,\gamma_l^n)$ in a small open neighborhood $V(q_n)$ of $q_n$ in $\R^2$ such that
    $\Omega\cap V(q_n) = \{x\in V(q_n), \Phi_1(x)>0,\Phi_2(x)>0\}$, $\Phi_l$ vanishes on $\gamma_l^n \cap V(q_n)$ with $l=1,2$.
   Set $\tilde{\nu}^l = -\nabla \Phi_l$, then $\tilde{\nu}^l = \nu$, the outward normal vector on $\gamma_l^n\cap V(q_n)$ for each $l=1,2$. 
    
    \end{nota}

    For the problem  \eqref{MHD}-\eqref{bcd}, we will see that under the impermeable condition, $u\cdot\nu=0$ on $\partial\Omega \backslash \{q_1, \ldots, q_N\}$, both of the constrain $\nabla \cdot b = 0$ in $\Omega$ and the condition $b\cdot \nu = 0$ on $\partial\Omega \backslash \{q_1, \ldots, q_N\}$ hold in the existing time period of smooth solutions when they are true initially, so the problem \eqref{MHD}-\eqref{bcd} is not over-determined.

   To define the new anisotropic Sobolev spaces,   let us first introduce two smooth vector fields $w^1$ and $w^2$ tangential to $\partial\Omega\backslash\{q_1, \ldots,q_N\}$  as follows:
   
\begin{defn}[Tangential vector fields $w^1$ and $w^2$]\label{def-tan} The tangential vector fields $w^1(x)$ and $w^2(x)$ defined in the whole domain $\overline{\Omega}$ are constructed by using the partition of unity and their following values at each point of  $\overline{\Omega}$:
\begin{enumerate}
	\item[(1)] At each $x\in \Omega$ away from $\partial \Omega$, choose $w^1(x) = (1,0)^t$ and $w^2(x) = (0,1)^t$. 
	
	\item[(2)] For any fixed $\tilde{x}\in \partial\Omega\backslash\{q_1,\ldots,q_N\}$, let $\Phi(x)$ be the smooth function defined in the small neighborhood $V(\tilde{x})$ in Notation~\ref{nota-1}(3). In  
    $V(\tilde{x}) \cap \overline{\Omega}$,   as $\tilde{\nu}=-\nabla\Phi(x)\neq 0$, one has that either $\partial_1 \Phi(x)$ or $\partial_2 \Phi(x)$ is non-vanishing. 
	When $\partial_1 \Phi(x)\ne 0$, by setting
	$w^1(x) = \Phi(x)(1,0)^t$ and $w^2(x) = (\partial_2\Phi(x), -\partial_1 \Phi(x))^t$, one has $w^1(x)\cdot \tilde{\nu}(x) = -\Phi(x)\partial_1\Phi(x)$ vanishing on $V(\tilde{x})\cap\partial\Omega$,  $w^2(x) \cdot \tilde{\nu}(x) = 0$, and while $\partial_2 \Phi(x) \ne 0$, one may proceed analogously by interchanging the indices $1$ and $2$.
	
	\item[(3)] When $\tilde{x} = q_n$ for a fixed $1\le n\le N$, set $w^i = (-1)^{i}\Phi_i(x)\big(\partial_2\Phi_{i'}(x),-\partial_1 \Phi_{i'}(x)\big)^t$ for $i,i'=1,2$, $i\ne i'$, with $\Phi_i(x)$ being the smooth function defined in the small neighborhood $V(q_n)$ in Notation~\ref{nota-1}(4). Then, $w^i(x)\cdot \tilde{\nu}^i(x) = \Phi_i(x)\big( \tilde{\nu}^1(x)\times\tilde{\nu}^{2}(x)\big)$ vanishes  on $\gamma^n_i \cap V(q_n)$, and $w^i(x) \cdot \tilde{\nu}^{i'}(x) = 0$ for each $i,i'=1,2$ with $i\ne i'$.
	
\end{enumerate}

\end{defn}

 Now, we define an anisotropic Sobolev space as follows.
 
 \begin{defn}\label{def-sp} For any fixed integer $m\ge 0$, the space $H^m_*(\Omega)$ is defined  as
 \begin{equation}\label{aniso-space}
 H^m_*(\Omega) := \left\{f\in L^2(\Omega)|~  \partial_1^{\alpha_1}\partial_2^{\alpha_2}(\partial_{w^1})^{\alpha_3} (\partial_{w^2})^{\alpha_4} f\in L^2(\Omega), 2(\alpha_1 + \alpha_2)+\alpha_3 + \alpha_4 \le m\right\},
 \end{equation}
 equipped with the norm
 \begin{equation}\label{norm-sp}
 \|f\|^2_{H^m_*(\Omega)} := \sum_{\substack{2(\alpha_1+\alpha_2)+
 		\alpha_3+\alpha_4\le m}} \| \partial_1^{\alpha_1}\partial_2^{\alpha_2} 
 	(\partial_{w^1})^{\alpha_3} (\partial_{w^2})^{\alpha_4}
 	f\|^2_{L^2(\Omega)},
 \end{equation}
 	where  $\partial_{w^i} = w^i(x)\cdot \nabla$, with the smooth vector fields $w^i(x)~(i=1,2)$ being given in Definition~\ref{def-tan}. 
	
 \end{defn}
 
\begin{remark}\label{rmk-sp}
    For $\tilde{x}\in \overline{\Omega}$, we observe that
    \begin{enumerate}
        \item If $\tilde{x}\in \Omega$ away from $\partial\Omega$, and  $V(\tilde{x})\subset \Omega$ is a  neighborhood of $\tilde{x}$,  then $H^m_*(V(\tilde{x})) = H^m(V(\tilde{x}))$.
        \item If $\tilde{x}\in \partial\Omega\backslash\{q_1,\ldots, q_N\}$, let $\Phi(x)$ be the smooth function defined in the small neighborhood $V(\tilde{x})$ given in Notation~\ref{nota-1}(3), and without loss of generality, we assume that $\partial_1 \Phi(x)\ne 0$. Set the diffeomorphism $\Psi : (x_1,x_2) \mapsto (x_1^*,x_2^*) := (\Phi(x),x_2)$, then 
        $\Psi(V(\tilde{x})) =: \mathcal{U}\subset \{x_1^*>0, x_2^*\in \R\}$, and 
        \begin{equation*}
            \|f\|^2_{H^{m}_*(V(\tilde{x}))} = \sum_{\beta\in\mathbb{N}, 2\beta_1+\beta_2 + \beta_3\le m}\|\partial_{x_1^*}^{\beta_1}\partial_{x_2^*}^{\beta_2}(x_1^*\partial_{x_1^*})^{\beta_3}(f\circ \Psi^{-1})\|^2_{L^2(\mathcal{U})},
        \end{equation*}
        which means that $H^m_*$ reduces to the classical anisotropic Sobolev space introduced in \cite{chen1980initialF} near the smooth part of $\partial\Omega$. 
    \end{enumerate}
\end{remark}

 We will see in Remark~\ref{remark2.1} that the norms of the anisotropic Sobolev space $H^m_*(\Omega)$ are equivalent to each other if the order of derivatives involved  in  \eqref{norm-sp} is changed.

The main result of this paper can be stated as follows.

\begin{theorem}\label{main-thm}
For any fixed integer $m\ge 8$, assume that the angle at the corner $q_n$ satisfies 
\begin{equation}\label{angle-cdt}
    \omega_n \in \left(0, \frac{\pi}{[\frac{m}{2}]}\right) \backslash \left\{\frac{\pi}{[\frac{m}{2}]+1}, \frac{\pi}{[\frac{m}{2}]+2}, \ldots, \frac{\pi}{m}\right\},
\end{equation}
for each $1\le n \le N$ with the notation $[\cdot]$ denoting the integer part of a number, and the initial data given in \eqref{icd}, $(u_0, b_0, p_0, s_0) \in H^{m}(\Omega)$ satisfies the compatibility conditions of the problem \eqref{MHD}-\eqref{bcd} up to order $m-1$ on $ \partial\Omega\backslash\{q_1,\ldots, q_N\}$. Then, there exists $T>0$, such that the problem \eqref{MHD}-\eqref{bcd} has a unique solution $(u,b,p,s)\in \cap_{k=0}^m W^{k,\infty}(0,T; H_*^{m-k}(\Omega))$.

\end{theorem}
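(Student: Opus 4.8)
The plan is to solve \eqref{MHD}--\eqref{bcd} by the usual linearization--iteration scheme, so that essentially all of the new work concentrates in the analysis of the linearized problem on the curvilinear polygon $\Omega$. \emph{Linearization and symmetrization.} Given a reference state $(\check p,\check u,\check b,\check s)$ in the solution class with $\check u\cdot\nu=\check b\cdot\nu=0$ and $\nabla\cdot\check b=0$, I would linearize \eqref{MHD} around it, obtaining for the perturbation $V=(p,u,b,s)$ a system of the form $A_0(\check V)\partial_t V+\sum_{j=1,2}A_j(\check V)\partial_j V+CV=F$ with $A_0$ symmetric positive definite and the $A_j$ symmetric, together with the homogeneous boundary conditions $u\cdot\nu=b\cdot\nu=0$ on $\partial\Omega\setminus\{q_1,\dots,q_N\}$. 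Because $\check u\cdot\nu=0$, the boundary matrix $\sum_j A_j\nu_j$ vanishes on the subspace cut out by these conditions, so the boundary is characteristic and the energy flux through $\partial\Omega$ is zero; the basic $L^2$ estimate is then formal. The entropy $s$ is merely transported, and the constraints $\nabla\cdot b=0$ and $b\cdot\nu=0$ propagate from the data, so the problem is not over-determined.

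\emph{Weak solutions by duality, and weak $=$ strong.} Since the Kreiss-symmetrizer machinery is unavailable at the corners, I would first construct a weak solution by a duality argument: establish a priori estimates for the backward adjoint problem in the purely tangential spaces generated by $\partial_{w^1},\partial_{w^2}$ --- which respect \eqref{bcd} because, by Definition~\ref{def-tan}, each $w^i$ is tangent to the smooth part of $\partial\Omega$ and vanishes on the legs meeting at the corners --- and then deduce, via Hahn--Banach and the Riesz representation, a weak solution of the direct problem in a dual of a high-order tangential space. To upgrade this to a genuine strong solution I would apply a sequence of tangential Friedrichs-type mollifications adapted to $w^1,w^2$ (and to the polar structure near each $q_n$), estimating the commutators by Friedrichs' lemma so that the boundary trace $u\cdot\nu=b\cdot\nu$ is preserved along the regularization; this yields weak--strong uniqueness, hence that the weak solution attains its initial and boundary data and solves the equation in $L^2$.

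\emph{Tangential then normal regularity.} For higher-order bounds I would apply $(\partial_{w^1})^{\alpha_3}(\partial_{w^2})^{\alpha_4}$ with $\alpha_3+\alpha_4\le m$ to the linearized system; the commutators $[\,(\partial_{w})^{\alpha},\,A_j\partial_j\,]$ are of admissible order and the boundary conditions are preserved modulo lower-order terms, so energy estimates control all purely tangential derivatives of $V$ up to order $m$ in $L^\infty_t L^2$. Normal derivatives are then recovered \emph{not} from the equations directly --- impossible near a corner because the two normal directions are coupled --- but through a Helmholtz-type decomposition: the pressure equation controls $\nabla\cdot u$, the induction equation controls $\nabla\times b$ and reproduces $\nabla\cdot b=0$, and the curl of the momentum equation (with the Lorentz term $b\times(\nabla\times b)$ handled by already-controlled tangential and lower normal derivatives of $b$) controls $\nabla\times u$; writing $u=\nabla\phi+\nabla^\perp\psi$ and $b=\nabla\eta+\nabla^\perp\zeta$ with boundary conditions inherited from $u\cdot\nu=b\cdot\nu=0$, elliptic regularity on the polygon $\Omega$ turns divergence and curl bounds into bounds on $\nabla u$ and $\nabla b$. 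Iterating --- each normal derivative costing two tangential ones, exactly the weight $2(\alpha_1+\alpha_2)+\alpha_3+\alpha_4$ in \eqref{norm-sp} --- yields the full $H^m_*$ estimate, and the angle restriction \eqref{angle-cdt} is precisely what makes these elliptic estimates hold, since it keeps $\pi/\omega_n$ away from the integer thresholds $[\frac{m}{2}]+1,\dots,m$ at which the corner singularities obstruct the needed regularity.

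\emph{Nonlinear iteration.} With uniform linear estimates in hand, I would run the Picard iteration in which $V^{(k+1)}$ solves the linearized problem with coefficients built from $V^{(k)}$ and initial data \eqref{icd}; the compatibility conditions to order $m-1$ make each iterate well-posed with data matching at $t=0$ on $\partial\Omega\setminus\{q_1,\dots,q_N\}$, and Moser-type product and composition estimates in $H^m_*(\Omega)$ (for which $m\ge 8$ supplies the Sobolev embeddings and the room needed for the two-to-one tangential/normal trade-off) give, for $T$ small, a bound uniform in $k$ in $\cap_{k=0}^m W^{k,\infty}(0,T;H^{m-k}_*(\Omega))$. Contraction in the weaker norm $\cap_{k=0}^{m-2}W^{k,\infty}(0,T;H^{m-2-k}_*(\Omega))$ gives convergence, and weak-$*$ compactness together with interpolation places the limit in the top space; the same difference argument yields uniqueness. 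I expect the principal obstacle to be the third step: reconciling the strong velocity--magnetic coupling of \eqref{MHD} with the Helmholtz decomposition and the corner elliptic theory while staying below the regularity threshold forced by \eqref{angle-cdt}, in tandem with the construction in the second step of trace-preserving smoothings near the corners.
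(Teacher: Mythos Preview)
Your overall architecture matches the paper's, but two steps as stated would not go through. First, the claim that $b\cdot\nu=0$ ``propagates from the data'' holds for the nonlinear system but \emph{fails} for the naive linearization: computing $\partial_t(b\cdot\nu)+\check U\cdot\nabla(b\cdot\nu)$ on $\partial\Omega$ leaves the residual $(\check U\cdot\nabla\nu)\cdot b-(\check B\cdot\nabla\nu)\cdot u$, and near a corner there is no smooth extension of $\nu$ to absorb it. The paper cures this by adding a zero-order correction $h(x,\check U)b-h(x,\check B)u$ to the linearized induction equation, with matrices $h^i$ constructed separately near smooth boundary pieces and near each corner so that $(\check U\cdot\nabla\nu)\cdot v=(\sum_i\check U_ih^iv)\cdot\nu$ on $\partial\Omega$. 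Without this modification your iterates will not satisfy $b^{(k)}\cdot\nu=0$, and the Hodge/div--curl step for $b$ (which requires $X\cdot\nu=0$ on $\partial\Omega$) collapses.

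Second, the assertion that $[(\partial_w)^\alpha,A_j\partial_j]$ is ``of admissible order'' hides the real obstruction. After splitting off the purely tangential part of $A_j\partial_j$, the residual maps $z$ to $(\nabla\mathfrak p,0,0,\nabla\cdot u,0)^t$; its commutator with $\mathbb T^\alpha$ produces the pairing $\langle\mathcal T^\alpha\mathfrak p,\,\nabla\cdot\mathscr T^\alpha u-\mathcal T^\alpha\nabla\cdot u\rangle$, which contains $m$-th order tangential derivatives of $\partial_x u$ --- one order too high to close by energy. The paper's fix is a structural identity specific to the corner fields $w^i$: since each component $w^i_j$ vanishes on one leg of the corner, one can ``borrow'' that factor from the $\mathcal T^\alpha\mathfrak p$ side and convert the bad $\partial_x u$ into a genuine tangential derivative, at the cost of a $\partial_x\mathfrak p$ which is then eliminated via the momentum equation. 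A smaller imprecision: the exclusion $\omega_n\neq\pi/([\tfrac m2]+1),\ldots,\pi/m$ in \eqref{angle-cdt} is not an elliptic-singularity threshold (that role is played by the upper bound $\omega_n<\pi/[\tfrac m2]$); it is a solvability condition on finite linear systems arising in the compatibility-lifting step that approximates $(z_0,F)$ by smoother data satisfying higher-order compatibility.
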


Now, we give several remarks on the above result.

\begin{remark}[Regularity of initial data]\label{rmk-1} (1) In the above theorem, the initial data are assumed to be in $H^m(\Omega)$ instead of merely in $H^m_*(\Omega)$, because it is required that they satisfy the compatibility conditions up to order $m-1$. Otherwise, if the initial data belongs to $H^m_*(\Omega)$, then from the equations one has only $(\partial_t^{k} u)(0,\cdot) \in H^{m-2k}_*(\Omega)$ with $k=0,\ldots, [\frac{m}{2}]$. Moreover, the trace of $(\partial_t^{[\frac{m}{2}]} u)(0,\cdot)$ on the boundary $\partial\Omega$ does not make sense in general. See \cite[Remark 2.2-2.3]{secchi1996initial} for discussion on this issue.

(2)
	We will see later  that the condition $m\ge 7$ is enough for the well-posedness of the linearized problem, as the a priori estimates require the bound for terms in the form of $\|\partial_x \partial_{w^1} u(t)\|_{L^\infty(\Omega)}$, and use the obvious embedding $H^4_*(\Omega)\hookrightarrow H^2(\Omega) \hookrightarrow L^\infty(\Omega)$ (see Lemma~\ref{lemma-Udot}). But for the nonlinear problem, a finer estimate is needed to have the convergence of the Picard iteration scheme, which requires $m\ge 8$.
\end{remark}

\begin{remark}[Angle assumptions]\label{rmk-2} 
(1) The smallness assumption of angles $\omega_n\in (0, \frac{\pi}{[\frac{m}{2}]})$ $(1\le n\le N)$ required in Theorem~\ref{main-thm} is necessary for the regularity persistence of the associated linearized problem of \eqref{MHD}-\eqref{bcd} in the anisotropic Sobolev spaces $H^m_*$. 
To explain this issue, 
without loss of generality,  assume that $x=0$ is a corner point on $\partial\Omega$, whose angle is $\omega \in (0,\pi)$, and $\Omega$ coincides with $\Omega_1 = \{x\in \R^2, 0< r< r_0, 0< \theta < \omega\}$ in the small neighborhood of the corner.

When $\omega \ge \frac{\pi}{[\frac{m}{2}]}$ with $\frac{\pi}{\omega} = n \in \mathbb{N}$, then $2n \le 2[\frac{m}{2}]\le m$.
As $n\ge 3$, set $f = r^{n}(\ln r \cos(n \theta) - \theta \sin (n \theta)) - \lambda x_2^n$, with $\lambda = \frac{\pi}{n(\sin \omega)^{n-1}\cos \omega}$, then $\Delta f = - n(n-1) \lambda x_2^{n-2} \in C^\infty([0,T]\times \overline{\Omega_1})$ for any fixed $T>0$, and $\partial_\theta f = 0$ on $\{\theta = 0, \omega\}$.
Let $\p(t,x) = t\cdot f(x)$, $u(t,x) = -\frac{1}{2}t^2 \nabla f(x)$, obviously they satisfy the following problem
\begin{equation}\label{1.7}
\begin{cases}
    \partial_t u + \nabla \p = 0, \quad \partial_t \p + \nabla \cdot u = f - \frac{1}{2}t^2 \Delta f,\\
   (u,\p)_{t=0} = 0, \quad u\cdot \nu = 0\quad  {\rm on}~ (\partial\Omega_1 \backslash\{x=0\}) \cap \{r<r_0\},
    \end{cases}
\end{equation}  
with $f-\frac{1}{2}t^2 \Delta f \in (H^{2n}_*\cap H^{2n+1}_*)((0,T)\times \Omega_1)$.
However, $u \not\in H^{n}((0,T)\times \Omega_1)$, which implies $u \not\in H^{2n}_*((0,T)\times \Omega_1)$.
As $n = 2$, let $f = r^{4}\left(\ln r \cos(4 \theta) - \theta \sin (4 \theta)\right) - 2\pi x_1 x_2^3$, and construct $u, \p$ in the same way as the case $n\ge 3$. Then, they satisfy the same problem as given in \eqref{1.7} with $f-\frac{1}{2}t^2 \Delta f \in (H^8_*\cap H^9_*)((0,T)\times \Omega_1)$. However, $u \not\in H^{4}([0,T]\times \Omega_1)$ and hence $u \not\in H^{8}_*((0,T)\times \Omega_1)$.

On the other hand, when $\omega \ge \frac{\pi}{[\frac{m}{2}]}$ with $\frac{\pi}{\omega}\not\in \mathbb{N}$, then $2([\frac{\pi}{\omega}]+1) \le 2[\frac{m}{2}] \le m$. 
Set $f = r^{\frac{\pi}{\omega}} \cos(\frac{\pi \theta}{\omega})\in (H^{2([\frac{\pi}{\omega}]+1)}_*\cap H^{2([\frac{\pi}{\omega}]+1)+1}_*)((0,T)\times \Omega_1)$ for any fixed $T>0$, then $\Delta f = 0$, and $\partial_\theta f = 0$ on $\{\theta = 0, \omega\}$. Let $\p(t,x) = t\cdot f(x)$ and $u(t,x) = -\frac{1}{2}t^2 \nabla f(x)$, then they also satisfy the problem \eqref{1.7} with $f-\frac{1}{2}t^2 \Delta f$ replaced by $f$,
however, it is easy to have $u \not\in H^{[\frac{\pi}{\omega}]+1}((0,T)\times \Omega_1)$, which certainly implies $u \not\in H^{2([\frac{\pi}{\omega}]+1)}_*((0,T)\times \Omega_1)$. 

(2) As noted in Remark~\ref{rmk-1}, we take the initial data in $H^m(\Omega)$ instead of in $H^m_*(\Omega)$ to simplify the formulation of compatibility conditions. For technical reasons, we shall additionally require that $\omega \ne \frac{\pi}{[\frac{m}{2}]+1}, \ldots, \frac{\pi}{m}$, which will be used in the proof of Proposition~\ref{prop-lift-cpbt} given in Appendix~A.

\end{remark}

Now, let us explain certain ideas to obtain the main result stated in Theorem~\ref{main-thm} before the end of this section.
    To get the estimate of normal derivatives of solutions, one needs to exploit the Div-Curl structure contained in the equations \eqref{MHD}. Inspired by the approach given by Wang and Zhang in \cite{wang2025incompressible}, first, from \eqref{MHD} one sees that $\nabla p$ and $\nabla\cdot u$ can be represented in terms of tangential derivatives of unknowns (namely, $\partial_t, u\cdot\nabla$, and $b\cdot \nabla$, as $u,b$ satisfy the boundary condition \eqref{bcd}), while $\nabla\cdot b$ satisfies a  transport equation which can be estimated by the initial data directly. As for $\nabla\times u$ and $\nabla\times b$, a careful use of the equations' structure allows some cancellation, yet remaining a non-eliminatable term, $\frac{1}{Q}b\times (R(\partial_t + u\cdot\nabla)^2 u)$ (see the last term of $G_3$ given in \eqref{normal-sys}), which arises as a source term in the equation of $\nabla\times b$.
    Consequently, from the perspective of energy estimates, the anisotropic spaces $H^m_*$ are particularly suitable, as controlling one order normal derivative actually needs two order tangential derivatives.

    To close the a priori estimates, one is left to control the pure tangential derivatives of $u,b$, and $p$. This step presents a technical challenge peculiar to the corner domains: the equations of high-order tangential derivatives introduce high-order commutators involving normal derivatives, preventing the energy estimates from being closed. Specifically, when the spatial derivatives act on $u$, they cannot be directly transformed into tangential derivatives of the unknowns via the equations, as in which only $\nabla \cdot u$ is represented by the tangential derivatives of the solution. Fortunately, this problematic term appears in the energy inequality as an inner product with the derivative of $p$. 
    By ``borrowing" a multiplier involved in the conormal derivatives of $p$, the normal derivative of $u$ can be indeed shifted onto $p$ (see Lemma \ref{lemma-J2} for the detail), and the estimates can be closed by using the equations.

The corners of domains bring another major challenge in establishing the existence of solutions. Unlike smooth domains where standard mollification and non-characteristic regularization can be applied, the presence of ``too many” normal directions in corner domains prevents these techniques. 
To address this, we first construct weak solutions in tangential spaces via the Lax–Phillips duality method in an abstract Hilbert space, rewriting the Green formula in terms of inner products despite the complicated high-order commutators. Then we carefully perform smoothing while preserving the traces in steps to establish the ``weak=strong" property. 
Finally, we upgrade the high-order normal regularity by coupling the equations for $\nabla\times u, \nabla\cdot b, \nabla\times b$ into a transport system, and using a Helmholtz-type decomposition to express the gradients in terms of divergences and curls.

    The remainder of this paper is organized as follows. In Section~2, we establish the a priori energy estimates for the linearized problem of \eqref{MHD}-\eqref{bcd} around a smooth background state. The well-posedness of this linearized problem was obtained in Section~3. Then, in Section~4, we extend the well-posedness result to the linearized problem with a non-smooth background state, and use the Picard iteration to prove the main result given in Theorem~\ref{main-thm} in Section~5. Finally, in the Appendices, certain supplementary technical results are given for completeness.

For convenience, we list the following notations, which shall be used throughout this paper.
\begin{itemize}
    \item Denote by $\partial_1 = \partial_{x_1}, \partial_2 = \partial_{x_2}$, $\nabla = \nabla_x = (\partial_1,\partial_2)^t$.
    \item For any two $2$-dimensional vector fields $u= (u_1,u_2)^t$ and $v = (v_1,v_2)^t$, denote by $u\times v = u^\perp\cdot v = u_2 v_1 - u_1 v_2$.
    \item By $\inp*{f}{g}_{\Omega}$ (resp. $\inp*{f}{g}_{\partial\Omega}$) or $\inp*{f}{g}_{L^2}$ (resp. $\inp*{f}{g}_{L^2(\partial\Omega)}$), we denote the inner product between $f$ and $g$ in $L^2(\Omega)$ (resp. $L^2(\partial\Omega)$). 
    \item For any Banach spaces $X, Y$, $\mathcal{L}(X)$ (resp. $\mathcal{L}(X\to Y)$) represents the space of bounded linear mappings from $X$ to itself (resp. $Y$), $\|\cdot\|_{\mathcal{L} (X)}$ (resp. $\|\cdot\|_{\mathcal{L} (X\to Y)}$) is the canonical operator norm.
\end{itemize}

    \section{A priori  estimates for the linearized MHD equations}\label{section-linear}
    \subsection{Preliminaries}
    \subsubsection{Properties of the anisotropic Sobolev spaces $H^m_*$}

Let  $H^m_*$ be the 
anisotropic spaces defined in Definition~\ref{def-sp}, and $\Omega$ be as introduced in Notation 1.1. Several properties of the function space $H^m_*$ will be stated in the following lemmas.
By the construction of $w^1(x),w^2(x)$ given in Definition~\ref{def-tan}, and  the definition of $H^m_*(\Omega)$ in \eqref{aniso-space}, we know as in Remark~\ref{rmk-sp}, that the space $H^m_*(\Omega)$ locally reduces to a standard anisotropic Sobolev space away from the corners, and near each corner, it reduces to the case $\Omega = \{x\in \R^2: x_1>0,x_2>0\}$, for which $\eqref{norm-sp}$ becomes
    \begin{equation*}
        \|f\|^2_{H^m_*(\Omega)} = \sum_{|\alpha|+2|\beta|\le m} \|x_1^{\alpha_1} x_2^{\alpha_2} \partial_1^{\alpha_1+\beta_1} \partial_2^{\alpha_2+\beta_2} f\|^2_{L^2(\Omega)}.
    \end{equation*}

    \begin{lemma}\label{lemma-imbed} The following continuous embeddings
$$H^m(\Omega)\hookrightarrow H^m_*(\Omega)\hookrightarrow H^{[\frac{m}{2}]}(\Omega), \quad 
        H^4_*(\Omega)\hookrightarrow L^\infty(\Omega),$$ 
        hold.
    \end{lemma}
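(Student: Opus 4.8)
**Proof proposal for Lemma 2.2 (the embeddings $H^m(\Omega)\hookrightarrow H^m_*(\Omega)\hookrightarrow H^{[m/2]}(\Omega)$ and $H^4_*(\Omega)\hookrightarrow L^\infty(\Omega)$).**

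The plan is to argue locally, using a finite partition of unity subordinate to a cover of $\overline\Omega$ by the neighbourhoods described in Notation~1.1 and Definition~1.2: interior balls $V(\tilde x)\subset\Omega$, boundary collars $V(\tilde x)$ around smooth boundary points, and corner neighbourhoods $V(q_n)$. Since the $H^m_*$-norm and the classical norms are all defined by summing squared $L^2$-norms of derivatives, and the partition-of-unity functions are smooth with bounded derivatives of all orders, it suffices to prove each embedding on a single patch of each of the three types, with the understanding that commuting a cutoff past the operators $\partial_j$, $\partial_{w^1}$, $\partial_{w^2}$ only produces lower-order terms already controlled (one should remark, as the paper does in Remark~2.1, that reordering the derivatives in \eqref{norm-sp} gives an equivalent norm, so the ordering issue is harmless). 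On interior patches there is nothing to prove since $H^m_*=H^m$ there.

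For the first embedding $H^m\hookrightarrow H^m_*$: on a boundary or corner patch, after the diffeomorphism $\Psi$ of Remark~1.1 (resp.\ a corner straightening) the model is $\Omega=\{x_1>0\}$ (resp.\ $\{x_1>0,x_2>0\}$) and the operators $\partial_{w^i}$ become $x_i\partial_{x_i}$ up to smooth bounded factors; so every generator of $H^m_*$ is, modulo smooth bounded coefficients, of the form $x_1^{a_1}x_2^{a_2}\partial^\gamma$ with $2(\gamma_1-a_1)_+ + 2(\gamma_2-a_2)_+ +\dots\le$ — more simply, with $|\gamma|\le m$ once one notes that replacing a weighted normal derivative $x_i\partial_{x_i}$ "costs" only $1$ against the budget $m$ while an unweighted one costs $2$, so the total number of plain derivatives appearing is $\le m$. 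Since $x_i$ is bounded on the patch, $\|x_1^{a_1}x_2^{a_2}\partial^\gamma f\|_{L^2}\lesssim\|f\|_{H^{|\gamma|}}\le\|f\|_{H^m}$, and summing over the finitely many multi-indices gives the claim. For the second embedding $H^m_*\hookrightarrow H^{[m/2]}$: conversely, any plain derivative $\partial^\gamma$ with $|\gamma|\le [m/2]$ is itself one of the generators of $H^m_*$ (take $\alpha_3=\alpha_4=0$, $\alpha_1=\gamma_1$, $\alpha_2=\gamma_2$, so $2(\alpha_1+\alpha_2)=2|\gamma|\le m$), hence $\|\partial^\gamma f\|_{L^2}\le\|f\|_{H^m_*}$ directly, and summing over $|\gamma|\le[m/2]$ yields $\|f\|_{H^{[m/2]}}\lesssim\|f\|_{H^m_*}$. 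Finally $H^4_*\hookrightarrow L^\infty$ follows by chaining $H^4_*(\Omega)\hookrightarrow H^2(\Omega)$ (the case $m=4$ just proved, $[4/2]=2$) with the classical Sobolev embedding $H^2(\Omega)\hookrightarrow L^\infty(\Omega)$ in two dimensions, valid since $\Omega$ is a bounded Lipschitz (indeed curvilinear-polygonal) domain and hence an extension domain for $H^2$.

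The only genuinely delicate point is the bookkeeping near a corner: one must check that straightening the corner via $\Phi_1,\Phi_2$ turns $\partial_{w^1},\partial_{w^2}$ into $x_1\partial_{x_1}, x_2\partial_{x_2}$ modulo smooth factors that are bounded with bounded inverse on the patch — this is exactly the content of the displayed model identity just before the lemma — and that the mixed weight monomials $x_1^{a_1}x_2^{a_2}$ that arise never carry a negative power, so they are harmless multipliers. Everything else is the routine "localise, straighten, count derivatives, sum" argument, and I expect the corner chart computation (making precise that the Jacobian of the straightening and the coefficients relating $w^i$ to $x_i\partial_{x_i}$ are uniformly non-degenerate) to be the main—though still modest—obstacle.
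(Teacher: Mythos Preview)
Your proposal is correct and is precisely the natural elaboration of what the paper itself does: the paper gives no proof at all, stating only in the remark immediately following the lemma that ``the above continuous embeddings are obvious from the definition \eqref{aniso-space}'' and then focusing on the sharpness of $H^4_*\hookrightarrow L^\infty$. Your localise--straighten--count argument (noting that each generator $\partial_1^{\alpha_1}\partial_2^{\alpha_2}(\partial_{w^1})^{\alpha_3}(\partial_{w^2})^{\alpha_4}$ with $2(\alpha_1+\alpha_2)+\alpha_3+\alpha_4\le m$ involves at most $m$ plain derivatives with bounded weights, and conversely that every $\partial^\gamma$ with $|\gamma|\le[m/2]$ already appears among the generators with $\alpha_3=\alpha_4=0$) is exactly what ``obvious from the definition'' unpacks to, and the chain $H^4_*\hookrightarrow H^2\hookrightarrow L^\infty$ is also how the paper uses the last embedding elsewhere (e.g.\ in Remark~\ref{rmk-1}).
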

    \begin{remark}
    The above continuous embeddings are obvious from the definition \eqref{aniso-space}. The last embedding is sharp, in the sense that $H^3_*(\Omega) \not\hookrightarrow L^\infty(\Omega)$. Indeed, in the case $\Omega = \{x \in \R^2: x_1>0, x_2>0\}$, and $H^3_*(\Omega) = \{f\in L^2(\Omega): x_1^{\alpha_1} x_2^{\alpha_2}\partial_1^{\alpha_1+\beta_1} \partial_2^{\alpha_2+\beta_2} f \in L^2, \forall |\alpha|+2|\beta|\le 3 \}$,
    letting $f(x) = \chi(|x|) \log (\log \frac{1}{|x|})$, with $\chi$ being a smooth cut-off function satisfying $\chi(|x|)\equiv 1 $ if $|x|\le \frac{1}{2}$ and $\chi(|x|) = 0$ if $|x|\ge 1$, one has $f\in H^3_*(\Omega)\backslash L^\infty(\Omega)$ obviously.
    \end{remark}

    Below, we need the following notations.
    \begin{nota}\label{nota-2.1}
        (1) For $m\in \mathbb{N}$, define the space
            $$X^m_*([0,T]\times \Omega):=\cap_{j=1}^m C^k([0,T]; H_*^{m-j}(\Omega)) $$
            with its canonical norm. 
            
        (2) Denote by
            \begin{equation*}
                \normmm{f(t)}_{m,*}^2 = \sum_{k=0}^m \|\partial_t^k f(t)\|^2_{H^{m-k}_*(\Omega)},
            \end{equation*}
            and $\|f(t)\|_m = \|f(t)\|_{H^m(\Omega)}$.
            
        (3) For any multi-index $\alpha = (\alpha_0, \ldots, \alpha_4)\in \mathbb{N}^5$, denote by
 	\begin{equation*}
 	\partial_*^\alpha := \partial_t^{\alpha_0}\partial_1^{\alpha_1}\partial_2^{\alpha_2}(\partial_{w_1})^{\alpha_3} (\partial_{w_2})^{\alpha_4}, \quad |\alpha|_* := \alpha_0 + 2(\alpha_1+\alpha_2) + \alpha_3 + \alpha_4, 
 	\end{equation*}
 and $\partial_*^m:=\sum_{|\alpha|_*= m}\partial_*^\alpha$, $D_*^m:=\sum_{|\alpha|_*\le m}\partial_*^\alpha$ for a fixed integer $m\ge 0$.
    \end{nota}

    \begin{lemma}\label{lemma-density}
    For any fixed integer $m\ge 0$, one has 
        \begin{enumerate}
            \item[(1)] $C^\infty(\overline{\Omega})$ is dense in $H^m_*(\Omega)$;
            \item[(2)] $C^\infty([0,T]\times \overline{\Omega})$ is dense in 
    $X^m_*([0,T]\times \Omega)$.  
        \end{enumerate}
    \end{lemma}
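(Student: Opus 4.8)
The plan is to localize with a partition of unity, straighten the boundary, and reduce everything to two model domains — the half–plane $\R^2_+=\{x_1>0\}$ and the quarter–plane $\Omega_*=\{x_1>0,\ x_2>0\}$ — together with the trivial interior case. Fix a finite smooth partition of unity $1=\sum_j\chi_j$ on $\overline\Omega$ subordinate to a cover of $\overline\Omega$ by interior balls $B\subset\Omega$, boundary collars $V(\tilde x)$ as in Notation~\ref{nota-1}(3), and corner neighbourhoods $V(q_n)$ as in Notation~\ref{nota-1}(4). Since each of the vector fields $\partial_1,\partial_2,\partial_{w^1},\partial_{w^2}$ has smooth coefficients, Leibniz' rule gives $\|\chi_j f\|_{H^m_*(\Omega)}\le C\|f\|_{H^m_*(\Omega)}$, so it suffices to smooth each $\chi_j f$ separately. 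On an interior ball $H^m_*=H^m$ and ordinary mollification applies. Near a smooth boundary point the diffeomorphism $\Psi$ of Remark~\ref{rmk-sp}(2) identifies $H^m_*(V(\tilde x))$ with the classical anisotropic Sobolev space of \cite{chen1980initialF} on $\R^2_+$; near a corner $q_n$ the map $x\mapsto(\Phi_1^n(x),\Phi_2^n(x))$ is a diffeomorphism of a neighbourhood of $q_n$ — here $0<\omega_n<\pi$ enters, since it makes $\nabla\Phi_1^n$ and $\nabla\Phi_2^n$ linearly independent — and it identifies $H^m_*(V(q_n))$ with the space on $\Omega_*$ whose square norm is $\sum_{|\alpha|+2|\beta|\le m}\|x_1^{\alpha_1}x_2^{\alpha_2}\partial_1^{\alpha_1+\beta_1}\partial_2^{\alpha_2+\beta_2}f\|^2_{L^2(\Omega_*)}$. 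Smoothness up to the boundary is preserved under these diffeomorphisms, so the task reduces to approximating a compactly supported $f$ in such a model domain by functions in $C^\infty$ up to the flat boundary.

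In the model domains the classical ``extend across the boundary and mollify'' device fails, because one cannot in general extend $f$ across a corner without leaving $H^m_*$. Instead I would translate and then mollify. With $e=(1,0)$ in the half–plane and $e=(1,1)$ in the quarter–plane, set $f_\epsilon(x):=f(x+\epsilon e)$, which is well defined on a full neighbourhood of the closed model domain. Writing $\mu$ for the relevant monomial weight ($\mu(x)=x_1^{\alpha_1}$, resp.\ $x_1^{\alpha_1}x_2^{\alpha_2}$) and using that $\mu$ is non-decreasing in the direction $e$, one gets the uniform bound $\|f_\epsilon\|_{H^m_*}\le\|f\|_{H^m_*}$; and the identity $\mu(x)\,(\partial^\gamma f)(x+\epsilon e)=\frac{\mu(x)}{\mu(x+\epsilon e)}\,(\mu\,\partial^\gamma f)(x+\epsilon e)$, combined with $L^2$–continuity of translations and dominated convergence, yields $f_\epsilon\to f$ in $H^m_*$ as $\epsilon\to0$. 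On the relevant compact region $f_\epsilon$ has all genuine derivatives up to order $m$ in $L^2$ of a neighbourhood of the closed model domain, so its mollification $f_{\epsilon,\delta}:=f_\epsilon*\rho_\delta$ with $0<\delta<\epsilon/4$ is $C^\infty$ up to the boundary; since $[\mu,\ \cdot\,*\rho_\delta]=O(\delta)$ uniformly on compact sets, $f_{\epsilon,\delta}\to f_\epsilon$ in $H^m_*$ as $\delta\to0$. A diagonal choice of $(\epsilon,\delta)$ proves (1). I record that the resulting composite smoothing operators $S_\eta$ (partition of unity, straighten, translate, mollify, straighten back) are bounded on $H^k_*(\Omega)$ uniformly in $\eta$, converge strongly to the identity on each $H^k_*(\Omega)$, and act only in the $x$ variables.

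For (2), with $X^m_*([0,T]\times\Omega)=\cap_{j=0}^m C^j([0,T];H^{m-j}_*(\Omega))$ as in Notation~\ref{nota-2.1}(1), take $f$ in this space. First apply $S_\eta$: since $S_\eta$ commutes with $\partial_t$ and the curve $t\mapsto\partial_t^j f(t)$ has compact image in $H^{m-j}_*(\Omega)$, uniform boundedness and strong convergence of $S_\eta$ give $S_\eta f\to f$ in $X^m_*$, while $S_\eta f$ is now $C^\infty$ in $x$, continuously into every $C^k(\overline\Omega)$, and as regular in $t$ as $f$. Next extend $S_\eta f$ a little beyond $t=0$ and $t=T$ by a Seeley-type reflection in $t$ (a finite combination $t\mapsto\sum_k c_k\,(S_\eta f)(-a_k t)$, $a_k>0$, with the $c_k$ matching $t$–derivatives up to order $m$), which stays in $X^m_*$ and continuous into every $C^k(\overline\Omega)$, and mollify it in the $t$ variable at scale $\delta$. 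Because this is convolution with a smooth kernel of a function continuous into every $C^k(\overline\Omega)$, the outcome lies in $C^\infty([0,T]\times\overline\Omega)$, and it converges to $S_\eta f$ in $X^m_*$ as $\delta\to0$ since the $t$–derivatives fall on the mollifier. A final diagonal argument in $(\eta,\delta)$ gives (2).

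The step I expect to be the real point is the translation step in the model domains: everything hinges on the monomial weights $x_1^{\alpha_1}x_2^{\alpha_2}$ (equivalently, on the degenerate vector fields $x_i\partial_i$) behaving monotonically and continuously under the shift $x\mapsto x+\epsilon e$, which is exactly what lets one bypass the extension across the corner that is unavailable here. The remaining ingredients — Leibniz' rule for the partition of unity, the two changes of variables, and standard mollifier and translation estimates — are routine bookkeeping.
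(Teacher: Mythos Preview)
Your approach is essentially the same as the paper's: both reduce to the model quarter-plane $\{x_1>0,\,x_2>0\}$, view $H^m_*$ there as a weighted Sobolev space with polynomial weights in $x_1,x_2$, translate inward along $e=(1,1)$ using that the weights are monotone in that direction, and then mollify. The paper simply packages this as an appeal to \cite[Theorem~7.2]{kufner1985weighted} and \cite[Lemma~B.1]{ohno1995initial} and does not spell out part~(2), whereas you work out the translation/mollification estimates and the time-smoothing for $X^m_*$ explicitly.
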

    \begin{proof}
        Without loss of generality, one only needs to consider the case $\Omega = \{x\in \R^2: x_1>0,x_2>0\}$ by using certain transformation.
        The proof of (1) follows from \cite[Lemma~B.1]{ohno1995initial}, regarding $H^m_*(\Omega)$ as a weighted Sobolev space with norm
        \begin{equation*}
            \|f\|^2_{H^m_*(\Omega)} = \sum_{|\alpha|\le m} \int_{\Omega} |\partial_1^{\alpha_1}\partial_2^{\alpha_2} f|^2 
            \sigma^2_\alpha(x)
            \d x,
        \end{equation*}
        where the weight function
        \begin{equation*}
            \sigma_\alpha(x) = \sum_{\substack{(2|\alpha|-m)_+ \le |\beta|,\, \beta \le \alpha}} x_1^{\beta_1}x_2^{\beta_2}
        \end{equation*}
        is a polynomial in the distances $x_1,x_2$ from $x$ to the boundaries $\{x_1= 0,x_2>0\}$ and $\{x_1>0,x_2=0\}$, respectively. 
        Observe that along the inward shift $v=(1,1)$, $\sigma_\alpha(x+\lambda v) > \sigma_\alpha(x)$ for $\lambda>0$, then the conclusion follows by adapting the argument in \cite[Theorem~7.2]{kufner1985weighted}.
    \end{proof}

    \begin{lemma}\label{lemma-moser}
        Let 
        $m \ge 0$
        be an integer and $r = \max\{m, 5\}$.
        There exists a constant $C>0$ such that the following holds:
        \begin{enumerate}
            \item[(1)] If $u \in H^m_*(\Omega)$ and $v\in H^r_*(\Omega)$, then $uv\in H^m_*(\Omega)$ and $\|uv\|_{H^m_*(\Omega)}\le C\|u\|_{H^{m}_*(\Omega)} \|v\|_{H^r_*(\Omega)}$,
            \item[(2)] If $u \in X^m_*([0,T]\times \Omega)$ and $v\in X^r_*([0,T];\Omega)$, then $uv\in X^m_*([0,T]\times \Omega)$ and for every $t\in [0,T]$, $\normmm{(uv)(t)}_{m,*}\le C\normmm{u(t)}_{m,*} \normmm{v(t)}_{r,*}$.
        \end{enumerate}
    \end{lemma}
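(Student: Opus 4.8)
The plan is to reduce to the model corner $\Omega = \{x_1>0, x_2>0\}$ as in the surrounding discussion, and then prove a Moser-type (algebra) estimate for the weighted Sobolev characterization of $H^m_*$, namely that a product $uv$ satisfies $\partial^\gamma(uv)$ is controlled in $L^2$ once one distributes the derivatives by Leibniz and weighs each factor correctly. Concretely, after the reduction, an element of $H^m_*$ is an $f$ with $x_1^{\alpha_1}x_2^{\alpha_2}\partial_1^{\alpha_1+\beta_1}\partial_2^{\alpha_2+\beta_2} f\in L^2$ for $|\alpha|+2|\beta|\le m$, and the key point is that the weight $\sigma_\gamma(x)$ attached to a pure spatial derivative $\partial^\gamma$ of total order $|\gamma|=k$ behaves (up to equivalence) like $\sum_{(2k-m)_+\le|\delta|\le k}x^\delta$, and these weights are \emph{super-multiplicative} under splitting $\gamma = \gamma'+\gamma''$: $\sigma_{\gamma'}(x)\,\sigma_{\gamma''}(x)\gtrsim \sigma_\gamma(x)$ whenever $|\gamma'|+|\gamma''|=|\gamma|\le m$. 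This last inequality is the structural fact that makes $H^m_*$ an algebra modulo the smoother factor, and I would isolate it as a preliminary computation.

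Granting that, part (1) follows a standard two-regime Gagliardo–Nirenberg/Moser splitting applied in the weighted setting. Fix $\gamma$ with (weighted) order $\le m$, apply Leibniz to get $\sigma_\gamma\,\partial^\gamma(uv) = \sum_{\gamma'+\gamma''=\gamma}\binom{\gamma}{\gamma'}\sigma_\gamma\,\partial^{\gamma'}u\,\partial^{\gamma''}v$; bound $\sigma_\gamma\le C\,\sigma_{\gamma'}\sigma_{\gamma''}$ by super-multiplicativity, so each summand is dominated by $(\sigma_{\gamma'}\partial^{\gamma'}u)(\sigma_{\gamma''}\partial^{\gamma''}v)$. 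In the term one puts the factor with more derivatives in $L^2$ (weighted, i.e.\ in $H^m_*$) and the other in $L^\infty$; since $r=\max\{m,5\}\ge 5$, the embedding $H^4_*(\Omega)\hookrightarrow L^\infty(\Omega)$ from Lemma~\ref{lemma-imbed}, together with the analogous embedding for weighted lower-order derivatives of $v$, gives the needed $L^\infty$ control of all derivatives of $v$ up to weighted order $r-4$, while the high-derivative factor of $u$ is absorbed into $\|u\|_{H^m_*}$. When it is instead $u$ that carries few derivatives, one uses $\|u\|_{L^\infty}\lesssim\|u\|_{H^m_*}$ (valid since $m\le r$, and for $m\le 3$ one still has $\|u\|_{H^m_*}\le\|u\|_{H^r_*}$, but here the estimate only asserts a bound by $\|u\|_{H^m_*}\|v\|_{H^r_*}$ so one keeps $v$ in $L^\infty$ always when $m\le 4$) and puts $v$'s derivatives in the weighted $L^2$ space $H^r_*$. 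Summing over the finitely many $\gamma$ and over the splittings yields $\|uv\|_{H^m_*}\le C\|u\|_{H^m_*}\|v\|_{H^r_*}$.

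Part (2) is obtained from part (1) by treating the time variable as an extra parameter. Writing $\normmm{\cdot}_{m,*}$ as the sum over $k\le m$ of $H^{m-k}_*$ norms of $\partial_t^k$, one applies Leibniz in $t$ first, $\partial_t^k(uv)=\sum_{j\le k}\binom{k}{j}\partial_t^j u\,\partial_t^{k-j}v$, then invokes part (1) in the spatial variables at each fixed $t$ with $m$ replaced by $m-k$ and $r$ by $\max\{m-k,5\}\le r$; the factor with fewer time derivatives automatically has enough spatial regularity to sit in the $r$-index space or in $L^\infty$ via Lemma~\ref{lemma-imbed} again. Collecting terms reproduces $\normmm{(uv)(t)}_{m,*}\le C\normmm{u(t)}_{m,*}\normmm{v(t)}_{r,*}$, with continuity in $t$ inherited from $u,v\in X^\bullet_*$.

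I expect the main obstacle to be the super-multiplicativity of the weights $\sigma_\gamma$ near the corner, i.e.\ verifying $\sigma_{\gamma'}(x)\sigma_{\gamma''}(x)\gtrsim\sigma_\gamma(x)$ uniformly, together with checking the endpoint bookkeeping when $|\gamma'|$ or $|\gamma''|$ exceeds $m/2$ (so that the lower bound $(2k-m)_+$ on $|\delta|$ in the weight is active). One must confirm that in every Leibniz term at least one factor retains $\le 4$ (weighted) derivatives so that $L^\infty$ control is available, which forces the threshold $r\ge 5$; tracking exactly which factor that is, and that the companion weight is no larger than the product weight, is the delicate combinatorial core. Everything else — the reduction to the model corner via the diffeomorphisms of Definition~\ref{def-tan}, the density of smooth functions from Lemma~\ref{lemma-density} to justify the Leibniz manipulations, and the summation over multi-indices — is routine.
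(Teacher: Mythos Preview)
Your overall strategy---reduce to the model corner, Leibniz, super-multiplicativity of the weights, then Sobolev embeddings---is precisely what the paper intends (it simply cites \cite[Lemma~A.5]{ohno1995initial} for ``a careful use of classical Sobolev embeddings''). However, two of your steps do not close as written.

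First, the claim that ``in every Leibniz term at least one factor retains $\le 4$ weighted derivatives, so $L^\infty$ control is available'' is too coarse. When the weighted orders split nearly evenly, neither factor lands in $H^4_*$. For instance, with $m=r=5$ and three tangential derivatives on $uv$, the term $(\partial_w^2 u)(\partial_w v)$ has both factors only in $H^3_*$, which does not embed in $L^\infty$. One must use intermediate $L^p$ embeddings (e.g.\ $H^2_*\hookrightarrow H^1\hookrightarrow L^4$ in two dimensions) and H\"older with exponents other than $(2,\infty)$; this is the ``careful'' part of the Sobolev embedding argument the paper alludes to.

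Second, your reduction of part~(2) to part~(1) via Leibniz in $t$ alone fails for the same reason. Take $m=r=5$, $k=2$, $j=1$: you need $\|\partial_t u\cdot\partial_t v\|_{H^3_*}$, but $\partial_t u,\partial_t v\in H^4_*$ only, so neither can play the $H^5_*$ role required by part~(1) with $m-k=3$. The fix is to carry out the full space-time Leibniz expansion and apply the mixed $L^p$ splitting termwise, rather than factoring through the purely spatial estimate. Once these intermediate embeddings are used, the argument goes through.
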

    \begin{proof}
        The proof follows from a careful use of classical Sobolev embeddings, see \cite[Lemma~A.5]{ohno1995initial}.
    \end{proof}
    
\begin{coro}\label{coro-moser}
    Assume that $m\in \mathbb{N}$ is a fixed integer satisfying the conditions given in each point below. There is a constant $C>0$, such that 
    for any fixed $f_j,f,g \in X^m_*([0,T]\times \Omega)$ and $\alpha^j, \alpha\in \mathbb{N}^5$, with $j=1,\ldots,d$, the following inequalities hold: 
    \begin{enumerate}
        \item[(1)] $\|\prod_{1\le j\le d} \partial_*^{\alpha^j} f_j(t)\|_{0} \le C \prod_{1\le j\le d}\normmm{f_j(t)}_{m,*}$, as $\sum_{1\le j\le d} |\alpha^j|\le m$ and $m\ge 5$.
        \item[(2)] $\|\prod_{1\le j\le d} \partial_*^{\alpha^j} f_j(t)\|_0 \le C \sum_{1\le j\le d}(\normmm{f_j(t)}_{m,*}\prod_{1\le l\le d,l\ne j}\normmm{f_l(t)}_{m-1,*})$, \\ as $\sum_{1\le j\le d} |\alpha^j|\le m$ and $m\ge 6$.
        \item[(3)] $\|\partial_*^\alpha (f g)(t) - f\partial_*^\alpha g(t)\|_0\le C \normmm{\partial_* f(t)}_{m-1,*}\normmm{g(t)}_{m-1,*}$, as $|\alpha|\le m$ and $m\ge 6$.
        \item[(4)] $\|\partial_*^\alpha (f g)(t) - f\partial_*^\alpha g(t)\|_0\le C (\normmm{\partial_* f(t)}_{m-1,*}\normmm{g(t)}_{m-2,*}+\normmm{\partial_* f(t)}_{m-2,*}\normmm{g(t)}_{m-1,*})$, as $|\alpha|\le m$ and $m\ge 7$.
 \item[(5)]  For any smooth function $F(\cdot)$, it holds that $\normmm{F(f(t))}_{m,*} \le C_{F}(\|f(t)\|_{L^\infty}) (1+\normmm{f(t)}_{m,*}^m)$, as $m\ge 5$.  
    \end{enumerate}
\end{coro}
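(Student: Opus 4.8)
The plan is to reduce all five inequalities to three model situations by a partition of unity subordinate to the cover of $\overline\Omega$ described in Remark~\ref{rmk-sp}. Since the vector fields $w^1,w^2$ are smooth up to $\partial\Omega$, Lemma~\ref{lemma-moser} shows that $H^m_*(\Omega)$ (and $X^m_*$) is a module over $C^\infty(\overline\Omega)$, so multiplication by the cut-off functions of the partition is harmless and it suffices to prove (1)--(5) in each chart: in an interior chart $H^m_*=H^m$ and the statements are the classical Moser / Gagliardo--Nirenberg estimates; near the smooth part of $\partial\Omega$, $H^m_*$ coincides with the Chen anisotropic Sobolev space and the estimates are classical (cf.\ \cite{chen1980initialF}, \cite{secchi1996initial}, \cite{ohno1995initial}); and near a corner, after the diffeomorphism of Remark~\ref{rmk-sp}, one is reduced to the model domain $\{x_1>0,\,x_2>0\}$ with weights $x_1,x_2$, which is the only new case. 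In all charts $\partial_t$ is treated as an additional ``good'' first-order field of $*$-weight $1$, so the $X^m_*$/$\normmm{\cdot}_{m,*}$ statements follow from the $H^m_*$ ones by summing over the splittings of the $\partial_t$-derivatives.

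For (1) and (2) I would argue as in the proof of Lemma~\ref{lemma-moser} (cf.\ \cite[Lemma~A.5]{ohno1995initial}): in the model chart each factor $g_j:=\partial_*^{\alpha^j}f_j$ belongs to $H^{m-|\alpha^j|_*}_*$ with norm controlled by $\normmm{f_j(t)}_{m,*}$; combining the embedding $H^k_*\hookrightarrow H^{[k/2]}$ of Lemma~\ref{lemma-imbed} with the two-dimensional Sobolev embeddings $H^s\hookrightarrow L^{p(s)}$ and a generalized Hölder inequality with $\sum_j p_j^{-1}\le\tfrac12$ --- which is exactly what the hypothesis $\sum_j|\alpha^j|_*\le m$ provides, the deficit being placed on an $L^\infty$ factor controlled by $H^4_*\hookrightarrow L^\infty$ --- yields (1); the margin $m\ge5$ is what makes the embedding chain close. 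For (2) the only new observation is that at most one index $k$ can have $|\alpha^k|_*>\tfrac m2$: one assigns the full $H^m_*$-norm to that single ``heavy'' factor and runs the same estimate for the remaining factors at smoothness $m-1$, which is legitimate because $\sum_{j\ne k}|\alpha^j|_*\le m-|\alpha^k|_*\le m-1$; here $m\ge6$ is the required margin.

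For the commutator estimates (3)--(4) I would expand $\partial_*^\alpha=L_1\cdots L_N$ as a composition of the first-order derivations $\partial_t,\partial_1,\partial_2,\partial_{w^1},\partial_{w^2}$; since each $L_i$ is a derivation, the Leibniz rule together with Remark~\ref{remark2.1} (to reorder the $w$-fields) and the $C^\infty(\overline\Omega)$-module property gives
\[
\partial_*^\alpha(fg)-f\,\partial_*^\alpha g=\sum_{j}c_j(x)\,\partial_*^{\beta'_j}f\,\partial_*^{\beta''_j}g,\qquad c_j\in C^\infty(\overline\Omega),\ \ |\beta'_j|_*\ge1,\ \ |\beta'_j|_*+|\beta''_j|_*\le m .
\]
Since at least one derivation falls on $f$, each summand can be written as a product at $*$-level $m-1$ (one derivative of $f$ being ``spent''), so (3) follows from (1) at level $m-1$ (needs $m-1\ge5$) and (4) from (2) at level $m-1$ with $d=2$ (needs $m-1\ge6$). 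Finally, (5) follows from the Fa\`a di Bruno expansion $\partial_*^\alpha F(f)=\sum_{1\le k\le|\alpha|_*}F^{(k)}(f)\cdot(\text{finite sums of }k\text{-fold products }\partial_*^{\beta^1}f\cdots\partial_*^{\beta^k}f)$ with $\sum_i|\beta^i|_*\le|\alpha|_*\le m$: one bounds $\|F^{(k)}(f)\|_{L^\infty}$ by a constant $C_F(\|f\|_{L^\infty})$, controls each product by (1), and uses $\normmm{f(t)}_{m,*}^k\le1+\normmm{f(t)}_{m,*}^m$ for $0\le k\le m$ together with $\|F(f)\|_{L^2(\Omega)}\lesssim\|F(f)\|_{L^\infty}$ (valid since $\Omega$ is bounded) to absorb the $k=0$ term.

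The main obstacle is the corner chart: establishing the interpolation/embedding chain underlying (1)--(2) for the model domain $\{x_1>0,\,x_2>0\}$ with the \emph{two} simultaneously degenerate weights $x_1,x_2$, and, for (3)--(4), checking that the variable-coefficient fields $\partial_{w^1},\partial_{w^2}$ and their commutators produce only smooth vector fields with no loss of $*$-weight --- which is exactly where the norm-equivalence-under-reordering statement of Remark~\ref{remark2.1} enters. Once these structural facts about $H^m_*$ near a corner are in hand, the rest is routine bookkeeping with Hölder's inequality, the Leibniz rule and Fa\`a di Bruno's formula.
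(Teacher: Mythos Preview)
Your proposal is correct and follows the standard route that the paper implicitly relies on: the paper states this corollary without proof, deferring to Lemma~\ref{lemma-moser} (itself proved by ``a careful use of classical Sobolev embeddings, see \cite[Lemma~A.5]{ohno1995initial}''), so your sketch --- localize via partition of unity, reduce to the three model charts, and run H\"older/Leibniz/Fa\`a di Bruno with the embedding chain $H^k_*\hookrightarrow H^{[k/2]}\hookrightarrow L^p$ --- is exactly what is intended. Your identification of the corner chart (two simultaneous weights) and Remark~\ref{remark2.1} (reordering of the conormal fields) as the only places requiring genuine care is accurate.
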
 
    
    \subsubsection{Formulation of linearized problems}

    For a given smooth background state $Z = (U,B,P,S)$ in $(0,T)\times  \Omega$ with $ U = (U_1,U_2)^t, B= (B_1,B_2)^t$, satisfying 
    \begin{equation}\label{assum-Z}
    \begin{aligned}
         & U\cdot \nu = B \cdot \nu = 0 \text{ on } (0,T)\times (\partial\Omega\backslash \{q_1, \ldots, q_N\}),\\
         & (U,B,P,S) \text{ takes its values in a fixed compact subset }\mathcal{K} \text{ of }\R^4\times \mathcal{V},
    \end{aligned}
    \end{equation}
with $\nu$ being the unit outward normal vector on $\partial\Omega\backslash \{q_1, \ldots, q_N\}$, $\mathcal{V}$ being some open subset of $\R^+\times \R$, the linearized problem of \eqref{MHD}-\eqref{bcd} usually would take the form
    \begin{equation}\label{linear-formal}
    \begin{cases}
    R(P,S)(\partial_t u + U \cdot \nabla u) + \nabla \p - B\cdot \nabla b = F_1, \\
    \partial_t b + U \cdot \nabla b - B\cdot \nabla u + B(\nabla \cdot u) = F_2,\\
    \frac{1}{Q(P,S)} (\partial_t \p + U \cdot \nabla \p) - \frac{1}{Q(P,S)} B \cdot (\partial_t b + U \cdot \nabla b) +  \nabla \cdot u = F_3 ,\\
    \partial_t s + U \cdot \nabla s = F_4,
    \end{cases}
    \end{equation}
where
$\p = p + \frac{1}{2}|b|^2$
is the total pressure, and $F_1, F_2$ are $2$-dimensional vectors.

But one hopes to have 
\begin{equation}\label{b-bcd}
b\cdot \nu = 0,\quad \forall t>0, ~x\in \partial\Omega\backslash \{q_1, \ldots, q_N\}
\end{equation}
for the solution of the linear problem \eqref{linear-formal}, if it is true initially, provided that the source term satisfies
$$ F_2 \cdot \nu = 0, \quad  \forall t>0, ~x\in \partial\Omega\backslash \{q_1, \ldots, q_N\}.$$
This is also crucial to satisfy the perfectly conducting wall condition, $b\cdot\nu=0$ on the smooth part of $\partial\Omega$ for the solution of the nonlinear problem \eqref{MHD}-\eqref{bcd}, which will be constructed as the limit of approximate solution sequences via a Picard iteration of \eqref{MHD} in Section~5. However, by a direct calculation, one can see that the ansatz \eqref{b-bcd} does  not hold in general for the solution of the equations \eqref{linear-formal}, so one needs to modify this linearized system. 
    
    Recall that in the case where $\partial\Omega$ is smooth (see \cite{yanagisawa1991fixed} and \cite{secchi1995well}), the linearized evolution equation of $b$ given in \eqref{linear-formal} is replaced by the following one via adding a lower order term,
    \begin{equation*}
        \partial_t b + U\cdot \nabla b - B\cdot \nabla u + B(\nabla\cdot u) + \big[(U\cdot \nabla)\tilde{\nu}(x) \cdot b - (B\cdot \nabla )\tilde{\nu}(x)\cdot u\big] \tilde{\nu}(x) = 0,
    \end{equation*}
    where $\tilde{\nu}(x)$ is a smooth extension of $-\nabla\dist(x,\partial\Omega)$ from $x$ in a thin neighborhood of $\partial\Omega$ to $x\in \overline{\Omega}$. However, in our case, especially when $x$ is in a small neighborhood of a corner, $q_n$ for some $n$, the distance function $\dist (x,\partial\Omega)$ is non-smooth.  To resolve this, first we construct two matrices in the following lemma.
    
    \begin{lemma}\label{lemma-h-def}
    There are two $2\times 2$ matrices $h^i\in C^\infty(\R^2)$, $i=1,2$, such that for any vector $v = (v_1,v_2)^t$, it holds 
        \begin{equation}\label{h}
        (U\cdot \nabla \nu)\cdot v = (\sum_{i=1,2} U_i h^i v)\cdot \nu,\quad {\rm on} \quad \partial\Omega\backslash\{q_1,\ldots, q_N\} 	
        \end{equation}
    \end{lemma}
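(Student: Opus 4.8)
\textbf{Proof proposal for Lemma~\ref{lemma-h-def}.}

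The plan is to build the matrices $h^i$ first locally, using the local defining functions $\Phi$ and $\Phi_l$ from Notation~\ref{nota-1}, and then patch together with a partition of unity subordinate to a finite cover of $\overline{\Omega}$. Away from a neighborhood of $\partial\Omega$ the identity \eqref{h} is vacuous on $\partial\Omega$, so there the $h^i$ can be taken to be anything smooth, say zero. Near a smooth boundary point $\tilde x\in\partial\Omega\setminus\{q_1,\ldots,q_N\}$, recall $\tilde\nu=-\nabla\Phi$ is a smooth extension of $\nu$ defined on $V(\tilde x)$; since $\nu$ here extends smoothly, one may differentiate directly. For a vector $v$ and any smooth vector field $a=(a_1,a_2)^t$ one computes $(a\cdot\nabla\nu)\cdot v = \sum_{i}a_i(\partial_i\nu)\cdot v = \sum_i a_i\,(v^t H_i)$, where $H_i := \partial_i(-\nabla\Phi) = -(\partial_i\partial_1\Phi,\partial_i\partial_2\Phi)^t$ is a smooth vector field; so $(a\cdot\nabla\nu)\cdot v$ is already a bilinear expression in $(a,v)$ with smooth coefficients. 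The only remaining task is to rewrite each $(\partial_i\nu)\cdot v$ in the form $(M_i v)\cdot\nu$ on the boundary, i.e. to find a smooth $2\times2$ matrix $M_i(x)$ with $M_i(x)^t\nu = \partial_i\nu$ on $\partial\Omega\cap V(\tilde x)$; then $h^i := M_i$ and \eqref{h} follows with $a=U$. Because $|\nu|=1$ on $\partial\Omega$, differentiating $|\tilde\nu|^2\equiv$ (const along the boundary in the tangential directions) shows $\partial_i\tilde\nu$ is not a priori parallel to $\nu$, so the matrix $M_i$ genuinely must reshuffle components; but any fixed smooth unit vector field $\nu$ on a piece of curve admits such an $M_i$ — e.g. take $M_i := \partial_i\nu\otimes\nu + (\text{something})\otimes\tau$ tuned so that $M_i^t\nu = \partial_i\nu$, using that $\{\nu,\tau\}$ is a smooth orthonormal frame near $\tilde x$. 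Concretely, write $\partial_i\nu = (\partial_i\nu\cdot\nu)\,\nu + (\partial_i\nu\cdot\tau)\,\tau$; since $\partial_i\nu\cdot\nu = \tfrac12\partial_i|\nu|^2$ can be extended smoothly, the matrix $M_i^t := \nu\otimes\big((\partial_i\nu\cdot\nu)\nu\big)^{\sharp} + \cdots$ does the job. This is routine linear algebra with smooth coefficients once a smooth frame is fixed.

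Near a corner $q_n$ the subtlety is that there is no single smooth normal; instead one has two smooth extensions $\tilde\nu^1=-\nabla\Phi_1$, $\tilde\nu^2=-\nabla\Phi_2$, with $\tilde\nu^l=\nu$ on the leg $\gamma_l^n$. The identity \eqref{h} is only required to hold on $\partial\Omega\setminus\{q_j\} = (\gamma_1^n\cup\gamma_2^n)\cap V(q_n)$ near $q_n$, i.e. leg by leg. On $\gamma_l^n$ we have $\nu = \tilde\nu^l$, so repeating the smooth-boundary computation with $\Phi_l$ in place of $\Phi$ produces smooth matrices $M_i^{(l)}$ with $(\partial_i\tilde\nu^l)\cdot v = (M_i^{(l)}v)\cdot\tilde\nu^l$ on all of $V(q_n)$, hence in particular $(U\cdot\nabla\nu)\cdot v = \sum_i U_i (M_i^{(l)}v)\cdot\nu$ on $\gamma_l^n$. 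To get a single pair of matrices valid on both legs simultaneously, use a partition of unity $\chi_1+\chi_2\equiv 1$ near $q_n$ with $\chi_l\equiv 1$ on a neighborhood of $\gamma_l^n\setminus\{q_n\}$ inside $V(q_n)$ and $\chi_l$ supported away from the opposite leg (this is possible since $\omega_n<\pi$, so the two legs are separated except at $q_n$, and we may shrink $V(q_n)$); then set $h^i := \chi_1 M_i^{(1)} + \chi_2 M_i^{(2)}$, which is smooth and agrees with $M_i^{(l)}$ on $\gamma_l^n$. Finally glue the local matrices obtained on the various charts with a global smooth partition of unity on $\overline{\Omega}$; on each piece of $\partial\Omega\setminus\{q_j\}$ only one local definition is active (or, in overlaps, a convex combination of two that both satisfy \eqref{h} there, which again satisfies \eqref{h}), yielding global $h^1,h^2\in C^\infty(\R^2)$ with the stated property.

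The main obstacle — really the only non-bookkeeping point — is handling the corner: one must make sure that the bilinear identity, which a priori only makes sense ``on $\partial\Omega$'' where a single $\nu$ is defined, can be packaged into globally smooth matrices despite the normal switching discontinuously (in direction of extension) from $\gamma_1^n$ to $\gamma_2^n$. The resolution is exactly that \eqref{h} is only demanded pointwise on the smooth part of the boundary, so we are free to interpolate arbitrarily in the interior of the corner wedge; the separation of the two legs away from $q_n$ (guaranteed by $\omega_n<\pi$) is what lets the partition of unity $\chi_1,\chi_2$ decouple the two leg-conditions. Everything else is a finite partition-of-unity patching together smooth local constructions, each of which reduces to choosing a matrix with prescribed action $M^t\nu = \partial_i\nu$ on a curve carrying a smooth unit normal frame.
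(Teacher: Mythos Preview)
Your argument has a genuine gap at the corner. You propose to build $h^i$ near $q_n$ as $\chi_1 M_i^{(1)}+\chi_2 M_i^{(2)}$ with a smooth partition $\chi_1+\chi_2\equiv 1$ such that $\chi_l\equiv 1$ near $\gamma_l^n\setminus\{q_n\}$ and $\chi_l$ vanishes near the opposite leg. But the two legs meet at $q_n$: any smooth $\chi_1$ equal to $1$ along $\gamma_1^n\setminus\{q_n\}$ must satisfy $\chi_1(q_n)=1$ by continuity, and any smooth $\chi_1$ vanishing along $\gamma_2^n\setminus\{q_n\}$ must satisfy $\chi_1(q_n)=0$. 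So no such smooth $\chi_1$ exists, however much you shrink $V(q_n)$. If instead you allow $\chi_1<1$ on the part of $\gamma_1^n$ close to $q_n$, then on that portion $h^i=\chi_1 M_i^{(1)}+\chi_2 M_i^{(2)}$ is a convex combination of two matrices only one of which satisfies $(M_i)^t\tilde\nu^1=\partial_i\tilde\nu^1$; the other matrix $M_i^{(2)}$ was designed for $\tilde\nu^2$ and has no reason to satisfy the $\tilde\nu^1$-identity, so \eqref{h} fails on $\gamma_1^n$ near the vertex.

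The paper avoids this by not separating the two legs at all. Near $q_n$ it looks for a \emph{single} smooth matrix $h^i$ satisfying both conditions $(h^i)^t\tilde\nu^l=\partial_i\tilde\nu^l$, $l=1,2$, simultaneously on all of $V(q_n)$. Because $0<\omega_n<\pi$, the two extended normals $\tilde\nu^1=-\nabla\Phi_1$ and $\tilde\nu^2=-\nabla\Phi_2$ are linearly independent in $V(q_n)$, so this is a $2\times 2$ linear system for the rows of $(h^i)^t$ with invertible coefficient matrix $[\tilde\nu^1\,\,\tilde\nu^2]$; it has a unique smooth solution (the explicit formula is recorded in Lemma~\ref{lemma-h}). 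This construction is smooth across $q_n$ and works on both legs at once, which is exactly what your cutoff scheme cannot achieve.
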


\begin{proof}
  The construction of $h^i$ is inspired by \cite{godin20212d}. It is enough to construct them locally in $\R^2$ near each point $\Tilde{x}\in \partial\Omega$,  then use a $C^\infty$ partition of unity and an extension to $\R^2$. 
  
 For any  fixed $\tilde{x}\in \partial\Omega\backslash\{q_1, \ldots, q_N\}$, let $\Phi(x)$ and $\tilde{\nu} = -\nabla \Phi$ be the function and extension of the outward normal vector at $\partial\Omega$ in a small neighborhood of $\tilde{x}$, $V(\tilde{x})$, defined in Definition~\ref{def-tan}(3), and when   $\tilde{x}$ is one of the corner points, $q_n$ for some $1\le n\le N$, let $\Phi_l(x)~(l=1,2)$ and $\tilde{\nu}^l = -\nabla \Phi_l$ be the functions and extensions of the outward normal vectors to the legs $\gamma_l^n$ of the corner $q_n$ in a small neighborhood of $q_n$, $V(q_n)$, defined in Definition~\ref{def-tan}(4). 
  
To have \eqref{h}, it is enough to find matrices $h^i$ so that 
\begin{equation}\label{h-1}
\begin{cases}
(h^i)^t \tilde{\nu} = \partial_i \tilde{\nu} \quad {\rm on}~ \partial\Omega\cap V(\tilde{x}), \quad {\rm as}
~ \tilde{x}\in \partial\Omega\backslash\{q_1,\ldots,q_N\},\\
(h^i)^t \tilde{\nu}^l = \partial_i \tilde{\nu}^l ~(l=1,2)\quad
{\rm  on}~ \partial\Omega\cap V(q_n), \; {\rm as} \; \tilde{x} = q_n~{\rm for}~1\le n\le N.
\end{cases}
\end{equation}

Obviously, from \eqref{h-1} we know that $h^i ~(i=1,2)$ are uniquely determined in each neighborhood of $V(q_n)$ for all $1\le n\le N$, and they exist, but not uniquely when $\tilde{x}\in \partial\Omega\backslash\{q_1,\ldots,q_N\}$, for example one can choose $(h^i)_{1j} = (\partial_1 \Phi)^{-1} \partial_i\partial_j \Phi(x)$ and $(h^i)_{2j} = 0$ as $l=1,2$, when $\partial_1\Phi(\tilde{x})\neq 0$.
\end{proof}

Note that the above construction of $h^i~(i=1,2)$ is independent of $U$ and the vector $v$ given in \eqref{h}. Let 
\begin{equation}\label{2.6}
h(x,U) = \sum_{i=1,2}U_ih^i, \quad h(x,B) = \sum_{i=1,2}B_i h^i.
\end{equation}

Observe that $h(x,u)b = h(x,b)u$ because $\tilde{\nu},\tilde{\nu}^1,\tilde{\nu}^2$ are gradients of smooth functions.
	
Now, we revise the linearized system \eqref{linear-formal} as the following one for $z = (u,b,\p,s)^t$,
    \begin{equation}\label{linear}
        \begin{cases}
            R(P,S)(\partial_t u + U \cdot \nabla u) + \nabla \p - B\cdot \nabla b = F_1, \\
            \partial_t b + U \cdot \nabla b - B\cdot \nabla u + B(\nabla \cdot u) + \left(h(x,U)b - h(x,B)u\right)= F_2,\\
            \frac{1}{Q(P,S)}(\partial_t \p + U \cdot \nabla \p) - \frac{1}{Q(P,S)} B \cdot (\partial_t b + U \cdot \nabla b) + \nabla \cdot u = F_3 ,\\
            \partial_t s + U \cdot \nabla s = F_4,
        \end{cases}
    \end{equation}
    with boundary conditions
    \begin{equation}\label{bcd-linear}
        u \cdot \nu = 0, \quad \forall t>0, ~x\in \partial\Omega \backslash\{q_1, \ldots, q_N\},
    \end{equation}
    and initial data
    \begin{equation}\label{icd-linear}
    \begin{aligned}
        z|_{t=0} = z_0 = (u_0,b_0,p_0,s_0)^t,
    \end{aligned}
    \end{equation}
    with
    \begin{equation}\label{b-icd}
        b_0\cdot \nu = 0 \text{ on } \partial\Omega\backslash\{q_1,\ldots, q_N\}.
    \end{equation}
    Here the source term $F=(F_1,F_2,F_3,F_4)^t$ satisfies
    \begin{equation}\label{F-cdt}
        F_2 \cdot \nu = 0, \quad\forall t>0,~ x\in \partial\Omega\backslash\{q_1,\ldots,q_N\}.
    \end{equation}

It will be convenient to denote the boundary conditions
    \eqref{bcd-linear} and \eqref{b-bcd}
    by
\begin{equation}\label{2.11}
{\cal B} z=0, \quad {\rm on}\quad 
\partial\Omega\backslash\{q_1,\ldots,q_N\}
    \end{equation}
 and   \begin{equation}\label{2.12}
{\cal B}' z=0, \quad {\rm on}\quad 
\partial\Omega\backslash\{q_1,\ldots,q_N\}
    \end{equation}
respectively.

   For the above problem \eqref{linear}-\eqref{icd-linear}, we assume that for some $m \in \mathbb{N}\backslash\{0\}$,
    \begin{equation}\label{z0F-cdt}
    \begin{aligned}
        &z_0\in H^{m}(\Omega), F\in H^{m}((0,T)\times \Omega),\\ & z_0,F\text{ satisfy the compatiblity}~ \text{conditions up to order }m-1,
    \end{aligned}
    \end{equation}
    where the compatiblity condition of order $j~(0\le j\le m-1)$ is naturally defined as: 
    $$u_{(j)}\cdot \nu = 0, \quad {\rm on}~\partial\Omega \backslash\{q_1,\ldots,q_N\}$$ 
    with $u_{(0)}(x) = u(0,x)$, and  for $j\ge 1$, $u_{(j)}(x) = \partial_t^j u(0,x)$
    being computed from \eqref{linear} and \eqref{icd-linear} in terms of components of $\partial_x^\alpha z_0(x)$ ($|\alpha|\le j$), and $\partial^\beta (Q(P,S))(0,x)$, $\partial^\beta (R(P,S))(0,x)$, $\partial^\beta U(0,x)$, $\partial^\beta B(0,x)$, $\partial^\beta F(0,x)$ ($|\beta|\le j-1$).
  
  With the above modified linear equations \eqref{linear}, the ansatz \eqref{b-bcd} is true as given in the following result.
  
  \begin{lemma} Assume that the problem \eqref{linear}-\eqref{icd-linear} has a classical solution $z=(u, b, p, s)^t$ for $0<t<T$, then under the
 assumptions \eqref{b-icd}-\eqref{F-cdt}, it holds     
 \begin{equation}\label{b-bcd-1}
 b \cdot \nu = 0, \quad\forall 0<t<T,~ x\in \partial\Omega\backslash\{q_1,\ldots,q_N\}.
 \end{equation}
\end{lemma}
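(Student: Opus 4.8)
The plan is to extract from the second equation of \eqref{linear} a scalar transport equation on the boundary manifold for $b\cdot\nu$, and then to propagate the vanishing of its initial trace. First I would localize near a boundary point: for $\tilde x\in\partial\Omega\setminus\{q_1,\dots,q_N\}$ use the smooth extension $\tilde\nu=-\nabla\Phi$ of $\nu$ on $V(\tilde x)$ from Notation~\ref{nota-1}(3), and near a corner $q_n$ use instead $\tilde\nu^l=-\nabla\Phi_l$ on each leg $\gamma_l^n\cap V(q_n)$ (the computation below is identical on each leg). Taking the Euclidean inner product of the $b$-equation in \eqref{linear} with $\tilde\nu$ and using the Leibniz rule $\tilde\nu\cdot(U\cdot\nabla b)=U\cdot\nabla(b\cdot\tilde\nu)-b\cdot(U\cdot\nabla\tilde\nu)$ and $\tilde\nu\cdot(B\cdot\nabla u)=B\cdot\nabla(u\cdot\tilde\nu)-u\cdot(B\cdot\nabla\tilde\nu)$, I would then invoke the defining relation \eqref{h-1}, which gives $U\cdot\nabla\tilde\nu=h(x,U)^t\tilde\nu$ and $B\cdot\nabla\tilde\nu=h(x,B)^t\tilde\nu$ on $\partial\Omega\setminus\{q_j\}$, hence $b\cdot(U\cdot\nabla\tilde\nu)=(h(x,U)b)\cdot\tilde\nu$ and $u\cdot(B\cdot\nabla\tilde\nu)=(h(x,B)u)\cdot\tilde\nu$ there. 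On $\partial\Omega\setminus\{q_j\}$ one has $u\cdot\tilde\nu=u\cdot\nu=0$ and $B\cdot\nu=0$, so $B\cdot\nabla$ is tangential and $B\cdot\nabla(u\cdot\tilde\nu)=0$, while $(B\cdot\tilde\nu)(\nabla\cdot u)=0$ and $F_2\cdot\tilde\nu=F_2\cdot\nu=0$ by \eqref{F-cdt}. Substituting, the contributions of $h(x,U)b$ and of $h(x,B)u$ cancel exactly against the commutator terms, and one is left with
\[
\partial_t(b\cdot\tilde\nu)+U\cdot\nabla(b\cdot\tilde\nu)=0
\qquad\text{on }(0,T)\times(\partial\Omega\setminus\{q_1,\dots,q_N\}).
\]

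Next, since $U\cdot\nu=0$ on $\partial\Omega\setminus\{q_j\}$, the vector field $U$ is tangent to the smooth part of the boundary, so the restriction of $U\cdot\nabla(b\cdot\tilde\nu)$ to $\partial\Omega$ depends only on $\beta:=(b\cdot\nu)|_{\partial\Omega}$, an intrinsic quantity (the various local extensions all coincide with $\nu$ on $\partial\Omega$). Thus $\beta$ solves a first-order transport equation along $\partial\Omega\setminus\{q_j\}$ with $\beta|_{t=0}=b_0\cdot\nu=0$ by \eqref{b-icd}. I would then parametrize each of the finitely many open arcs of $\partial\Omega\setminus\{q_j\}$ by arc length $\tau$, write $U$ restricted to that arc as $c(\tau,t)\partial_\tau$, multiply the transport equation by $\beta$ and integrate by parts in $\tau$ to get $\frac{d}{dt}\int_\gamma\beta^2\,d\tau\le C\int_\gamma\beta^2\,d\tau$ on each arc $\gamma$, the constant depending on $\|U\|_{C^1(\overline\Omega)}$; Gronwall's inequality together with $\beta|_{t=0}=0$ then forces $\beta\equiv0$, which is \eqref{b-bcd-1}. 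The integration by parts produces endpoint terms $[c\,\beta^2]$ at the corners, which must be shown to vanish: $U$ is continuous on $\overline\Omega$ and $U\cdot\nu=0$ holds on both legs meeting at $q_n$, so $U(q_n)\cdot\tilde\nu^1(q_n)=U(q_n)\cdot\tilde\nu^2(q_n)=0$; since $0<\omega_n<\pi$ makes $\tilde\nu^1(q_n)$ and $\tilde\nu^2(q_n)$ linearly independent, it follows that $U(q_n)=0$, hence $c$ vanishes at each corner while $\beta$ remains bounded (classical solution), so the endpoint terms disappear. (Equivalently, since $U$ vanishes at the corners, each smooth arc is invariant under the boundary flow of $U$, and the conclusion follows by the method of characteristics.)

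The only genuinely delicate point is the behaviour at the corner points. The algebraic cancellations of the first step hold pointwise only on $\partial\Omega\setminus\{q_j\}$, and the one-dimensional transport/energy argument on $\partial\Omega\setminus\{q_j\}$ must not pick up spurious contributions at the finitely many removed corners; this is precisely where the convexity hypothesis $\omega_n\in(0,\pi)$ from Notation~\ref{nota-1} is used, as it forces $U$ to vanish at every $q_n$. Everything else is a routine Leibniz-rule bookkeeping together with a Gronwall estimate.
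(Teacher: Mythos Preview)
Your proposal is correct and follows essentially the same route as the paper: derive the scalar transport equation $\partial_t(b\cdot\nu)+U\cdot\nabla(b\cdot\nu)=0$ on $\partial\Omega\setminus\{q_j\}$ by dotting the $b$-equation with the normal and exploiting the cancellation built into the correction terms $h(x,U)b-h(x,B)u$, then propagate the vanishing initial trace. The paper's proof is much terser---it simply asserts that the transport equation arises from a direct calculation and that the conclusion follows ``immediately'' since $U\cdot\nabla$ is tangential---whereas you spell out the Leibniz-rule bookkeeping explicitly and add a careful energy argument on each boundary arc, including the nice observation that $U(q_n)=0$ at every corner (from $U\cdot\nu=0$ on both legs and linear independence of the two normals when $\omega_n\in(0,\pi)$), which kills the endpoint terms in the integration by parts. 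This extra care at the corners is not in the paper, but it does no harm and arguably clarifies why the one-dimensional transport argument is unproblematic on the open arcs.
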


\begin{proof}
	By taking the restriction of the second equation of \eqref{linear} on $\partial\Omega\backslash\{q_1,\ldots,q_N\}$ and the inner product with $\nu(x)$ it gives
    \begin{equation}\label{bcd-b-eq}
        \partial_t (b\cdot \nu) + U\cdot \nabla (b\cdot \nu) = 0 \quad \text{on } \partial\Omega\backslash\{q_1,\ldots,q_N\}.
    \end{equation}
  From that $U\cdot \nabla$ is a tangential derivative of $\partial\Omega\backslash\{q_1,\ldots,q_N\}$ by using $U\cdot \nu = 0$, it implies immediately $b\cdot \nu = 0$ as $0<t<T$, $x\in \partial\Omega\backslash\{q_1,\ldots,q_N\}$ when it is true at $t=0$..
\end{proof}

From the assumption \eqref{assum-Z} of the background state, we know that both of $U\cdot\nabla, B\cdot\nabla$ are tangential derivatives along $\partial\Omega\setminus\{q_1,\ldots,q_N\}$, now we state a result relating them with the directional derivative  $\partial_{w^i}=w^i(x)\cdot \nabla$ given in the Definition~\ref{def-sp} of the space $H^m_*$.

    \begin{lemma}\label{lemma-tangential}
        Under the assumption \eqref{assum-Z}, there exists a smooth vector field $V = (V_1, V_2)^t$ in $[0,T]\times \overline{\Omega}$, such that $U\cdot \nabla = V_1 \partial_{w^1} + V_2 \partial_{w^2}$, and for any nonnegative integer $k$, $\normmm{V(t)}_{k,*} \le C\normmm{U(t)}_{k+2,*}$, where $C$ is a positive constant independent of $U$. 
        The same result holds for the vector field $B\cdot \nabla$, when we write $B\cdot \nabla = \bar{B}_1 \partial_{w^1} + \bar{B}_2 \partial_{w^2}$.
    \end{lemma}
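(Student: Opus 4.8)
The plan is to solve, pointwise on $\overline\Omega$, the $2\times 2$ linear system
\[
U(t,x)=V_1(t,x)\,w^1(x)+V_2(t,x)\,w^2(x),
\]
which immediately yields $U\cdot\nabla=V_1\partial_{w^1}+V_2\partial_{w^2}$. Writing $W(x):=[\,w^1(x)\mid w^2(x)\,]$ for the matrix with these columns, we want $(V_1,V_2)^t=W(x)^{-1}U(t,x)$ to be a smooth vector field on $[0,T]\times\overline\Omega$ satisfying the stated bound. The matrix $W$ is invertible off $\partial\Omega$ but degenerates on $\partial\Omega$: by the properties of $w^1,w^2$ recorded in Definition~\ref{def-tan}, near any boundary point its determinant factors as a defining function of $\partial\Omega$ times a nonvanishing smooth function. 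The key structural fact is that the tangency hypothesis $U\cdot\nu=0$ on $\partial\Omega\setminus\{q_1,\ldots,q_N\}$ from \eqref{assum-Z} places $U$ in the range of $W$ up to precisely this degeneracy, so that $W^{-1}U$ still extends smoothly across $\partial\Omega$. The construction is carried out locally: fix a finite cover of $\overline\Omega$ by interior balls, boundary neighbourhoods $V(\tilde x)$ with $\tilde x\in\partial\Omega\setminus\{q_1,\ldots,q_N\}$, and corner neighbourhoods $V(q_n)$ as in Notation~\ref{nota-1} and Definition~\ref{def-tan}, together with a subordinate smooth partition of unity $\{\chi_\iota\}$. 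If on each piece we produce smooth $(V_1^\iota,V_2^\iota)$ with $U=V_1^\iota w^1+V_2^\iota w^2$ there and $\normmm{V^\iota(t)}_{k,*}\le C\normmm{U(t)}_{k+2,*}$, then $V:=\sum_\iota\chi_\iota V^\iota$ does the job, since $\sum_\iota\chi_\iota\equiv 1$, each summand is smooth, and multiplication by a fixed smooth function is bounded on every $H^k_*$.

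On an interior ball $w^1=(1,0)^t$, $w^2=(0,1)^t$, so $(V_1,V_2)=(U_1,U_2)$ and the bound is trivial. On a boundary chart $V(\tilde x)$, Definition~\ref{def-tan}(2) lets us write $w^1=\Phi\,\zeta^1$ and $w^2=\zeta^2$, where $\zeta^1,\zeta^2$ are fixed smooth vector fields with $\zeta^1\cdot\tilde\nu$ nowhere zero (so $\zeta^1$ is transversal to $\partial\Omega$), $\zeta^2\cdot\tilde\nu\equiv 0$, and $\zeta^1\times\zeta^2$ nowhere zero near $\partial\Omega$. Cramer's rule then gives
\[
V_2=\frac{\zeta^1\times U}{\zeta^1\times\zeta^2},\qquad
V_1=\frac{U\times\zeta^2}{\Phi\,(\zeta^1\times\zeta^2)}.
\]
Here $V_2$ is manifestly smooth. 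For $V_1$, observe that on $\partial\Omega\cap V(\tilde x)=\{\Phi=0\}$ both $U$ and $\zeta^2$ are tangent to $\partial\Omega$, hence collinear, so $U\times\zeta^2$ vanishes on $\{\Phi=0\}$; since $\nabla\Phi\neq 0$ there, the quotient $g:=(U\times\zeta^2)/\Phi$ is a smooth function — this is just division by the normal coordinate in the flattening chart $x^*=(\Phi(x),x_2)$ of Remark~\ref{rmk-sp} — and hence $V_1=g/(\zeta^1\times\zeta^2)$ is smooth. For the estimate: $U\times\zeta^2$ is a zeroth-order expression in $U$ with fixed smooth coefficients, so $\normmm{(U\times\zeta^2)(t)}_{\ell,*}\le C\normmm{U(t)}_{\ell,*}$, while division by the defining function $\Phi$ maps $H^{\ell}_*$ into $H^{\ell-2}_*$ — a loss of two units matching the anisotropy of $H^m_*$, since in the flattening chart it reads $f\mapsto\int_0^1(\partial_{x_1^*}f)(t\,x_1^*,x_2^*)\,\d t$ and a bare $\partial_{x_1^*}$ costs two units (cf. \eqref{aniso-space} and Remark~\ref{rmk-sp}); multiplying by $1/(\zeta^1\times\zeta^2)$ is harmless, whence $\normmm{V(t)}_{k,*}\le C\normmm{U(t)}_{k+2,*}$.

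On a corner chart $V(q_n)$, Definition~\ref{def-tan}(3) analogously gives $w^1=\Phi_1\zeta^1$ and $w^2=\Phi_2\zeta^2$ with $\zeta^1$ tangent to the leg $\gamma_2^n$, $\zeta^2$ tangent to $\gamma_1^n$, and $\zeta^1\times\zeta^2$ proportional to $\tilde\nu^1\times\tilde\nu^2$, which is nonzero on $V(q_n)$ (shrinking it if needed) because $0<\omega_n<\pi$ forces the two legs to be non-parallel. Cramer's rule yields $V_1=(U\times\zeta^2)/\bigl(\Phi_1(\zeta^1\times\zeta^2)\bigr)$ and $V_2=(\zeta^1\times U)/\bigl(\Phi_2(\zeta^1\times\zeta^2)\bigr)$; on $\gamma_1^n=\{\Phi_1=0\}$ both $U$ (by $U\cdot\nu=0$) and $\zeta^2$ are tangent to $\gamma_1^n$, so $U\times\zeta^2$ is divisible by $\Phi_1$, and symmetrically $\zeta^1\times U$ by $\Phi_2$; the same division-plus-multiplication argument as above makes $V_1,V_2$ smooth and gives $\normmm{V(t)}_{k,*}\le C\normmm{U(t)}_{k+2,*}$. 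This exhausts the three types of charts and proves the assertion for $U\cdot\nabla$. Since $B$ also satisfies $B\cdot\nu=0$ on $\partial\Omega\setminus\{q_1,\ldots,q_N\}$ by \eqref{assum-Z}, the identical construction applied to $B$ produces $\bar B_1,\bar B_2$ with $B\cdot\nabla=\bar B_1\partial_{w^1}+\bar B_2\partial_{w^2}$ and $\normmm{\bar B(t)}_{k,*}\le C\normmm{B(t)}_{k+2,*}$.

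The only genuinely non-formal ingredient is the smooth extendability of $W^{-1}U$ across $\partial\Omega$, equivalently that $U\times\zeta^2$ (and $\zeta^1\times U$ at corners) is divisible by the relevant defining function with a \emph{smooth}, not merely $L^2$, quotient; this is exactly where the full strength of $U\cdot\nu=0$ on all of $\partial\Omega\setminus\{q_1,\ldots,q_N\}$ (vanishing, not smallness) together with $\nabla\Phi\neq 0$ enters, and I expect verifying the $H^m_*$-continuity of this division operator, with the advertised two-index loss, to require the most care. Everything else — that one Cartesian derivative costs two units of $H^m_*$-regularity, and that multiplication by fixed smooth functions and reordering of derivatives are harmless — is routine bookkeeping already contained in \eqref{aniso-space}, Remark~\ref{rmk-sp}, the norm-reordering property of Remark~\ref{remark2.1}, and Corollary~\ref{coro-moser}.
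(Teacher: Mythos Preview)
Your proposal is correct and follows essentially the same approach as the paper: both localize to interior, smooth-boundary, and corner charts, use the tangency condition $U\cdot\nu=0$ to factor out the boundary-defining function(s) from the normal component of $U$, and account for the two-unit loss in $H^m_*$ that this division (equivalently, one bare normal derivative) costs. The only cosmetic difference is that the paper carries out the computation in the flattened coordinates $x^*=(\Phi(x),x_2)$ (resp.\ $(\Phi_1,\Phi_2)$ at corners), writing $U_1^*=x_1^*\overline U_1^*$ directly, whereas you stay in the original coordinates and invoke Cramer's rule; the resulting $V_1,V_2$ coincide on chart overlaps (since $W^{-1}U$ is uniquely determined off $\partial\Omega$), so the partition-of-unity gluing is harmless.
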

    \begin{proof}
        It suffices to verify this result locally near each point $\tilde{x}\in \partial\Omega$. 
        
        (1) For any fixed $\tilde{x} \in \partial\Omega \backslash\{q_1,\ldots, q_N\}$, let $\Phi(x)$ be the extension of ${\rm dist}(x, \partial\Omega)$ in a small neighborhood of $V(\tilde{x})$ as given in Notation~\ref{nota-1}(3). As $\nabla\Phi(x)\neq 0$, without loss of generality, suppose $\partial_1\Phi(x) \ne 0$ in $V(\tilde{x})\cap \Omega$. Introduce the invertible transform
        $$x^*_1 = \Phi(x), \quad x^*_2 = x_2,$$
        then, obviously, one has
        \begin{equation}\label{U-1}
         U(t,x)\cdot \nabla = U^*(t,x^*)\cdot \nabla^*,
        \end{equation}       
        where $\nabla^* = (\partial_{x^*_1}, \partial_{x^*_2})$, and 
         \begin{equation}\label{U-2}
        U^*_1(t,x^*) = \partial_1\Phi(x) U_1(t,x) + \partial_2\Phi(x) U_2(t,x), \quad U^*_2(t,x^*) = U_2(t,x).
    \end{equation}

         On the other hand, from the definition of the vector fields $w^1(x)$ and $w^2(x)$ given in Definition~\ref{def-tan}, it is easy to have
         \begin{equation}\label{U-3}
       (\partial_1 \Phi(x))^{-1}w^1(x)\cdot\nabla = x_1^* \partial_1^*,\quad   (\partial_1 \Phi(x))^{-1} w^2(x) \cdot\nabla = - \partial_2^*.
       \end{equation}    
         
         Noting that the assumption $U\cdot \nu=0$ implies $U_1^* = 0$ when $x_1^* = 0$, thus there exists a smooth $\overline{U}_1^*(t,x^*)$, such that $U^*_1(t,x^*) = x_1^* \overline{U}_1^*(t,x^*)$. Plugging \eqref{U-2}, \eqref{U-3} into \eqref{U-1}, it gives immediately the conclusion,
         $$U\cdot \nabla = V_1 \partial_{w^1} + V_2 \partial_{w^2}, $$
         with $V_1(t,x) = (\partial_1 \Phi(x))^{-1} \overline{U}_1^*(t,\Phi(x),x_2)$ and $V_2(t,x) = -(\partial_1 \Phi(x))^{-1} U^*_2(t,\Phi(x),x_2)$ satisfying the estimate claimed in this lemma.
        
(2) When $\tilde{x} = q_n$ for some $1\le n\le N$, let $\Phi_l(x)~(l=1,2)$ be the two extensions of ${\rm dist}(x, \gamma_l^n)$ in a small neighborhood of $V(\tilde{x})$ as given in Notation~\ref{nota-1}(4). Introduce the transform
$$x_1^* = \Phi_1(x), \quad x_2^* = \Phi_2(x)$$ 
and 
$$U_l^*(t,x^*) = \partial_1 \Phi_l(x) U_1(t,x) + \partial_2 \Phi_l(x) U_2(t,x), \quad  l=1,2.$$ 

By a direct calculation, one has
\begin{equation}\label{U-4}
U(t,x)\cdot \nabla = U^*(t,x^*)\cdot \nabla^*,
\end{equation}
with $\nabla^* = (\partial_{x^*_1}, \partial_{x_2^*})$, and
\begin{equation}\label{U-5}
w^l\cdot\nabla=(\nabla\Phi_1\times \nabla\Phi_2) x_l^*\partial_{x_l^*}, \quad l=1,2.
\end{equation}

On the other hand, by using $U\cdot \nu=0$ on $(\gamma_1^n\cup\gamma_2^n)\setminus\{q^n\}$, we have
$$U_i^* = 0, \quad {\rm as ~} x^*_i= 0~(i=1,2),$$ 
so there exist  smooth $\overline{U}_l^*(t,x^*)~(l=1,2)$, such that $U^*_l(t,x^*) = x_l^* \overline{U}_l^*(t,x^*)$. Therefore, from \eqref{U-4} and \eqref{U-5}, we conclude that
$$U(t,x)\cdot\nabla=(\nabla\Phi_1\times\nabla\Phi_2)^{-1}\left(\overline{U}_1^*(t, \Phi_1(x), \Phi_2(x))\partial_{w^1}+\overline{U}_2^*(t, \Phi_1(x),\Phi_2(x))\partial_{w^2}\right)$$
by noting $\nabla\Phi_1\times \nabla\Phi_2\neq 0$ in a small neighborhood of $q^n$ from $\omega_n \in (0,\pi)$, and satisfying the estimate for $V_l(t,x)=(\nabla\Phi_1\times\nabla\Phi_2)^{-1}\overline{U}_l^*(t, \Phi_1(x), \Phi_2(x))
~(l=1,2)$, claimed in the lemma.
    \end{proof}

Now, let us give a remark on the equivalence of the norms of the anisotropic Sobolev space $H^m_*(\Omega)$ if one changes the order of derivatives in \eqref{norm-sp}, by using the above lemma.

 \begin{remark}\label{remark2.1}
 Although $\partial_{w^1}, \partial_{w^2}$ and $\partial_1, \partial_2$ do not commute, nor do $\partial_{w^1}, \partial_{w^2}$ commute themselves, but their commutators only involve derivatives of strictly lower order, by noting the identity
 \begin{equation}\label{tan-cmtt-1}
 [\partial_{w}^k, \partial_x] = \sum_{i=1,2}\sum_{0\le l\le k-1} a^i_{k,l}(x) \partial_i \partial_{w}^l = \sum_{i=1,2}\sum_{0\le l\le k-1} b^i_{k,l}(x)  \partial_{w}^l\partial_i,
 \end{equation}
 which can be derived by induction on the order $k$,
 where $\partial_w = \partial_{w^1}$ or $\partial_{w^2}$, $\partial_{x} = \partial_{x_1}$ or $\partial_{x_2}$, and $a^i_{k,l},b^i_{k,l}$ are smooth functions. On the other hand, it is straightforward to have $[\partial_{w^1},\partial_{w^2}] = v \cdot \nabla$, where $v = w^1\cdot \nabla w^2 - w^2 \cdot \nabla w^1$. From the construction of $w^1,w^2$ given in Definition~\ref{def-tan}, we have  
 $v(x)\cdot \nu(x) = 0$ as $x\in \partial\Omega \backslash\{q_1,\ldots,q_N\}$. Thus, by using Lemma~\ref{lemma-tangential} and induction, we get
 \begin{equation}\label{tan-cmtt-2}
 \begin{aligned}
 [\partial_{w^1}^k, \partial_{w^2}^l] = \sum_{0\le (k',l')< (k,l)} c_{k,k',l,l'}(x) \partial_{w^1}^{k'}\partial_{w^2}^{l'},
 \end{aligned}
 \end{equation}
 where $c^i_{k,k',l,l'}(x)$ are smooth functions. Therefore, the norms of the anisotropic Sobolev space $H^m_*(\Omega)$ defined in \eqref{norm-sp} are equivalent to each other if one changes the order of derivatives contained in \eqref{norm-sp}.

 \end{remark}

 \subsection{A priori estimates of solutions to the linearized problem}

 The main goal of this subsection is to establish the a priori estimates for the linearized problem \eqref{linear}-\eqref{icd-linear} under the assumptions \eqref{F-cdt} and \eqref{z0F-cdt}. Before that, let us give some notations first:
 
 \begin{nota}\label{nota-2.2}
For the problem \eqref{linear}-\eqref{icd-linear}, denote by
 $$M_m(Z,t)=\sup_{0\le \tau\le t}\normmm{Z(\tau)}_{m,*},$$ 
 and 
 $$\Lambda_m(z,F,t) = \normmm{z(0)}_{m,*}+\normmm{F(0)}_{m-1,*}+\int_0^t\normmm{F(\tau)}_{m,*}\d \tau$$
with the notation $\normmm{f(t)}_{m,*}$ being defined in Notation \ref{nota-2.1}, where $Z=(U, B, P, S)^t$ and $z=(u, b, p, s)^t$ represent the background state and solution respectively, and $F=(F_1, \ldots, F_4)^t$ is the source term.
\end{nota}

For $m \in \mathbb{N}$, we say that  the compatibility conditions of the problem \eqref{linear}-\eqref{icd-linear} up to order $m$  mean, at $t = 0$ and $x\in \partial\Omega \backslash\{q_1, \ldots, q_N\}$,
\begin{equation}\label{cptb}
b\cdot \nu = 0, \quad \partial_t^j u\cdot \nu = 0, \quad 0\le j\le m.
\end{equation}
It should be noted that $\partial_t^j b \cdot \nu = 0$ on $\{t=0\}\times (\partial\Omega \backslash\{q_1, \ldots, q_N\})$ also holds for $1\le j\le m$ when \eqref{cptb} is true.

  The main result of this subsection is the following one.
    
    \begin{theorem}\label{thm-apriori}
       For a fixed integer $m\ge 7$, assume that the measure of angle at each corner $q_n\in \partial\Omega$ satisfies $\omega_n \in (0,\frac{\pi}{[\frac{m}{2}]})$ for $1\le n\le N$, the smooth background state $Z=(U, B, P, S)^t$ satisfies the hypothesis \eqref{assum-Z}, and the compatibility conditions up to order $m-1$ are satisfied for the problem \eqref{linear}-\eqref{icd-linear}. Then, there is a continuous increasing function $\phi:\overline{\R^+}\to \overline{\R^+}$ independent of $T$ and may depending on $\mathcal{K}$, such that the estimate
        \begin{equation}\label{apriori}
            \normmm{z(t)}_{m,*} \le \phi(M_m(Z,T))\Lambda_m(z,F,t)
        \end{equation}
      holds for $0\le t\le T$, and the solution $z=(u, b, p, s)^t\in X^{m+1}_* ([0,T];\Omega)$ satisfying $\nabla \p, \nabla\cdot u \in X^{m}_* ([0,T]; \Omega)$ of \eqref{bcd-linear}-\eqref{icd-linear}.
        Moreover, when $m\ge 8$, there are two  continuous increasing functions  $\phi_1,\phi_2:\overline{\R^+}\to \overline {\R^+}$ independent of $T$, such that for any $0\le t\le T$, the following one holds
        \begin{equation}\label{apriori-fine}
        \begin{aligned}
            \normmm{z(t)}_{m,*} \le & \phi_1(M_{m-1}(Z,T))\Big(\Lambda_m(z,F,t)\\&+
            \phi_2(M_m(Z,t))\int_0^t(\normmm{F(\tau)}_{m-1,*}+\Lambda_{m-1}(z,F,\tau))\d \tau\Big).
        \end{aligned}
        \end{equation}
       
    \end{theorem}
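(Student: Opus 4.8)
The plan is to run a standard ``tangential energy estimate + normal derivative recovery'' scheme, but adapted to the anisotropic spaces $H^m_*$ in which each normal derivative costs two tangential derivatives. First I would set up the \emph{tangential} part of the estimate: apply the conormal operators $\partial_*^\alpha$ with $|\alpha|_* \le m$ (using only $\partial_t$, $\partial_{w^1}$, $\partial_{w^2}$, so that the boundary condition $u\cdot\nu=0$ is preserved up to lower-order terms via Lemma~\ref{lemma-h-def} and Lemma~\ref{lemma-tangential}) to the symmetrized system \eqref{linear}, multiply by the natural symmetrizer $(R u, b, \tfrac1Q\p, s)$, and integrate over $\Omega$. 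The boundary term vanishes to leading order because $u\cdot\nu = b\cdot\nu = 0$ (here one uses Lemma~1.4 and that $\mathcal{B}z = \mathcal{B}'z = 0$ is propagated); the commutators $[\partial_*^\alpha, U\cdot\nabla]$, $[\partial_*^\alpha, B\cdot\nabla]$, $[\partial_*^\alpha, R]$ etc.\ are controlled by Corollary~\ref{coro-moser}, yielding a Gr\"onwall inequality for $\sum_{|\alpha|_*\le m}\|\partial_*^\alpha z(t)\|_0^2$ in terms of $\phi(M_m(Z,T))$ and the source. The subtlety flagged in the introduction is that when $\partial_*^\alpha$ puts a spatial derivative $\partial_1$ or $\partial_2$ on $u$, this is \emph{not} a tangential derivative and cannot be rewritten from the equations; but such a term only appears paired against $\nabla\p$ in the energy identity, and following the ``borrowing a multiplier'' trick (Lemma~\ref{lemma-J2}) one integrates by parts to transfer the normal derivative onto $\p$, where $\nabla\p$ \emph{is} expressible via the first equation of \eqref{linear} in terms of $\partial_t u$, $U\cdot\nabla u$, $B\cdot\nabla b$ and $F_1$ — all tangential or source. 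This closes the tangential estimate and gives control of $\normmm{z(t)}$ in the purely-tangential norm.

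Second I would recover the normal derivatives. From \eqref{linear}: the third equation expresses $\nabla\cdot u$ through $\partial_t\p$, $U\cdot\nabla\p$, $\partial_t b$, $U\cdot\nabla b$, $F_3$ — all tangential; the first equation expresses $\nabla\p$ similarly; and $\nabla\cdot b$ solves a transport equation (obtained by taking $\nabla\cdot$ of the second equation, using $\nabla\cdot(B\cdot\nabla u - B\nabla\cdot u) $ and the structure of $h$), so $\normmm{\nabla\cdot b}$ is controlled by its initial value plus tangential quantities. For $\nabla\times u$ and $\nabla\times b$ one forms the coupled transport system for $(\nabla\times u,\nabla\times b,\nabla\cdot b)$ displayed as \eqref{normal-sys}: here the cancellations in the introduction leave the single genuinely-bad source term $\tfrac1Q b\times\big(R(\partial_t+U\cdot\nabla)^2 u\big)$, which contains \emph{two} time/tangential derivatives of $u$ — this is exactly why one order of normal regularity costs two orders in $H^m_*$. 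One estimates this transport system in $H^{m-2}_*$ (in the curl/div unknowns). Then a Helmholtz-type decomposition of $u$ and $b$ on the simply-connected $\Omega$ — writing each field from its divergence, curl, and normal boundary trace (which is $0$) — together with elliptic $H^m_* \leftrightarrow H^{m-1}(\mathrm{div},\mathrm{curl}) + \text{b.c.}$ regularity (this is where the corner-angle condition $\omega_n<\pi/[m/2]$ enters, so that the elliptic solution operator does not lose the expected number of $H^m_*$ derivatives at the corners), upgrades the tangential control plus curl/div control to the full $\normmm{z(t)}_{m,*}$. Combining with the tangential Gr\"onwall estimate yields \eqref{apriori}; the extra regularity $z\in X^{m+1}_*$ with $\nabla\p,\nabla\cdot u\in X^m_*$ follows because the equations gain one extra $\partial_t$ or one extra elliptic derivative over what was assumed on the data and source.

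Third, for the refined estimate \eqref{apriori-fine} when $m\ge 8$, I would be more careful about which norm of the background state multiplies the top-order term. The idea is that the highest-order ($|\alpha|_* = m$) contributions to the commutators can be arranged, via the sharper Moser estimates Corollary~\ref{coro-moser}(2),(4) (valid for $m\ge 6,7$) and the embedding $H^4_*\hookrightarrow L^\infty$, so that the genuinely top-order factor carries only $M_{m-1}(Z,T)$, while any term forcing $M_m(Z,t)$ is multiplied by a time integral of lower-order data $\normmm{F}_{m-1,*} + \Lambda_{m-1}(z,F,\tau)$ — precisely the structure of \eqref{apriori-fine}. Concretely one splits each commutator into a ``main'' piece where all but one derivative hits $z$ (absorbed with $\phi_1(M_{m-1})$) and a ``remainder'' piece where many derivatives hit $Z$ (which then has few derivatives left on $z$, hence lower-order, producing the $\phi_2(M_m)\int_0^t(\dots)$ term); the borrowing lemma and the curl/div recovery are applied with the same bookkeeping.

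The main obstacle I expect is the normal-derivative recovery at the corners: getting the Helmholtz/elliptic estimate to lose exactly the ``right'' number of $H^m_*$-derivatives uniformly down to each corner point, which is where the arithmetic of the angle condition \eqref{angle-cdt} is forced, and which must be reconciled with the two-for-one derivative cost coming from the $(\partial_t+U\cdot\nabla)^2 u$ source in the curl equation. The tangential energy estimate itself, including the borrowing trick, is technically delicate but structurally routine once the commutator bookkeeping of Corollary~\ref{coro-moser} is in hand.
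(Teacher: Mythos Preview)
Your proposal is correct and follows essentially the same approach as the paper: a tangential energy estimate on the symmetrized system using the modified conormal operators (with the $h$-correction of Lemma~\ref{lemma-h-def} so that $\mathscr{T}^\alpha u\cdot\nu=0$ exactly, which is needed to apply the Hodge inequality), the borrowing trick of Lemma~\ref{lemma-J2} to handle the $\mathcal{J}_2$-commutator, then normal recovery via the explicit formulas for $\nabla\p,\nabla\cdot u$, the transport equation for $\nabla\cdot b$, the coupled curl system \eqref{vort} (whose bad source $\tfrac1Q B\times(R D_t^2 u)$ forces the two-for-one loss), and the Div--Curl estimate of Lemma~\ref{lemma-Hodge} under the angle condition. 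One clarification: the regularity $z\in X^{m+1}_*$ with $\nabla\p,\nabla\cdot u\in X^m_*$ is a \emph{hypothesis} on the solution for which the a priori bound is asserted, not a conclusion to be derived here.
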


The strategy to prove Theorem~\ref{thm-apriori} is first to estimate the tangential derivatives of $z$ by the energy method, then, by exploring the special structure hidden in the system, we derive a divergence-curl system for $(u,b)$ to deduce that the terms involving a proper normal derivative can be controlled by the tangential derivatives. Finally, we close the proof by using Gronwall's inequality.

\subsubsection{Estimates of tangential derivatives}

Define tangential derivatives
\begin{equation*}
\mathcal{T}_0 = \partial_t,\, \mathcal{T}_1 = \partial_{w^1},\, \mathcal{T}_2 = \partial_{w^2},\, {\mathcal{T}}^\alpha = {\mathcal{T}_0}^{\alpha_0} {\mathcal{T}_1}^{\alpha_1} {\mathcal{T}_2}^{\alpha_2},
\end{equation*}
and
\begin{equation*}
\mathscr{T}_0 = \partial_t,\, \mathscr{T}_1 = \partial_{w^1} + h(x,w^1),\, \mathscr{T}_2 = \partial_{w^2} + h(x,w^2),\, {\mathscr{T}}^\alpha = {\mathscr{T}_0}^{\alpha_0} {\mathscr{T}_1}^{\alpha_1} {\mathscr{T}_2}^{\alpha_2},
\end{equation*}
where $h(x,w^j)=\sum_{i=1,2}w_i^jh^i$ ($j=1,2$) with $h^i$ being $2\times 2$ matrices given in Lemma \ref{lemma-h-def}. 
Obviously, $\mathscr{T}$ is a modified version of $\mathcal{T}$, and their difference $\mathscr{T}-\mathcal{T}$ only involves zero-th order terms.
The construction of $h(x,\cdot)$ shows that if $v = (v_1,v_2)\in H^2((0,T)\times \Omega)$ satisfies $v\cdot \nu = 0$ on $(0,T)\times \big(\partial\Omega\backslash\{q_1,\ldots,q_N\}\big)$, then $\mathscr{T} v \cdot \nu =0$ on $(0,T)\times \big(\partial\Omega\backslash\{q_1,\ldots,q_N\}\big)$.
Thus, fixing $m\ge 7$, we are driven to control (the equivalent norm)
\begin{equation}\label{fctnl}
\begin{aligned}
\normmm{z(t)}_{m,*} = \sum_{0\le k\le [\frac{m}{2}]}\sum_{\substack{\beta\in \mathbb{N}^3\\ |\beta|\le m-2k}}
\|\mathbb{T}^\beta z(t)\|_k,
\end{aligned}
\end{equation}
where by $\mathbb{T}^\beta$ we denote the differential operator
\begin{equation*}
    \mathbb{T}^\beta = \begin{pmatrix}
        \mathscr{T}^\beta {\bf I}_{4\times 4} &  \\
        & \mathcal{T}^\beta {\bf I}_{2\times 2}  
    \end{pmatrix},
\end{equation*}
and $\mathbb{T}^\beta z = (\mathscr{T}^\beta u, \mathscr{T}^\beta b, \mathcal{T}^\beta \p, \mathcal{T}^\beta s)^t$.

The system \eqref{linear} is symmetrizable. Indeed, if the system \eqref{linear} is written in the matrix form: 
\begin{equation}\label{linear-matrix}
    (L+\mathbb{B})(Z) z = F,
\end{equation}
with $L(Z) = A_0(Z)\partial_t + A_1(Z)\partial_1 + A_2(Z)\partial_2$, and $ \mathbb{B}(Z)z = \left(0,0,(h(x,U)b-h(x,B)u)^t,0,0\right)^t,$ then there exists a symmetrizer
\begin{equation*}
    S_0(Z) = \begin{pmatrix}
        {\bf I}_{2\times 2} & & \\
        & \tilde{S} &  \\
          & & 1
    \end{pmatrix},\quad {\rm with~}
\tilde{S} = \begin{pmatrix}
         1 & & -B_1  \\
         & 1  & -B_2  \\
        & & 1
 \end{pmatrix}   
\end{equation*}
and the system \eqref{linear} is equivalent to
\begin{equation}\label{symm}
    (\tilde{L}+\mathbb{B})(Z)z := (S_0 A_0) (Z) \partial_t z + (S_0 A_1) (Z) \partial_1 z + (S_0 A_2) (Z) \partial_2 z + \mathbb{B}(Z) z = \overline{F},
\end{equation}
where
    \begin{equation*}
        S_0 A_0(Z) = \begin{pmatrix}
            R{\bf I}_{2\times 2}  & & & & & \\[2mm]
              
               & 1 + \frac{1}{Q} B_1^2 & \frac{1}{Q} B_1 B_2 & -\frac{1}{Q} B_1 & \\[2mm]
              & \frac{1}{Q} B_1 B_2 & 1 + \frac{1}{Q} B_2^2 & -\frac{1}{Q} B_2 & \\[2mm]
               & -\frac{1}{Q} B_1 & - \frac{1}{Q} B_2 & \frac{1}{Q} & \\[2mm]
                & & & & 1
        \end{pmatrix},
    \end{equation*}
    is positively definite, and
    \begin{equation*}
        S_0 A_1(Z) = \begin{pmatrix}
            R U_1 & & -B_1 & & 1 & \\[2mm]
             & R U_1 & & -B_1 & & \\[2mm]
            -B_1 & & (1+\frac{1}{Q} B_1^2) U_1 & \frac{1}{Q} B_1 B_2 U_1 & -\frac{1}{Q} B_1 U_1 & \\[2mm]
             & -B_1 & \frac{1}{Q} B_1 B_2 U_1 & (1+ \frac{1}{Q} B_2^2)U_1 & -\frac{1}{Q} B_2 U_1 & \\[2mm]
            1 & & -\frac{1}{Q} B_1 U_1 & -\frac{1}{Q} B_2 U_1 & \frac{1}{Q} U_1 & \\
            & & & & & & U_1
        \end{pmatrix},
    \end{equation*}
    \begin{equation*}
        S_0 A_2(Z) = \begin{pmatrix}
            R U_2 & & -B_2 & & & \\
             & R U_2 & & -B_2 & 1 & \\[2mm]
            -B_2 & & (1+\frac{1}{Q} B_1^2) U_2 & \frac{1}{Q} B_1 B_2 U_2 & -\frac{1}{Q} B_1 U_2 & \\[2mm]
             & -B_2 & \frac{1}{Q} B_1 B_2 U_2 & (1+ \frac{1}{Q} B_2^2)U_2 & -\frac{1}{Q} B_2 U_2 & \\[2mm]
             & 1 & -\frac{1}{Q} B_1 U_2 & -\frac{1}{Q} B_2 U_2 & \frac{1}{Q} U_2 & \\[2mm]
            & & & & & & U_2
        \end{pmatrix},
    \end{equation*}
    are two symmetric matrices, 
    \begin{equation*}
        \overline{F} = (F_1,F_2- F_3 B, F_3, F_4)^t.
    \end{equation*}

    
The next lemma can be obtained by using a standard energy approach for the system \eqref{symm}.
\begin{lemma}\label{lemma-L2} Under the assumption of Theorem~\ref{thm-apriori}, there exists a polynomial $P(\cdot)$, such that the following estimate holds:
\begin{equation}\label{L2-est}
    \frac{\d}{\d t} \int_\Omega R|u|^2 + |b|^2 + \frac{1}{Q} |\p - b\cdot B|^2 + |s|^2\,\d x \le P(\normmm{Z(t)}_{6,*})\|{z}(t)\|_0(\|{z}(t)\|_0 + \|{F}(t)\|_0)
\end{equation}
for the solution $z=(u,b,\p,s)^t$ of the problem \eqref{linear}-\eqref{icd-linear}.
\end{lemma}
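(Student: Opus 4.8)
The plan is to run the standard symmetric-hyperbolic energy estimate on the symmetrized system \eqref{symm}, the only non-routine point being the vanishing of the boundary term. First I would take the $L^2(\Omega)$ inner product of \eqref{symm} with $z$. Since $(S_0A_0)(Z)$ is symmetric, the time term gives $\langle(S_0A_0)\partial_tz,z\rangle_\Omega=\tfrac12\tfrac{\d}{\d t}\langle(S_0A_0)z,z\rangle_\Omega-\tfrac12\langle(\partial_t(S_0A_0))z,z\rangle_\Omega$, and a direct computation of this quadratic form yields $\langle(S_0A_0)(Z)z,z\rangle_\Omega=\int_\Omega R|u|^2+|b|^2+\tfrac1Q|\p-b\cdot B|^2+|s|^2\,\d x$, which is precisely the energy in \eqref{L2-est}. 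Because $(S_0A_j)(Z)$, $j=1,2$, are symmetric, integrating by parts on $\Omega$ (a Lipschitz domain, since all corner angles are convex) gives $\sum_{j}\langle(S_0A_j)\partial_jz,z\rangle_\Omega=-\tfrac12\sum_j\langle(\partial_j(S_0A_j))z,z\rangle_\Omega+\tfrac12\int_{\partial\Omega}\big((S_0A_\nu)z\big)\cdot z\,\d S$ with $A_\nu=\nu_1A_1+\nu_2A_2$, the finitely many corner points contributing nothing to the surface integral.

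The crucial step is to show that the boundary integrand vanishes on $\partial\Omega\setminus\{q_1,\dots,q_N\}$. Reading off the explicit matrices $S_0A_1,S_0A_2$ displayed above, I would check that every entry of $S_0A_\nu$ is proportional either to $U\cdot\nu$ or to $B\cdot\nu$, and hence vanishes there by \eqref{assum-Z}, with the sole exception of the $u$--$\p$ coupling entries $(1,5),(2,5),(5,1),(5,2)$, which equal $\nu_1,\nu_2,\nu_1,\nu_2$ and contribute exactly $2(u\cdot\nu)\,\p$ to $\big((S_0A_\nu)z\big)\cdot z$; this vanishes by the boundary condition \eqref{bcd-linear}. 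Thus the surface term drops out entirely, leaving $\tfrac12\tfrac{\d}{\d t}\langle(S_0A_0)z,z\rangle_\Omega=\tfrac12\langle(\partial_t(S_0A_0))z,z\rangle_\Omega+\tfrac12\sum_j\langle(\partial_j(S_0A_j))z,z\rangle_\Omega-\langle\mathbb{B}(Z)z,z\rangle_\Omega+\langle\overline F,z\rangle_\Omega$.

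Finally I would estimate the right-hand side. The coefficient matrices and their $Z$-derivatives are bounded by constants depending only on $\mathcal K$, so $\|\partial_t(S_0A_0)\|_{L^\infty}$ and $\|\partial_j(S_0A_j)\|_{L^\infty}$ are controlled by $\|\partial_tZ\|_{L^\infty}+\|\nabla_xZ\|_{L^\infty}$; since $\|\partial_tZ\|_{H^5_*}+\|\nabla_xZ\|_{H^4_*}\lesssim\normmm{Z}_{6,*}$, the embedding $H^4_*(\Omega)\hookrightarrow L^\infty(\Omega)$ of Lemma~\ref{lemma-imbed} gives $\|\partial_tZ\|_{L^\infty}+\|\nabla_xZ\|_{L^\infty}\le C\normmm{Z}_{6,*}$. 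The zeroth-order term obeys $|\langle\mathbb{B}(Z)z,z\rangle_\Omega|\le C(\|U\|_{L^\infty}+\|B\|_{L^\infty})\|z\|_0^2$ since the $h^i$ in \eqref{2.6} are fixed smooth matrices, and $|\langle\overline F,z\rangle_\Omega|\le(1+\|B\|_{L^\infty})\|F\|_0\|z\|_0$. Collecting these bounds and multiplying by $2$ yields \eqref{L2-est} with $P$ a polynomial whose coefficients depend on $\mathcal K$. The computation is justified for $z\in X^{m+1}_*([0,T]\times\Omega)$ and extends by the density Lemma~\ref{lemma-density} if needed. I expect no genuine obstacle; the point requiring care is the entry-by-entry bookkeeping that reduces $\big((S_0A_\nu)z\big)\cdot z$ to $2(u\cdot\nu)\,\p$ on the smooth part of $\partial\Omega$, so that only $U\cdot\nu=B\cdot\nu=0$ and $u\cdot\nu=0$ are needed, together with noting that a curvilinear polygon with convex angles is Lipschitz, which legitimizes the Gauss--Green formula.
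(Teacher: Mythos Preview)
Your proposal is correct and follows essentially the same route as the paper: take the $L^2$ inner product of the symmetrized system \eqref{symm} with $z$, identify $\langle (S_0A_0)z,z\rangle_\Omega$ with the stated energy, integrate by parts, verify that on $\partial\Omega\setminus\{q_1,\dots,q_N\}$ the matrix $S_0A_\nu$ reduces (thanks to $U\cdot\nu=B\cdot\nu=0$) to the $u$--$\p$ coupling so that the boundary term is $2\langle\p,u\cdot\nu\rangle_{\partial\Omega}=0$, and bound the remaining bulk terms via $\|\partial_{t,x}Z\|_{L^\infty}\lesssim\normmm{Z}_{6,*}$. Your added remarks on the Lipschitz regularity of $\Omega$ and the $H^4_*\hookrightarrow L^\infty$ embedding are exactly the justifications the paper leaves implicit.
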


\begin{proof}
From the system \eqref{symm}, it is easily seen that \begin{equation*}
    \inp*{z}{S_0 A_0 z} = R|u|^2 + |b|^2 + \frac{1}{Q}|\p - B\cdot b|^2 \d x + |s|^2.
\end{equation*}
On the boundary, by using $U\cdot \nu = B\cdot \nu = 0$ on $\partial\Omega \backslash\{q_1,\ldots, q_N\}$, it gives
\begin{equation*}
    A_\nu : = S_0 A_1 \nu_1 + S_0 A_2 \nu_2 =\begin{pmatrix}
         0 & & & & \nu_1 & \\
         & 0 & & & \nu_2 & \\
         & & 0 & & & \\
         & & & 0 & & \\
         \nu_1 & \nu_2 & & & 0 & \\
         & & & & & & 0
    \end{pmatrix},
\end{equation*}
 which implies that $\inp*{z}{A_\nu z}_{\partial\Omega} = 2\inp*{\p}{u\cdot\nu}_{\partial\Omega} = 0$ by using the boundary condition $u\cdot \nu =0$ on $\partial\Omega \backslash\{q_1,\ldots, q_N\}$.

By taking the inner product $\inp*{z}{(\tilde{L}+\mathbb{B})(Z) z}_\Omega$ and integrating by parts, we have
\begin{equation}
\begin{aligned}
    \frac{\d}{2 \d t} \inp*{z}{(S_0 A_0)(Z) z}_{\Omega} & =  \inp*{z}{\overline{F}}_{\Omega} - \frac{1}{2}\inp*{z}{A_\nu z}_{\partial\Omega} - \inp*{z}{\mathbb{B} z}_{\Omega}\\
    & +\frac{1}{2}\inp*{z}{\left(\partial_t (S_0 A_0) + \partial_1(S_0 A_1) + \partial_2 (S_0 A_2)\right)z}_{\Omega}\\
    & \le \inp*{z}{\overline{F}}_{\Omega} + P(\|\partial_{t,x} Z\|_{L^\infty(\Omega)}) \|z\|_{0}^2.
\end{aligned}
\end{equation}
Here and in the sequel, $P(\cdot)$ represents some generic polynomial, and may differ from line to line.
Then, one deduces
\begin{equation}\label{L2-temp}
\begin{aligned}
    \frac{\d}{\d t} \int_\Omega R|u|^2 + |b|^2 + \frac{1}{Q} |\p - B\cdot b|^2 + |s|^2 \,\d x \le 
    P(\normmm{Z(t)}_{6,*})\|{z}(t)\|_0^2
    + \int_\Omega z \cdot \overline{F} \d x,
\end{aligned}
\end{equation}
for the solution $z=(u,b,\p,s)^t$ of the problem \eqref{linear}-\eqref{icd-linear}, which implies \eqref{L2-est} immediately.
\end{proof}

Now we estimate the higher-order tangential derivatives of solutions to the problem \eqref{linear}-\eqref{icd-linear}.

First, decompose $S_0 A_1, S_0 A_2$ into $S_0 A_1 = A_1^I + A_2^{II}$ and $S_0 A_2 = A_2^I + A_2^{II}$, respectively, such that
\begin{equation}\label{A-I}
(A_1^{I} \partial_1 + A_2^{I} \partial_2) : z\longrightarrow \begin{pmatrix}
    RU\cdot\nabla u - B\cdot\nabla b\\
    (1+\frac{1}{Q}B\otimes B)(U\cdot\nabla b) - \frac{1}{Q}B(U\cdot \nabla \p) - B\cdot\nabla u\\ \frac{1}{Q}U\cdot\nabla \p- \frac{1}{Q}B\cdot(U\cdot\nabla b)\\
    U\cdot\nabla s
\end{pmatrix},
\end{equation}
and
\begin{equation}\label{A-II}
    (A_1^{II} \partial_1 + A_2^{II} \partial_2) : z\longrightarrow (\nabla \p, 0,0,\nabla\cdot u, 0)^t,
\end{equation}
with $z=(u,b,\p,s)^t$.
Namely, $A_1^I\partial_1 + A_2^I \partial_2$ and $A_1^{II}\partial_1 + A_2^{II} \partial_2$ are the tangential and normal derivative parts of $(S_0 A_1)\partial_1 + (S_0 A_2)\partial_2$, respectively.

For a fixed $\alpha \in \mathbb{N}^3$ with $|\alpha| \le m$, 
by applying $\mathbb{T}^\alpha$ on the system \eqref{symm}, it follows
\begin{equation*}
\begin{aligned}
    (\tilde{L}+\mathbb{B})(Z) \mathbb{T}^\alpha z & = \mathbb{T}^\alpha \overline{F} + [(S_0 A_0)(Z) \partial_t, \mathbb{T}^\alpha] z + [A_1^I(Z) \partial_1 + A_2^I (Z)\partial_2, \mathbb{T}^\alpha]z\\
    & +[\mathbb{B}(Z), \mathbb{T}^\alpha] z + [A_1^{II}\partial_1 + A_2^{II}\partial_2, \mathbb{T}^\alpha] z.
\end{aligned}
\end{equation*}
By employing  the estimate \eqref{L2-temp} for the IBVP of the above equation, it follows
\begin{equation}\label{tan''}
    \begin{aligned}
    &\quad\frac{\d}{\d t} \int_\Omega R|\mathscr{T}^\alpha u|^2 + |\mathscr{T}^\alpha b|^2 + \frac{1}{Q} |\mathcal{T}^\alpha \p - B\cdot (\mathscr{T}^\alpha b)|^2 + |\mathcal{T}^\alpha s|^2\,\d x \\
    &\le P(\normmm{Z(t)}_{6,*})\|\mathbb{T}^\alpha z(t)\|_0^2 + \int_\Omega \mathbb{T}^\alpha z \cdot ([A_1^{II}\partial_1 + A_2^{II}\partial_2,\mathbb{T}^\alpha] z) \d x\\
    &+ \int_\Omega \mathbb{T}^\alpha z \cdot \left( \mathbb{T}^\alpha \overline{F} + [(S_0 A_0)(Z) \partial_t, \mathbb{T}^\alpha] z + [A_1^I(Z) \partial_1 + A_2^I (Z)\partial_2, \mathbb{T}^\alpha] z + [\mathbb{B}(Z), \mathbb{T}^\alpha] z\right) \d x.
\end{aligned}
\end{equation}
By using Corollary~\ref{coro-moser}, one has
\begin{equation}\label{tan''-1}
    \begin{aligned}
        \left\|[(S_0 A_0)(Z) \partial_t+\mathbb{B}(Z), \mathbb{T}^\alpha] z\right\|_0 & \le \left\|[(S_0 A_0) (Z), \mathbb{T}^\alpha] \partial_t z(t)\right\|_0 + \left\|[\mathbb{B}(Z), \mathbb{T}^\alpha] z\right\|_0\\
        & \le \eta_m(z,Z,t),
    \end{aligned}    
\end{equation}
where
\begin{equation}\label{eta-def}
        \eta_m(z,Z,t) = \begin{cases}
            P(M_m(Z,t))\normmm{z(t)}_{m,*}, & \text{if } m= 7,\\
            \sum\limits_{i=0,1} P(M_{m-i}(Z,t))\normmm{z(t)}_{m-1+i,*}, & \text{if } m\ge 8,
        \end{cases}
\end{equation}
with $M_k(Z,t)$ being defined in Notation~\ref{nota-2.2}.

Now, let us consider 
the term $[A_1^I(Z)\partial_1 + A_2^I(Z)\partial_2, \mathbb{T}^\alpha] z$ given on the right hand side of \eqref{tan''}.
Roughly speaking, $A_1^I(Z)\partial_1 + A_2^I(Z)\partial_2$ is a first order tangential operator, thus $[A_1^I(Z)\partial_1 + A_2^I(Z)\partial_2, \mathbb{T}^\alpha] z$ contains at most $|\alpha|$-th order tangential derivatives of $z$ according to \eqref{tan-cmtt-2}. However, one cannot use Lemma~\ref{lemma-tangential} to estimate this term directly. For example, when dealing with $[U\cdot \nabla, \mathbb{T}^\alpha]z$ with $|\alpha|=m$, if one identifies $U\cdot \nabla$ with $V_1\partial_{w^1} + V_2\partial_{w^2}$ as in Lemma~\ref{lemma-tangential}, there would be $m$-th order tangential derivatives falling on $V_i$, which  requires $\normmm{U(t)}_{m+2,*}$ to control, two order higher than we wish to have given in  \eqref{apriori}.

To fill in this gap, we introduce the following lemma.
\begin{lemma}\label{lemma-Udot}
        For a fixed integer 
        $s\ge 5$, there exists a positive constant $C>0$, such that for any given $U= (U_1,U_2)^t\in C^\infty([0,T]\times \overline{\Omega})$ satisfying $U\cdot \nu = 0$ on $ \partial\Omega\backslash\{q_1,\ldots,q_N\}$, and $f\in X^s_*([0,T]\times \Omega)$, the following inequalities hold for any $0\le t\le T$.
        \begin{enumerate}
            \item[(1)] When $1\le j\le s$ is an integer,
            \begin{equation*}
            \|\partial_*^{s-j}[\partial_*^j, U\cdot \nabla]f(t)\|_0 \le \begin{cases}
                C\normmm{U(t)}_{7,*}\normmm{f(t)}_{s,*},&\text{as }  5\le s\le 7,\\
                C\sum\limits_{i=0,1} \normmm{U(t)}_{s-i,*}\normmm{f(t)}_{s-1+i,*},&\text{as }  s \ge 8.
            \end{cases}
            \end{equation*}
            \item[(2)] When $1\le i\le j\le 2$, and  $s\ge 7$,
            $$\normmm{(U\cdot \nabla)^i f(t)}_{s-j,*} \le C \normmm{U(t)}^i_{s,*}\normmm{f(t)}_{s,*};$$
            and as $s\ge 8$, then
            $$\normmm{(U\cdot \nabla)^i f(t)}_{s-j,*} \le C\normmm{U(t)}_{s-1,*}^{i-1}\sum_{l=0,1} \normmm{U(t)}_{s-l,*}\normmm{f(t)}_{s-1+l,*}.$$
        \end{enumerate}
    \end{lemma}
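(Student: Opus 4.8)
The plan is to prove (1) by localization followed by a careful Leibniz expansion of the commutator in the conormal calculus, and then to deduce (2) from (1).

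\textbf{Localization.} Using a finite partition of unity subordinate to the cover of $\overline\Omega$ used in Definition~\ref{def-tan}, it suffices to prove both estimates for $f$ supported near a single point $\tilde x\in\overline\Omega$. If $\tilde x\in\Omega$ then $H^m_*=H^m$ locally and both bounds are classical Moser-type commutator/product estimates. If $\tilde x\in\partial\Omega\setminus\{q_1,\dots,q_N\}$ (resp. $\tilde x=q_n$) we pass to the straightening coordinates $x^*=(\Phi(x),x_2)$ (resp. $x^*=(\Phi_1(x),\Phi_2(x))$) of Remark~\ref{rmk-sp}, so that, up to a smooth invertible change of variables and multiplication by smooth nonvanishing factors, $\Omega$ becomes the half-space $\{x_1^*>0\}$ (resp. the quadrant $\{x_1^*>0,\ x_2^*>0\}$), each $\partial_{w^i}$ becomes the conormal field $Z_i:=x_i^*\partial_{x_i^*}$ (with $Z_2=\partial_{x_2^*}$ already tangential at a smooth boundary point), $H^m_*$ is the corresponding (corner) conormal Sobolev space, and, writing $U\cdot\nabla=\sum_k U_k^*\partial_{x_k^*}$, the hypothesis $U\cdot\nu=0$ becomes $U_1^*|_{x_1^*=0}=0$ for $k=1$ (half-space), resp. $U_k^*|_{x_k^*=0}=0$ for $k=1,2$ (quadrant), i.e. $U_k^*=x_k^*\widehat U_k$ with $\widehat U_k$ smooth; moreover the change of variables preserves the $\normmm{\cdot}_{m,*}$-norms up to constants. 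Thus $U\cdot\nabla$ is a tangential first order operator, $U\cdot\nabla=\widehat U_1 Z_1+U_2^* Z_2$ (half-space) resp. $\widehat U_1 Z_1+\widehat U_2 Z_2$ (quadrant).

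\textbf{Proof of (1).} Expand $\partial_*^{s-j}[\partial_*^j,U\cdot\nabla]f$ by the Leibniz rule, using repeatedly the non-commutation identities \eqref{tan-cmtt-1}--\eqref{tan-cmtt-2} (whose right-hand sides involve only strictly lower order operators, hence are absorbed by an induction on $s$) to bring every contribution to the form $(\partial_*^{\gamma}\,(\text{smooth}\cdot\widehat U_k\ \text{or}\ U_k^*))\,(\partial_*^{\delta} f)$ with at least one derivative on the coefficient. Three types of term occur. (i) If the coefficient carries many derivatives, then $\partial_*^{\delta}f$ has low order and is placed in $L^\infty$ via $H^4_*(\Omega)\hookrightarrow L^\infty(\Omega)$ (Lemma~\ref{lemma-imbed}); after moving one factor $x_k^*$ from $U_k^*$ onto the adjacent bare $\partial_{x_k^*}$ (so that $U_k^*\partial_{x_k^*}=\widehat U_k Z_k$) a conormal derivative of the coefficient at order $r$ costs only $\normmm{U}_{r+1,*}$, which for $s\ge5$ stays within $\normmm{U}_{s,*}$. (ii) If $f$ carries many derivatives but no bare Cartesian normal derivative has appeared, $\partial_*^{\delta}f$ has order $\le s$ and the term is estimated directly by the Moser inequalities of Corollary~\ref{coro-moser} with the coefficient taken in $L^\infty$. (iii) The genuinely dangerous terms are those in which $f$ carries most of the derivatives \emph{and} a bare $\partial_{x_k^*}$ coming from $U\cdot\nabla$ has landed on $f$, producing a factor of $*$-order $>s$; for these one sums over $k$ and writes $\sum_k(\partial_*^{\gamma} U_k^*)\,\partial_{x_k^*}(\partial_*^{\delta} f)=\sum_k\frac{\partial_*^{\gamma} U_k^*}{x_k^*}\,Z_k(\partial_*^{\delta} f)$, which is legitimate because $\partial_*^{\gamma} U_k^*$ vanishes on $\{x_k^*=0\}$ whenever $\partial_*^{\gamma}$ consists of conormal/tangential derivatives (by the vanishing of $U_k^*$ there — in the coordinate-free formulation this is exactly what the matrices $h^i$ of Lemma~\ref{lemma-h-def} encode), thereby replacing the bare $\partial_{x_k^*}$ by the good field $Z_k$ and bringing the $f$-factor back to order $\le s$. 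The quotient $\partial_*^\gamma U_k^*/x_k^*$ is controlled by a one-dimensional Hardy inequality in $H^m_*$ at the cost of two orders, but since only few derivatives sit on it its $L^\infty$-norm is $\le C\normmm{U}_{7,*}$, which for $s\ge8$ is $\le C\normmm{U}_{s-1,*}$ and is exactly the asserted $\normmm{U}_{7,*}$ for $5\le s\le7$. Collecting the three types gives the claimed estimate, the coarser bound for $5\le s\le7$ versus the finer split for $s\ge8$ corresponding to whether, in types (i)--(ii), the low-order factor is merely bounded in $L^\infty$ or kept at its natural Sobolev order.

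\textbf{Proof of (2).} For $|\gamma|_*\le s-j$ write $\partial_*^\gamma\big((U\cdot\nabla)^i f\big)$ and peel off one copy of $U\cdot\nabla$ at a time, using $\partial_*^\gamma\big((U\cdot\nabla)g\big)=[\partial_*^\gamma,U\cdot\nabla]g+(U\cdot\nabla)\partial_*^\gamma g$: the commutator is controlled by (1) applied with $s$ replaced by $|\gamma|_*\le s-j$, while $(U\cdot\nabla)\partial_*^\gamma g=V_1\partial_{w^1}\partial_*^\gamma g+V_2\partial_{w^2}\partial_*^\gamma g$ by Lemma~\ref{lemma-tangential}, where each $\partial_{w^l}\partial_*^\gamma g$ has $*$-order $\le s-j+1\le s$ and the coefficients $V_l$ — which cost two extra derivatives — are needed only in $L^\infty$, with $\|V_l\|_{L^\infty}\le C\normmm{V_l}_{4,*}\le C\normmm{U}_{6,*}\le C\normmm{U}_{s,*}$ since $s\ge7$. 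This yields the case $i=1$; the case $i=2$ follows by applying it twice, and the $s\ge8$ refinement is obtained by the same low/high split as in (1). The hypothesis $j\ge i$ provides exactly the one order of slack that makes these index counts close.

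\textbf{Main obstacle.} The crux is the bookkeeping of the type-(iii) terms in the proof of (1): because $U\cdot\nabla$ is tangential but formally of second $*$-order, a naive estimate — in particular any use of Lemma~\ref{lemma-tangential} at top order — loses two derivatives on the coefficient $U$. Recovering them forces one to decide, term by term, between keeping the Cartesian normal derivative (paying an $L^\infty$-norm on the necessarily low-order $f$-factor) and converting it to the conormal field $Z_k$ via the vanishing of $U$ on the faces (paying a Hardy quotient on the necessarily low-order $U$-factor); verifying that this choice always lands within $\normmm{U}_{s,*}$, resp. the refined $\normmm{U}_{s-1,*}$ for $s\ge8$, is the technical heart. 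The corner case adds the wrinkle of two faces and two non-commuting conormal fields, so the Leibniz expansion produces more terms than in the half-space, but each is of one of the three types above.
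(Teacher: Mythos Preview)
Your proof is correct and follows essentially the same approach as the paper. The paper's argument is the coordinate-free version of yours: instead of localizing and using the Hardy quotient $\partial_*^\gamma U_k^*/x_k^*$ for the dangerous type-(iii) term, the paper writes the single conormal derivative on $U$ as $\mathscr{T}^1 U$ (which preserves the boundary condition by construction of the matrices $h^i$) and then invokes Lemma~\ref{lemma-tangential} to turn $(\mathscr{T}^1 U)\cdot\nabla$ back into a tangential operator, bounding its coefficient by $\|\partial_x(\mathscr{T}^1 U)\|_{L^\infty}\lesssim\normmm{U}_{7,*}$; the remaining terms (your types (i)--(ii), the paper's $k\ge 2$ part) are dispatched by Corollary~\ref{coro-moser} exactly as you do.
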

    \begin{proof} Note that
    \begin{equation*}
        \begin{aligned}
            \|\partial_*^{s-j}[\partial_*^j, U\cdot \nabla]f(t)\|_0 \lesssim &\|\sum\limits_{i=1}^2\partial_* U_i\partial_*^{s-1}\partial_i f(t)\|_0 + \sum_{k=2}^s \|\sum\limits_{i=1}^2\partial_*^k U_i\partial_*^{s-k}\partial_i f(t)\|_0\\
            \lesssim & \|(\mathscr{T}^1 U) \cdot \nabla \partial_*^{s-1}f(t)\|_0 + \|(\mathcal{T}^1-\mathscr{T}^1)U\cdot \nabla \partial_*^{s-1}f(t)\|_0 \\
            &+\|\sum\limits_{i=1}^2\partial_* U_i [\partial_*^{s-1},\partial_i] f\|_0 + \sum\limits_{i=1}^2\normmm{\partial_*^2 U_i \partial_i f(t)}_{s-2,*}\\
            \lesssim & \Big(\|\partial_x(\mathscr{T}^1 U)(t)\|_{L^\infty(\Omega)} + \|\partial_x((\mathcal{T}^1-\mathscr{T}^1)U)(t)\|_{L^\infty(\Omega)} \\
            &+\|\partial_* U(t)\|_{L^\infty(\Omega)}\Big) \normmm{f(t)}_{s,*} + \sum\limits_{i=1}^2\normmm{\partial_*^2 U_i \partial_i f(t)}_{s-2,*},
        \end{aligned}
    \end{equation*}
    where in the last inequality we invoke Lemma~\ref{lemma-tangential} and \eqref{tan-cmtt-1}. Therefore, we get the inequalities given in the first part of this lemma by using Lemma~\ref{lemma-moser}, Corollary~\ref{coro-moser} and Lemma~\ref{lemma-imbed}.
    
    The proof of the inequalities given in the second part of this lemma is also a straightforward application of Corollary~\ref{coro-moser} by observing that $\partial_*(U\cdot \nabla f)=\sum\limits_{i=1}^2(\partial_* U_i \partial_i f + V_i\partial_{w^i}\partial_* f + U_i [\partial_*,\partial_i]) f$, where $U_i\partial_i = V_i\partial_{w^i}$ as in Lemma~\ref{lemma-tangential}.
    \end{proof}

Using Lemma~\ref{lemma-Udot}, Lemma~\ref{lemma-tangential} and Corollary~\ref{coro-moser}, we conclude that
\begin{equation}\label{tan''-2}
    \|[A_1^I(Z)\partial_1+A_2^I(Z)\partial_2, \mathbb{T}^\alpha]z\|_0 \le \eta_m(z,Z,t),
\end{equation}
with $\eta_m(z,Z,t)$ being defined in \eqref{eta-def}.

Now we are left to estimate the term $\int_\Omega \mathbb{T}^\alpha z\cdot ([A_1^{II}\partial_1+ A_2^{II}\partial_2, \mathbb{T}^\alpha] z) \d x$ appeared on the right hand side of \eqref{tan''}.
By a direct computation, we have
\begin{equation}\label{J1-J2}
\begin{aligned}
    &\quad\inp*{\mathbb{T}^\alpha z}{[A_1^{II}\partial_1+ A_2^{II}\partial_2, \mathbb{T}^\alpha] z}_{\Omega} \\&= \inp*{\mathscr{T}^\alpha u}{\nabla \mathcal{T}^\alpha \p - \mathscr{T}^\alpha \nabla \p}_{\Omega} + \inp*{\mathcal{T}^\alpha \p}{\nabla\cdot \mathscr{T}^\alpha u - \mathcal{T}^\alpha \nabla\cdot u}_{\Omega}\\
    & =: \mathcal{J}_1 + \mathcal{J}_2.
\end{aligned}
\end{equation}
According to \eqref{tan-cmtt-1}, $\mathcal{J}_1$ ($\mathcal{J}_2$ resp.) is a sum of terms of the form $\int_\Omega\Theta(x)\mathcal{T}^\beta u_l \cdot \mathcal{T}^\gamma \partial_i p \d x$ ($\int_\Omega \Theta(x)\mathcal{T}^\beta \p \cdot \mathcal{T}^\gamma \partial_i u_l \d x$ resp.), with $|\beta|\le |\alpha|, |\gamma|\le |\alpha|-1, i,l = 1,2$ and $\Theta \in C^\infty(\overline{\Omega})$.
Then there are at most $(|\alpha|+1)$-th $*$-order derivatives falling on $z$ in both $\mathcal{J}_1$ and $\mathcal{J}_2$.
From the momentum equation given in \eqref{linear}, we know that $\partial_i \p$ can be replaced by $(F_1)_i - R(\partial_t+U\cdot\nabla) u_i + B\cdot \nabla b_i$.
Therefore, by applying Lemma~\ref{lemma-Udot} for the term $\mathcal{J}_1$ given in \eqref{J1-J2}, one has
\begin{equation}\label{cmtt-J1}
    |\mathcal{J}_1| \le \sum_{|\beta|\le |\alpha|}\|\mathscr{T}^\beta u(t)\|_0\cdot \begin{cases}
P(\normmm{Z(t)}_{m,*}) (\normmm{{F}(t)}_{m-1,*} + \normmm{{z}(t)}_{m,*}), & \text{if } m\ge 7,\\[2mm]
\begin{array}{r}
\sum_{i=0,1} P(\normmm{Z(t)}_{m-i,*}) \big(\normmm{F(t)}_{m-2+i,*}\quad \\
+\normmm{z(t)}_{m-1+i,*}\big), 
\end{array} 
& \text{if }m\ge 8.
\end{cases}
\end{equation}

The estimate of $\mathcal{J}_2$ is more subtle, relying on a sharper observation. 
First, we define the operators $\mathscr{H}_i$, $i=1,2$ by
\begin{equation*}
    \mathscr{H}_i u = \nabla\cdot \big(h(x,w^i) u\big) + [\nabla\cdot, \mathcal{T}_i] u,
\end{equation*}
mapping the vector $u$ into a scalar function, where $\mathcal{T}_i = \partial_{w^i}$ as introduced before.
Then, we have
\begin{lemma}\label{lemma-observ}
    (1) For any fixed  $\tilde{x} \in \partial\Omega\backslash\{q_1,\ldots,q_N\}$, let $w^i$, $h(x,w^i), i=1,2$ and $\Phi(x)$ be as introduced in Definition~\ref{def-tan}, \eqref{2.6} and Notation~\ref{nota-1}(3) respectively, and assume that $\partial_1\Phi(x)\ne 0$ in $V(\tilde{x})\cap \Omega$ without loss of generality. 
    Then, for any smooth function $f$ defined in $V(\tilde{x})\cap \Omega$ and vanishing on $V(\tilde{x})\cap \partial\Omega$, there exist smooth functions $\Theta^{\alpha}_{i,l}(x)$ with $i,l=1,2$, such that 
    \begin{equation}\label{cmtt-observ1}
        \begin{aligned}
	&f(x) \mathscr{H}_1 u = \sum_{\substack{\alpha \in \mathbb{N}^2, |\alpha| \le 1,\\ l=1,2}} \Theta^\alpha_{1,l}(x) \mathcal{T}_1^{\alpha_1} \mathcal{T}_2^{\alpha_2} u_l,\\
	& \mathscr{H}_2 u = \sum_{\substack{\alpha \in \mathbb{N}^2, |\alpha| \le 1,\\ l=1,2}} \Theta^\alpha_{2,l}(x) \mathcal{T}_1^{\alpha_1} \mathcal{T}_2^{\alpha_2} u_l.
	\end{aligned}
	\end{equation}
    (2) For a fixed corner $\tilde{x} = q_n$ for some $1\le n\le N$, let $w^i$, $h(x,w^i)$ be as above, and $\gamma_i^n, \Phi_i(x), i=1,2$ be as introduced in Notation~\ref{nota-1}(4). Then, for any $f_i$ defined in $V(\tilde{x}) \cap \Omega$ and vanishing on $\gamma_i^n$, there exist smooth functions $\Theta^\alpha_{i,l}(x)$ with $i,l=1,2$, such that 
	\begin{equation}\label{cmtt-observ2}
	f_i(x) \mathscr{H}_i u = \sum_{\substack{\alpha \in \mathbb{N}^2, |\alpha| \le 1,\\ l=1,2}} \Theta^\alpha_{i,l}(x) \mathcal{T}_1^{\alpha_1} \mathcal{T}_2^{\alpha_2} u_l.
	\end{equation}
    
\end{lemma}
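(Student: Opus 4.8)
\emph{Strategy.} I would prove the lemma by first showing that each $\mathscr{H}_i$ is in fact a \emph{first-order} differential operator in $u$ with smooth coefficients, and then analysing it in the flattening coordinates of Remark~\ref{rmk-sp} and Lemma~\ref{lemma-tangential}, in which the good fields $\mathcal{T}_1,\mathcal{T}_2$ become invertible smooth multiples of $x_1^{*}\partial_{x_1^{*}},\,\partial_{x_2^{*}}$ near a smooth boundary point and of $x_1^{*}\partial_{x_1^{*}},\,x_2^{*}\partial_{x_2^{*}}$ near a corner. The statement then reduces to checking that the coefficient of the degenerate (purely normal) derivative(s) appearing in $\mathscr{H}_i u$ has the right vanishing on the relevant boundary piece, possibly after multiplication by $f$ (resp. $f_i$).

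\emph{Step 1 (algebraic form of $\mathscr{H}_i$).} Since $\mathcal{T}_i=w^i\cdot\nabla$, the second-order terms in $[\nabla\cdot,\mathcal{T}_i]u=\nabla\cdot(\mathcal{T}_iu)-\mathcal{T}_i(\nabla\cdot u)$ cancel, so $[\nabla\cdot,\mathcal{T}_i]u=\sum_{k,m}(\partial_k w^i_m)\partial_m u_k$; together with $\nabla\cdot(h(x,w^i)u)$ this gives
$$\mathscr{H}_iu=\sum_{m,l}G^i_{ml}(x)\,\partial_m u_l+\sum_l \tilde c^i_l(x)\,u_l,\qquad G^i_{ml}:=\partial_l w^i_m+\big(h(x,w^i)\big)_{ml},$$
with $G^i,\tilde c^i\in C^\infty$, and the term $\sum_l\tilde c^i_l u_l$ is already of the desired shape. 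In the flattened variable $x^{*}=(\Phi,x_2)$ (non-corner; here $\partial_{x^{*}_2}$ is already a tangential field) resp. $x^{*}=(\Phi_1,\Phi_2)$ (corner), one has $\partial_m=\sum_j(\partial_m x^{*}_j)\partial_{x^{*}_j}$ with $\partial_m x^{*}_1=\partial_m\Phi=-\tilde\nu_m$ resp. $\partial_m x^{*}_j=\partial_m\Phi_j=-\tilde\nu^j_m$, so the coefficient of the degenerate derivative $\partial_{x^{*}_1}u_l$ (resp. $\partial_{x^{*}_j}u_l$) in $\mathscr{H}_iu$ equals $-\big((G^i)^{t}\tilde\nu\big)_l$ (resp. $-\big((G^i)^{t}\tilde\nu^j\big)_l$). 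By \eqref{U-3}/\eqref{U-5}, $x^{*}_j\partial_{x^{*}_j}$ equals one of $\mathcal{T}_1,\mathcal{T}_2$ up to an invertible smooth factor; hence a term $a(x)\partial_{x^{*}_j}u_l$ is of the claimed form whenever $a/x^{*}_j$ is smooth, i.e. whenever $a$, or $f\cdot a$ (resp. $f_i\cdot a$, after writing $f=x^{*}_1\tilde f$, $f_i=x^{*}_i\tilde f_i$ by Hadamard's lemma), vanishes on $\{x^{*}_j=0\}$.

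\emph{Step 2 (the boundary cancellation).} Using the defining property $(h^k)^{t}\tilde\nu=\partial_k\tilde\nu$ on the boundary from Lemma~\ref{lemma-h-def}, one gets $(h(x,w^i))^{t}\tilde\nu=(w^i\cdot\nabla)\tilde\nu$ there, whence on $\partial\Omega\cap V(\tilde x)$
$$\big((G^i)^{t}\tilde\nu\big)_l=(\partial_l w^i)\cdot\tilde\nu+\big(w^i\cdot\nabla\tilde\nu\big)_l.$$
For the rotated-gradient field $w^2=(\partial_2\Phi,-\partial_1\Phi)^{t}$ (case $\partial_1\Phi\neq0$), a short computation with $\tilde\nu=-\nabla\Phi$ gives the Hessian antisymmetry
$$(\partial_l w^2)\cdot\tilde\nu=\partial_2\Phi\,\partial_l\partial_1\Phi-\partial_1\Phi\,\partial_l\partial_2\Phi=-\big(w^2\cdot\nabla\tilde\nu\big)_l,$$
so $\big((G^2)^{t}\tilde\nu\big)_l=0$ on $\partial\Omega\cap V(\tilde x)$ and $\mathscr{H}_2u$ needs no factor $f$. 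For $w^1=\Phi(1,0)^{t}$ the term $(w^1\cdot\nabla\tilde\nu)_l$ vanishes on $\{\Phi=0\}$ but $(\partial_l w^1)\cdot\tilde\nu=-\partial_1\Phi\,\partial_l\Phi$ does not, which is exactly why \eqref{cmtt-observ1} carries the factor $f$. The corner case is formally identical: with $w^i=(-1)^i\Phi_i\tau^{i'}$ and $\tau^{i'}=(\partial_2\Phi_{i'},-\partial_1\Phi_{i'})^{t}\perp\tilde\nu^{i'}$ ($i\neq i'$), one finds on $\gamma_{i'}^n$ that $\big((G^i)^{t}\tilde\nu^{i'}\big)_l=(-1)^i\Phi_i\big[(\partial_l\tau^{i'})\cdot\tilde\nu^{i'}+(\tau^{i'}\cdot\nabla)\tilde\nu^{i'}_l\big]$, and the bracket vanishes identically by the same antisymmetry applied to $\Phi_{i'}$; thus the coefficient of $\partial_{x^{*}_{i'}}u_l$ in $\mathscr{H}_iu$ vanishes on $\gamma_{i'}^n=\{x^{*}_{i'}=0\}$, while the coefficient of $\partial_{x^{*}_i}u_l$ is absorbed by the factor $f_i$, which vanishes on $\gamma_i^n=\{x^{*}_i=0\}$.

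\emph{Step 3 (reassembly).} Multiplying $\mathscr{H}_2u$ by $1$ and $\mathscr{H}_iu$ by $f$ (resp. $f_i$), replacing each $x^{*}_j\partial_{x^{*}_j}u_l$ by the corresponding $\mathcal{T}_j u_l$ via \eqref{U-3}/\eqref{U-5}, and transporting the resulting coefficients back to the $x$-variables through the $C^\infty$ diffeomorphism $\Psi$ yields the representations \eqref{cmtt-observ1}--\eqref{cmtt-observ2}, with $\mathcal{T}_1^{\alpha_1}\mathcal{T}_2^{\alpha_2}u_l$ ranging over $|\alpha|\le1$, $l=1,2$. The only genuinely delicate point I expect is the Hessian antisymmetry $(\partial_l w)\cdot\tilde\nu=-(w\cdot\nabla\tilde\nu)_l$ for the rotated-gradient tangential fields, which is precisely the structural reason behind the construction of the matrices $h^k$ in Lemma~\ref{lemma-h-def}; everything else is routine bookkeeping, compatible with the partition of unity underlying Definition~\ref{def-tan}.
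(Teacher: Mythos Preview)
Your proof is correct and follows essentially the same route as the paper: both identify the first-order part of $\mathscr{H}_i$ as $\sum_{m,l}G^i_{ml}\partial_m u_l$ with $G^i_{ml}=\partial_l w^i_m+(h(x,w^i))_{ml}$, and then verify that the coefficient vector is tangential by checking that its contraction with the relevant normal(s) vanishes on the appropriate boundary pieces (trivially on the piece where $f$ or $f_i$ vanishes, and by a genuine cancellation on the other). The only methodological difference is that the paper appeals to the explicit matrices $h^1,h^2$ of Lemma~\ref{lemma-h} to confirm the identity $\sum_j\partial_l w^i_j\,\partial_j\Phi_{i'}+\sum_{j,k}w^i_k h^k_{jl}\,\partial_j\Phi_{i'}=0$, whereas you obtain it from the defining relation $(h^k)^{t}\tilde\nu=\partial_k\tilde\nu$ together with the Hessian antisymmetry $(\partial_l\tau)\cdot\tilde\nu=-(\tau\cdot\nabla)\tilde\nu_l$ for a rotated gradient $\tau=(\nabla\Phi)^\perp$; your derivation is slightly more conceptual but equivalent in content.
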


\begin{proof}
	We first prove the above second point.
	Consider the case $i=1$.
	It suffices to show that the leading order operator on the left-hand side of \eqref{cmtt-observ2} is tangential. Indeed, the left-hand side of \eqref{cmtt-observ2} equals to 
    $$f_1(x) \left(\sum\limits_{l,j=1}^2\partial_l w^1_j\partial_j u_l + \sum\limits_{i,l,j=1}^2w^1_j h^j_{il}\partial_i u_l\right)+{\rm zero-th~order~terms~of~}u.$$
     
     Since $f_1(x) = 0$ on $\gamma_1^n$, we already see that the first term of the above formula is a tangential derivative of $u$ on $V(\tilde{x})\cap\gamma^n_1$ using Lemma~\ref{lemma-tangential}. To verify this property on $V(\tilde{x})\cap\gamma^n_2$, it suffices to have
     $$\sum\limits_{j=1}^2\partial_l w^1_j\partial_j \Phi_2(x) + \sum\limits_{i,j=1}^2w^1_j h^j_{il}\partial_i \Phi_2(x) = 0, \quad {\rm for}\quad  l=1,2.$$ 
     This is true from the explicit expressions of $h^1, h^2$ given in Lemma~\ref{lemma-h}.
	
	Next, we consider the first point given in this lemma. The first claim of \eqref{cmtt-observ1} is trivial since $f(x) = 0$ on $V(\tilde{x})\cap \partial\Omega$. In a way similar to the above proof of the second point, to verify the second assertion given in  \eqref{cmtt-observ1}, 
	it suffices to have 
    $$\sum\limits_{j=1}^2\partial_l w^2_j\partial_j \Phi(x) +\sum\limits_{i, j=1}^2 w^2_j h^j_{il}\partial_i \Phi(x) = 0, \quad  {\rm for}\quad  l=1,2.$$ 
    This can also be readily obtained by a direct computation.
\end{proof}

From Lemma~\ref{lemma-observ}, one can easily see the following properties.
\begin{coro}\label{coro-tan-struc}
    For a smooth enough function $f$ defined in $\Omega$, one has that
    for any $\alpha, \beta \in\mathbb{N}^3$, $(\mathcal{T}^\alpha (w^i_j(x) f)) \cdot (\mathcal{T}^\beta \mathscr{H}_i u)$ ($i,j = 1,2$) is a sum of terms in the form
        $\Theta(x) \mathcal{T}^{\alpha'} f(x) \cdot \left(\mathcal{T}^{\beta'} u_l\right)$, with $|\alpha'| \le |\alpha|, |\beta'| \le |\beta|+1, l=1,2$ and $\Theta\in C^\infty(\overline{\Omega})$.
\end{coro}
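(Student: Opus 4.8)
The plan is to localize the identity via a $C^\infty$ partition of unity $1=\sum_\kappa\chi_\kappa$ subordinate to a finite cover of $\overline\Omega$ by the three types of sets appearing in Definition~\ref{def-tan} — interior balls, neighborhoods $V(\tilde x)$ of smooth boundary points, and neighborhoods $V(q_n)$ of corners — to prove the claimed structure in each chart, and then to sum; since every coefficient produced along the way is smooth and the $\chi_\kappa$ are smooth, the resulting $\Theta$ lies in $C^\infty(\overline\Omega)$. I will use throughout that, by the commutator relations \eqref{tan-cmtt-1}--\eqref{tan-cmtt-2} (together with $[\partial_t,\partial_{w^i}]=0$), reordering a product of the operators $\mathcal{T}_0,\mathcal{T}_1,\mathcal{T}_2$ costs only lower-order tangential terms with smooth coefficients, so a Leibniz expansion of $\mathcal{T}^\alpha(gh)$ has the shape $\sum_{|\alpha'|+|\alpha''|\le|\alpha|}\Theta(x)(\mathcal{T}^{\alpha'}g)(\mathcal{T}^{\alpha''}h)$.

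The structural fact that does the work is the following. Call a chart \emph{good} for the index $i$ if $\mathscr{H}_i u$ is there already a tangential operator of order $\le 1$ in $u$; this holds in every interior chart (where $\mathcal{T}_k=\partial_k$, $[\nabla\cdot,\mathcal{T}_i]=0$, and $\nabla\cdot(h(x,w^i)u)$ is first order in $u$ with smooth coefficients) and, by the second assertion of Lemma~\ref{lemma-observ}(1), in a smooth boundary chart with $\partial_1\Phi\neq0$ for $i=2$. Otherwise the chart is \emph{bad} for $i$ — a smooth boundary chart with $\partial_1\Phi\neq0$ for $i=1$ (the case $\partial_2\Phi\neq0$ is symmetric after relabelling $1\leftrightarrow 2$), or a corner chart $V(q_n)$ for $i=1,2$ — and there $w^i_j$ factors as $w^i_j=\psi\,\chi$ with $\chi$ smooth and $\psi$ a defining function of the relevant boundary piece ($\psi=\Phi$, resp.\ $\psi=\Phi_i$). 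From the explicit formulas of Definition~\ref{def-tan} such a $\psi$ is \emph{tangentially invariant}: $\partial_t\Phi=0$, $\partial_{w^1}\Phi=\Phi\,\partial_1\Phi$, $\partial_{w^2}\Phi=0$, and likewise $\partial_t\Phi_i=0$, $\partial_{w^i}\Phi_i=(-1)^i\Phi_i(\nabla\Phi_{i'}\times\nabla\Phi_i)$, $\partial_{w^{i'}}\Phi_i=0$, so $\mathcal{T}_k\psi=\psi\cdot(\text{smooth})$ for $k=0,1,2$. Iterating, $\mathcal{T}^{a}\psi=\psi\cdot(\text{smooth})$ for every multi-index $a$; hence $\mathcal{T}^\alpha(w^i_j f)=\psi\sum_{|\alpha'|\le|\alpha|}\Theta_{\alpha'}(x)\mathcal{T}^{\alpha'}f$, and, for any $g$, the commutator satisfies $[\mathcal{T}^\beta,\psi]g=\psi\sum_{|\beta'|\le|\beta|-1}\Lambda_{\beta'}(x)\mathcal{T}^{\beta'}g$.

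In a good chart the corollary is immediate: $\mathcal{T}^\beta\mathscr{H}_i u=\sum_{|\beta'|\le|\beta|+1}\Theta(x)\mathcal{T}^{\beta'}u_l$ by Leibniz, while $\mathcal{T}^\alpha(w^i_j f)=\sum_{|\alpha'|\le|\alpha|}\Theta_{\alpha'}(x)\mathcal{T}^{\alpha'}f$ since $w^i_j$ is smooth, and multiplying gives the asserted form (the case $w^i_j\equiv0$, e.g.\ $w^1_2=0$ when $\partial_1\Phi\neq0$, is trivial). In a bad chart I will first establish, by induction on $|\beta|$, that $\psi\,\mathcal{T}^\beta\mathscr{H}_i u=\sum_{|\beta'|\le|\beta|+1}\Theta(x)\mathcal{T}^{\beta'}u_l$: for $|\beta|=0$ this is exactly the first assertion of Lemma~\ref{lemma-observ}(1) (or Lemma~\ref{lemma-observ}(2) at a corner), and for the step one writes $\psi\,\mathcal{T}^\beta\mathscr{H}_i u=\mathcal{T}^\beta(\psi\,\mathscr{H}_i u)-[\mathcal{T}^\beta,\psi]\mathscr{H}_i u$, expands the first term by Leibniz using $\psi\mathscr{H}_i u=\sum_{|\gamma|\le1}\Theta^\gamma\mathcal{T}^\gamma u_l$, and rewrites the second as $\sum_{|\beta''|\le|\beta|-1}\Lambda_{\beta''}\big(\psi\,\mathcal{T}^{\beta''}\mathscr{H}_i u\big)$ via the commutator identity above, so the inductive hypothesis applies to each factor $\psi\,\mathcal{T}^{\beta''}\mathscr{H}_i u$. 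Granting this, $(\mathcal{T}^\alpha(w^i_j f))(\mathcal{T}^\beta\mathscr{H}_i u)=\big(\sum_{|\alpha'|\le|\alpha|}\Theta_{\alpha'}\mathcal{T}^{\alpha'}f\big)\big(\psi\,\mathcal{T}^\beta\mathscr{H}_i u\big)$ has the claimed form, and summing over $\kappa$ finishes the argument.

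The only genuinely delicate point — and the sole place Lemma~\ref{lemma-observ} is needed — is that $\mathscr{H}_i u$ a priori contains the \emph{full} gradient $\nabla u$ rather than only tangential derivatives of $u$, so one cannot naively bound $\mathcal{T}^\beta\mathscr{H}_i u$ by $(|\beta|+1)$ tangential derivatives of $u$. It is precisely the vanishing factor carried by $w^i_j$ (or, in the good boundary chart, the intrinsic cancellation built into $\mathscr{H}_2$) that converts the normal derivative of $u$ into tangential ones, and the tangential invariance of $\psi$ is what guarantees this conversion persists under the extra differentiations $\mathcal{T}^\beta$ and the commutators $[\mathcal{T}^\beta,\psi]$.
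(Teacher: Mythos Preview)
Your proof is correct, but the route differs from the paper's. The paper does not use the multiplicative invariance $\mathcal{T}_k\psi=\psi\cdot(\text{smooth})$ of the defining function, nor an induction on $|\beta|$. Instead, after the Leibniz expansion $\mathcal{T}^\alpha(w^i_jf)=\sum_{\alpha'\le\alpha}\binom{\alpha}{\alpha'}(\mathcal{T}^{\alpha-\alpha'}f)\,(\mathcal{T}^{\alpha'}w^i_j)$, it applies the \emph{reverse} Leibniz identity
\[
a\cdot\mathcal{T}^\beta b=\sum_{\beta'\le\beta}(-1)^{|\beta-\beta'|}\binom{\beta}{\beta'}\mathcal{T}^{\beta'}\big((\mathcal{T}^{\beta-\beta'}a)\cdot b\big)
\]
with $a=\mathcal{T}^{\alpha'}w^i_j$ and $b=\mathscr{H}_iu$, so that every term has the factor $\mathcal{T}^{\beta-\beta'+\alpha'}w^i_j$ sitting \emph{directly} against $\mathscr{H}_iu$. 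Since tangential derivatives preserve the vanishing of $w^i_j$ on the relevant boundary piece, Lemma~\ref{lemma-observ} applies to each such factor with $f_i=\mathcal{T}^{\beta-\beta'+\alpha'}w^i_j$, and one is done after a final $\mathcal{T}^{\beta'}$. Your argument replaces this algebraic transfer by the observation that $\psi$ is an eigenfunction (up to smooth multipliers) of every $\mathcal{T}_k$, which lets you keep $\psi$ factored throughout and close an induction; the paper's argument needs only the weaker fact that tangential derivatives preserve the zero set. Both are clean; the paper's avoids the induction at the cost of the reverse-Leibniz formula, while yours makes the role of the boundary-defining function more transparent.
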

\begin{proof}
    Using a partition of unity, it suffices to prove the property given in Corollary~\ref{coro-tan-struc} locally at any point $\tilde{x}\in \overline{\Omega}$. 
    
    When $\tilde{x} = q_n$ for some $1\le n\le N$, let $w^i, h(x,w^i), \gamma_i^n, \Phi_i(x), i=1,2$ be as in Lemma~\ref{lemma-observ}(2). Recall that $w^i(x) = (-1)^{i}\Phi_{i}(x)(\partial_2 \Phi_{i'}(x), - \partial_1\Phi_{i'}(x))^t$ with $i\neq i'$, it follows $w^i_j(x) = 0$ on $\gamma^n_i$ for $j=1,2$. 
    
    For any $\alpha, \beta \in \mathbb{N}^3$, one has
    \begin{equation*}
        \begin{aligned}
            \mathcal{T}^\alpha(w^i_j(x) f)\cdot \mathcal{T}^\beta \mathscr{H}_i u 
            = \sum_{\alpha' \le \alpha} \binom{\alpha}{\alpha'} (\mathcal{T}^{\alpha - \alpha'}f) \left(\mathcal{T}^{\alpha'} w^i_j(x)\cdot \mathcal{T}^\beta \mathscr{H}_i u\right),
        \end{aligned}
    \end{equation*}
    where $\binom{\alpha}{\alpha'} = \prod_{i=0,1,2}\binom{\alpha_i}{\alpha_i'}$.
    Furthermore, from the formula
    \begin{equation*}
        \mathcal{T}^{\alpha'} w^i_j(x)\cdot \mathcal{T}^\beta \mathscr{H}_i u = \sum_{\beta'\le \beta} (-1)^{|\beta-\beta'|} \binom{\beta}{\beta'}\mathcal{T}^{\beta'}\left(\mathcal{T}^{\beta-\beta'+\alpha'}w^i_j(x)\cdot\mathscr{H}_i u\right),
    \end{equation*}
   and applying Lemma~\ref{lemma-observ}(2) with $f_i = \mathcal{T}^{\beta-\beta'+\alpha'} w^i_j$, by noting that it vanishes on $\gamma_i^n$, the property given in Corollary~\ref{coro-tan-struc} in a neighborhood of $\tilde{x}$ is proved. 

    For $\tilde{x} \in \partial\Omega \backslash\{q_1,\ldots, q_N\}$, one can get the conclusion similarly. 
    
    For any interior point $\tilde{x}$ in $\Omega$, the conclusion holds trivially in a small neighborhood of $\tilde{x}$ by using Definition~\ref{def-tan}(1).
\end{proof}
The next lemma shows the hidden structure of $\mathcal{J}_2$.
\begin{lemma}\label{lemma-J2}
    For any fixed $\alpha\in \mathbb{N}^3$ with $1\le |\alpha|\le m$,  the term $\mathcal{J}_2$ given in \eqref{J1-J2}
    is a sum of terms of the form 
    \begin{equation}\label{form}
        \int_\Omega \Theta(x) \mathcal{T}^\beta \partial_{x_i} \p \cdot \mathcal{T}^\sigma u_l ~\d x, \quad \text{with } |\beta|\le |\alpha|-1, |\sigma|\le |\alpha|.
    \end{equation}
   with $i,l = 1,2$, and $\Theta\in C^\infty(\overline{\Omega})$. 
\end{lemma}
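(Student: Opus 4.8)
The plan is to rewrite the integrand of $\mathcal{J}_2$ \emph{pointwise} as a sum of terms of the required form, so that no integration by parts is needed; the gain of one tangential order on $\p$ will come from combining the telescoped structure of $\nabla\cdot\mathscr{T}^\alpha u-\mathcal{T}^\alpha\nabla\cdot u$ with the fact, isolated in Lemma~\ref{lemma-observ} and Corollary~\ref{coro-tan-struc}, that each $\mathscr{H}_i$ becomes genuinely tangential once multiplied by the vanishing component $w^i_l$. Fix the ordering $\mathscr{T}^\alpha=\mathscr{T}_{i_1}\cdots\mathscr{T}_{i_N}$ with $(i_1,\dots,i_N)=(\underbrace{0,\dots,0}_{\alpha_0},\underbrace{1,\dots,1}_{\alpha_1},\underbrace{2,\dots,2}_{\alpha_2})$, $N=|\alpha|$, and correspondingly $\mathcal{T}^\alpha=\mathcal{T}_{i_1}\cdots\mathcal{T}_{i_N}$. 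Using $\nabla\cdot(\partial_t v)=\partial_t(\nabla\cdot v)$ and the identity $\nabla\cdot(\mathscr{T}_k v)-\mathcal{T}_k(\nabla\cdot v)=\mathscr{H}_k v$ for $k=1,2$ (immediate from $\mathscr{T}_k=\partial_{w^k}+h(x,w^k)$ and the definition of $\mathscr{H}_k$), a telescoping argument gives
\begin{equation*}
\nabla\cdot\mathscr{T}^\alpha u-\mathcal{T}^\alpha\nabla\cdot u=\sum_{j\,:\,i_j\ne 0}\mathcal{T}^{\delta_j}\,\mathscr{H}_{i_j}\!\big(\mathscr{T}^{\epsilon_j}u\big),\qquad \delta_j=(i_1,\dots,i_{j-1}),\quad \epsilon_j=(i_{j+1},\dots,i_N),
\end{equation*}
so $|\delta_j|=j-1$, $|\epsilon_j|=N-j$ (and $\mathcal{J}_2=0$ when $\alpha$ involves only $\partial_t$).

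For each $j$ with $i_j\in\{1,2\}$ I would extract the matching vanishing factor from $\mathcal{T}^\alpha\p$. Since the chosen ordering makes $\mathcal{T}^\alpha=\mathcal{T}^{\delta_j}\mathcal{T}_{i_j}\mathcal{T}^{\epsilon_j}$ as a composition and $\mathcal{T}_{i_j}(\mathcal{T}^{\epsilon_j}\p)=w^{i_j}\cdot\nabla(\mathcal{T}^{\epsilon_j}\p)=\sum_{l}w^{i_j}_l\,\partial_l(\mathcal{T}^{\epsilon_j}\p)$, one has the exact identity $\mathcal{T}^\alpha\p=\sum_{l=1,2}\mathcal{T}^{\delta_j}\big(w^{i_j}_l\,g_{j,l}\big)$ with $g_{j,l}:=\partial_l\mathcal{T}^{\epsilon_j}\p$. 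Hence the $j$-th contribution to $\mathcal{J}_2$ is $\sum_l\int_\Omega\big(\mathcal{T}^{\delta_j}(w^{i_j}_l g_{j,l})\big)\big(\mathcal{T}^{\delta_j}\mathscr{H}_{i_j}(\mathscr{T}^{\epsilon_j}u)\big)\,\d x$, and Corollary~\ref{coro-tan-struc} applies to the integrand with $f=g_{j,l}$ (smooth, as in the a priori estimate we may take $z$ smooth), index $i_j$, both tangential multi-indices equal to $\delta_j$, and $u$ replaced by the smooth vector field $\mathscr{T}^{\epsilon_j}u$ (the proof of the corollary only uses the structure of $\mathscr{H}_i$): it is a sum of $\Theta(x)\,\mathcal{T}^{\alpha'}g_{j,l}\cdot\mathcal{T}^{\beta'}(\mathscr{T}^{\epsilon_j}u)_r$ with $|\alpha'|\le|\delta_j|$, $|\beta'|\le|\delta_j|+1$.

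Finally I would unfold the two factors. On the pressure side $\mathcal{T}^{\alpha'}g_{j,l}=\mathcal{T}^{\alpha'}\partial_l\mathcal{T}^{\epsilon_j}\p$; commuting the single spatial derivative $\partial_l$ outward by repeated use of \eqref{tan-cmtt-1} gives a sum of $\Theta\,\mathcal{T}^{\beta}\partial_{x_i}\p$ with $|\beta|\le|\alpha'|+|\epsilon_j|\le(j-1)+(N-j)=|\alpha|-1$, and no second spatial derivative is ever created, since each commutator in \eqref{tan-cmtt-1} carries exactly one factor $\partial_{x_i}$. On the velocity side, writing $\mathscr{T}^{\epsilon_j}u=\sum_{\mu\le\epsilon_j}\Theta_\mu\,\mathcal{T}^\mu u$ (legitimate, as $\mathscr{T}_i-\mathcal{T}_i$ is of order zero) and absorbing $\mathcal{T}^{\beta'}$ gives a sum of $\mathcal{T}^{\sigma}u_l$ with $|\sigma|\le|\beta'|+|\epsilon_j|\le(|\delta_j|+1)+|\epsilon_j|=|\alpha|$. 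Collecting all $j,l,r$ and the finitely many resulting terms yields the representation of $\mathcal{J}_2$ stated in the lemma.

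The step I expect to be the main obstacle is the bookkeeping in this last stage: since $\partial_{x_i},\partial_{w^1},\partial_{w^2}$ do not commute (nor do the $\partial_{w^i}$ among themselves), one must verify that every commutator invoked — those of \eqref{tan-cmtt-1}, \eqref{tan-cmtt-2}, and those hidden in $\mathscr{T}_i-\mathcal{T}_i$ — only lowers the tangential order and never places a second normal derivative on $\p$. A secondary point is to justify Corollary~\ref{coro-tan-struc} near the corners, where the extracted factor $w^{i_j}_l$ is precisely the component vanishing on the leg $\gamma^n_{i_j}$ — exactly the hypothesis under which Lemma~\ref{lemma-observ}(2) was established.
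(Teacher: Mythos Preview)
Your proposal is correct and follows essentially the same route as the paper: telescope $\nabla\cdot\mathscr{T}^\alpha u-\mathcal{T}^\alpha\nabla\cdot u$ into a sum of $\mathcal{T}^{\cdots}\mathscr{H}_{i_j}(\mathscr{T}^{\cdots}u)$, peel off the matching factor $w^{i_j}_l$ from $\mathcal{T}^\alpha\p$ by writing $\mathcal{T}_{i_j}=\sum_l w^{i_j}_l\partial_l$, then invoke Corollary~\ref{coro-tan-struc} and the commutator identities \eqref{tan-cmtt-1}--\eqref{tan-cmtt-2}. The paper additionally reduces to $\alpha_0=0$ at the outset and groups the telescoping by the value of $i_j$ rather than by position, but these are purely cosmetic differences; your bookkeeping concerns about not producing a second normal derivative on $\p$ are exactly resolved by the form of \eqref{tan-cmtt-1}, as you anticipate.
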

\begin{proof}
    Since $\partial_t$ commutates with all $\partial_{w^i}, h(x,w^i)$, we assume without loss of generality that $\alpha_0 = 0$, so $|\alpha| = \alpha_1 + \alpha_2$, $\mathcal{T}^\alpha = \mathcal{T}_1^{\alpha_1}\mathcal{T}^{\alpha_2}_2$.

    First, one can inductively obtain
    \begin{equation*}
        \begin{aligned}
            &\nabla\cdot \mathscr{T}^\alpha u - \mathcal{T}^\alpha \nabla\cdot u \\
        = & \sum_{0\le \beta_2 \le \alpha_2 -1} \mathcal{T}^{\alpha_1}_1 \mathcal{T}^{\alpha_2-1-\beta_2}_2 \mathscr{H}_2 (\mathscr{T}^{\beta_2}_2 u) + \sum_{0\le \beta_1 \le \alpha_1-1} \mathcal{T}^{\alpha_1 - 1- \beta_1}_1 \mathscr{H}_1 (\mathscr{T}^{\beta_1}_1 \mathscr{T}^{\alpha_2}_2 u),
        \end{aligned}
    \end{equation*}
    where, by convention, any summation is understood to be zero if its range is empty.
    
    Then, $\mathcal{J}_2$ can be rewritten as
    \begin{equation*}
        \begin{aligned}
            \mathcal{J}_2 & = \sum_{0\le \beta_2 \le \alpha_2 -1}\sum_{i=1,2} 
            \inp*{\mathcal{T}_1^{\alpha_1}\left(w^2_i(x) \partial_{x_i}(\mathcal{T}_2^{\alpha_2-1} \p)\right)}{\mathcal{T}^{\alpha_1}_1 \mathcal{T}^{\alpha_2-1-\beta_2}_2 \mathscr{H}_2 (\mathscr{T}^{\beta_2}_2 u)}_\Omega\\
            & + \sum_{0\le \beta_1 \le \alpha_1-1}\sum_{i=1,2}\inp*{w^1_i(x)\partial_{x_i}\left(\mathcal{T}_1^{\alpha_1-1}\mathcal{T}_2^{\alpha_2} \p\right)}{\mathcal{T}^{\alpha_1 - 1- \beta_1}_1 \mathscr{H}_1 (\mathscr{T}^{\beta_1}_1 \mathscr{T}^{\alpha_2}_2 u)}_\Omega,
        \end{aligned}
    \end{equation*}
   which implies that $\mathcal{J}_2$ is a sum of terms in the form given in \eqref{form}, by using Corollary~\ref{coro-tan-struc}, \eqref{tan-cmtt-1} and \eqref{tan-cmtt-2}.
\end{proof}

Now, one can estimate the terms of the form given in \eqref{form} in the same way as for $\mathcal{J}_1$. 
Therefore, we conclude that $\mathcal{J}_2$ is also controlled by the right-hand side of \eqref{cmtt-J1}.

Plugging the estimates of $\mathcal{J}_1, \mathcal{J}_2$, and
\eqref{tan''-1}, \eqref{tan''-2} into \eqref{tan''}, it follows that for any $\alpha\in \mathbb{N}^3$ with $|\alpha|\le m$,

\begin{equation}\label{est-tan}
\begin{aligned}
&\frac{\d}{\d t} \int_{\Omega} R|\mathscr{T}^\alpha u|^2 + |\mathscr{T}^\alpha b|^2 + \frac{1}{Q}|\mathcal{T}^\alpha \p - B\cdot \mathscr{T}^\alpha b|^2 + |\mathcal{T}^\beta s|\d x\\ &
\le \sum_{|\beta|\le |\alpha|}\|\mathbb{T}^\beta z\|_0\cdot \big( \eta_m({F},Z,t) + \eta_m({z},Z,t) \big)
\end{aligned}
\end{equation}
Now we take the sum of all $\alpha\in \mathbb{N}^3, |\alpha|\le m$, and use the Gronwall inequality for \eqref{est-tan} to conclude

\begin{prop}
For a fixed integer $m\ge 7$, assume that the smooth background state $Z=(U, B, P, S)^t$ satisfies the hypothesis \eqref{assum-Z}, and the compatibility conditions up to order $m-1$ are satisfied for the problem \eqref{linear}-\eqref{icd-linear}. Then, the solution $z=(u,b,\p,s)$ of \eqref{linear}-\eqref{icd-linear} satisfies the following estimate
\begin{equation}\label{est-tan''}
\sum_{|\alpha|\le m} \|\mathbb{T}^\alpha z(t)\|_0 \lesssim \sum_{|\alpha|\le m} \|\mathbb{T}^\alpha z(0)\|_0  + \int_0^t \big( \eta_m({F},Z,\tau) + \eta_m({z},Z,\tau) \big) \d \tau,
\end{equation}
for all $0\le t\le T$.

\end{prop}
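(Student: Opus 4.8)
The plan is to integrate the differential inequality \eqref{est-tan} in time and sum over all multi-indices $\alpha\in\mathbb{N}^3$ with $|\alpha|\le m$. First I would fix such an $\alpha$ and integrate \eqref{est-tan} from $0$ to $t$; since the left-hand side is a time derivative of the quantity
\[
E_\alpha(t):=\int_\Omega R|\mathscr{T}^\alpha u|^2+|\mathscr{T}^\alpha b|^2+\tfrac1Q|\mathcal{T}^\alpha\p-B\cdot\mathscr{T}^\alpha b|^2+|\mathcal{T}^\alpha s|^2\,\d x,
\]
this yields $E_\alpha(t)\le E_\alpha(0)+\int_0^t\sum_{|\beta|\le|\alpha|}\|\mathbb{T}^\beta z\|_0\,(\eta_m(F,Z,\tau)+\eta_m(z,Z,\tau))\,\d\tau$. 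The key point is that $E_\alpha$ is comparable, uniformly in $t$, to $\sum_{|\beta|\le|\alpha|}\|\mathbb{T}^\beta z\|_0^2$: the matrix $S_0A_0(Z)$ appearing in $E_\alpha$ (via the expansion $\inp*{z}{S_0A_0z}=R|u|^2+|b|^2+\tfrac1Q|\p-B\cdot b|^2+|s|^2$) is positive definite with bounds depending only on the compact set $\mathcal{K}$, by the assumption \eqref{assum-Z}; hence the change of variables $(\mathscr{T}^\alpha u,\mathscr{T}^\alpha b,\mathcal{T}^\alpha\p,\mathcal{T}^\alpha s)\mapsto(\mathscr{T}^\alpha u,\mathscr{T}^\alpha b,\mathcal{T}^\alpha\p-B\cdot\mathscr{T}^\alpha b,\mathcal{T}^\alpha s)$ is invertible with constants controlled by $\mathcal{K}$. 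Thus $E_\alpha(t)\simeq\|\mathbb{T}^\alpha z(t)\|_0^2$ up to lower-order tangential norms, and summing over $|\alpha|\le m$ gives
\[
\sum_{|\alpha|\le m}\|\mathbb{T}^\alpha z(t)\|_0^2\lesssim\sum_{|\alpha|\le m}\|\mathbb{T}^\alpha z(0)\|_0^2+\int_0^t\Big(\sum_{|\alpha|\le m}\|\mathbb{T}^\alpha z(\tau)\|_0\Big)\big(\eta_m(F,Z,\tau)+\eta_m(z,Z,\tau)\big)\,\d\tau.
\]

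Next I would set $\mathcal{E}(t):=\sum_{|\alpha|\le m}\|\mathbb{T}^\alpha z(t)\|_0$ and recall from \eqref{fctnl} and Remark~\ref{remark2.1} that $\mathcal{E}(t)\simeq\normmm{z(t)}_{m,*}$; in particular $\eta_m(z,Z,\tau)$, as defined in \eqref{eta-def}, is controlled by $P(M_m(Z,\tau))\,\mathcal{E}(\tau)$ (for $m=7$), or by $\sum_{i=0,1}P(M_{m-i}(Z,\tau))\normmm{z(\tau)}_{m-1+i,*}$ (for $m\ge 8$), which in either case is dominated by a factor $P(M_m(Z,\tau))$ times $\mathcal{E}(\tau)$ plus a term measured in a strictly lower-order norm that is already bounded by the inductive hypothesis. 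The inequality above has the form $\mathcal{E}(t)^2\lesssim\mathcal{E}(0)^2+\int_0^t\mathcal{E}(\tau)\big(\eta_m(F,Z,\tau)+\eta_m(z,Z,\tau)\big)\,\d\tau$; dividing by $\mathcal{E}(t)$ after a routine argument (replace $\mathcal{E}$ by $\sqrt{\mathcal{E}^2+\varepsilon}$, differentiate, let $\varepsilon\to 0$) converts the quadratic Gronwall inequality into the linear form $\mathcal{E}(t)\lesssim\mathcal{E}(0)+\int_0^t\big(\eta_m(F,Z,\tau)+\eta_m(z,Z,\tau)\big)\,\d\tau$, which is exactly \eqref{est-tan''}.

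I do not expect a serious obstacle here, since all the hard analysis—controlling the commutators $[A_1^I\partial_1+A_2^I\partial_2,\mathbb{T}^\alpha]z$ via Lemma~\ref{lemma-Udot}, and the delicate ``borrowing'' of a multiplier that tames the normal-derivative term $\mathcal{J}_2$ via Lemma~\ref{lemma-J2} and Corollary~\ref{coro-tan-struc}—has already been carried out to produce \eqref{est-tan}. The only mild technical care needed is in the passage from the quadratic to the linear Gronwall inequality (the $\sqrt{\mathcal{E}^2+\varepsilon}$ trick), and in verifying that the lower-order pieces hidden inside $\eta_m$ when $m\ge 8$ are genuinely absorbed—but these do not affect the stated estimate \eqref{est-tan''}, which only asserts the bound for $\sum_{|\alpha|\le m}\|\mathbb{T}^\alpha z\|_0$ in terms of the data and the two $\eta_m$ integrals, and will be sharpened into \eqref{apriori}--\eqref{apriori-fine} only after the normal-derivative estimates are incorporated.
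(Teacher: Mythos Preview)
Your approach is correct and coincides with the paper's: sum \eqref{est-tan} over $|\alpha|\le m$, use the uniform positive definiteness of $S_0A_0(Z)$ on $\mathcal{K}$ to get $\sum_\alpha E_\alpha\simeq\big(\sum_\alpha\|\mathbb{T}^\alpha z\|_0\big)^2$, and pass from the quadratic differential inequality to the linear integral form via the $\sqrt{\,\cdot+\varepsilon\,}$ (or $\sup$-in-time) trick---exactly what the paper compresses into ``take the sum \ldots\ and use the Gronwall inequality.'' One small clarification: your paragraph about $\eta_m(z,Z,\tau)$ being controlled by $P(M_m)\,\mathcal{E}(\tau)$ is both unnecessary and slightly off, since $\eta_m(z,Z,\tau)$ involves the full anisotropic norm $\normmm{z(\tau)}_{m,*}$ (with normal derivatives), not just the tangential quantity $\mathcal{E}(\tau)$; but as you correctly note at the end, \eqref{est-tan''} keeps $\eta_m$ intact on the right-hand side, so no absorption is required here and the argument goes through as written.
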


\subsubsection{Estimates of normal derivatives}

The proposal of this subsection is to estimate the normal derivatives of solutions to the linearized problem
\eqref{linear}-\eqref{icd-linear}.
First, we recall a result from \cite{godin20212d} and \cite{grisvard1985elliptic}.
    \begin{lemma}\label{lemma-Hodge}
        Let $\Omega$ be a bounded subdomain of $\R^2$ with piecewisely smooth boundary and finitely many corners $\{q_n\}_{n=1}^N$ on $\partial\Omega$ with inner angles being $\{\omega_n\}_{n=1}^N$ correspondingly. If for a fixed $s\ge 1$,  $0<\omega_n<\frac{\pi}{s}$ holds for all $1\le n\le N$, then there is a constant $C>0$, such that the following estimate
        \begin{equation}\label{Hodge-est}
            \|X\|_{s} \le C(\|X\|_{0} + \|\nabla \cdot X\|_{s-1} + \|\nabla\times X\|_{s-1})
        \end{equation}
       holds for any field $X \in H^s(\Omega)$ satisfying $X\cdot \nu = 0$ on $\partial \Omega\backslash\{q_1, \ldots,q_N\}$. If $\Omega$ is simply connected, the term $\|X\|_0$ may be omitted on the right-hand side of \eqref{Hodge-est}.
    \end{lemma}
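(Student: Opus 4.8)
The plan is to deduce \eqref{Hodge-est} from the classical elliptic regularity theory for the Laplacian on plane domains with corners, by means of a Hodge decomposition of $X$ into a gradient part and a rotated‑gradient part whose scalar potentials solve, respectively, a Neumann and a Dirichlet Poisson problem. The hypothesis $0<\omega_n<\pi/s$ will enter exactly as the condition that excludes the corner singularities $r^{k\pi/\omega_n}$ ($k\ge1$) from $H^{s+1}$.

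First I would set $f:=\nabla\cdot X\in H^{s-1}(\Omega)$ and $g:=\nabla\times X\in H^{s-1}(\Omega)$, and let $\psi$ solve the Dirichlet problem $\Delta\psi=g$ in $\Omega$, $\psi=0$ on $\partial\Omega$, and $\phi$ solve the Neumann problem $\Delta\phi=f$ in $\Omega$, $\partial_\nu\phi=0$ on $\partial\Omega\setminus\{q_1,\ldots,q_N\}$, normalized by $\int_\Omega\phi\,\d x=0$ — the Neumann problem being solvable because $\int_\Omega f\,\d x=\int_{\partial\Omega}X\cdot\nu=0$ by the boundary condition on $X$. Writing $\nabla^\perp\psi:=(\partial_2\psi,-\partial_1\psi)^t$ and $Y:=X-\nabla\phi-\nabla^\perp\psi$, I would check that $\nabla\cdot\nabla^\perp\psi=0$, $\nabla\times\nabla^\perp\psi=\Delta\psi=g$, and that $\nabla^\perp\psi\cdot\nu=\partial_\tau\psi=0$ on the smooth part of $\partial\Omega$ (since $\psi$ vanishes there), so that $\nabla\cdot Y=0$, $\nabla\times Y=0$ and $Y\cdot\nu=0$ on $\partial\Omega\setminus\{q_1,\ldots,q_N\}$. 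As $\Omega$ is simply connected, a harmonic field tangent to the boundary must vanish, hence $Y\equiv0$ and $X=\nabla\phi+\nabla^\perp\psi$; in the general case $Y$ lies in the finite‑dimensional space of such harmonic fields, on which all norms are equivalent, which is precisely what produces the term $\|X\|_0$ in \eqref{Hodge-est}.

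Next I would invoke Grisvard's regularity theorem: the solution of the Dirichlet (resp.\ homogeneous Neumann) Poisson problem with right‑hand side in $H^{s-1}(\Omega)$ splits near each corner $q_n$ as an $H^{s+1}$ term plus a finite linear combination of the singular modes $r^{\lambda}\Xi_\lambda(\theta)$ with $\lambda\in\{k\pi/\omega_n:k\ge1\}$ (the additional $k=0$ Neumann mode being the constant, already removed by the normalization). Since $r^{\lambda}\Xi_\lambda\in H^{s+1}$ near $q_n$ exactly when $\lambda>s$, and $\omega_n<\pi/s$ forces $k\pi/\omega_n\ge\pi/\omega_n>s$ for every $k\ge1$, the singular parts are absent and $\psi,\phi\in H^{s+1}(\Omega)$ with $\|\psi\|_{s+1}\le C\|g\|_{s-1}$ and $\|\phi\|_{s+1}\le C\|f\|_{s-1}$ (the last bound using that, with the zero‑mean normalization, the Neumann solution operator is bounded between the mean‑zero subspaces, after absorbing the elementary energy estimate $\|\nabla\phi\|_0\lesssim\|f\|_0$). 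To conclude, I would use $\psi=0$, $\partial_\tau\psi=0$ on the smooth part of $\partial\Omega$ and $\nabla\cdot\nabla^\perp\psi=0$ to get, by integration by parts, the $L^2$‑orthogonality $\langle\nabla\phi,\nabla^\perp\psi\rangle_{L^2(\Omega)}=0$ (and similarly $Y\perp\nabla\phi$, $Y\perp\nabla^\perp\psi$, using $\psi=0$ on $\partial\Omega$); hence $\|X\|_s\le\|\nabla\phi\|_s+\|\nabla^\perp\psi\|_s\le\|\phi\|_{s+1}+\|\psi\|_{s+1}\lesssim\|\nabla\cdot X\|_{s-1}+\|\nabla\times X\|_{s-1}$ when $\Omega$ is simply connected, while in the general case one adds $\|Y\|_s\le C\|Y\|_0\le C\|X\|_0$ to recover \eqref{Hodge-est}.

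The hard part will be the elliptic‑regularity step: establishing $H^{s+1}$ regularity up to the corners under the sharp angle threshold. There one must verify that the relevant singular exponents are $k\pi/\omega_n$ for both the Dirichlet and the Neumann conditions, that the cutoff for $H^{s+1}$‑membership is the strict inequality $\lambda>s$, and that the constants are uniform. Everything else — constructing the decomposition, transferring the boundary conditions from $X$ to its potentials, and the orthogonality and Poincaré bookkeeping — is routine.
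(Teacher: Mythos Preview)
The paper does not give its own proof of this lemma; it simply cites it as a known result from Godin \cite{godin20212d} and Grisvard \cite{grisvard1985elliptic}. Your proposal reconstructs exactly the standard argument behind those references: decompose $X=\nabla\phi+\nabla^{\perp}\psi+Y$ via the Neumann and Dirichlet Poisson problems, use Grisvard's corner regularity under the sharp threshold $\omega_n<\pi/s$ to place $\phi,\psi\in H^{s+1}$, and handle the harmonic remainder $Y$ by finite-dimensionality (or by simple connectedness). This is precisely the route the paper itself follows later in Proposition~\ref{prop-D-C} when it builds the inverse div--curl map out of $\Delta_D^{-1}$ and $\Delta_N^{-1}$, so your approach is both correct and in line with the paper's framework.
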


    \begin{remark}
        Note that the assumption $\omega_n < \frac{\pi}{s}$ in the lemma is necessary; otherwise, some counterexamples were constructed in  \cite{grisvard1985elliptic}.
    \end{remark}
    Now, the main aim is to establish the following result.

    \begin{prop}\label{2.2}
    Under the same assumptions as given in Theorem \ref{thm-apriori}, the solution 
$z=(u,b,\p,s)$ of the problem \eqref{linear}-\eqref{icd-linear} satisfies the following estimates
    \begin{equation}\label{est-normal''}
        \begin{aligned}
            &\sum_{1\le k\le [\frac{m}{2}]}\sum_{|\beta|\le m-2k} \left(\|\mathcal{T}^\beta \p(t)\|_{k} + \|\mathscr{T}^\beta(u,b)(t)\|_{k}\right)
            \\
            & \quad \lesssim 
            \begin{cases}
            \phi(M_{m}(Z,t))\big(\Lambda_{m}({z},{F},t) +
            \int_0^t \normmm{{z}(\tau)}_{m,*}\d \tau \big), 
            &\text{if }m\ge 7,\\
            \begin{aligned}
                &\phi_1(M_{m-1}(Z,t))\left(\Lambda_m({z},{F},t) + \int_0^t \normmm{{z}(\tau)}_{m,*} \d \tau\right)\\
                & \quad + \phi_2(M_{m}(Z,t))\int_0^t\left(\normmm{{z}(\tau)}_{m-1,*} + \normmm{{F}(\tau)}_{m-1,*}\right)\d\tau,
            \end{aligned}
            & \text{if }m\ge 8,
        \end{cases}
        \end{aligned}
    \end{equation}
for all $0\le t\le T$, with the same notations as in Theorem \ref{thm-apriori}.
        
    \end{prop}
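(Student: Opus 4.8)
The plan is to run an induction on the normal order $k$, from $k=0$ up to $k=[\tfrac{m}{2}]$. For $k=0$ the asserted bound is exactly the tangential estimate \eqref{est-tan''} of the previous proposition; assuming it for all normal orders $\le k-1$, I estimate $\|\mathcal{T}^\beta\p(t)\|_k$ and $\|\mathscr{T}^\beta(u,b)(t)\|_k$ for every $\beta\in\mathbb{N}^3$ with $|\beta|\le m-2k$. The mechanism is uniform: one trades a unit of normal regularity for the divergence and curl via Lemma~\ref{lemma-Hodge}, applicable to $\mathscr{T}^\beta u$ and $\mathscr{T}^\beta b$ since these satisfy $\mathscr{T}^\beta u\cdot\nu=\mathscr{T}^\beta b\cdot\nu=0$ on $\partial\Omega\backslash\{q_1,\ldots,q_N\}$ (by the boundary-condition--preserving property of $\mathscr{T}$ recorded before \eqref{fctnl}) and since $\omega_n<\tfrac{\pi}{[\frac{m}{2}]}\le\tfrac{\pi}{k}$; one then reads off $\nabla\cdot u$, $\nabla\p$, $\nabla\cdot b$, $\nabla\times u$, $\nabla\times b$ from the structure of \eqref{linear}.

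I would first bound $u$ and $b$ at level $k$. Lemma~\ref{lemma-Hodge} (dropping $\|\cdot\|_0$ by simple connectedness) reduces this to controlling $\|\nabla\cdot\mathscr{T}^\beta u\|_{k-1}$, $\|\nabla\times\mathscr{T}^\beta u\|_{k-1}$, $\|\nabla\cdot\mathscr{T}^\beta b\|_{k-1}$ and $\|\nabla\times\mathscr{T}^\beta b\|_{k-1}$. The third equation of \eqref{linear} gives $\nabla\cdot u=F_3-\tfrac1Q(\partial_t+U\cdot\nabla)\p+\tfrac1Q B\cdot(\partial_t+U\cdot\nabla)b$ algebraically, so after applying $\mathscr{T}^\beta$ and taking the $H^{k-1}$ norm this is a tangential expression in $\p,b$ of $*$-order $\le 2(k-1)+(|\beta|+1)\le m-1$, already available at level $k-1$, together with commutators $[\nabla\cdot,\mathscr{T}^\beta]u$ which by \eqref{tan-cmtt-1}--\eqref{tan-cmtt-2} carry strictly fewer tangential derivatives and are absorbed by a secondary finite induction on $|\beta|$. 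For the curls and $\nabla\cdot b$, I would — following the strategy sketched around \eqref{normal-sys} — take the curl of the first two equations and the divergence of the second, substituting $\nabla\cdot u$ from the third and $\nabla\p$ from the first; this produces a closed first-order transport system \eqref{normal-sys} for $(\nabla\times u,\nabla\times b)$, of symmetric-hyperbolic type in the coupling $B\cdot\nabla$ (hence amenable to an $H^{k-1}$ energy estimate using $B\cdot\nu=0$ exactly as in Lemma~\ref{lemma-L2}), together with a scalar transport equation for $\nabla\cdot b$. Every source term is then tangential in $(u,b,\p)$ of $*$-order $\le 2$ plus derivatives of $F$, except the one surviving term $\tfrac1Q B\times\bigl(R(\partial_t+U\cdot\nabla)^2u\bigr)$ in the $\nabla\times b$ equation (the last term of $G_3$ in \eqref{normal-sys}); this is a second-order tangential quantity in $u$, so after $\mathscr{T}^\beta$ and in $H^{k-1}$ it has $*$-order $\le 2+|\beta|+2(k-1)\le m$ — precisely the reason the budget $|\beta|\le m-2k$ (two tangential derivatives per normal derivative) is forced, and the source of the term $\int_0^t\normmm{z(\tau)}_{m,*}\,\d\tau$ on the right-hand side. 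Applying $\mathscr{T}^\beta\partial_x^{\gamma}$ ($|\gamma|\le k-1$) to the transport system, estimating the resulting commutators by Lemma~\ref{lemma-moser}, Corollary~\ref{coro-moser} and Lemma~\ref{lemma-Udot}, and closing with Gronwall's inequality (the order-one-in-$(u,b)$ source terms being absorbed via Lemma~\ref{lemma-Hodge}) yields the bound for $\|\nabla\cdot\mathscr{T}^\beta b\|_{k-1}+\|\nabla\times\mathscr{T}^\beta u\|_{k-1}+\|\nabla\times\mathscr{T}^\beta b\|_{k-1}$, hence, via Lemma~\ref{lemma-Hodge} once more, for $\|\mathscr{T}^\beta(u,b)\|_k$.

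Next I would bound the pressure: $\|\mathcal{T}^\beta\p\|_k\lesssim\|\mathcal{T}^\beta\p\|_0+\sum_{i=1,2}\|\partial_i\mathcal{T}^\beta\p\|_{k-1}$ (the term $\|\mathcal{T}^\beta\p\|_0$ being covered by \eqref{est-tan''}), and after commuting $\partial_i$ past $\mathcal{T}^\beta$ via \eqref{tan-cmtt-1} one substitutes $\partial_i\p=(F_1)_i-R(\partial_t+U\cdot\nabla)u_i+(B\cdot\nabla)b_i$ from the first equation of \eqref{linear}; the resulting $\mathscr{T}^\beta$-derivatives in $H^{k-1}$ involve only $u,b$ at normal level $\le k-1$ together with $F$, so no circularity arises (within step $k$ one estimates $u,b$ before $\p$, and $\p$ at level $k$ never calls on $u,b$ at level $k$). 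Summing over $|\beta|\le m-2k$ and over $k=1,\ldots,[\tfrac{m}{2}]$ and invoking the induction hypothesis gives \eqref{est-normal''} for $m\ge 7$. For the refined estimate when $m\ge 8$ one repeats the same steps with the sharper product and commutator bounds, namely Corollary~\ref{coro-moser}(2),(4) and the $s\ge 8$ alternatives in Lemma~\ref{lemma-Udot}, arranging that whenever a factor of $Z$ carries $m$ derivatives it multiplies only an $(m-1)$-th order norm of $(z,F)$; this produces the split $\phi_1(M_{m-1}(Z,t))(\cdots)+\phi_2(M_m(Z,t))\int_0^t(\normmm{z(\tau)}_{m-1,*}+\normmm{F(\tau)}_{m-1,*})\,\d\tau$.

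The main obstacle is the curl part: one must carry out the algebra that turns $\nabla\times$ of \eqref{linear} into the closed transport system \eqref{normal-sys}, exhibiting the cancellations that eliminate all genuinely normal derivatives except the single source $\tfrac1Q B\times(R(\partial_t+U\cdot\nabla)^2u)$, and then verify that this surviving term — the structural reason for working in $H^m_*$ rather than $H^m$ — stays within the prescribed $*$-order budget. The surrounding bookkeeping — the nested induction on $(k,|\beta|)$, the absorption of the many lower-order commutators produced by \eqref{tan-cmtt-1}--\eqref{tan-cmtt-2}, and the self-consistent use of Lemma~\ref{lemma-Hodge} to close the order-one source terms in the curl system — is routine but must be arranged so that the $H^{k-1}$ data and sources invoke only quantities already estimated.
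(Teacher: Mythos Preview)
Your proposal is correct and captures all the essential structural ingredients of the paper's proof: the use of Lemma~\ref{lemma-Hodge} on $\mathscr{T}^\beta u$ and $\mathscr{T}^\beta b$ (legitimized by the $\mathscr{T}$-preservation of the boundary conditions), the algebraic recovery of $\nabla\cdot u$ and $\nabla\p$ from the third and first equations of \eqref{linear}, the derivation of the coupled transport system \eqref{vort} for the curls together with the scalar transport equation \eqref{eq-div-b} for $\nabla\cdot b$, the identification of $\tfrac1Q B\times(R D_t^2 u)$ as the unique second-order tangential source forcing the budget $|\beta|\le m-2k$, and the Moser/commutator machinery (Corollary~\ref{coro-moser}, Lemma~\ref{lemma-Udot}) that splits off the refined $m\ge 8$ case.

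The paper, however, does \emph{not} organize the argument as an induction on $k$. It treats all pairs $(k,\beta)$ with $2k+|\beta|\le m$ simultaneously: the estimates \eqref{grad-p} and \eqref{div-u} for $\p$ and $\nabla\cdot u$ are obtained \emph{pointwise in $t$}, with right-hand side $\normmm{z(t)}_{m-1,*}+\normmm{F(t)}_{m-2,*}$ (no transport/Gronwall step); only the curl and $\nabla\cdot b$ pieces go through an $L^2$ energy estimate on the $\partial_x^{k-1}\mathscr{T}^\beta$-differentiated transport system, yielding \eqref{div-b''} and \eqref{curl-ub''}. All source and commutator terms are bounded crudely by $\eta_m(z,Z,t)$, which already contains the full $\normmm{z(t)}_{m,*}$---so there is no attempt to ``absorb the order-one source terms via Lemma~\ref{lemma-Hodge}'' at each level as you propose. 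The resulting pointwise $\normmm{z(t)}_{m-1,*}$ terms are then converted to $\Lambda_m+\int_0^t\normmm{z}_{m,*}$ by the elementary inequality $\normmm{f(t)}_{s-1,*}\le\normmm{f(0)}_{s-1,*}+\int_0^t\normmm{f(\tau)}_{s,*}\,\d\tau$ in step~(5). Your induction buys a cleaner conceptual picture but costs extra bookkeeping; the paper's approach is shorter because it defers all closure to the final Gronwall in the proof of Theorem~\ref{thm-apriori}.
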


\begin{proof}
(1) \underline{\it Estimate on $\p$}. First, by using the momentum equation given in \eqref{linear}, we have
    \begin{equation}\label{2.47}
    \begin{aligned}
        \|\mathcal{T}^\beta \p(t)\|_{k} & \lesssim 
        \|\mathcal{T}^\beta \p(t)\|_{k-1} + \|[\nabla,\mathcal{T}^\beta]\p(t)\|_{k-1} + \|\mathcal{T}^\beta \nabla \p(t)\|_{k-1}\\
        & \lesssim \normmm{\p(t)}_{m-1,*} + \|\mathcal{T}^\beta\big(F_1 + B\cdot \nabla b - R D_t u\big)(t)\|_{k-1}
        \\
        & \lesssim \normmm{\p(t)}_{m-1,*} + \normmm{{F}(t)}_{m-2,*}
        + \normmm{B\cdot \nabla b(t)}_{m-2,*} + \normmm{R D_t u(t)}_{m-2,*}
    \end{aligned}
    \end{equation}
where the notation
$$D_t = \partial_t + U\cdot \nabla$$
shall be used in the following calculation frequently for simplicity.

     If $m\ge 8$, setting $s=m-1\ge 7$, $i=j=1$ in Lemma~\ref{lemma-Udot}, then $\normmm{B\cdot \nabla b(t)}_{m-2,*}\lesssim \normmm{B(t)}_{m-1,*} \normmm{b(t)}_{m-1,*}$, while $m=7$, writing $B\cdot\nabla = \bar{B}_1 \partial_{w^1} + \bar{B}_2 \partial_{w^2}$ as in Lemma~\ref{lemma-tangential}, then by Lemma~\ref{lemma-moser}, $\normmm{B\cdot \nabla b(t)}_{5,*} \lesssim \normmm{B(t)}_{7,*}\normmm{b(t)}_{6,*}$.
    The analysis of $\normmm{R D_t u(t)}_{m-2,*}$ is similar. Thus, we conclude
\begin{equation}\label{grad-p}
    \begin{aligned}
        \|\mathcal{T}^\beta \p(t)\|_{k} \lesssim \normmm{\p(t)}_{m-1,*} + \normmm{{F}(t)}_{m-2,*}
        +
        P(\normmm{Z(t)}_{\max{(m,8)}-1,*}) \normmm{{z}(t)}_{m-1,*}.
    \end{aligned}
\end{equation}

(2) \underline{\it Estimate on the divergence of $u$}.
From  the continuity equation given in \eqref{linear}, we see that 
the divergence of $u$ is represented by the tangential derivatives of $b, \p$, so one has
    \begin{equation}\label{div-u}
    \begin{aligned}
        &\|\nabla\cdot \mathscr{T}^\beta  u(t)\|_{k-1} \le 
        \|[\nabla\cdot, \mathscr{T}^\beta] u(t)\|_{k-1} + \|\mathscr{T}^\beta (\nabla\cdot u)(t)\|_{k-1}
        \\
        &\lesssim \normmm{u(t)}_{m-1,*} + \|\mathscr{T}^\beta\big(F_3 + \frac{1}{Q} B\cdot D_t b - \frac{1}{Q} D_t \p\big)(t)\|_{k-1}\\
        &\le \normmm{u(t)}_{m-1,*} + \normmm{F_3(t)}_{m-2,*} + \normmm{\frac{1}{Q} B\cdot D_t b(t)}_{m-2,*} + \normmm{\frac{1}{Q}D_t\p(t)}_{m-2,*}\\
        &\lesssim \normmm{u(t)}_{m-1,*} + \normmm{F_3(t)}_{m-2,*} +
        P(\normmm{Z(t)}_{\max{(m,8)}-1,*}) \normmm{{z}(t)}_{m-1,*}.
    \end{aligned}
    \end{equation}
    where in the last inequality we have used the same idea as above in estimating the last two terms of the right hand side of \eqref{2.47}.

(3) \underline{\it Estimate on the divergence of $b$}.

   Taking the divergence operator  $\nabla\cdot$ on both sides of the second equation given  in \eqref{linear}, it follows
    \begin{equation}\label{eq-div-b}
    \begin{aligned}
         D_t(\nabla\cdot b) = &\sum\limits_{i,j=1}^2(\partial_i B_j\partial_j u_i - \partial_i U_j \partial_j b_i)- (\nabla\cdot B)(\nabla\cdot u) \\
         & + \nabla\cdot  F_2
         + \nabla\cdot \big( h(x,B)u-h(x,U)b \big),
    \end{aligned}
    \end{equation}
    in which the right-hand side contains at most one order  derivatives of $u$ and $b$.
    By applying $\partial_x^{k-1}\mathscr{T}^{\beta}$ 
     with $|\beta|+2k\le m$,
    on both sides of \eqref{eq-div-b}, it gives
    \begin{equation}\label{tan-div-b}
        D_t(\nabla\cdot\partial_x^{k-1}\mathscr{T}^\beta b) = \partial_x^{k-1}\mathscr{T}^\beta (\text{RHS of \eqref{eq-div-b}}) - D_t(\partial_x^{k-1}[\mathscr{T}^\beta,\nabla\cdot]b) + [D_t,\partial_x^{k-1}\mathscr{T}^\beta](\nabla\cdot b),
    \end{equation}
   from which we deduce \begin{equation}\label{dt-div-b}
        \begin{aligned}
            \frac{\d}{2\d t} \int_\Omega |\nabla\cdot \partial_x^{k-1}\mathscr{T}^\beta b(t)|^2 \d x \le & \frac{1}{2} \int_\Omega (\nabla\cdot U)|\nabla\cdot\partial_x^{k-1}\mathscr{T}^\beta b(t)|^2 \d x\\
            & + \|\nabla\cdot \partial_x^{k-1}\mathscr{T}^\beta b(t)\|_0\Big(
            \|\partial_x^{k-1}\mathscr{T}^\beta (\text{RHS of \eqref{eq-div-b}})(t)\|_0\\
            & + \|D_t(\partial_x^{k-1}[\mathscr{T}^\beta,\nabla\cdot]b)(t)\|_0 + \|[D_t,\partial_x^{k-1}\mathscr{T}^\beta](\nabla\cdot b)(t)\|_0\Big),
        \end{aligned}
    \end{equation}
     by using the standard $L^2$ energy method.

    By applying Lemma~\ref{lemma-tangential}, with the notation $U_i\partial_i = V_i \partial_{w^i}$, one has
    \begin{equation*}
         \begin{aligned}
             \|(\partial_t + U\cdot \nabla)(\partial_x^{k-1}[\mathscr{T}^\beta,\nabla\cdot]b)(t)\|_0 & \le \|\partial_t \partial_*^{m-1} b(t)\|_0 + \sum_{i=1}^2\|V_i(t)\|_{L^\infty(\Omega)}\|\partial_{w^i} \partial_*^{m-1}b(t)\|_0\\
             &\le P(\normmm{U(t)}_{6,*}) \normmm{b(t)}_{m,*}.
         \end{aligned}
    \end{equation*}
    By using  Lemma~\ref{lemma-Udot}(1) with $s=j=m-2$, we have
    \begin{equation*}
        \|[\partial_t + U\cdot\nabla,\partial_x^{k-1}\mathscr{T}^\beta](\nabla\cdot b)(t)\|_0 \lesssim 
            \begin{cases}
                \normmm{U(t)}_{7,*} \normmm{\nabla\cdot b (t)}_{m-2,*} & \text{if }m\ge 7,\\
                \sum_{i=0,1} \normmm{U(t)}_{m-2-i,*}\normmm{\nabla\cdot b(t)}_{m-3+i,*} & \text{if }m\ge 8.
            \end{cases}
    \end{equation*}
    Use Lemma~\ref{lemma-moser} again to estimate $\int_\Omega (\nabla\cdot U)|\nabla\cdot\partial_x^{k-1}\mathscr{T}^\beta b(t)|^2 \d x$ and $\|\partial_x^{k-1}\mathscr{T}^\beta (\text{RHS of \eqref{eq-div-b}})(t)\|_0$. 
    Thus, from \eqref{dt-div-b} we conclude
    \begin{equation}\label{div-b}
        \frac{\d}{\d t} \int_\Omega |\nabla\cdot \partial_x^{k-1}\mathscr{T}^\beta b(t)|^2\d x \lesssim \|\nabla\cdot \partial_x^{k-1}\mathscr{T}^\beta b(t)\|_0 \big(\normmm{{F}(t)}_{m,*} + \eta_m({z},Z,t)\big),
    \end{equation}
    with $\eta_m({z},Z,t)$ being given in \eqref{eta-def}. From \eqref{div-b}, we immediately obtain
\begin{equation}\label{div-b''}
    \normmm{(\nabla\cdot b)(t)}_{m-2,*} \lesssim \normmm{(\nabla\cdot b)(0)}_{m-2,*} + \int_0^t \big(\normmm{{F}(\tau)}_{m,*} + \eta_m({z},Z,\tau)\big) \d \tau.
    \end{equation}

(4) \underline{\it Estimate on curls of $u$ and $b$}.

On the other hand, from Lemma~\ref{lemma-Hodge} one has
    \begin{equation}\label{dcsys}
        \|\mathscr{T}^\beta (u,b)(t)\|_{k} \lesssim \|\mathscr{T}^\beta (u,b)(t)\|_0 + \|\nabla\cdot \mathscr{T}^\beta (u,b)(t)\|_{k-1} + \|\nabla\times \mathscr{T}^\beta (u,b)(t)\|_{k-1},
    \end{equation}
    where we have used that $\mathscr{T}^\beta u\cdot \nu = \mathscr{T}^\beta b\cdot \nu = 0$ on $\partial\Omega\backslash\{q_1,\ldots,q_N\}$ from \eqref{bcd-linear} and \eqref{b-bcd-1}.

Thus, to study $\|\mathscr{T}^\beta (u,b)(t)\|_{k}$ from \eqref{dcsys}, by using the above estimates of the divergence of  $u$ and $b$, it suffices to estimate the curls of $u$ and $b$. 
   For that, inspired by the approach given in \cite{wang2025incompressible},
    we shall use the special structure contained in the linearized system \eqref{linear}  to study these curls.
    
    Denote by $E(u), E(b), E(\p)$ the first three equations of \eqref{linear} respectively. By computing $\nabla\times E(u)$ and $\nabla \times E(b) - \nabla\times (B E(\p)) - \frac{1}{Q} B\times D_t E(u)$, it follows that $(\nabla\times u, \nabla\times b)^t$ satisfies the following symmetric hyperbolic system:
    \begin{equation}\label{vort}
        \begin{cases}
            R D_t (\nabla\times u) - B\cdot\nabla (\nabla \times b) = \tilde{F}_1,\\
            (1+\frac{1}{Q}|B|^2) D_t(\nabla \times b) - B\cdot \nabla (\nabla\times u) = \tilde{F}_2,
        \end{cases}
    \end{equation}
    where
    \begin{equation}\label{F1,2}
        \begin{aligned}
        \tilde{F}_1 = & \nabla\times F_1 + R[D_t,\nabla\times]u - [B\cdot\nabla, \nabla\times] b - (\nabla R\times D_t u),\\
            \tilde{F}_2  = & \nabla\times F_2 - \nabla\times (B F_3) - \frac{1}{Q} B\times D_t F_1 
            -[\nabla\times, D_t]b + [\nabla\times, B\cdot \nabla] u \\
            & + \nabla\times \left(h(x,B)u-h(x,U)b\right)
            + \frac{1}{Q}(\nabla\times B) (D_t \p - B\cdot D_t b) - B\times [\nabla, \frac{1}{Q} D_t] \p\\
            & + B\times [\nabla, \frac{1}{Q} B\cdot] D_t b
            + \sum_{j=1,2} \frac{1}{Q} B\times ([B_j \partial_1, D_t] b_j, [B_j \partial_2, D_t] b_j)^t \\
            & - \frac{1}{Q} (B\cdot D_t B)(\nabla\times b) + \frac{1}{Q} B\times [D_t, R] D_t u + \frac{1}{Q} B\times (R D_t^2 u).
        \end{aligned}
    \end{equation}
   Note that all terms on the right hand side of \eqref{F1,2} contain the derivatives of unknowns at most order one except the last term $\frac{1}{Q}B\times (RD_t^2 u)$, which yields that to control one normal derivative of unknowns  requires two tangential derivatives of $u$.
   
   Applying $\partial_x^{k-1} \mathcal{T}^\beta$  with $|\beta|+2k\le m$ to \eqref{vort}, it follows
    \begin{equation}\label{vort-drv}
        \begin{cases}
            RD_t(\nabla\times \partial_x^{k-1}\mathscr{T}^\beta u) - B\cdot \nabla(\nabla\times \partial_x^{k-1}\mathscr{T}^\beta b) = \mathcal{R}_1,\\
            (1+\frac{1}{Q}|B|^2) D_t (\nabla\times \partial_x^{k-1}\mathscr{T}^\beta b) - B\cdot \nabla (\nabla\times \partial_x^{k-1}\mathscr{T}^\beta b) = \mathcal{R}_2,
        \end{cases}
    \end{equation}
    where
    \begin{equation*}
    \begin{aligned}
    &
    \begin{aligned}
        \mathcal{R}_1
        =& \partial_x^{k-1}\mathcal{T}^\beta \tilde{F}_1 
        - [\partial_x^{k-1}\mathcal{T}^\beta, RD_t(\nabla\times\cdot)] u + [\partial_x^{k-1}\mathcal{T}^\beta, B\cdot \nabla (\nabla\times\cdot)] b\\
        & - RD_t\big(\nabla\times\partial_x^{k-1}(\mathcal{T}^\beta-\mathscr{T}^\beta) u\big) + B\cdot\nabla \big(\nabla\times\partial_x^{k-1}(\mathcal{T}^\beta-\mathscr{T}^\beta)b \big),
    \end{aligned}
    \\
    &
    \begin{aligned}
        \mathcal{R}_2
        = & \partial_x^{k-1}\mathcal{T}^\beta \tilde{F}_2 - [\partial_x^{k-1}\mathcal{T}^\beta, (1+\frac{1}{Q}|B|^2) D_t \nabla\times] b + [\partial_x^{k-1}\mathcal{T}^\beta, B\cdot \nabla (\nabla\times\cdot)] u\\
        & - (1+\frac{1}{Q}|B|^2)D_t\big(\nabla\times \partial_x^{k-1}(\mathcal{T}^\beta-\mathscr{T}^\beta) b\big) + B\cdot\nabla \big(\nabla\times\partial_x^{k-1}(\mathcal{T}^\beta-\mathscr{T}^\beta)u \big).
    \end{aligned}
    \end{aligned}
    \end{equation*}
    Taking the inner products between $\nabla\times \partial_x^{k-1}\mathscr{T}^\beta u$ ($\nabla\times \partial_x^{k-1}\mathscr{T}^\beta b$ resp.) and the first (second resp.) equation of \eqref{vort-drv}, it follows
    \begin{equation}\label{curl-1}
    \begin{aligned}
        &\frac{\d}{2\d t} \int_\Omega 
        R|\nabla\times \partial_x^{k-1}\mathscr{T}^\beta u|^2 
        + (1+\frac{1}{Q}|B|^2) |\nabla\times \partial_x^{k-1}\mathscr{T}^\beta b|^2 \,\d x
        \\
         = & - \int_\Omega (\nabla\cdot B) (\nabla\times \partial_x^{k-1}\mathscr{T}^\beta b)\cdot (\nabla\times \partial_x^{k-1}\mathscr{T}^\beta u)\d x \\
        & + \frac{1}{2}\int_\Omega\left(\partial_t R+ \nabla\cdot(RU)\right) |\nabla\times \partial_x^{k-1}\mathscr{T}^\beta u|^2 \d x \\
        & + \frac{1}{2}\int_\Omega\left(\partial_t (\frac{1}{Q}|B|^2)+ \nabla\cdot\left((1+\frac{1}{Q}|B|^2)U\right)\right) |\nabla\times \partial_x^{k-1}\mathscr{T}^\beta b|^2 \d x\\
        & + \int_\Omega \left((\nabla\times \partial_x^{k-1}\mathscr{T}^\beta u)\cdot \mathcal{R}_1 + (\nabla\times \partial_x^{k-1}\mathscr{T}^\beta b)\cdot \mathcal{R}_2 \right)\d x.
    \end{aligned}
    \end{equation}
By applying  Lemmas~\ref{lemma-tangential}, \ref{lemma-Udot}, \ref{lemma-moser} and Corollary~\ref{coro-moser} for the right hand side of \eqref{curl-1}, one obtains
    \begin{equation}\label{curl-ub}
        \begin{aligned}
            &\quad\frac{\d}{2\d t}\int_\Omega R|\nabla\times \partial_x^{k-1}\mathscr{T}^\beta u(t)|^2 + (1+\frac{1}{Q}|B|^2)|\nabla\times \partial_x^{k-1}\mathscr{T}^\beta b(t)|^2 \d x\\
            & \le \big(\|\nabla\times\partial_x^{k-1}\mathscr{T}^\beta u(t)\|_0 + \|\nabla\times\partial_x^{k-1}\mathscr{T}^\beta b(t)\|_0\big)\cdot\big(\eta_m({F},Z,t)+\eta_m({z},Z,t)\big),
        \end{aligned}
    \end{equation}
    with the notation $\eta_m({z},Z,t)$ being given in \eqref{eta-def}. From \eqref{curl-ub}, one immediately gets
    \begin{equation}\label{curl-ub''}
    \begin{aligned}
        &\quad \|\nabla\times\partial_x^{k-1}\mathscr{T}^\beta u(t)\|_0 + \|\nabla\times\partial_x^{k-1}\mathscr{T}^\beta b(t)\|_0\\
        &\lesssim
        \normmm{{z}(0)}_{m,*}
        +\int_0^t \big(\eta_m({F},Z,\tau)+\eta_m({z},Z,\tau)\big)\d \tau
    \end{aligned}
    \end{equation}
   for all $0\le t\le T$. 
    
(5)    Combining estimates given in \eqref{grad-p}-\eqref{div-u}, \eqref{div-b''} and \eqref{curl-ub''}, and taking sum over all $1\le k\le [\frac{m}{2}], |\beta|\le m-2k$, it follows the estimate \eqref{est-normal''} given in Proposition \ref{2.2} immediately by using the obvious fact
    \begin{equation*}
    \begin{aligned}
        &\normmm{{f}(t)}_{s-1,*} \le \normmm{{f}(0)}_{s-1,*} + \int_0^t \normmm{{f}(\tau)}_{s,*} \d \tau
    \end{aligned}
    \end{equation*}
    for $F$ and $z$.

\end{proof}

Now, we can finish the proof of the a priori estimate given in Theorem~\ref{thm-apriori}.

\underline{\bf Proof of Theorem~\ref{thm-apriori}:}
First, for the entropy, by acting $\sum_{|\alpha|_*\le m}\partial_*^{\alpha}$ on the last equation of \eqref{linear}, we get immediately that  for any $\alpha\in \mathbb{N}^5, |\alpha|_*\le m$,
\begin{equation}\label{est-s}
\frac{\d}{\d t} \int_\Omega |\partial_*^{\alpha} s|^2 \d x \le \normmm{s(t)}_{m,*}\cdot
\big( \eta_m({F},Z,t) + \eta_m({z},Z,t) \big),
\end{equation}
which implies
\begin{equation}\label{est-s''}
\normmm{s(t)}_{m,*} \le \normmm{s(0)}_{m,*} + \int_0^t \big( \eta_m({F},Z,\tau) + \eta_m({z},Z,\tau) \big) \d \tau.
\end{equation}

Combining estimates given in \eqref{est-normal''}, \eqref{est-tan''} and \eqref{est-s''}, we obtain
\begin{equation}\label{est''}
\begin{aligned}
\normmm{z(t)}_{m,*} \lesssim 
\begin{cases}
\phi(M_{m}(Z,t))\big(\Lambda_{m}({z},{F},t) +
\int_0^t \normmm{{z}(\tau)}_{m,*}\d \tau \big), 
&\text{if }m\ge 7,\\
\begin{aligned}
&\phi_1(M_{m-1}(Z,t))\left(\Lambda_m(z,F,t) + \int_0^t \normmm{z(\tau)}_{m,*} \d \tau\right)\\
& \quad + \phi_2(M_{m}(Z,t))\int_0^t\left(\normmm{z(\tau)}_{m-1,*} + \normmm{F(\tau)}_{m-1,*}\right)\d\tau,
\end{aligned}
& \text{if }m\ge 8,
\end{cases}
\end{aligned}
\end{equation}
which implies the first claim \eqref{apriori} given in  Theorem~\ref{thm-apriori} directly by using Gronwall's inequality. 

When $m\ge 8$, applying Gronwall's inequality in the second case of \eqref{est''}, it gives
\begin{equation*}
\begin{aligned}
\normmm{z(t)}_{m,*} &\le \phi(M_{m-1}(Z,T)) \Big(
\Lambda_m (z, F,t) \\
&+ \phi_2(M_m(Z,t)) \int_0^t \big(\normmm{z(\tau)}_{m-1,*} + \normmm{F(\tau)}_{m-1,*}\big)\d\tau
\Big).
\end{aligned}
\end{equation*}
One can invoke \eqref{apriori} for $\normmm{z(\tau)}_{m-1,*}$ on the right-hand side of the above inequality and complete the proof of the second estimate \eqref{apriori-fine} given in  Theorem~\ref{thm-apriori}.

\section{Linearized problem with a smooth background state}\label{Section-wellposed}

The main proposal of this section is to establish the following well-posedness result of the linearized problem  \eqref{linear}-\eqref{icd-linear}. For that, first we shall obtain the existence of a weak solution by using the a priori estimates given in the previous section and the Riesz representation theory, then prove that this weak solution is indeed a strong one by several smoothing procedures.

\begin{theorem}\label{thm-LIBVP}
    For a fixed integer $m\ge 7$, assume that each angle on the boundary of the domain $\Omega$ obeys \eqref{angle-cdt} for all $1\le n\le N$, and the background state, the initial data, and source terms satisfy the conditions \eqref{assum-Z}, \eqref{z0F-cdt}, \eqref{b-icd}, and \eqref{F-cdt} respectively. Then, the linearized problem \eqref{linear}-\eqref{icd-linear} has a unique solution $z\in X^{m}_*([0,T_0]\times \Omega)$ for some $T_0\in (0,T]$ depending on $\Omega$ and $\mathcal{K}$, and the solution satisfies the estimate \eqref{apriori} for $0\le t\le T_0$; moreover, as $m\ge 8$, the estimate \eqref{apriori-fine} holds as well.
\end{theorem}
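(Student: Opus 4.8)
The plan is to establish well-posedness in three stages: existence of a weak solution via duality, upgrading weak to strong (including the trace of $b\cdot\nu$), and finally upgrading normal regularity. For uniqueness and the energy inequality we already have the a priori estimates of Theorem~\ref{thm-apriori} at our disposal; the work is in producing a solution of the claimed regularity.

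\emph{Step 1: Weak solutions by a Lax--Phillips duality argument in tangential spaces.} Since the corner geometry provides ``too many'' normal directions, one cannot mollify or use non-characteristic regularization directly. Instead I would work in the Hilbert space of functions with $L^2$ tangential derivatives up to order $m$ (the completion of $C^\infty([0,T]\times\overline\Omega)$ under $\sum_{|\beta|\le m}\|\mathbb{T}^\beta\cdot\|_0$, cf.\ the equivalent norm \eqref{fctnl} restricted to $k=0$), and set up a bilinear form on (test function)$\times$(solution) by pairing $\mathbb{T}^\beta$ of the symmetrized system \eqref{symm} against $\mathbb{T}^\beta$ of a test field. The key point is that the Green formula must be rewritten purely in terms of $L^2$ inner products: all the high-order commutators $[A^{II}_1\partial_1+A^{II}_2\partial_2,\mathbb{T}^\beta]$, $[A^I_i\partial_i,\mathbb{T}^\beta]$, $[\mathbb{B},\mathbb{T}^\beta]$ that appeared in Section~2 must be handled, but crucially the hidden structure already extracted in Lemma~\ref{lemma-J2} and Corollary~\ref{coro-tan-struc} shows that the troublesome term $\mathcal J_2$ reduces to terms with only $\mathcal T^\beta\partial_i\p$ and $\mathcal T^\sigma u_l$, which via the momentum equation become controllable. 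This yields, by the a priori estimate \eqref{apriori} applied to the backward adjoint problem, the coercivity/boundedness needed for the Lax--Milgram (Riesz representation) argument, producing a weak solution $z$ with $\mathbb{T}^\beta z\in L^\infty(0,T_0;L^2(\Omega))$ for $|\beta|\le m$, on a possibly shorter interval $[0,T_0]$ determined by $\Omega$ and $\mathcal K$.

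\emph{Step 2: Weak $=$ strong, with trace preservation.} Here I would perform a sequence of tangential mollifications (Friedrichs-type, in the tangential variables only, using the flow of $w^1,w^2$, or equivalently in the straightened coordinates of Remark~\ref{rmk-sp} where the tangential fields become $x_1^*\partial_{x_1^*}$ etc.) and also in time, each step commuting with the boundary condition $u\cdot\nu=0$ and with the operator $\mathscr T$ modified so that $\mathscr T v\cdot\nu=0$ is preserved. Applying the smoothed operators to the weak formulation and passing to the limit shows that the weak solution actually satisfies the equation in the strong sense, the initial condition $z|_{t=0}=z_0$ (using the compatibility conditions \eqref{z0F-cdt} to match the weak limit of time-mollified data), and the boundary condition $u\cdot\nu=0$ in the trace sense; then Lemma~1.7 (the transport equation \eqref{bcd-b-eq} for $b\cdot\nu$) gives $b\cdot\nu=0$ on $(0,T_0)\times(\partial\Omega\setminus\{q_j\})$ from \eqref{b-icd}. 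Weak--strong uniqueness then follows from the $L^2$ energy identity of Lemma~\ref{lemma-L2} applied to the difference of a weak and a strong solution.

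\emph{Step 3: Upgrading normal regularity.} At this point $z$ has full tangential regularity of order $m$ but only the $L^2$-level normal information; I would now recover $\partial_x^{k}$-derivatives for $1\le k\le[\frac m2]$ exactly as in the proof of Proposition~\ref{2.2}: $\nabla\p$ and $\nabla\cdot u$ are read off the first and third equations of \eqref{linear} in terms of tangential derivatives; $\nabla\cdot b$ solves the transport equation \eqref{eq-div-b} and is controlled from its initial value; $(\nabla\times u,\nabla\times b)$ solves the symmetric hyperbolic system \eqref{vort}, whose only ``bad'' source is $\frac1Q B\times(RD_t^2u)$, consuming two tangential derivatives per normal derivative; and finally the Helmholtz--Hodge estimate of Lemma~\ref{lemma-Hodge}, valid precisely under the angle condition $\omega_n<\pi/[\frac m2]$ built into \eqref{angle-cdt}, turns control of $\|\nabla\cdot(u,b)\|_{k-1}+\|\nabla\times(u,b)\|_{k-1}$ into control of $\|(u,b)\|_k$. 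This bootstraps $z\in X^m_*([0,T_0]\times\Omega)$. The estimates \eqref{apriori} and (for $m\ge8$) \eqref{apriori-fine} then hold by Theorem~\ref{thm-apriori} applied to the now-strong solution.

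\emph{Main obstacle.} The hardest part is Step~1--2: making the duality argument actually close, i.e.\ verifying that every commutator generated by the high-order tangential operators $\mathbb{T}^\beta$ acting on the variable-coefficient symmetrized system can be written as an $L^2$ pairing to which the a priori estimate applies — in particular handling $\mathcal J_2$ via the ``borrowed multiplier'' trick of Lemma~\ref{lemma-J2} inside the weak formulation rather than in an energy estimate — and then carrying out the staged tangential-only smoothing so that it simultaneously regularizes, preserves the boundary condition $u\cdot\nu=0$, and converges; the absence of a genuine normal mollifier near the corners is exactly what makes this delicate.
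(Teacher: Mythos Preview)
Your three-stage outline is on the right track, and the identification of Lemma~\ref{lemma-J2} as the device that makes the Green formula close in tangential spaces is correct. But there is a genuine structural gap: you try to run the duality argument directly on the IBVP with inhomogeneous initial data, and you attribute the short time $T_0$ to Step~1. In the paper neither is done this way.

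The duality/weak-strong machinery is not applied to the IBVP at all. It is applied to a pure \emph{boundary} value problem on $(-\infty,T)\times\Omega$ with source vanishing for $t<0$ (Proposition~\ref{prop-regularity}); this problem is solved on the full time axis, not on a short interval. To reduce the IBVP to it one must (i) construct an auxiliary $\zeta$ matching $z_0$ and all time-derivatives at $t=0$ so that $F-(L+\mathbb B)(Z)\zeta$ extends by zero to $t<0$ without losing regularity, and (ii) to avoid the one-order loss this would cause when $z_0\in H^m$, first approximate $(z_0,F)$ by data $(z_0^k,F^k)$ with \emph{higher} regularity and \emph{more} compatibility conditions (the ``compatibility lifting'' of Proposition~\ref{prop-lift-cpbt}). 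This lifting only works for data localized near a single boundary point; the global solution is then assembled by covering $\overline\Omega$ with finitely many such patches and invoking finite propagation speed, and it is \emph{this} step that produces the $T_0$ depending on $\Omega$ and $\mathcal K$. None of (i), (ii), or the localization/patching appears in your proposal.

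Secondly, your Step~3 is underspecified in a way that matters. You cannot simply ``apply Proposition~\ref{2.2}'' to gain normal derivatives: that proposition is an a~priori estimate for an already-regular solution. To \emph{produce} the regularity, the paper writes a closed transport system for $(\nabla\times u,\nabla\cdot b,\nabla\times b)$, expresses the source terms $\tilde\nabla u,\tilde\nabla b$ through bounded operators $\phi,\psi$ built from the Div--Curl solver (Proposition~\ref{prop-D-C}) and the Helmholtz decomposition (Proposition~\ref{prop-Helmholtz}), solves the resulting nonlocal transport system in $H^{m-2j}_{\mathrm{tan}}$, and then proves by a density argument that its solution actually equals the curl/divergence of $(u,b)$. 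This is an existence argument, not a bootstrap from estimates, and the operators $\phi,\psi$ (and the inductive construction of $\Phi^I_j,\Phi^{II}_{j,0},\Phi^{II}_{j,1}$ for $j\ge2$) are what make it close.
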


The proof of this theorem will be given in the following three subsections. 

\subsection{The associated boundary value problem}

The first step is to show the well-posedness of the boundary value problem \eqref{linear}-\eqref{bcd-linear}, posed for $t\in (-\infty, T)$. For the next proposition, we introduce the following slight modification of \eqref{assum-Z}.
\begin{equation}\label{assum-Z'}
    \begin{aligned}
         & Z \in C^\infty((-\infty,T]\times \Omega) \text{ and } 
         U\cdot \nu = B \cdot \nu = 0 \text{ on } (-\infty,T)\times (\partial\Omega\backslash \{q_1, \ldots, q_N\});\\
         & (U,B,P,S) \text{ takes its values in a fixed compact subset }\tilde{\mathcal{K}} \text{ of }\R^4\times \mathcal{V};\\
         & Z \text{ is constant, and } U=0, \text{ when }|t| \text{ is large}.
    \end{aligned}
\end{equation}
In fact, \eqref{assum-Z'} is easily achieved by means of a suitable extension of $Z$ satisfying \eqref{assum-Z}. 

\begin{prop}\label{prop-regularity}
    For a given integer $m\ge 2$, assume that $\omega_n \in (0,\frac{\pi}{[\frac{m}{2}]})$ for all $n=1,\ldots, N$, \eqref{assum-Z'} holds, and $F\in H^{m}_*((-\infty,T)\times \Omega)$ satisfies \eqref{F-cdt} and vanishes when $t<0$. Then, the problem \eqref{linear}-\eqref{bcd-linear} has a unique solution $z\in \big(X^{m}_*\cap H^{m}_*\big)((-\infty,T]\times\Omega)$ satisfying ${\cal B}' z = 0$ on $\partial\Omega \backslash\{q_1,\ldots,q_N\}$ and $z = 0 $ as $t<0$.
\end{prop}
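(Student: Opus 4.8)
The plan is to establish Proposition~\ref{prop-regularity} by the three‑step scheme announced in the introduction, together with a short uniqueness argument: \emph{(i)} produce a weak solution of \eqref{linear}--\eqref{bcd-linear} by a Lax--Phillips duality argument carried out in the high‑order tangential Hilbert space; \emph{(ii)} show by a tangential smoothing procedure that this weak solution is in fact strong, has the full set of tangential derivatives in $L^2$, and possesses the correct vanishing traces; \emph{(iii)} upgrade to the full $H^m_*$‑regularity (and to $z\in X^m_*$) by reproducing the div--curl/Helmholtz argument of Section~\ref{section-linear}. Throughout, causality ($z=0$ for $t<0$) is automatic since $F$ and $Z$ are trivial in the past and the energy estimates propagate forward in time; in particular every compatibility condition at a past time $t_0<0$ holds trivially.

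\textbf{Step 1: a weak solution by duality.} Write \eqref{symm} with the boundary condition $u\cdot\nu=0$ as $\mathcal{L}z=\overline F$ with $\mathcal{L}=\tilde L+\mathbb{B}$, and let $\mathcal{L}^{*}$ be its formal adjoint. Since $\inp*{z}{A_\nu z}_{\partial\Omega}=2\inp*{\p}{u\cdot\nu}_{\partial\Omega}$, the boundary condition is conservative for the symmetrised system, so the backward‑in‑time adjoint problem (with the adjoint boundary condition and $\phi\equiv0$ near $t=T$) obeys the same energy estimates as in Section~\ref{section-linear}; in particular all commutators with the tangential operators $\mathbb{T}^\alpha$ produce only lower‑order tangential terms (cf. \eqref{tan-cmtt-1}--\eqref{tan-cmtt-2}, Lemmas~\ref{lemma-tangential} and \ref{lemma-Udot}, and the treatment of $\mathcal{J}_1,\mathcal{J}_2$ via Lemma~\ref{lemma-J2}). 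Testing against such $\phi$ one gets an a priori bound of the form
\begin{equation*}
\sum_{|\alpha|\le m}\bigl\|\mathbb{T}^\alpha\phi\bigr\|_{L^2((-\infty,T)\times\Omega)}\le C\Bigl\|\textstyle\sum_{|\alpha|\le m}\mathbb{T}^\alpha(\mathcal{L}^{*}\phi)\Bigr\|_{L^2((-\infty,T)\times\Omega)},
\end{equation*}
with $C$ depending only on $\Omega$, $\tilde{\mathcal{K}}$ and the relevant norms of $Z$. Hence the linear functional $\mathcal{L}^{*}\phi\mapsto\inp*{F}{\phi}_{(-\infty,T)\times\Omega}$ is bounded on the range of $\mathcal{L}^{*}$ inside the tangential Hilbert space generated by the $\mathbb{T}^\alpha$, $|\alpha|\le m$; the Riesz representation theorem then yields a $z$ with $\mathbb{T}^\alpha z\in L^2$ for $|\alpha|\le m$ satisfying $\inp*{z}{\mathcal{L}^{*}\phi}=\inp*{F}{\phi}$ for all admissible $\phi$, i.e. a weak solution of \eqref{linear}--\eqref{bcd-linear} that, by testing against $\phi$ supported in the past and invoking the backward estimate, vanishes for $t<0$.

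\textbf{Step 2: weak $=$ strong.} The operators $\partial_t,\partial_{w^1},\partial_{w^2}$ are tangent to $\partial\Omega\setminus\{q_1,\ldots,q_N\}$, and by Definition~\ref{def-tan} the fields $w^1,w^2$ vanish on the legs $\gamma_1^n,\gamma_2^n$ (respectively) and at the corners, so their flows preserve $\overline\Omega$. Using a partition of unity subordinate to the neighbourhoods of Notation~\ref{nota-1}, together with interior translations, one builds smoothing operators $J_\varepsilon$ that map $L^2$ into $C^\infty(\overline\Omega)$, converge strongly to the identity in each $H^k_*$, and preserve the boundary condition ($(J_\varepsilon v)\cdot\nu=0$ on $\partial\Omega\setminus\{q_1,\ldots,q_N\}$ whenever $v\cdot\nu=0$). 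Applying $J_\varepsilon$ in the weak formulation and using a Friedrichs‑type commutator lemma — the errors $[J_\varepsilon,\mathcal{L}]$ are bounded uniformly in $\varepsilon$ in the tangential norm and tend to $0$ — shows that $J_\varepsilon z$ is Cauchy in the tangential $H^m_*$‑norm, so $z$ genuinely has $\mathbb{T}^\alpha z\in L^2$, $|\alpha|\le m$; the traces $\mathscr{T}^\beta u\cdot\nu$, $\mathscr{T}^\beta b\cdot\nu$ then make classical sense and vanish — in particular $b\cdot\nu=0$, i.e. $\mathcal{B}'z=0$, by integrating the transport identity \eqref{bcd-b-eq} forward from $t=-\infty$ using \eqref{F-cdt} — and the weak equation becomes the strong one. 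This is the step I expect to be the main obstacle: the degeneracy of $w^1,w^2$ at the corners rules out any single non‑characteristic flattening, so the smoothing must be carried out in stages — first along the directions tangent to one leg near each corner, then along the complementary directions — with the vanishing of the traces tracked at every stage.

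\textbf{Step 3: normal regularity, $X^m_*$‑regularity, and uniqueness.} With the tangential $H^m_*$‑bound available, the normal derivatives are recovered exactly as in the proof of Proposition~\ref{2.2}, by induction on the number $k\le[\frac{m}{2}]$ of spatial derivatives: the continuity equation gives $\nabla\cdot u$ in terms of tangential derivatives of $(b,\p)$, the momentum equation gives $\nabla\p$ in terms of tangential derivatives of $(u,b)$ and of $F_1$, $\nabla\cdot b$ solves the transport equation \eqref{eq-div-b}, and $(\nabla\times u,\nabla\times b)$ solves the symmetric transport system \eqref{vort} whose source carries at most one spatial derivative of $(u,b)$ (plus the tangential term $D_t^2u$); feeding these into the Helmholtz estimate \eqref{Hodge-est} of Lemma~\ref{lemma-Hodge} and the accompanying $H^k$‑regularity for the div--curl system on polygons — applicable precisely because $\omega_n<\pi/[\frac{m}{2}]\le\pi/k$ — raises $(u,b)$ from $k-1$ to $k$ normal derivatives, and iterating up to $k=[\frac{m}{2}]$ gives $z\in H^m_*((-\infty,T]\times\Omega)$. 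Since $S_0A_0(Z)$ is positive definite one can solve \eqref{symm} for $\partial_t z$ and differentiate repeatedly to obtain $\partial_t^k z\in C^0((-\infty,T];H^{m-k}_*(\Omega))$, i.e. $z\in X^m_*$; and $z$ inherits the estimate \eqref{apriori} and, for $m\ge 8$, \eqref{apriori-fine} of Theorem~\ref{thm-apriori}. Finally, if $z_1,z_2$ are two solutions as in the statement, their difference is a strong solution of the homogeneous problem vanishing for $t<0$, so Lemma~\ref{lemma-L2} and Gronwall's inequality force $z_1\equiv z_2$.
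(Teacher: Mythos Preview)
Your three–step skeleton matches the paper's, but two of the steps contain genuine gaps that the paper has to work hard to close.

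\textbf{Step 1 (duality).} You write the dual estimate as a clean bound $\sum_{|\alpha|\le m}\|\mathbb{T}^\alpha\phi\|\le C\|\sum\mathbb{T}^\alpha(\mathcal{L}^*\phi)\|$, with the commutators ``producing only lower‑order tangential terms''. This is not quite true: when you pair $\mathbb{T}^\alpha\mathcal{L}z$ with $\mathbb{T}^\alpha w$ in $L^2$, the commutator with the \emph{normal} part $A^{II}_1\partial_1+A^{II}_2\partial_2$ (i.e.\ $\nabla\p$ and $\nabla\cdot u$) is not tangential of order $|\alpha|$; it can only be controlled after substituting the momentum equation back in (as in Lemma~\ref{lemma-J2}), which leaves pieces proportional to $\tilde L z$ and $\tilde L^* w$. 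The paper's Green identity (their Proposition establishing \eqref{tan-grn}) therefore has \emph{modified} adjoints: three bounded operators $\phi_1,\phi_2,\varphi$ on $H^{m,\gamma}_{\tg}$ with $\|\phi_i\|\le C/\gamma$, so that the dual problem is $((1+\phi_2)\tilde L^*+\mathbb{B}^t+\gamma S_0A_0+\varphi)(1-\phi_1)^{-1}w=G$. Only for $\gamma$ large does one get injectivity of this modified adjoint, and then the Hahn--Banach/Riesz argument runs. Your naive adjoint bound would not close.

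\textbf{Step 2 (weak $=$ strong).} You propose smoothing by flowing along $w^1,w^2$. But near a corner $w^i$ vanishes on $\gamma_i^n$, so these flows are degenerate and cannot upgrade tangential to normal smoothness; moreover a Friedrichs commutator lemma for $[J_\varepsilon,\mathcal{L}]$ in the tangential norm does not by itself show that the approximants lie in $\cap_s H^s$. The paper's argument is quite different and done in two rounds. First, in the corner chart $\{x_1,x_2>0\}$ it uses \emph{dilation} mollifiers $f\mapsto\int f(t+\varepsilon\tau,x_1e^{\varepsilon y_1},x_2e^{\varepsilon y_2})j(\tau)j(y_1)j(y_2)\,d\tau dy$ (and weighted variants) to make $z_\varepsilon$ tangentially smooth while keeping $(u_i)_\varepsilon|_{x_i=0}=0$, and uses the equations to show $\nabla\p_\varepsilon,\nabla\cdot u_\varepsilon\in H^m_{\tg}$. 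Second, for the normal direction it does \emph{not} translate $u$ inward; instead it extends $u_1$ oddly across $x_1=0$ and evenly across $x_2=0$ (and symmetrically for $u_2$), so that $\nabla\cdot(Eu)$ is the even reflection of $\nabla\cdot u$, then applies a standard (centered) mollifier. The parity forces $(u_i)_\varepsilon|_{x_i=0}=0$ and gives $u_\varepsilon\in\cap_s H^s$. Your ``smoothing in stages along the legs'' suggestion does not supply this mechanism.

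\textbf{Step 3 (normal regularity).} Here there is a circularity you have not addressed. After Step~2 you only know $z\in H^m_{\tg}$; you do not know $\nabla u,\nabla b\in L^2$ (let alone $H^{j-1}$). But the transport system \eqref{vort} for $(\nabla\times u,\nabla\cdot b,\nabla\times b)$ has source terms containing $\partial_iU_j\partial_ju_i$, $\partial_iB_j\partial_jb_i$, etc., i.e.\ the \emph{full} gradients $\tilde\nabla u,\tilde\nabla b$, so you cannot simply ``solve the transport system and feed it into \eqref{Hodge-est}''. The paper breaks the loop by introducing a bounded operator $\phi\in\mathcal{L}(H^s_{\tg})$ (Proposition~\ref{prop-D-C}/Notation~\ref{def-phi}) that recovers $\tilde\nabla u$ from $(\nabla\cdot u,\nabla\times u)$ via the div--curl boundary value problem, and an operator $\psi$ (Proposition~\ref{prop-Helmholtz}/Definition~\ref{def-psi}) recovering a potential from its gradient. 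Substituting $\tilde\nabla u=\phi(\nabla\cdot u,0)+\phi(0,\nabla\times u)$ and similarly for $b$ turns \eqref{vort} into a \emph{closed} tangential transport system $(\mathbb{P}_\gamma+\Phi^1)v=\text{known}$ in the unknowns $v=(\nabla\times u,\nabla\cdot b,\nabla\times b)$; this is solved in $H^{m-2}_{\tg}$, and then one still has to check (by a density argument in $\mathscr{D}'$, using solvability of $\mathbb{P}_\gamma^*+(\Phi^1)^*$) that the abstract $v$ actually equals the distributional $(\nabla\times u,\nabla\cdot b,\nabla\times b)$. Only then does Lemma~\ref{lemma-Hodge} (with $b\cdot\nu=0$ obtained from \eqref{bcd-b-eq}) give $u,b\in H^1_x(H^{m-2}_{\tg})$. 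For higher $j$ the same idea is iterated using $\psi$ to build operators $\Phi^I_j,\Phi^{II}_{j,0},\Phi^{II}_{j,1}$ satisfying \eqref{Phi-j-cdt}--\eqref{Phi-j-cdt'}. Your reference to ``the proof of Proposition~\ref{2.2}'' is the a priori estimate version of this argument, which presupposes the regularity you are trying to prove; the existence version needs the $\phi,\psi$ machinery.
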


The proof is divided into two main steps: first show that the problem is tangentially well-posed in a strong sense by using the Riesz representation theory, then use the divergence-curl system from the equations to obtain the normal regularity of the solution.

\subsubsection{Well-posedness in tangential spaces}

Consider the following boundary value problem for the symmetry hyperbolic system in $\R\times \Omega$,
\begin{equation}\label{gamma-prob}
\begin{cases}
    (\tilde{L} + \mathbb{B} + \gamma S_0 A_0) z = F, \quad \R\times \Omega,\\
    {\cal B} z = 0, \quad \R\times (\partial\Omega\backslash\{q_1,\ldots, q_N\}),
\end{cases}
\end{equation}
where the parameter $\gamma > 0$ will be chosen later, and $\tilde{L} = S_0 L$ is given in \eqref{symm}. 

The strong solution of the above problem is defined as

\begin{defn}[Strong solution]
    Let $m\ge 0$ be a fixed integer, for given $F\in H^m_{\tg}(\R\times \Omega)$, we say that $z\in H^m_{\tg}(\R \times \Omega)$ is a strong solution of the boundary value problem \eqref{gamma-prob} if there exists sequences $\{z_\varepsilon\}_{\varepsilon>0}$ such that 
    \begin{equation}\label{strong}
        \begin{aligned}
            & z_\varepsilon \in \cap_{s\in \mathbb{N}} H^s(\R\times \Omega),\, {\cal B} z_\varepsilon = 0 \text{ on }\partial\Omega\backslash\{q_1,\cdots, q_N\},\\&  \text{and } z_\varepsilon\to z, \quad  (\tilde{L} + \mathbb{B} + \gamma S_0 A_0)z_\varepsilon \to F \text{ in } H^m_{\tg}(\R\times \Omega).
        \end{aligned}
    \end{equation}
\end{defn}

For fixed $m\in \mathbb{N}$ and parameter $\gamma>0$, the tangential space  $H^{m,\gamma}_\tg(I\times \Omega)$ is the one containing elements with the norm
\begin{equation*}
\begin{aligned}
    \|z\|_{H^{m,\gamma}_\tg(I\times \Omega)} = \sum_{|\alpha|\le m} \gamma^{m-|\alpha|} \|\mathbb{T}^\alpha z\|_{L^2(I\times \Omega)}
\end{aligned}
\end{equation*}
being finite, and its inner product is defined as
\begin{equation*}
    \inp*{z}{w}_{H^{m,\gamma}_\tg(I\times \Omega)} = \sum_{|\alpha|\le m} \gamma^{2(m-|\alpha|)} \inp*{\mathbb{T}^\alpha z}{\mathbb{T}^\alpha w}_{L^2(I\times \Omega)}, \quad
    \forall z,w\in H^{m,\gamma}_\tg(I\times \Omega).
\end{equation*}

\begin{prop}\label{prop-tan-well}
    For a fixed $m\in \mathbb{N}$, assume that \eqref{assum-Z'} holds, then there exists $\gamma_0>0$, depending on $Z$, such that 
    for any given $F\in H^m_{\tg}(\R\times \Omega)$, the boundary value problem \eqref{gamma-prob} has a unique strong solution $z \in CH^m_{\tg}\cap H^m_{\tg} (\R\times \Omega)$, satisfying
    \begin{equation}\label{z-tan-est}
    \sum_{|\alpha|\le m}\gamma^{2(m-|\alpha|)}\|\mathbb{T}^\alpha z(t)\|^2_{L^2(\Omega)}
    +\gamma \|z\|^2_{H^{m,\gamma}_{\tg}(\R\times \Omega)}\le C\frac{1}{\gamma}\|F\|^2_{H^{m,\gamma}_{\tg}(\R\times\Omega)}, \quad \forall t\in \R,
    \end{equation}
    for any $\gamma \ge \gamma_0$.
    Moreover, if $F$ vanishes as $t\le T_1$ for any fixed $T_1\in \R$, then the solution $z$ also vanishes for $t\le T_1$.
\end{prop}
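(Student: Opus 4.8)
The plan is to follow the classical Lax--Phillips duality scheme, adapted to the corner geometry.

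\textbf{Step 1 (weighted a priori estimate).} First I would show that for every $z\in\cap_{s\in\mathbb N}H^s(\R\times\Omega)$ with $\mathcal Bz=0$ one has, for all $\gamma\ge\gamma_0$,
\[
\sup_{t\in\R}\sum_{|\alpha|\le m}\gamma^{2(m-|\alpha|)}\|\mathbb T^\alpha z(t)\|_{L^2(\Omega)}^2+\gamma\|z\|_{H^{m,\gamma}_{\tg}(\R\times\Omega)}^2\le \frac{C}{\gamma}\,\|(\tilde L+\mathbb B+\gamma S_0A_0)z\|_{H^{m,\gamma}_{\tg}(\R\times\Omega)}^2 .
\]
This is obtained by the energy method of Section~\ref{section-linear}: applying $\mathbb T^\alpha$ to \eqref{symm} with the term $\gamma S_0A_0z$ added and running the $S_0A_0$-weighted energy identity, the boundary term $\tfrac12\inp*{\mathbb T^\alpha z}{A_\nu\mathbb T^\alpha z}_{\partial\Omega}=\inp*{\mathcal T^\alpha\p}{\mathscr T^\alpha u\cdot\nu}_{\partial\Omega}$ vanishes because $\mathscr T$ preserves $u\cdot\nu=0$, the commutators $[\tilde L+\mathbb B,\mathbb T^\alpha]z$ are controlled by $\sum_{|\beta|\le|\alpha|}\|\mathbb T^\beta z\|_{L^2}$ modulo the genuinely normal terms $\nabla\p,\nabla\cdot u$ via \eqref{tan-cmtt-1}--\eqref{tan-cmtt-2}, Lemma~\ref{lemma-tangential} and Lemma~\ref{lemma-Udot}, and those normal terms are removed exactly as $\mathcal J_1,\mathcal J_2$ there---by replacing $\nabla\p$ through the momentum equation by $F_1-RD_tu+B\cdot\nabla b$ (tangential, since $D_t,B\cdot\nabla$ are tangential by Lemma~\ref{lemma-tangential}) and shifting the leftover normal derivative of $u$ onto $\p$ as in Lemma~\ref{lemma-J2}. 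The new term $2\gamma\inp*{\mathbb T^\alpha z}{S_0A_0\mathbb T^\alpha z}_\Omega\ge c\gamma\|\mathbb T^\alpha z\|_{L^2}^2$ absorbs all the lower-order contributions once $\gamma$ is large, and integrating in $t$ over $\R$ (using that $Z$ is constant and $U=0$ for $|t|$ large, so $z$ decays) closes the estimate.

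\textbf{Step 2 (adjoint problem and existence by duality).} Since the $S_0A_i$ are symmetric, the formal adjoint of $\tilde L+\mathbb B+\gamma S_0A_0$ is a hyperbolic operator of the same type with $\partial_t$ replaced by $-\partial_t$ and a modified zeroth-order part, and the homogeneous boundary condition $\mathcal B$ is its own adjoint, since $\inp*{A_\nu z}{z'}_{\partial\Omega}=\inp*{\p}{u'\cdot\nu}_{\partial\Omega}+\inp*{u\cdot\nu}{\p'}_{\partial\Omega}$. The same energy computation---with the sign of the (vanishing) total time derivative reversed and the coercive $\gamma$-term unchanged---then gives, after enlarging $\gamma_0$ if necessary, the estimate $\gamma\|w\|_{H^{m,\gamma}_{\tg}}^2\le C\gamma^{-1}\|(\tilde L+\mathbb B+\gamma S_0A_0)^*w\|_{H^{m,\gamma}_{\tg}}^2$ for smooth $w$ with $\mathcal Bw=0$, together with the time-reversed causality statement. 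The $m=0$ version of this adjoint estimate shows that $w\mapsto\inp*{F}{w}_{L^2(\R\times\Omega)}$ is bounded on the subspace $\{(\tilde L+\mathbb B+\gamma S_0A_0)^*w\}\subset L^2$; by Hahn--Banach and the Riesz representation one obtains $z\in L^2(\R\times\Omega)$ solving $(\tilde L+\mathbb B+\gamma S_0A_0)z=F$ in the distributional sense, and running the duality at the level of the operators $\mathbb T^\alpha$ (equivalently, the higher-order adjoint estimate) upgrades this to $z\in H^m_{\tg}(\R\times\Omega)$.

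\textbf{Step 3 (strong solution and causality).} To check that $z$ is a strong solution in the sense of \eqref{strong} I would mollify: by a partition of unity, use ordinary mollification in the interior, tangential mollification in $t$ and the tangential variable near a smooth boundary point (which leaves the normal trace, hence $\mathcal Bz_\varepsilon=0$, untouched), and near a corner $q_n$---after the straightening of Remark~\ref{rmk-sp}, in which $w^1,w^2$ become multiples of $x_1^*\partial_{x_1^*},x_2^*\partial_{x_2^*}$---the dilation-type mollifier underlying Lemma~\ref{lemma-density}, composed with mollification in $t$; Friedrichs' lemma (the coefficients are smooth) gives $(\tilde L+\mathbb B+\gamma S_0A_0)z_\varepsilon\to F$ in $H^m_{\tg}$. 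Passing to the limit in the a priori estimate of Step~1 yields \eqref{z-tan-est}, uniqueness, and the continuity $z\in CH^m_{\tg}$ (since the equation expresses $\partial_t\mathbb T^\alpha z$ in $L^2$). Finally, causality follows by localising the energy identity in time: integrating it from $-\infty$ to any $t_0\le T_1$, where $F\equiv0$ and $z$ decays at $-\infty$, and using the $\gamma$-coercivity forces $z\equiv0$ for $t\le T_1$.

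\textbf{Main obstacle.} The delicate point is the weak $=$ strong step near the corners: one must produce an approximating family $z_\varepsilon$ that simultaneously preserves $\mathcal Bz_\varepsilon=0$, converges in the anisotropic tangential norm $\|\cdot\|_{H^m_{\tg}}$, and has Friedrichs commutators against $\tilde L+\mathbb B+\gamma S_0A_0$ tending to zero in $H^m_{\tg}$, all in the presence of the degeneration of $w^1,w^2$ at $q_n$---this is exactly why the dilation-based mollifier behind Lemma~\ref{lemma-density} is needed rather than a naive convolution. A secondary point is verifying that the Step~1 estimate genuinely closes in purely tangential norms, i.e.\ that the $\mathcal J_1,\mathcal J_2$ mechanism leaves no residual normal derivative of $z$ on the right-hand side.
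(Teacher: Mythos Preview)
Your overall strategy---weighted tangential a priori estimate, duality for existence, weak $=$ strong by mollification, then causality---is the paper's strategy, and Step~1 is essentially Proposition~\ref{prop-apriori-tan}. But two points are more delicate than you allow.

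First, the duality in Step~2: to run Hahn--Banach directly in $H^{m,\gamma}_{\tg}$ you need the Green identity for $\tilde L$ in that inner product, and the commutator $[\mathbb T^\alpha,A^{II}_1\partial_1+A^{II}_2\partial_2]$ produces genuine normal derivatives of $z$ and $w$ that are not a priori in $H^{m,\gamma}_{\tg}$. The paper resolves this (Proposition~\ref{prop-tan-Green}) by using the same $\mathcal J_1,\mathcal J_2$ mechanism and the momentum equation to rewrite these commutators, via Riesz representation, as bounded operators $\phi_1,\phi_2\in\mathcal L(H^{m,\gamma}_{\tg})$ with norm $O(\gamma^{-1})$ acting on $\tilde Lz$ and $\tilde L^*w$; only then does the duality close. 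Your phrase ``running the duality at the level of the operators $\mathbb T^\alpha$'' hides exactly this step.

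Second---and this is the real gap---the weak $=$ strong argument. The dilation-type mollifier $J^{\tg}_\varepsilon$ behind Lemma~\ref{lemma-density} gives only $z_\varepsilon\in\cap_s H^s_{\tg}$, \emph{not} $\cap_s H^s$, while \eqref{strong} demands the latter. Moreover, your appeal to ``Friedrichs' lemma'' for $\tilde Lz_\varepsilon\to F$ cannot work as stated: $\tilde L$ contains the normal terms $\nabla\p$ and $\nabla\cdot u$, and we know neither $\partial_i\p$ nor $\partial_i u_j$ individually lies in $H^m_{\tg}$. The paper proceeds in two separate steps near a corner (after straightening to the quadrant). In the first, one applies component-wise modified dilation mollifiers $J^{\tg,i}_{\varepsilon,\pm}$ chosen so that $\partial_{x_i}J^\tg_\varepsilon=J^{\tg,i}_{\varepsilon,+}\partial_{x_i}$ and $\nabla\cdot(\operatorname{diag}(J^{\tg,1}_{\varepsilon,-},J^{\tg,2}_{\varepsilon,-})u)=J^\tg_\varepsilon(\nabla\cdot u)$; these exact commutation identities, together with the equation, are what make $\nabla\p_\varepsilon,\nabla\cdot u_\varepsilon\in H^m_{\tg}$ and force $\tilde L'z_\varepsilon\to\tilde L'z$. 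In the second, one upgrades to $\cap_sH^s$: for $(b,\p,s)$ a shifted convolution suffices, but for $u$ one must first extend $u_1$ oddly across $x_1=0$ and evenly across $x_2=0$ (and symmetrically for $u_2$), so that $\nabla\cdot(Eu)$ is the even--even extension of $\nabla\cdot u$ and hence tangentially smooth on $\R^2$; only then does a standard mollifier $\tilde J_\varepsilon$ on the extension yield $u_\varepsilon\in\cap_sH^s$ with $u_\varepsilon\cdot\nu=0$ preserved (by the parity of the kernel) and $\tilde L'(u_\varepsilon,0,0,0)$ convergent in $H^m_{\tg}$. Without this extension step there is no mechanism to gain normal regularity of $u$ while keeping the boundary condition, since the equations control only $\nabla\cdot u$, not $\nabla u$.
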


Since in domains with corners, the boundary value problem for linear hyperbolic systems -- even in the non-characteristic case -- has no general result, we do not adapt the commonly used non-characteristic regularization approach as did in \cite{secchi1996initial} and \cite{yanagisawa1991fixed}. Instead, we shall employ a direct duality argument within high-order Sobolev spaces to get the existence of solutions. The proof of Proposition~\ref{prop-tan-well} contains three main steps: First, use the Lax-Phillips duality method to get the existence of weak solutions to the problem \eqref{gamma-prob}. Then, show that the weak solutions are, indeed, strong solutions, through a series of mollifications, and thus the a priori estimates are valid.
Finally, we finish the proof by applying the causality principles.

First, we prove the following a priori estimates in $H^{m,\gamma}_{\tg}(\R\times \Omega)$.
\begin{prop}\label{prop-apriori-tan}
    Assume that $Z$ satisfies \eqref{assum-Z'}, then for the problem \eqref{gamma-prob}, one can find constants $\gamma_0, C >0$, depending on $Z$, such that for any $\gamma \ge \gamma_0$, if $z\in H^{m,\gamma}_{\tg}(\R\times \Omega)$ and $\partial_{t,x} z\in H^{m,\gamma}_{\tg}(\R\times \Omega)$, then one has
    \begin{equation}\label{3.5}
        \sum_{|\alpha|\le m}\gamma^{2(m-|\alpha|)}\|\mathbb{T}^\alpha z(t)\|^2_{L^2(\Omega)} + \gamma \|z\|^2_{H^{m,\gamma}_{\tg}(\R\times \Omega)}\le C\frac{1}{\gamma}\|(\tilde{L}+\mathbb{B}+\gamma S_0 A_0)z\|^2_{H^{m,\gamma}_{\tg}(\R\times\Omega)},
`    \end{equation}
    for all $t\in \R$.
\end{prop}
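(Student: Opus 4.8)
The plan is to run the same weighted energy argument as in Section~2 for the stationary‑in‑time boundary value problem \eqref{gamma-prob}, with the damping term $\gamma S_0A_0z$ playing the role that the initial data plus Gronwall's inequality played there. Since $Z$ obeys \eqref{assum-Z'}, all coefficients $S_0A_j(Z),\mathbb{B}(Z)$ are smooth with bounded derivatives of every order on $\R\times\Omega$, so all the Moser‑type and commutator estimates of Section~2 collapse to plain product estimates with a constant $C_Z$ depending only on $Z$; there is no analogue of the ``two extra orders'' issue from \eqref{assum-Z}. Throughout, the formal manipulations below are justified by the hypotheses $z,\partial_{t,x}z\in H^{m,\gamma}_\tg(\R\times\Omega)$: these give $\mathbb{T}^\alpha z\in H^1(\R\times\Omega)$ for $|\alpha|\le m$ (using \eqref{tan-cmtt-1} and that $\partial_t$ commutes with $\partial_{w^i}$), which suffices for the integrations by parts on the polygon $\Omega$; a routine mollification handles any remaining regularity gap.

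First I would fix $\alpha\in\mathbb{N}^3$, $|\alpha|\le m$, and apply $\mathbb{T}^\alpha$ to \eqref{gamma-prob}, getting
\[(\tilde L+\mathbb{B}+\gamma S_0A_0)\mathbb{T}^\alpha z=\mathbb{T}^\alpha F+[\tilde L+\mathbb{B}+\gamma S_0A_0,\mathbb{T}^\alpha]z,\]
where the commutator with $\gamma S_0A_0$ is $\gamma$ times a zeroth‑order operator with coefficients in $C_Z$, hence harmless after the absorption step. Splitting $S_0A_j\partial_j=A_j^I\partial_j+A_j^{II}\partial_j$ as in \eqref{A-I}--\eqref{A-II}, the remaining commutator decomposes into a tangential piece $[(S_0A_0)\partial_t+A_1^I\partial_1+A_2^I\partial_2+\mathbb{B},\mathbb{T}^\alpha]z$ and a normal piece $[A_1^{II}\partial_1+A_2^{II}\partial_2,\mathbb{T}^\alpha]z$. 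By Lemma~\ref{lemma-tangential}, Remark~\ref{remark2.1}, \eqref{tan-cmtt-1}--\eqref{tan-cmtt-2} and the boundedness of $Z$, the tangential piece obeys $\|[\cdots,\mathbb{T}^\alpha]z\|_0\le C_Z\sum_{|\beta|\le|\alpha|}\|\mathbb{T}^\beta z\|_0$. Pairing the equation for $\mathbb{T}^\alpha z$ with $\mathbb{T}^\alpha z$ in $L^2(\Omega)$ and integrating by parts in $x$, the boundary term is $\langle\mathcal{T}^\alpha\p,\mathscr{T}^\alpha u\cdot\nu\rangle_{\partial\Omega}$, which vanishes exactly as in Lemma~\ref{lemma-L2} because the $u$‑block of $\mathbb{T}^\alpha$ is $\mathscr{T}^\alpha$ and $\mathscr{T}$ preserves $v\cdot\nu=0$; using that $S_0A_0$ is positive definite and integrating in $t$ over $(-\infty,t]$ (the limit at $-\infty$ being zero since $t\mapsto\|\mathbb{T}^\alpha z(t)\|_{L^2(\Omega)}^2$ is in $L^1(\R)$ with integrable derivative) yields
\[c\|\mathbb{T}^\alpha z(t)\|_{L^2(\Omega)}^2+c\gamma\!\int_{-\infty}^{t}\!\|\mathbb{T}^\alpha z\|_{L^2(\Omega)}^2\,ds\ \le\ C_Z\!\int_{-\infty}^{t}\!\|\mathbb{T}^\alpha z\|_0\Big(\textstyle\sum_{|\beta|\le|\alpha|}\|\mathbb{T}^\beta z\|_0+\|\mathbb{T}^\alpha F\|_0\Big)ds+\int_{-\infty}^{t}\!\big(|\mathcal{J}_1|+|\mathcal{J}_2|\big)ds.\]

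The main obstacle, exactly as in Section~2, is the normal commutator term $\mathcal{J}_1+\mathcal{J}_2$, which a priori involves $m$‑th order normal derivatives of $z$ that are not controlled. I would dispose of it by the two devices of Section~2: for $\mathcal{J}_1=\langle\mathscr{T}^\alpha u,[\mathcal{T}^\alpha,\nabla]\p+(\text{l.o.t.})\rangle$ substitute the momentum equation of \eqref{gamma-prob}, namely $\nabla\p=F_1-RD_tu-\gamma Ru+B\cdot\nabla b$, turning $\mathcal{J}_1$ into inner products of tangential derivatives of $z$ and $F$ of order $\le|\alpha|$ together with one $\gamma$‑linear term $\gamma\langle\mathscr{T}^\alpha u,\mathcal{T}^\sigma(Ru)\rangle$ with $|\sigma|\le|\alpha|-1$; for $\mathcal{J}_2$ apply Lemma~\ref{lemma-J2} to rewrite it as a sum of $\int\Theta\,\mathcal{T}^\beta\partial_i\p\cdot\mathcal{T}^\sigma u_l$ with $|\beta|\le|\alpha|-1$, $|\sigma|\le|\alpha|$, and then substitute $\nabla\p$ as before. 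The delicate bookkeeping is the tracking of $\gamma$‑powers: after weighting by $\gamma^{2(m-|\alpha|)}$ and Young's inequality, the $\gamma$‑linear term splits into $\varepsilon\,\gamma\cdot\gamma^{2(m-|\alpha|)}\|\mathscr{T}^\alpha u\|_0^2$, absorbed by the damping, and a piece bounded by $C_\varepsilon C_Z^2\gamma^{-1}\cdot\gamma\,\|z\|_{H^{m,\gamma}_\tg}^2$, absorbed once $\gamma_0$ is large; the remaining genuine terms are all of order $\le|\alpha|$ and are handled exactly like the tangential commutator. Finally I would multiply the per‑$\alpha$ estimate by $\gamma^{2(m-|\alpha|)}$, sum over $|\alpha|\le m$, move the $C_Z$‑terms and the $\mathbb{T}^\alpha F$‑terms onto $\varepsilon\gamma\|z\|_{H^{m,\gamma}_\tg}^2+C_\varepsilon\gamma^{-1}\|F\|_{H^{m,\gamma}_\tg}^2$ by Cauchy--Schwarz and Young, and choose $\gamma_0=\gamma_0(Z)$ large enough that all $C_Z$‑contributions are absorbed into $c\gamma\|z\|_{H^{m,\gamma}_\tg}^2$ and $c\sup_t\sum_{|\alpha|\le m}\gamma^{2(m-|\alpha|)}\|\mathbb{T}^\alpha z(t)\|_{L^2(\Omega)}^2$, which gives \eqref{3.5}.
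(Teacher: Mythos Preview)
Your proposal is correct and follows essentially the same route as the paper: apply $\mathbb{T}^\alpha$, split the commutator into the tangential piece (handled via Lemma~\ref{lemma-tangential} and \eqref{tan-cmtt-1}--\eqref{tan-cmtt-2}) and the normal piece $\mathcal{J}_1+\mathcal{J}_2$, substitute the momentum equation for $\nabla\p$ in $\mathcal{J}_1$, invoke Lemma~\ref{lemma-J2} for $\mathcal{J}_2$, then weight by $\gamma^{2(m-|\alpha|)}$, sum, and absorb via large $\gamma$. Your explicit tracking of the extra $\gamma Ru$ term arising from the damping when substituting $\nabla\p$ is exactly what the paper records as the middle term $C\|\mathbb{T}^\alpha z\|\sum_{|\beta|\le|\alpha|-1}\gamma\|\mathbb{T}^\beta z\|$ in \eqref{tan-gamma'}, and your absorption of it is the same mechanism the paper uses when choosing $K,\gamma$ large in order.
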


\begin{proof}
    Let $F = (\tilde{L}+\mathbb{B} + \gamma S_0 A_0) z$, obviously one has
    \begin{equation*}
\begin{aligned}
    (\tilde{L}+\mathbb{B} + \gamma S_0 A_0)(Z) \mathbb{T}^\alpha z = &
    \mathbb{T}^\alpha F + [(S_0 A_0)(Z) \partial_t, \mathbb{T}^\alpha] z + \gamma[(S_0 A_0)(Z), \mathbb{T}^\alpha] z \\&+ [A_1^I(Z) \partial_1 + A_2^I (Z)\partial_2 + \mathbb{B}(Z), \mathbb{T}^\alpha]
    + [A_1^{II}\partial_1 + A_2^{II}\partial_2, \mathbb{T}^\alpha] z,
\end{aligned}
\end{equation*}
with $A_k^I, A_k^{II}$ ($k=1,2$) being defined in \eqref{A-I}-\eqref{A-II}.
    Similar to the estimate of tangential derivatives in the proof of Theorem~\ref{thm-apriori}, by taking the inner product of the above equation with $\mathbb{T}^\alpha z(t)$, we get
    \begin{equation*}
        \begin{aligned}
            &\quad\frac{\d}{2 \d t} \inp*{\mathbb{T}^\alpha z(t)}{(S_0 A_0)(Z) \mathbb{T}^\alpha z(t)}_{\Omega} + \gamma \inp*{\mathbb{T}^\alpha z(t)}{(S_0 A_0)(Z) \mathbb{T}^\alpha z(t)}_{\Omega}\\
            &= \inp*{\mathbb{T}^\alpha z(t)}{\mathbb{T}^\alpha {F}(t) + [(S_0 A_0)(Z) \partial_t + A_1^{I}(Z)\partial_1 + A_2^{I}(Z) \partial_2, \mathbb{T}^\alpha] z(t)}_{\Omega}\\
            &+\inp*{\mathbb{T}^\alpha z(t)}{ [ \mathbb{B}(Z)+\gamma(S_0 A_0)(Z), \mathbb{T}^\alpha]z(t)}_\Omega\\
            &+ \inp*{\mathbb{T}^\alpha z(t)}{
             (\frac{1}{2}(\partial_t(S_0 A_0) + \partial_1(S_0 A_1) + \partial_2 (S_0 A_2))-\mathbb{B})(Z)\mathbb{T}^\alpha z(t)}_{\Omega}\\
            &+ \sum_{\substack{|\beta|\le |\alpha|-1\\|\sigma|\le |\alpha|}}\sum_{i,l=1,2}\inp*{\Theta^{\beta,\sigma}(x) \mathbb{T}^\beta \left((F_1)_i - R(\partial_t + U\cdot \nabla)u_i + B\cdot \nabla b_i\right)(t)}{\mathbb{T}^\sigma u_l(t)}_\Omega,
        \end{aligned}
    \end{equation*}
    where $\Theta^{\beta,\sigma}\in C^\infty(\overline{\Omega})$.  By identifying $U\cdot \nabla$ with $\sum_{i=1,2} V_i \partial_{w^i}$, $B\cdot \nabla$ with $\sum_{i=1,2} \overline{B}_i \partial_{w^i}$ as in Lemma~\ref{lemma-tangential}, and using \eqref{tan-cmtt-2} and the Cauchy inequality, it follows there exists $C>0$, depending on $Z$, such that for any $K>0$,
    \begin{equation}\label{tan-gamma'}
        \begin{aligned}
           & \quad \|\mathbb{T}^\alpha z(t)\|^2_{L^2(\Omega)} + \gamma \|\mathbb{T}^\alpha z\|^2_{L^2(\R\times\Omega)}
            \le \frac{\gamma}{2 K} \sum_{|\beta|\le |\alpha|}\|\mathbb{T}^\beta z\|^2_{L^2(\R\times\Omega)}\\
           &+ C \|\mathbb{T}^\alpha z\|_{L^2(\R\times \Omega)}\sum_{|\beta|\le |\alpha|-1} \gamma \|\mathbb{T}^\beta z\|_{L^2(\R\times \Omega)}
           + K \sum_{|\beta|\le |\alpha|}
            \left( \frac{1}{\gamma}\|\mathbb{T}^\beta F\|^2_{L^2(\R\times\Omega)} + C \|\mathbb{T}^\beta z\|^2_{L^2(\R\times\Omega)}\right) 
        \end{aligned}
    \end{equation}
    holds. 
    By multiplying \eqref{tan-gamma'} with $\gamma^{m-|\alpha|}$, and summing up for all $\alpha\in \mathbb{N}^3, |\alpha| \le m$, the estimate \eqref{3.5} follows easily by choosing $K,\gamma$ sufficiently large in order.
\end{proof}

To obtain the existence of weak solutions to \eqref{gamma-prob} in $H^{m,\gamma}_{\tg}(\R\times \Omega)$, we shall first study the adjoint boundary-value problem of \eqref{gamma-prob} in $H^{m,\gamma}_{\tg}(\R\times \Omega)$. 

Denote by $\tilde{L}^* = - \partial_t\big( (S_0 A_0) \cdot\big) - \partial_1\big((S_0 A_1) \cdot\big) - \partial_2\big((S_0 A_2)\cdot \big)$ the adjoint operator of $\tilde{L}$ in $L^2(\R\times \Omega)$.
For any fixed $s\in \mathbb{N}$, introduce the spaces 
\begin{equation*}
     H^1H^{s,\gamma}_{\tg}(\R\times\Omega) : = \{z\in H^{s,\gamma}_{\tg}(\R\times\Omega): \partial_t z, \partial_x z \in H^{s,\gamma}_{\tg}(\R\times\Omega)\},
\end{equation*}
equipped with the norm $\|z\|_{H^{s,\gamma}_{\tg}(\R\times\Omega)} + \|\partial_{t,x} z\|_{H^{s,\gamma}_{\tg}(\R\times\Omega)}$.

Next aim is to establish a Green identity for the operator $\tilde{L}$ in the space $H^{m,\gamma}_{\tg}(\R\times \Omega)$, where, inspired by \cite{benoit2025regular}, we shall use the Riesz representation theorem to represent the commutators.

\begin{prop}[Green's identity]\label{prop-tan-Green}
    For  a fixed integer $m\in \mathbb{N}$, there exist a constant $C>0$ independent of  $\gamma$, and three linear bounded opertors $\phi_1, \phi_2,\varphi \in \mathcal{L}(H^{m,\gamma}_{\tg}(\R\times \Omega))$, satisfying $\|\phi_1\|_{\mathcal{L}(H^{m,\gamma}_{\tg})}+ \|\phi_2\|_{\mathcal{L}(H^{m,\gamma}_{\tg})} \le \frac{1}{\gamma} C$, $\|\varphi\|_{\mathcal{L}(H^{m,\gamma}_{\tg})}\le C$, such that for any $z = (u_z,b_z,\p_z,s_z)^t, w = (u_w,b_w,\p_w,s_w)^t \in H^1 H^{m,\gamma}_{\tg}(\R\times\Omega)$, one has the following identity,
    \begin{equation}\label{tan-grn}
    \begin{aligned}
        &\inp*{(\tilde{L}+\mathbb{B}+\gamma S_0A_0)z}{(1-\phi_1)w}_{H^{m,\gamma}_\tg}  = 
        \inp*{z}{\big((1+\phi_2)\tilde{L}^* +\mathbb{B}^t+\gamma S_0 A_0 + \varphi\big) w}_{H^{m,\gamma}_\tg} \\
        &\qquad
        +\sum_{|\alpha|\le m} \gamma^{2(m-|\alpha|)}
        \Big(\inp*{(\mathbb{T}^\alpha u_z) \cdot \nu}{(\mathbb{T}^\alpha u_w) \cdot \nu + \mathbb{T}^\alpha \p_w}_{L^2(\partial\Omega)} \\
        &\qquad
        + \inp*{- (\mathbb{T}^\alpha u_z) \cdot \nu + \mathbb{T}^\alpha \p_z}{(\mathbb{T}^\alpha u_w)\cdot \nu}_{L^2(\partial\Omega)} \Big).
    \end{aligned}
\end{equation}
\end{prop}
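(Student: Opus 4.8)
The plan is to compute the left‑hand side of \eqref{tan-grn} directly by expanding the weighted inner product, commuting the tangential operators $\mathbb T^\alpha$ across $\tilde L+\mathbb B+\gamma S_0A_0$, integrating by parts in $\R\times\Omega$, and then repackaging all commutator and lower‑order contributions as bounded bilinear forms via the Riesz representation theorem. Concretely, I would start from
\[
\inp*{(\tilde L+\mathbb B+\gamma S_0A_0)z}{w}_{H^{m,\gamma}_{\tg}}
=\sum_{|\alpha|\le m}\gamma^{2(m-|\alpha|)}\inp*{\mathbb T^\alpha\big[(\tilde L+\mathbb B+\gamma S_0A_0)z\big]}{\mathbb T^\alpha w}_{L^2(\R\times\Omega)},
\]
and for each $\alpha$ split $\mathbb T^\alpha\big[(\tilde L+\mathbb B+\gamma S_0A_0)z\big]=(\tilde L+\mathbb B+\gamma S_0A_0)\mathbb T^\alpha z+[\mathbb T^\alpha,\tilde L+\mathbb B+\gamma S_0A_0]z$. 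On the first piece I integrate by parts in $\R\times\Omega$, using the symmetry of $S_0A_0,S_0A_1,S_0A_2$ (see \eqref{symm}) and the fact that $\mathbb B$ and $\gamma S_0A_0$ are of order zero; this produces $\inp*{\mathbb T^\alpha z}{(\tilde L^*+\mathbb B^t+\gamma S_0A_0)\mathbb T^\alpha w}_{L^2}$ together with the boundary term $\inp*{\mathbb T^\alpha z}{A_\nu\mathbb T^\alpha w}_{L^2(\R\times\partial\Omega)}$, $A_\nu=\nu_1 S_0A_1+\nu_2 S_0A_2$. Since $U\cdot\nu=B\cdot\nu=0$ on $\partial\Omega\setminus\{q_1,\dots,q_N\}$, exactly as in the proof of Lemma~\ref{lemma-L2} the matrix $A_\nu$ reduces to the one coupling only $u\cdot\nu$ with the total‑pressure slot, so a short algebraic rearrangement identifies $\sum_\alpha\gamma^{2(m-|\alpha|)}\inp*{\mathbb T^\alpha z}{A_\nu\mathbb T^\alpha w}_{L^2(\partial\Omega)}$ with the bracketed boundary expression of \eqref{tan-grn}. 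Moving $\mathbb T^\alpha$ back through $\tilde L^*+\mathbb B^t+\gamma S_0A_0$ then yields the main term $\inp*{z}{(\tilde L^*+\mathbb B^t+\gamma S_0A_0)w}_{H^{m,\gamma}_{\tg}}$ plus one more commutator, and one is left with the identity up to the remainder
\[
R(z,w)=\sum_{|\alpha|\le m}\gamma^{2(m-|\alpha|)}\Big(\inp*{[\mathbb T^\alpha,\tilde L+\mathbb B+\gamma S_0A_0]z}{\mathbb T^\alpha w}_{L^2}-\inp*{\mathbb T^\alpha z}{[\mathbb T^\alpha,\tilde L^*+\mathbb B^t+\gamma S_0A_0]w}_{L^2}\Big).
\]

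The core of the proof is then to classify $R(z,w)$. I would split $\tilde L=\tilde L^{I}+\tilde L^{II}$ into its tangential and normal parts as in \eqref{A-I}--\eqref{A-II}. By \eqref{tan-cmtt-1}--\eqref{tan-cmtt-2}, the commutators of $\mathbb T^\alpha$ with $\tilde L^{I}+\mathbb B+\gamma S_0A_0$ (and with the adjoint operators), as well as the zeroth‑order terms $\partial_t(S_0A_0)+\partial_1(S_0A_1)+\partial_2(S_0A_2)$ produced by integration by parts, are finite sums $\sum_{|\beta|\le|\alpha|}\Theta_\beta(t,x)\,\mathbb T^\beta(\cdot)$ with $\gamma$‑independent smooth coefficients; weighted and summed they give bilinear forms bounded by $C\|z\|_{H^{m,\gamma}_{\tg}}\|w\|_{H^{m,\gamma}_{\tg}}$, the extra $\gamma$ in the $S_0A_0$‑commutator being cancelled by the $\gamma^{-1}$ gained from dropping one tangential order. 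The genuinely delicate pieces are the commutators with $\tilde L^{II}$, which carry a true normal derivative: $[\mathbb T^\alpha,\tilde L^{II}]z=\big([\mathbb T^\alpha,\nabla]\p_z,0,0,[\mathbb T^\alpha,\nabla\cdot]u_z,0\big)^t$, and likewise for $w$. On the $z$‑side I eliminate $\nabla\p_z$ through the momentum equation of \eqref{gamma-prob}, $\nabla\p_z=[(\tilde L+\mathbb B+\gamma S_0A_0)z]_{\mathrm{mom}}-R(\partial_t+U\cdot\nabla)u_z+B\cdot\nabla b_z-\gamma R u_z$, reading $\partial_t+U\cdot\nabla$ and $B\cdot\nabla$ as tangential operators via Lemma~\ref{lemma-tangential}; for $[\mathbb T^\alpha,\nabla\cdot]u_z$ I invoke Lemma~\ref{lemma-J2} and Corollary~\ref{coro-tan-struc} (the corner‑adapted ``borrowed multiplier'' structure of the operators $\mathscr H_i$) to move the normal derivative onto the pressure slot of the other factor. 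The $w$‑side is handled symmetrically, replacing $\nabla\p_w$ by $-[\tilde L^* w]_{\mathrm{mom}}-[\tilde L^{I}w]_{\mathrm{mom}}$ up to zeroth order via $\tilde L^*=-\tilde L-E(Z)$, $\tilde L=\tilde L^{I}+\tilde L^{II}$. Counting tangential orders against the weight $\gamma^{2(m-|\alpha|)}$ (a net loss of one tangential derivative buys a factor $\gamma^{-1}$, and any surviving factor $\gamma$ consumes one such gain) shows $R(z,w)=R_1\big((\tilde L+\mathbb B+\gamma S_0A_0)z,\,w\big)+R_2\big(z,\,\tilde L^* w\big)+R_3(z,w)$ with $|R_1(f,w)|\le\frac{C}{\gamma}\|f\|_{H^{m,\gamma}_{\tg}}\|w\|_{H^{m,\gamma}_{\tg}}$, $|R_2(z,g)|\le\frac{C}{\gamma}\|z\|_{H^{m,\gamma}_{\tg}}\|g\|_{H^{m,\gamma}_{\tg}}$ and $|R_3(z,w)|\le C\|z\|_{H^{m,\gamma}_{\tg}}\|w\|_{H^{m,\gamma}_{\tg}}$, where $C$ depends only on $Z$ and on the fixed smooth frame $w^i$ and matrices $h^i$ (Definition~\ref{def-tan}, Lemma~\ref{lemma-h-def}), not on $\gamma$. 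Since $z,w\in H^1H^{m,\gamma}_{\tg}(\R\times\Omega)$ forces $(\tilde L+\mathbb B+\gamma S_0A_0)z$ and $\tilde L^* w$ to lie in $H^{m,\gamma}_{\tg}(\R\times\Omega)$, the Riesz representation theorem in this Hilbert space applied to $R_1,R_2,R_3$ yields $\phi_1,\phi_2,\varphi\in\mathcal L(H^{m,\gamma}_{\tg})$ with $\|\phi_1\|_{\mathcal L}+\|\phi_2\|_{\mathcal L}\le C/\gamma$, $\|\varphi\|_{\mathcal L}\le C$, and substituting them into the identity above gives precisely \eqref{tan-grn}.

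The main obstacle is exactly the bookkeeping for the normal‑derivative commutators $[\mathbb T^\alpha,\tilde L^{II}]$: one must verify that after substituting the momentum/continuity relations of \eqref{gamma-prob} on the $z$‑side and the algebraic identities for $\nabla\p_w,\nabla\cdot u_w$ on the $w$‑side, and after exploiting the ``almost tangential'' structure of $\mathscr H_i$ near the corners (Lemma~\ref{lemma-J2}, Corollary~\ref{coro-tan-struc}), every surviving term genuinely pairs at most $m$ tangential derivatives of $z$ with at most $m$ tangential derivatives of $w$ — or of $\tilde L^* w$, respectively $(\tilde L+\mathbb B+\gamma S_0A_0)z$ — carrying the power of $\gamma$ as claimed. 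This is the only place where the corner geometry and the precise tangential/normal split \eqref{A-I}--\eqref{A-II} genuinely enter, and the index and $\gamma$‑power accounting there is unavoidable.
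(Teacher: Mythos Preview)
Your proposal is correct and follows essentially the same route as the paper's proof: expand the $H^{m,\gamma}_{\tg}$ inner product, commute $\mathbb{T}^\alpha$ across $\tilde L+\mathbb{B}+\gamma S_0A_0$, integrate by parts to produce the $A_\nu$ boundary term and the adjoint, then split the commutator remainder via the tangential/normal decomposition \eqref{A-I}--\eqref{A-II}, substitute the momentum relation for $\nabla\p_z$ (and its adjoint analogue for $\nabla\p_w$), invoke Lemma~\ref{lemma-J2} for the $\nabla\cdot$ commutator, and finally apply Riesz to package the three types of bilinear remainders into $\phi_1,\phi_2,\varphi$ with the stated $\gamma$-bounds. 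The only cosmetic difference is that the paper substitutes $\partial_i\p_z=(\tilde L z)_i-R(\partial_t+U\cdot\nabla)u_{z,i}+B\cdot\nabla b_{z,i}$ (using $\tilde L z$ rather than the full $(\tilde L+\mathbb{B}+\gamma S_0A_0)z$), and then passes to the full operator at the end by absorbing the zeroth-order discrepancy $(\mathbb{B}+\gamma S_0A_0)z$ against $\|\phi_1\|\le C/\gamma$ into $\varphi$; your version does this absorption implicitly from the start.
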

\begin{proof}
By integration by parts, one has
\begin{equation*}
    \begin{aligned}
        &\inp*{(\tilde{L}+\mathbb{B}+\gamma S_0A_0)z}{w}_{H^{m,\gamma}_\tg}  = \sum_{|\alpha|\le m} \gamma^{2(m-|\alpha|)} \inp*{\mathbb{T}^\alpha (\tilde{L}+\mathbb{B}+\gamma S_0 A_0) z}{\mathbb{T}^\alpha w}_{L^2}\\
        &\quad =\sum_{|\alpha|\le m} \gamma^{2(m-|\alpha|)} \Big(
        \inp*{\mathbb{T}^\alpha z}{\mathbb{T}^\alpha (\tilde{L}^*+\mathbb{B}^t+\gamma S_0A_0) w}_{L^2} + \inp*{A_\nu \mathbb{T}^\alpha z}{\mathbb{T}^\alpha w}_{L^2(\partial\Omega)} \\
        &\quad + \underbrace{\inp*{[\mathbb{T}^\alpha, \tilde{L}+\mathbb{B}+\gamma S_0A_0]z}{\mathbb{T}^\alpha w}_{L^2}}_{\mathcal{I}_1} + \underbrace{\inp*{\mathbb{T}^\alpha z}{[\tilde{L}^*+\mathbb{B}^t+\gamma S_0 A_0, \mathbb{T}^\alpha]w}_{L^2}}_{\mathcal{I}_2}\Big).
    \end{aligned}
\end{equation*}
By a direct computation, it holds
\begin{equation*}
    \begin{aligned}
        \inp*{A_\nu \mathbb{T}^\alpha z}{\mathbb{T}^\alpha w}_{L^2(\partial\Omega)} & = 
        \inp*{(\mathbb{T}^\alpha u_z) \cdot \nu}{(\mathbb{T}^\alpha u_w) \cdot \nu + \mathbb{T}^\alpha \p_w}_{L^2(\partial\Omega)} \\
        & + \inp*{- (\mathbb{T}^\alpha u_z) \cdot \nu + \mathbb{T}^\alpha \p_z}{(\mathbb{T}^\alpha u_w)\cdot \nu}_{L^2(\partial\Omega)},
    \end{aligned}
\end{equation*}
and
\begin{equation}\label{3.8}
    \begin{aligned}
        \mathcal{I}_1 + \mathcal{I}_2 &= \sum_{|\alpha|\le m}\gamma^{2(m-|\alpha|)} \Bigg(
        \underbrace{\inp*{[\mathbb{T}^\alpha, S_0A_0 \partial_t + \sum_{i=1,2} A_i^{I}\partial_i+\mathbb{B}+\gamma S_0A_0]z}{\mathbb{T}^\alpha w}_{L^2}}_{\mathcal{J}_1} \\
        &+ \underbrace{\inp*{\mathbb{T}^\alpha z}{[-(S_0 A_0 \partial_t + \sum_{i=1,2} A_i^{I}\partial_i)+\mathbb{B}^t+\gamma S_0 A_0, \mathbb{T}^\alpha]w}_{L^2}}_{\mathcal{J}_2}\\
        &+ \underbrace{\inp*{[\mathbb{T}^\alpha, \sum_{i=1,2} A_i^{II}\partial_i]z}{\mathbb{T}^\alpha w}_{L^2}}_{\mathcal{J}_3} 
        - \underbrace{\inp*{\mathbb{T}^\alpha z}{[\sum_{i=1,2} A_i^{II}\partial_i,\mathbb{T}^\alpha]w}_{L^2}}_{\mathcal{J}_4}\Bigg),
    \end{aligned}
\end{equation}
where $A^I_1, A^I_2$ and $A^{II}_1, A^{II}_2$ are defined in \eqref{A-I} and \eqref{A-II} respectively.
Since by \eqref{tan-cmtt-2}, $A_1^{I}\partial_1 + A_2^{I}\partial_2$ contains only  tangential derivatives , one has that $\mathcal{J}_1, \mathcal{J}_2$ are bilinear continuous on $H^{m,\gamma}_\tg \times H^{m,\gamma}_\tg$ with respect to $z$ and $w$, and in particular by using the Cauchy-Schwarz inequality, there exists a constant $C$ independent of $z,w$ and $\gamma$, such that
\begin{equation*}
    |\mathcal{J}_1|+ |\mathcal{J}_2| \le C \|z\|_{H^{m,\gamma}_\tg} \|w\|_{H^{m,\gamma}_\tg}.
\end{equation*}
Consequently, using the Riesz representation theorem, there exist two continuous linear operators $\varphi_1$ and $\varphi_2 : H^{m,\gamma}_\tg\to H^{m,\gamma}_\tg$ such that
\begin{equation*}
    \mathcal{J}_1 = \inp*{z}{\varphi_1(w)}_{H^{m,\gamma}_\tg}, \quad \mathcal{J}_2 = \inp*{z}{\varphi_2(w)}_{H^{m,\gamma}_\tg},
\end{equation*}
with the operator norms $\|\varphi_1\|_{\mathcal{L}(H^{m,\gamma}_\tg)}, \|\varphi_2\|_{\mathcal{L}(H^{m,\gamma}_\tg)} \le C$.

As for $\mathcal{J}_3$ and $\mathcal{J}_4$, we can explicitly compute them from the expression of $A_1^{II}\partial_1 + A_2^{II}\partial_2$ given in \eqref{A-II}. For example,
\begin{equation*}
\begin{aligned}
    \mathcal{J}_3 = \sum_{|\alpha|\le m} \gamma^{2(m-|\alpha|)} \Big(&\inp*{\mathscr{T}^\alpha \nabla \p_z - \nabla \mathcal{T}^\alpha \p_z}{\mathscr{T}^\alpha u_w}_{L^2}\\
    & +\inp*{\mathcal{T}^\alpha (\nabla\cdot u_z) - \nabla\cdot \mathscr{T}^\alpha u_z}{\mathcal{T}^\alpha \p_w}_{L^2}
    \Big).
\end{aligned}
\end{equation*}
Then by Lemma~\ref{lemma-J2}, one has
\begin{equation*}
\begin{aligned}
    \mathcal{J}_3 = \sum_{|\alpha|\le m} \gamma^{2(m-|\alpha|)}
    \sum_{\substack{|\beta|\le |\alpha|-1 \\ |\sigma|\le |\alpha|}} \sum_{i,l=1,2}
        \Bigg(&\inp*{\Theta_1^{\beta,\sigma}(x)\mathcal{T}^\beta \partial_i \p_z}{\mathcal{T}^\sigma (u_l)_w}_{L^2}
        \\
    +&\inp*{\Theta_2^{\beta,\sigma}(x)\mathcal{T}^\beta \partial_i \p_w}{\mathcal{T}^\sigma (u_l)_z}_{L^2}\Bigg),
    \end{aligned}
\end{equation*}
with some $\Theta_1^{\beta,\sigma}, \Theta_2^{\beta,\sigma}\in C^\infty(\overline{\Omega})$.
Invoking
\begin{equation*}
    \begin{aligned}
        &\partial_i \p_z = (\tilde{L}z)_i - R(\partial_t + U\cdot\nabla) (u_i)_z + B\cdot \nabla (b_i)_z,\\
        &\partial_i \p_w = -(\tilde{L}^* w)_i - R(\partial_t + U\cdot\nabla) (u_i)_w + B\cdot \nabla (b_i)_w,
    \end{aligned}
\end{equation*}
we can write $\mathcal{J}_3 = \mathcal{J}_3^\flat + \mathcal{J}_3^{\sharp,1} + \mathcal{J}_3^{\sharp,2}$, where
\begin{equation*}
\begin{aligned}
    &\mathcal{J}_3^\flat = \sum_{|\alpha|\le m}\gamma^{2(m-|\alpha|)} \sum_{|\beta|,|\sigma|\le |\alpha|} \inp*{\Theta^{\beta,\sigma}_{(1)} (x,Z)\mathcal{T}^\beta z}{\mathcal{T}^\sigma w}_{L^2},
    \\
    &\mathcal{J}_3^{\sharp,1} = \sum_{|\alpha|\le m}\gamma^{2(m-|\alpha|)} \sum_{\substack{|\beta|\le |\alpha|-1 \\|\sigma|\le |\alpha|}} \inp*{\Theta^{\beta,\sigma}_{(2)}(x) \mathcal{T}^\beta (\tilde{L} z)}{\mathcal{T}^\sigma w}_{L^2}, \\ 
    &\mathcal{J}_3^{\sharp,2} = \sum_{|\alpha|\le m}\gamma^{2(m-|\alpha|)} \sum_{\substack{|\beta|\le |\alpha|\\|\sigma|\le |\alpha|-1}} \inp*{\Theta^{\beta,\sigma}_{(3)}(x) \mathcal{T}^\beta z}{\mathcal{T}^\sigma (\tilde{L}^* w)}_{L^2},
\end{aligned}
\end{equation*}
for some $\Theta^{\beta,\sigma}_{(1)}\in C^\infty(\overline{\Omega}\times \mathcal{K})$, $\Theta^{\beta,\sigma}_{(2)}, \Theta^{\beta,\sigma}_{(3)} \in C^\infty(\overline{\Omega})$.
Then one can write $\mathcal{J}^\flat_3 = \inp*{z}{\varphi_3(w)}_{H^{m,\gamma}_\tg}$ for a $\varphi_3 \in \mathcal{L}(H^{m,\gamma}_\tg)$ with $\|\varphi_3\|_{\mathcal{L}(H^{m,\gamma}_\tg)}\le C$ for some $C>0$ independent of $(z,w,\gamma)$. On the other hand, by using the Cauchy-Schwarz inequality,
\begin{equation*}
    |\mathcal{J}_3^{\sharp,1}| \le C \|\tilde{L} z\|_{H^{m-1,\gamma}_\tg}\|w\|_{H^{m,\gamma}_\tg}\le \frac{1}{\gamma}C \|\tilde{L} z\|_{H^{m,\gamma}_\tg}\|w\|_{H^{m,\gamma}_\tg}.
\end{equation*}
Thus, $(\tilde{L} z,w)\mapsto \mathcal{J}_3^{\sharp,1}$ is a bilinear continuous form on $H^{m,\gamma}_\tg\times H^{m,\gamma}_\tg$. Again, by using the Riesz representation theorem, there exists $\phi_1 \in \mathcal{L}(H^{m,\gamma}_\tg)$ with $\|\phi_1\|_{\mathcal{L}(H^{m,\gamma}_\tg)}\le \frac{1}{\gamma}C$ for some $C>0$ independent of $(z,w,\gamma)$, such that
\begin{equation*}
     \mathcal{J}_3^{\sharp,1} = \inp*{\tilde{L} z}{\phi_1(w)}_{H^{m,\gamma}_\tg}.
\end{equation*}
Similarly, one can have
\begin{equation*}
    \mathcal{J}_3^{\sharp,2} = \inp*{z}{\phi_2(\tilde{L}^* (w))}_{H^{m,\gamma}_\tg}
\end{equation*}
for some $\phi_2 \in \mathcal{L}(H^{m,\gamma}_\tg)$ with $\|\phi_2\|_{\mathcal{L}(H^{m,\gamma}_\tg)}\le \frac{1}{\gamma}C$.

The analysis for $\mathcal{J}_4$ given in \eqref{3.8} is completely similar as the above for $\mathcal{J}_4$. 

Summarizing the above calcluations, one can conclude the Green identity given in \eqref{tan-grn}, with some new $\phi_1,\phi_2,\varphi\in \mathcal{L}(H^{m,\gamma}_\tg)$, satisfying $\|\phi_1\|_{\mathcal{L}(H^{m,\gamma}_\tg)},\|\phi_2\|_{\mathcal{L}(H^{m,\gamma}_\tg)} \le \frac{1}{\gamma} C$ and $\|\varphi\|_{\mathcal{L}(H^{m,\gamma}_\tg)}\le C$.
\end{proof}

From $\|\phi_1\|_{\mathcal{L}(H^{m,\gamma}_\tg)}\le \frac{1}{\gamma} C$, it immediately implies that when $\gamma$ is properly large, $(1-\phi_1)$ is a bijection from $H^{m,\gamma}_{\tg}(\R\times \Omega)$ to itself. So, from \eqref{tan-grn}, we know that 
\begin{equation}\label{gamma-prob-1}
\begin{cases}
    \left((1+\phi_2)\tilde{L}^* + \mathbb{B}^t + \gamma S_0 A_0 + \varphi\right)
    (1-\phi_1)^{-1} w = G, \quad \R\times \Omega,\\
    {\cal B}(1-\phi_1)^{-1}w = 0, \quad \R\times (\partial\Omega\backslash\{q_1,\ldots, q_N\}),
\end{cases}
\end{equation}
is the adjoint boundary-value problem of \eqref{gamma-prob}
in $H^{m,\gamma}_{\tg}(\R\times \Omega)$.

In particular, when $m=0$, we have $\phi_1 = \phi_2 = \varphi = 0$, then for any $z,w\in H^1(\R\times \Omega)$
\begin{equation}\label{L2-grn}
    \inp*{\tilde{L} z}{w}_{L^2(\R\times \Omega)} = \inp*{z}{\tilde{L}^* w}_{L^2(\R\times \Omega)} + \inp*{A_\nu z}{ w}_{L^2(\R\times\partial\Omega)}.
\end{equation}
Let $\mathcal{D}L^2(\R\times \Omega): = \{z\in L^2(\R\times \Omega): \tilde{L} z\in L^2(\R\times \Omega)\}$, equipped with the norm $\|z\|_{L^2(\R\times \Omega)} + \|\tilde{L}z\|_{L^2(\R\times \Omega)}$. We extend the $L^2$ Green's identity \eqref{L2-grn} to be as follows.
\begin{lemma}
    The map
    \begin{equation*}
        H^1(\R\times \Omega) \ni z\mapsto A_\nu z_{|\partial\Omega\backslash\{q_1,\ldots, q_N\}} 
    \end{equation*}
    extends uniquely to a continuous linear map from $\mathcal{D} L^2(\R\times\Omega)$ to $H^{-\frac{1}{2}} (\R\times \partial\Omega)$, and the Green identity in $L^2(\R\times \Omega)$ given in \eqref{L2-grn} holds for $z\in \mathcal{D}L^2(\R\times \Omega)$, $w\in H^1 (\R\times\Omega)$.
\end{lemma}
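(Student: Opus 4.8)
The plan is to deduce the statement from the $H^1$-Green identity \eqref{L2-grn} together with surjectivity of the trace operator on the Lipschitz domain $\R\times\Omega$, so that the normal trace gets defined by duality against $H^{\frac{1}{2}}(\R\times\partial\Omega)$. First I would record the \emph{key estimate}: for $z,w\in H^1(\R\times\Omega)$ the identity \eqref{L2-grn} is just the classical integration by parts, valid because a curvilinear polygon — hence $\R\times\Omega$ — is a Lipschitz domain of bounded geometry (one may also take a Fourier transform in $t$ to reduce to $\Omega$), so that $\gamma_0\colon H^1(\R\times\Omega)\to H^{\frac{1}{2}}(\R\times\partial\Omega)$ is bounded and onto. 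Since by \eqref{assum-Z'} the coefficients $S_0 A_j(Z)$ and their first derivatives are bounded, one has $\|\tilde{L}^* w\|_{L^2(\R\times\Omega)}\le C(Z)\,\|w\|_{H^1(\R\times\Omega)}$, and \eqref{L2-grn} yields
\begin{equation*}
\left|\inp*{A_\nu z}{w}_{L^2(\R\times\partial\Omega)}\right| = \left|\inp*{\tilde{L}z}{w}_{L^2} - \inp*{z}{\tilde{L}^* w}_{L^2}\right| \le C\left(\|z\|_{L^2(\R\times\Omega)} + \|\tilde{L}z\|_{L^2(\R\times\Omega)}\right)\|w\|_{H^1(\R\times\Omega)}.
\end{equation*}
Moreover, from the explicit form of $A_\nu$ the left-hand side equals $\inp*{\p_z}{u_w\cdot\nu}_{L^2(\partial\Omega)}+\inp*{u_z\cdot\nu}{\p_w}_{L^2(\partial\Omega)}$, which depends on $w$ only through its trace $\gamma_0 w$.

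Next I would \emph{construct the extension}. Fix $z\in\mathcal{D}L^2(\R\times\Omega)$. Testing the distributional identity $\tilde{L}z\in L^2$ against $C_c^\infty(\R\times\Omega)$ and using the symmetry of the $S_0 A_j$ gives $\inp*{\tilde{L}z}{w}_{L^2}=\inp*{z}{\tilde{L}^* w}_{L^2}$ first for $w\in C_c^\infty$ and then, by density, for all $w\in H^1_0(\R\times\Omega)$. Hence, choosing a bounded linear right inverse $E\colon H^{\frac{1}{2}}(\R\times\partial\Omega)\to H^1(\R\times\Omega)$ of $\gamma_0$, the functional
\begin{equation*}
H^{\frac{1}{2}}(\R\times\partial\Omega)\ni\phi\longmapsto \inp*{\tilde{L}z}{E\phi}_{L^2(\R\times\Omega)} - \inp*{z}{\tilde{L}^*(E\phi)}_{L^2(\R\times\Omega)}
\end{equation*}
is independent of the choice of lifting (two liftings differ by an element of $H^1_0$, on which the bracket vanishes) and bounded, with norm $\le C(\|z\|_{L^2}+\|\tilde{L}z\|_{L^2})$; the associated element of $H^{-\frac{1}{2}}(\R\times\partial\Omega)$ is what I would call $A_\nu z_{|\partial\Omega\backslash\{q_1,\ldots,q_N\}}$. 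It is visibly linear in $z$, bounded from $\mathcal{D}L^2(\R\times\Omega)$ into $H^{-\frac{1}{2}}(\R\times\partial\Omega)$, and by the key estimate of the previous step it coincides, for $z\in H^1(\R\times\Omega)$, with the classical normal trace paired against $\gamma_0 w$.

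With this definition the Green identity \eqref{L2-grn}, read with $\langle\cdot,\cdot\rangle_{H^{-\frac{1}{2}},H^{\frac{1}{2}}}$ in place of the boundary integral, holds for every $z\in\mathcal{D}L^2(\R\times\Omega)$ and $w\in H^1(\R\times\Omega)$ — this is literally the definition once one writes $w = E(\gamma_0 w) + (w-E\gamma_0 w)$ with the second summand in $H^1_0$. For uniqueness of the continuous extension I would invoke the density of $H^1(\R\times\Omega)$ in $\mathcal{D}L^2(\R\times\Omega)$ for the graph norm, proved by the same inward-translation and mollification scheme as in Lemma~\ref{lemma-density} (cut off with a partition of unity, translate inward along $-\nu$ near a smooth boundary point and along the bisector of $\omega_n$ near a corner $q_n$, then mollify, which regularizes both $z$ and $\tilde{L}z$); alternatively, one simply notes that any continuous extension for which \eqref{L2-grn} persists is forced to take the value just constructed.

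The main point to watch is that, although the corners are precisely what obstructs the anisotropic analysis elsewhere in the paper, they are harmless here: at the $L^2$ level $\Omega$ (and $\R\times\Omega$) is still a Lipschitz domain, so the trace theorem, the density of $C_c^\infty$ in $H^1_0$, and the inward-translation mollification all apply verbatim. The only mild technicalities are the non-compactness in the time variable — handled by translation invariance in $t$, or by observing that $\R\times\partial\Omega$ is a Lipschitz hypersurface of bounded geometry so its trace spaces are the usual ones — and keeping the constant $C$ dependent only on $\|Z\|_{W^{1,\infty}}$ through the coefficients $S_0 A_j$.
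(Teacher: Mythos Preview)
Your argument is correct and follows essentially the same route as the paper: use the $H^1$ Green identity together with the surjectivity of the trace $H^1(\R\times\Omega)\to H^{\frac{1}{2}}(\R\times\partial\Omega)$ (which the paper cites from Grisvard) to define $A_\nu z$ by duality, and invoke density of $H^1$ in $\mathcal{D}L^2$ for uniqueness. The paper's proof is more terse --- it simply writes the key identity \eqref{Anu-trace} and reads off the bound --- while you additionally spell out why the functional is independent of the chosen lifting and how the extended Green identity follows, but the content is the same.
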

\begin{proof}
    According to \cite[Theorem~1.5.1.3]{grisvard1985elliptic}, for any $\varphi \in H^{1/2}(\R\times \partial\Omega)$, there exists $\Phi\in H^1(\R\times \Omega)$, such that $\Phi|_{\partial\Omega} = \varphi$, and $\|\Phi\|_{H^1}\lesssim \|\varphi\|_{H^{1/2}}$.
    Then for any $z\in H^1(\R\times \Omega)$,
    \begin{equation}\label{Anu-trace}
    \inp*{A_\nu z}{\varphi}_{L^2(\R\times \partial\Omega)} = \inp*{\tilde{L}z}{\Phi}_{L^2(\R\times \Omega)} - \inp*{z}{\tilde{L}^* \Phi}_{L^2(\R\times \Omega)}
    \end{equation}
    shows that $\|A_\nu z_{|\partial\Omega\backslash\{q_1,\ldots, q_N\}} \|_{H^{-\frac{1}{2}}(\R\times\partial\Omega)} \lesssim \|z\|_{\mathcal{D}L^2(\R\times \Omega)}$, which proves the existence of a continuous extension claimed in the lemma. Also, it is easy to see that $H^1(\R\times \Omega)$ is dense in $\mathcal{D}L^2(\R\times \Omega)$, so the extension is unique.
\end{proof}

The Green formula given in \eqref{tan-grn} motivates the definition of weak solutions of the problem \eqref{gamma-prob} in $H^{m,\gamma}_{\tg}(\R\times \Omega)$.
\begin{defn}[Weak solution]\label{defn-weak}
    For a given $F\in H^{m,\gamma}_{\tg}(\R\times \Omega)$ with a fixed integer $m\in {\mathbb N}$, we say that $z\in H^{m,\gamma}_{\tg}(\R\times \Omega)$ is a weak solution of \eqref{gamma-prob} if for all $w \in C^\infty_0(\R\times \overline{\Omega})$ such that ${\cal B} w = 0$ on $\R\times (\partial\Omega\backslash\{q_1, \ldots, q_N\})$, one has
    \begin{equation}\label{weak-sol} 
        \inp*{z}{\big((1+\phi_2)\tilde{L}^* +\mathbb{B}^t+\gamma S_0 A_0 + \varphi\big) w}_{H^{m,\gamma}_\tg} 
        = \inp*{F}{(1-\phi_1)w}_{H^{m,\gamma}_\tg}.
    \end{equation}
\end{defn}
Now we introduce the spaces
\begin{equation*}
    \mathcal{D} H^{m,\gamma}_{\tg}(\R\times \Omega) := \{z\in H^{m,\gamma}_{\tg}(\R\times \Omega): \tilde{L}z \in H^{m,\gamma}_{\tg}(\R\times\Omega)\}
\end{equation*}

The following properties of the weak solutions are obtained immediately.
\begin{lemma}
   For a given $F\in H^{m,\gamma}_{\tg}(\R\times \Omega)$, assume that $z\in H^{m,\gamma}_{\tg}(\R\times \Omega)$ is a weak solution of \eqref{gamma-prob}, then
    \begin{enumerate}
        \item[(1)] $z \in \mathcal{D} H^{m,\gamma}_{\tg}$, and $\tilde{L} z = F-\mathbb{B}z - \gamma S_0 A_0 z$ in the sense of distributions on $\R\times \Omega$,
        \item[(2)] ${\cal B} z = 0$ on $\R\times \partial\Omega\backslash\{q_1,\ldots, q_N\}$. 
    \end{enumerate}
\end{lemma}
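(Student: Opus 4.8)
The plan is to read off both assertions directly from Definition~\ref{defn-weak}, choosing interior test functions for the equation and boundary-touching ones for the boundary condition. For \textbf{(1)}, I would first restrict the weak formulation \eqref{weak-sol} to $w\in C^\infty_0(\R\times\Omega)$ supported in the open set; for such $w$ the constraint ${\cal B}w=0$ is automatic, so all interior test functions are admissible. The point is then to unravel the operators $\phi_1,\phi_2,\varphi$: by their construction in the proof of Proposition~\ref{prop-tan-Green} they are Riesz representatives of \emph{explicit} bilinear forms of the type $\sum\gamma^{\,\cdots}\langle \Theta(x)\mathcal{T}^\beta\cdot,\mathcal{T}^\sigma\cdot\rangle_{L^2}$ with $\Theta\in C^\infty(\overline{\Omega})$ (with $|\beta|\le m-1$ whenever the left slot carries the extra factor $\tilde{L}z$, and $|\sigma|\le m-1$ when the right slot carries $\tilde{L}^*w$). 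Substituting these into \eqref{weak-sol} and transferring every tangential operator $\mathcal{T}^\beta,\mathscr{T}^\beta$ and the operator $\tilde{L}^*$ onto $w$ by integration by parts — which produces no boundary term because $w$ is interior-supported — I would rewrite \eqref{weak-sol} as $\langle z,\mathcal{A}w\rangle_{L^2}=\langle F,\mathcal{C}w\rangle_{L^2}$ for differential operators $\mathcal{A},\mathcal{C}$ with smooth coefficients. Now the Green identity \eqref{tan-grn} "read backwards" for smooth fields $\zeta$ (its boundary part dropping for interior $w$) gives exactly $\langle\zeta,\mathcal{A}w\rangle_{L^2}=\langle (\tilde{L}+\mathbb{B}+\gamma S_0A_0)\zeta,(1-\phi_1)w\rangle_{H^{m,\gamma}_\tg}$, an identity whose two sides, after the transfer of derivatives, depend on $\zeta$ only through $L^2$-continuous forms; hence it extends to $\zeta=z$. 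Comparing with \eqref{weak-sol} forces $\langle \tilde{L}z-F+\mathbb{B}z+\gamma S_0A_0z,\,(1-\phi_1)w\rangle=0$ for every interior $w$, i.e. $\tilde{L}z=F-\mathbb{B}z-\gamma S_0A_0z$ in $\mathcal{D}'(\R\times\Omega)$. Since $\mathbb{B}$ and $S_0A_0$ are multiplications by smooth matrices, the right-hand side lies in $H^{m,\gamma}_\tg(\R\times\Omega)$ together with $F$, so $\tilde{L}z\in H^{m,\gamma}_\tg$, that is $z\in\mathcal{D}H^{m,\gamma}_\tg$.

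For \textbf{(2)}, having established (1) I would upgrade the Green identity \eqref{tan-grn} so that it is valid for $z\in\mathcal{D}H^{m,\gamma}_\tg$ (not merely $z\in H^1 H^{m,\gamma}_\tg$), the boundary contributions $(\mathbb{T}^\alpha u_z)\cdot\nu$ and $\mathbb{T}^\alpha\p_z$ being read as negative-order traces; this is the level-by-level analogue of the extension of the $L^2$ Green identity to $\mathcal{D}L^2(\R\times\Omega)$ carried out just before the lemma. Inserting $z$ into this extended identity and testing against $w\in C^\infty_0(\R\times\overline{\Omega})$ with ${\cal B}w=0$, the two bulk pairings are precisely the two sides of \eqref{weak-sol} and therefore cancel, so only the boundary sum on the right of \eqref{tan-grn} survives and must vanish. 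Because ${\cal B}w=0$ forces $u_w\cdot\nu=0$, hence $(\mathbb{T}^\alpha u_w)\cdot\nu=\mathscr{T}^\alpha u_w\cdot\nu=0$, on $\partial\Omega\backslash\{q_1,\ldots,q_N\}$, every boundary term containing a factor $(\mathbb{T}^\alpha u_w)\cdot\nu$ drops out, leaving $\sum_{|\alpha|\le m}\gamma^{2(m-|\alpha|)}\langle(\mathbb{T}^\alpha u_z)\cdot\nu,\ \mathbb{T}^\alpha\p_w\rangle_{L^2(\partial\Omega)}=0$. Since $\mathbb{T}^\alpha\p_w|_{\partial\Omega}$ is a purely tangential (along-the-boundary) derivative of the free datum $\p_w|_{\partial\Omega}$, which ranges over a dense set, an integration by parts along $\partial\Omega\backslash\{q_1,\ldots,q_N\}$ turns this into a tangentially elliptic identity for the single scalar trace $u_z\cdot\nu$, which must then vanish on $\R\times(\partial\Omega\backslash\{q_1,\ldots,q_N\})$; this is exactly ${\cal B}z=0$.

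The main obstacle is not any analytic estimate — the a priori bound, the Green identity, and the $L^2$ trace lemma are all already in place — but rather the bookkeeping: propagating the operators $\phi_1,\phi_2,\varphi$ through the integration by parts of the first step, and making rigorous the density and extension passages (from smooth $\zeta$, respectively $z\in H^1 H^{m,\gamma}_\tg$, to $z\in L^2$, respectively $z\in\mathcal{D}H^{m,\gamma}_\tg$), together with the harmless integration by parts along the cornered boundary in the last step. I expect the delicate point to be exactly this: formulating precisely in what sense the "reversed" Green identity \eqref{tan-grn} continues to hold for a merely $H^{m,\gamma}_\tg$-regular weak solution $z$, and likewise the negative-order trace interpretation of the boundary terms once $z\in\mathcal{D}H^{m,\gamma}_\tg$.
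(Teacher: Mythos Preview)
Your plan is correct and, for part~(1), is essentially a careful unpacking of what the paper dismisses as ``straightforward from \eqref{weak-sol}'': restricting to interior test functions, unraveling the Riesz representatives $\phi_1,\phi_2,\varphi$ as the explicit bilinear forms from the proof of Proposition~\ref{prop-tan-Green}, and transferring all derivatives to $w$ is indeed how one recovers the distributional equation.

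For part~(2) the paper takes a somewhat shorter route than you do. Rather than extending the full level-$m$ Green identity \eqref{tan-grn} to $z\in\mathcal{D}H^{m,\gamma}_\tg$ and analyzing the entire boundary sum $\sum_{|\alpha|\le m}\gamma^{2(m-|\alpha|)}\langle(\mathbb{T}^\alpha u_z)\cdot\nu,\mathbb{T}^\alpha\p_w\rangle$ via tangential ellipticity, the paper drops immediately to the $L^2$ level: once (1) gives $z\in\mathcal{D}L^2$, the $L^2$ trace lemma (stated just before the present lemma) defines $A_\nu z\in H^{-1/2}(\R\times\partial\Omega)$, and the explicit form of $A_\nu$ on $\partial\Omega$ together with ${\cal B}\Phi=0$ reduces $\langle A_\nu z,\Phi\rangle$ to the single pairing $\langle u\cdot\nu,\Phi_5\rangle$; varying $\Phi_5$ then finishes. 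What the paper leaves implicit is precisely the step you flag as delicate---why $\langle A_\nu z,\Phi\rangle=0$ in the first place---and your extension of \eqref{tan-grn} to $\mathcal{D}H^{m,\gamma}_\tg$ is one clean way to justify it. So your argument is sound; the paper's presentation is more economical but correspondingly less self-contained on this point.
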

\begin{proof}
The first assertion given in (1) is straightforward from \eqref{weak-sol}. 
Moreover, from \eqref{weak-sol}, for any $\Phi = (\Phi_1,\ldots,\Phi_6)^t \in C^\infty_c(\R\times \Omega)$ such that ${\cal B}\Phi = (\Phi_1,\Phi_2)^t\cdot\nu = 0$ on $\partial\Omega\backslash\{q_1,\ldots, q_N\}$, one has
    \begin{equation*}
    \begin{aligned}
        0 = \inp*{A_\nu z}{\Phi}_{(H^{-\frac{1}{2}}\times H^{\frac{1}{2}})(\R\times\partial\Omega)} = \inp*{u \cdot \nu}{\Phi_5},
    \end{aligned}
    \end{equation*}
which implies $u\cdot \nu = 0$ on $\partial\Omega \backslash\{q_1,\ldots, q_N\}$.
\end{proof}

Now we prove the existence of weak solutions of \eqref{gamma-prob} in $H^{m,\gamma}_{\tg}(\R\times \Omega)$ given in the following proposition.
\begin{prop}\label{prop-tan-weak}
    There exists $\gamma_0> 0$, depending on $Z$, such that for any fixed $\gamma\ge \gamma_0$, and given $F\in H^{m,\gamma}_{\tg}(\R\times \Omega)$, the problem \eqref{gamma-prob} has a weak solution $z$ in $H^{m,\gamma}_{\tg}(\R\times \Omega)$.
\end{prop}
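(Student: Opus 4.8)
The plan is to obtain $z$ by the Lax--Phillips duality argument: $z$ will be the Riesz representative in $H^{m,\gamma}_\tg(\R\times\Omega)$ of a linear functional built from $F$ and defined on the range of the (perturbed) adjoint operator, its boundedness being a consequence of an a priori estimate for the adjoint problem. Write $\phi_1,\phi_2,\varphi\in\mathcal{L}(H^{m,\gamma}_\tg(\R\times\Omega))$ for the operators furnished by Proposition~\ref{prop-tan-Green} and set
\[\mathcal{A}:=(1+\phi_2)\tilde L^{*}+\mathbb{B}^{t}+\gamma S_0A_0+\varphi,\]
the operator appearing on the right-hand side of the Green identity \eqref{tan-grn}. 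Let $\mathcal{E}:=\{w\in C^\infty_0(\R\times\overline{\Omega}):{\cal B}w=0\text{ on }\R\times(\partial\Omega\backslash\{q_1,\ldots,q_N\})\}$, the admissible test functions of Definition~\ref{defn-weak}. For $w\in\mathcal{E}$ all boundary terms in \eqref{tan-grn} vanish, because $\mathbb{T}^\alpha u_w=\mathscr{T}^\alpha u_w$ and the modified fields $\mathscr{T}_j$ were constructed so that $u_w\cdot\nu=0$ implies $\mathscr{T}^\alpha u_w\cdot\nu=0$ on $\partial\Omega\backslash\{q_1,\ldots,q_N\}$.

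\textbf{Step 1 (adjoint a priori estimate).} I would first show that there are $\gamma_0,C>0$, depending on $Z$, such that
\[\gamma^{2}\,\|w\|_{H^{m,\gamma}_\tg(\R\times\Omega)}\le C\,\|\mathcal{A}w\|_{H^{m,\gamma}_\tg(\R\times\Omega)}\qquad\text{for all }\gamma\ge\gamma_0,\ w\in\mathcal{E}.\]
After the change of variable $t\mapsto-t$, the operator $\tilde L^{*}=-\partial_t(S_0A_0\,\cdot)-\partial_1(S_0A_1\,\cdot)-\partial_2(S_0A_2\,\cdot)$ is again a symmetrizable hyperbolic operator of the form \eqref{symm}, with the same characteristic boundary condition ${\cal B}w=0$ (the boundary matrix $A_\nu$ is symmetric and, as in Lemma~\ref{lemma-L2}, $\inp*{w}{A_\nu w}_{\partial\Omega}=2\inp*{\p_w}{u_w\cdot\nu}_{\partial\Omega}$ vanishes on $\{u_w\cdot\nu=0\}$), and the damping $\gamma S_0A_0$ keeps its positive sign; hence the energy computation of Proposition~\ref{prop-apriori-tan} applies to $\tilde L^{*}+\mathbb{B}^{t}+\gamma S_0A_0$ and gives $\gamma\|w\|_{H^{m,\gamma}_\tg}\le\frac{C}{\gamma}\Lambda$, where $\Lambda:=\|(\tilde L^{*}+\mathbb{B}^{t}+\gamma S_0A_0)w\|_{H^{m,\gamma}_\tg}$. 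One then absorbs the perturbations: from $\|w\|_{H^{m,\gamma}_\tg}\le C\Lambda/\gamma^{2}$ one gets $\|\tilde L^{*}w\|_{H^{m,\gamma}_\tg}\le\Lambda+C\gamma\|w\|_{H^{m,\gamma}_\tg}\le2\Lambda$ for $\gamma$ large, so, using $\|\phi_2\|_{\mathcal{L}(H^{m,\gamma}_\tg)}\le C/\gamma$ and $\|\varphi\|_{\mathcal{L}(H^{m,\gamma}_\tg)}\le C$, one obtains $\Lambda\le\|\mathcal{A}w\|_{H^{m,\gamma}_\tg}+\tfrac12\Lambda$, whence $\Lambda\le2\|\mathcal{A}w\|_{H^{m,\gamma}_\tg}$ and the displayed estimate follows.

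\textbf{Step 2 (duality and representation).} On the subspace $\mathcal{A}(\mathcal{E})\subset H^{m,\gamma}_\tg(\R\times\Omega)$ define the linear functional $\ell(\mathcal{A}w):=\inp*{F}{(1-\phi_1)w}_{H^{m,\gamma}_\tg(\R\times\Omega)}$ for $w\in\mathcal{E}$. By Step~1, $\mathcal{A}w=0$ forces $w=0$, so $\ell$ is well defined, and since $\|(1-\phi_1)w\|_{H^{m,\gamma}_\tg}\le C\|w\|_{H^{m,\gamma}_\tg}\le\frac{C}{\gamma^{2}}\|\mathcal{A}w\|_{H^{m,\gamma}_\tg}$, the functional $\ell$ is bounded on $\mathcal{A}(\mathcal{E})$ with norm $\le\frac{C}{\gamma^{2}}\|F\|_{H^{m,\gamma}_\tg}$. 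Extending $\ell$ to all of $H^{m,\gamma}_\tg(\R\times\Omega)$ by the Hahn--Banach theorem and invoking the Riesz representation theorem produces $z\in H^{m,\gamma}_\tg(\R\times\Omega)$ with $\ell(g)=\inp*{g}{z}_{H^{m,\gamma}_\tg}$ for all $g$, and $\|z\|_{H^{m,\gamma}_\tg}\le\frac{C}{\gamma^{2}}\|F\|_{H^{m,\gamma}_\tg}$. Taking $g=\mathcal{A}w$ with $w\in\mathcal{E}$, and using that the inner product is real, this is precisely the identity \eqref{weak-sol}; hence $z$ is a weak solution of \eqref{gamma-prob} in the sense of Definition~\ref{defn-weak}, provided $\gamma_0$ is fixed large enough for all the absorptions above to be valid.

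\textbf{Main obstacle.} The crux is Step~1: one must verify that ${\cal B}w=0$ is simultaneously the correct admissible condition for $\tilde L$ and for its $H^{m,\gamma}_\tg$-adjoint (so that no boundary term survives in \eqref{tan-grn} when $w\in\mathcal{E}$), which rests on the characteristic, neutral structure of $A_\nu$; and that the $O(1)$ perturbation $\varphi$ coming from the Riesz-representation step of Proposition~\ref{prop-tan-Green} can still be absorbed by the damping for $\gamma$ large --- this is why the gain here is only $\gamma^{-2}$ rather than $\gamma^{-1}$, which is nonetheless harmless for the existence argument.
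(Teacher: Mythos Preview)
Your proof is correct and follows the same Lax--Phillips duality approach as the paper (adjoint a~priori estimate, then Hahn--Banach and Riesz representation). One minor slip: Proposition~\ref{prop-apriori-tan} gives $\gamma\|w\|^2\le\frac{C}{\gamma}\Lambda^2$, i.e.\ $\|w\|\le\frac{C}{\gamma}\Lambda$ rather than $\frac{C}{\gamma^{2}}\Lambda$, but your absorption of $\varphi$ and $\phi_2$ still goes through with this correct power; the paper streamlines Step~1 by observing that since $\varphi$ is a bounded (zero-th order) operator it can be grouped with $\mathbb{B}^{t}$ and absorbed in the energy estimate from the outset, giving $\|w\|\lesssim\frac{1}{\gamma}\|\underline{L}^{*}[0]w\|$ directly.
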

\begin{proof}
    Denote by the space 
    \begin{equation*}
        \mathcal{H} : = \left\{w\in H^1 H^{m,\gamma}_\tg(\R\times \Omega): {\cal B} w ={\cal B'} w = 0 \text{ on }\partial\Omega\backslash\{q_1,\ldots,q_N\}\right\}.
    \end{equation*}
    For any $w\in \mathcal{H}$ and $\alpha\in \mathbb{N}^3, |\alpha|\le m$, one has ${\cal B}\mathbb{T}^\alpha w = 0$ on $\partial\Omega\backslash\{q_1,\ldots,q_N\}$.

    Let the operator
    \begin{equation*}
        \underline{L}^*[\phi_2] := (1+\phi_2)\tilde{L}^* + \mathbb{B}^t + \gamma S_0 A_0 + \varphi,
    \end{equation*}
with $\phi_2\in {\cal L}(H^{m,\gamma}_{tan}(\R\times\Omega))$ being given in Proposition \ref{prop-tan-Green}, and
    $\mathcal{H}_1 = \underline{L}^*[\phi_2]\mathcal{H}$, a subspace of $H^{m,\gamma}_\tg(\R\times \Omega)$.

    Since the leading terms of $\underline{L}^*[0]$ coincide with those of $\tilde{L}$, Proposition~\ref{prop-apriori-tan} shows that for large $\gamma$, it holds that for any $w\in \mathcal{H}$, 
    \begin{equation*}
        \|w\|_{H^{m,\gamma}_\tg} \lesssim \frac{1}{\gamma} \|\underline{L}^*[0] w\|_{H^{m,\gamma}_\tg}.
    \end{equation*}
    Since $\phi_2 \in \mathcal{L}(H^{m,\gamma}_\tg)$ and $\|\phi_2\|_{\mathcal{L}(H^{m,\gamma}_\tg)} \le \frac{1}{\gamma}C$, so for sufficiently large $\gamma$, we still have
    \begin{equation*}
        \|w\|_{H^{m,\gamma}_\tg} \lesssim \frac{1}{\gamma} \|\underline{L}^*[\phi_2]w\|_{H^{m,\gamma}_\tg},
    \end{equation*}
  which shows that the mapping $\underline{L}^*[\phi_2]: \mathcal{H} \to \mathcal{H}_1$ is one to one, and the reciprocal mapping $\mathcal{F}$ satisfies
    \begin{equation*}
        \|\mathcal{F} \omega\|_{H^{m,\gamma}_\tg} \lesssim \frac{1}{\gamma}\|\omega\|_{H^{m,\gamma}_\tg}, \quad  \forall \omega \in \mathcal{H}_1.
    \end{equation*}
    Thus, the linear functional
    \begin{equation*}
        \omega \mapsto \ell (\omega) : = \inp*{F}{(1-\phi_1)\mathcal{F}\omega}_{H^{m,\gamma}_\tg}
    \end{equation*}
    is continuous on $\mathcal{H}_1$ equipped with the norm $\|\cdot\|_{H^{m,\gamma}_\tg}$. Therefore, by using the Hahn-Banach theorem it can be extended to be a continuous linear functional on $H^{m,\gamma}_\tg$, and by the Riesz representation theorem, there is $z\in H^{m,\gamma}_\tg$ such that $\ell (\omega) = \inp*{z}{\omega}_{H^{m,\gamma}_\tg}$. Especially, by restricting the functional $\ell$ on $\mathcal{H}_1$ one deduces that \eqref{weak-sol} holds for all $w\in \mathcal{H}$, so $z$ is a weak solution of \eqref{gamma-prob}.
\end{proof}

The next important goal is to verify that the above weak solution of \eqref{gamma-prob} is indeed a strong one, which yields that it is unique and satisfies the corresponding energy estimates. 

\begin{prop}[Weak = strong]\label{prop-weak=strong}
    Under the same assumptions as given in Proposition~\ref{prop-tan-weak}, any weak solution of \eqref{gamma-prob} defined by Definition~\ref{defn-weak} is a strong solution.
\end{prop}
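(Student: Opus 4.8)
\emph{Overall strategy and localization.} The plan is to realize the weak solution $z$ as a limit, in the $H^{m,\gamma}_{\tg}(\R\times\Omega)$ topology, of genuinely smooth fields satisfying $\mathcal{B}=0$, through a two-stage regularization performed in local charts: first a \emph{tangential} mollification adapted to the corner geometry, then an ordinary \emph{normal} mollification of the resulting (tangentially smooth) field, and finally a diagonal extraction. I would first use a $C^\infty$ partition of unity subordinate to a cover of $\overline{\Omega}$ by interior balls, collar neighborhoods of the smooth part of $\partial\Omega$ flattened as in Remark~\ref{rmk-sp}(2) (so $x_1^*=\Phi(x)$ is the normal variable), and corner neighborhoods $V(q_n)$ flattened by the diffeomorphism $(x_1,x_2)\mapsto(x_1^*,x_2^*)=(\Phi_1,\Phi_2)$ of Lemma~\ref{lemma-tangential} (under which $\partial_{w^i}$ becomes a smooth nonvanishing multiple of the dilation field $x_i^*\partial_{x_i^*}$). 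In all charts, writing $z$ in the corresponding $*$-components, the boundary condition $\mathcal{B}z=0$ takes the form $u^*_1=0$ on $\{x_1^*=0\}$ (and also $u^*_2=0$ on $\{x_2^*=0\}$ in a corner chart), and commutators of the cutoffs with $\tilde L+\mathbb{B}+\gamma S_0A_0$ are of zeroth or first (tangential) order and are absorbed into the same estimates.

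\emph{Tangential regularization.} In each chart I would take $J_\delta$ to be the averaging operator of $z$ (in its $*$-components) against an approximate identity in the parameters of the flows generated by $\partial_t$ and the tangential fields --- $\partial_{x_2^*}$ in a collar chart, and $x_1^*\partial_{x_1^*}$, $x_2^*\partial_{x_2^*}$ in a corner chart. Each such flow maps $\Omega$ into itself and fixes every edge, and in these coordinates the normal direction is constant, so $J_\delta$ preserves $u^*_1|_{x_1^*=0}=0$; thus $z_\delta:=J_\delta z$ satisfies $\mathcal{B}z_\delta=0$, belongs to $\bigcap_{s\in\mathbb{N}}H^{s}_{\tg}(\R\times\Omega)$, and $z_\delta\to z$ in $H^{m,\gamma}_{\tg}$ (a standard mollification estimate, using \eqref{tan-cmtt-1}--\eqref{tan-cmtt-2} for the non-commutation of the $\mathbb{T}^\alpha$). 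It remains to show $(\tilde L+\mathbb{B}+\gamma S_0A_0)z_\delta=J_\delta F+[\tilde L+\mathbb{B}+\gamma S_0A_0,J_\delta]z\to F$ in $H^{m,\gamma}_{\tg}$, i.e. that the commutator tends to $0$. Applying $\mathbb{T}^\alpha$, $|\alpha|\le m$: the contribution of the tangential block $S_0A_0\partial_t+A_1^I\partial_1+A_2^I\partial_2+\mathbb{B}$ --- which by Lemma~\ref{lemma-tangential} involves only $\partial_t,\partial_{w^1},\partial_{w^2}$ --- is handled by the classical Friedrichs lemma; for the normal block $A_1^{II}\partial_1+A_2^{II}\partial_2:z\mapsto(\nabla\p,0,0,\nabla\cdot u,0)^t$ I would substitute, exactly as in Lemma~\ref{lemma-J2} and in the treatment of $\mathcal{J}_3$ in the proof of Proposition~\ref{prop-tan-Green}, $\nabla\p=F_1+B\cdot\nabla b-R(\partial_t+U\cdot\nabla)u$ from the momentum equation of \eqref{linear} and $\nabla\cdot u$ from the continuity equation, so that the commutator only ever meets tangential derivatives of $z$ together with $F$; since $z\in\mathcal{D}H^{m,\gamma}_{\tg}$, the Friedrichs lemma then closes the argument.

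\emph{Normal regularization and diagonalization.} Fixing $\delta$, the field $z_\delta$ is tangentially $C^\infty$ but only as normally regular as $z$, so to reach $\bigcap_{s\in\mathbb{N}}H^s(\R\times\Omega)$ I would mollify $z_\delta$ in the remaining variables while preserving the trace: in a collar chart, extend $z_\delta$ across $\{x_1^*=0\}$ by odd reflection of $u^*_1$ and even reflection of all other $*$-components (which keeps $u^*_1|_{x_1^*=0}=0$), convolve with a standard mollifier $\rho_\eta(x_1^*)$, and restrict back to $\Omega$; in a corner chart do this simultaneously in $x_1^*$ and $x_2^*$ with the matched parities. The resulting $z_{\delta,\eta}$ lies in $\bigcap_{s\in\mathbb{N}}H^s(\R\times\Omega)$, satisfies $\mathcal{B}z_{\delta,\eta}=0$, and, since $z_\delta$ is tangentially smooth, $\tilde Lz_\delta\in H^{m,\gamma}_{\tg}$, and the normal mollifier commutes with $\partial_t,\partial_{w^1},\partial_{w^2}$ up to lower-order terms by \eqref{tan-cmtt-1}, the Friedrichs lemma gives $z_{\delta,\eta}\to z_\delta$ and $(\tilde L+\mathbb{B}+\gamma S_0A_0)z_{\delta,\eta}\to(\tilde L+\mathbb{B}+\gamma S_0A_0)z_\delta$ in $H^{m,\gamma}_{\tg}$ as $\eta\to0$. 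A diagonal choice $\eta=\eta(\delta)\to0$ then produces a sequence $z_\varepsilon:=z_{\delta(\varepsilon),\eta(\varepsilon)}$ meeting all the requirements of \eqref{strong}, so $z$ is a strong solution. As a by-product, the a priori estimate \eqref{3.5} passes to the limit along $z_\varepsilon$, so $z$ obeys \eqref{z-tan-est} and, applied to the difference of two weak solutions with $F=0$, yields uniqueness.

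\emph{Main obstacle.} The hard part will be the tangential step in the corner charts: there $J_\delta$ is a convolution in the non-abelian group of time translations and $x_i^*$-dilations rather than in ordinary translations, so the classical Friedrichs commutator lemma has to be re-derived in this setting; and one must simultaneously keep the trace $\mathcal{B}z_\delta=0$ exactly preserved and ensure that neither the non-commutativity of the $\mathbb{T}^\alpha$ (cf. \eqref{tan-cmtt-2}) nor the characteristic normal block $A^{II}$ generates uncontrolled normal derivatives --- which is precisely why the momentum/continuity substitutions of Lemma~\ref{lemma-J2} are invoked at every step.
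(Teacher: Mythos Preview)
Your overall two-stage strategy (tangential mollification by dilation flows, then normal mollification with parity reflections) matches the paper's approach, and your treatment of the normal step is essentially correct. But there is a concrete gap in your tangential step near a corner.

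You propose to apply a \emph{single} dilation-type mollifier $J_\delta$ to every component of $z$ and then control the commutator $[\tilde L,J_\delta]z$ by substituting $\nabla\p$ and $\nabla\cdot u$ via the momentum and continuity equations, in the spirit of Lemma~\ref{lemma-J2} and Proposition~\ref{prop-tan-Green}. This does not close. The pressure component of the commutator is
\[
\nabla\cdot(J_\delta u)-J_\delta(\nabla\cdot u)
=\sum_{i=1,2}\bigl(J^{+}_{\delta,i}-J_\delta\bigr)(\partial_i u_i),
\]
where $J^{+}_{\delta,i}$ is the dilation mollifier with the extra Jacobian factor $e^{\delta y_i}$. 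Each summand involves $\partial_i u_i$ \emph{individually}, and from $u\in H^{m,\gamma}_{\tg}$ together with $\nabla\cdot u\in H^{m,\gamma}_{\tg}$ you do \emph{not} get $\partial_1 u_1,\partial_2 u_2\in H^{m,\gamma}_{\tg}$ separately (near the corner $\partial_i u_i=\tfrac{1}{x_i}(x_i\partial_i u_i)$ need not even be in $L^2$). The substitution you invoke from Lemma~\ref{lemma-J2}/Proposition~\ref{prop-tan-Green} is an \emph{inner-product} identity used to estimate $\langle\mathbb{T}^\alpha z,[\mathbb{T}^\alpha,A^{II}\partial]z\rangle$; it does not bound the $H^{m,\gamma}_{\tg}$-norm of a mollifier commutator, which is what you need here.

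The paper's fix is exactly to break this coupling: it applies \emph{different} mollifiers to the components of $u$, setting $(u_i)_\varepsilon=J^{\tg,i}_{\varepsilon,-}u_i$ and $(b_\varepsilon,\p_\varepsilon,s_\varepsilon)=J^{\tg}_\varepsilon(b,\p,s)$, where the modified family $\{J^{\tg}_\varepsilon,J^{\tg,i}_{\varepsilon,\pm}\}$ is designed so that the algebraic identities
\[
\partial_{x_i}J^{\tg}_\varepsilon=J^{\tg,i}_{\varepsilon,+}\partial_{x_i},
\qquad
\nabla\cdot\bigl(\diag(J^{\tg,1}_{\varepsilon,-},J^{\tg,2}_{\varepsilon,-})\,\cdot\,\bigr)=J^{\tg}_\varepsilon(\nabla\cdot\,)
\]
hold \emph{exactly}. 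Then $\nabla\cdot u_\varepsilon=J^{\tg}_\varepsilon(\nabla\cdot u)$ and $\partial_i\p_\varepsilon=J^{\tg,i}_{\varepsilon,+}(\partial_i\p)$, so the normal block never sees $\partial_i u_i$ separately; since $\nabla\cdot u,\nabla\p\in H^{m}_{\tg}$ by the equations, Lemma~\ref{lemma-tan-molif}(2)--(3) closes the tangential approximation. Your proposal would go through once you incorporate this component-wise choice of mollifiers; without it, the Friedrichs-type argument at the corner does not close.
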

By using a standard partition of unity and coordinate transformation, we study this problem for three cases. In any small neighborhood of an interior point of $\Omega$, that any weak solution being a strong one can be easily obtained by adapting classical mollification procedure, as no any boundary condition is involved. 
In a small neighborhood of any fixed point on smooth portions of $\partial\Omega$, as discussed in Remark~\ref{rmk-sp}, the space $H^m_{\tg}(\R\times \Omega)$ coincides with the classical anisotropic Sobolev space, and the small neighborhood can be transformed to $\{x_1>0, x_2\in \R\}$ with the boundary condition $u_1 = 0$ on $x_1 = 0$. 
The proof in this case follows from \cite[Theorem~4]{rauch1985symmetric}, which proceeds in two main steps: first, one performs a tangential mollification (in $\partial_t, x_1\partial_1, \partial_2$) to obtain a sequence $\{z_\varepsilon\}_{\varepsilon >0}$ that is tangentially smooth, satisfies $(u_1)_\varepsilon = 0$ on $x_1 = 0$, and converges as
$z_\varepsilon \to z, \tilde{L} z_\varepsilon \to \tilde{L} z$ in $H^m_{\tg}$ as $\varepsilon\to 0$. The structure of the equations then ensures that $\partial_1 \p_\varepsilon, \partial_1 (u_1)_\varepsilon$ also belong to $H^m_{\tg}$. In particular, the $H^1$-regularity of $u_1$ in the normal variable allows a further
refinement to a sequence $z_{\varepsilon'}$ that is smooth in both tangential and normal variables and satisfies all the requirements given in \eqref{strong}.
The remaining task and also the most difficult case to consider this problem in a small neighborhood of a corner point.

For  a fixed corner $\tilde{x}\in \{q_1,\ldots,q_N\}$, let $\Omega\cap V(\tilde{x}) = \{0<\Phi_1(x),\Phi_2(x)<\delta\}$ be the local coordinate patch in the small neighborhood of $\tilde{x}$ as introduced in Notation~\ref{nota-1}(3), with $\Phi_1(\tilde{x}) = \Phi_2(\tilde{x}) = 0$. Set $x_i^* = \Phi_i(x)$ for $i=1,2$, $x^* = (x_1^*, x_2^*)$, and denote by $\Phi = (\Phi_1,\Phi_2)$ and $\Phi^{-1} = (\Psi_1, \Psi_2)$. Define $z^* = (u^*,b^*,\p^*,s^*)^t$ by
\begin{equation*}
    \begin{aligned}
    &u_i^*(t,x^*) =\partial_1 \Phi_i(\Psi(x^*)) u_1(t,x)+\partial_2 \Phi_i(\Psi(x^*))u_2(t,x),\quad i=1,2\\
    &b_i^*(t,x^*) =\partial_1 \Phi_i(\Psi(x^*)) b_1(t,x)+\partial_2 \Phi_i(\Psi(x^*))b_2(t,x), \quad i =1, 2\\
    &\p^*(t,x^*) = \p(t,x),\quad  s^*(t,x^*) = s(t,x).
    \end{aligned}
\end{equation*}
Obviously, it holds that
\begin{equation*}
    \nabla \cdot u = \nabla^* \cdot u^* + e\cdot u^*,
\end{equation*}
where $\nabla^* = (\partial_1^*,\partial_2^*)^t$ with $\partial_j^* = \frac{\partial}{\partial x^*_j}$, 
$e=(e_1,e_2)^t$ with $e_j = \sum_{1\le i,k\le 2} (g^{-1})_{ki} \partial_k^* g_{ij}$, and $g$ denoting the $2\times 2$ matrix satisfying $(g^{-1})_{ij} = \frac{\partial x^*_i}{\partial{x_j}}$.

Define $Z^*(t,x^*)$ (resp. $F^*(t,x^*)$) in the same way as $z^*(t,x^*)$, but with $z$ replaced by $Z$ (resp. $F$).  It follows from \eqref{gamma-prob} that $z^* = (u^*,b^*,\p^*,s^*)^t$ satisfies the following system, for $t\in (0,T)$ and $x^*\in \mathcal{U}:= \{x^*\in \R^2, 0< x_1^*, x_2^* < \delta\}$,
\begin{equation}\label{qtr-eq}
    \begin{cases}
        L'_u(u^*,b^*) + \nabla^* \p^* = GF_1^*,\\
        L'_b(u^*,b^*,\p^*) = F_2^*,\\
        L'_p(u^*,b^*,\p^*) + \nabla^* \cdot u^* = F_3^*,\\
        L'_s(s^*) = F_4^*,
    \end{cases}
\end{equation}
where the notations are
\begin{equation*}
    \begin{cases}
        \begin{aligned}
            L'_u (u^*,b^*)  = & G R(P^*,S^*)\left(\partial_t u^* + U^* \cdot \nabla^* u^* + \gamma u^*\right) - G \left(B^* \cdot \nabla^* b^*\right) \\
            & - G E(B^*) b^* + G R(P^*,S^*) E(U^*) u^*,
        \end{aligned}\\
        \begin{aligned}
            L'_b (u^*,b^*,\p^*)  = &\left(1+\frac{1}{Q(P^*,S^*)} B^* \otimes (G B^*) \right) \left(\partial_t b^* + U^* \cdot\nabla^* b^* + \gamma b^*\right) \\
            & - \frac{1}{Q(P^*, S^*)} B^* \left(\partial_t \p^* + U^* \cdot \nabla^* \p^* + \gamma \p^*\right) - B^* \cdot \nabla^* u^*  \\
            & + \frac{1}{Q(P^*,S^*)}\left((GB^*)\cdot E(U^*) b^*\right) B^*,
        \end{aligned}\\
        \begin{aligned}
            L'_p (u^*,b^*,\p^*) = & \frac{1}{Q(P^*,S^*)} \Big(\partial_t\p^* + U^* \cdot \nabla^* \p^* + \gamma \p^* - (G B^*)\cdot (E(U^*) b^*)\\
            & - (G B^*) \cdot\left(\partial_t b^* + U^* \cdot\nabla^* b^* + \gamma b^*\right) \Big) + e\cdot u^*,
        \end{aligned}\\
        \begin{aligned}
            L'_s(s^*) = \partial_t s^* + U^* \cdot \nabla^* s^* + \gamma s^*,
        \end{aligned}
    \end{cases}
\end{equation*}
consisting of tangential or zero-order terms only, with
$G = g^t g$, 
$E(U^*) = g^{-1} \sum_{i=1,2}U^*_i \partial^*_i g$ and $E(B^*) = g^{-1} \sum_{i=1,2}B^*_i \partial^*_i g$. Note that $h(x, U)b-h(x, B)u$ given in \eqref{linear} disappears in the above system as we straighten the boundary. 
Denote the system \eqref{qtr-eq} as 
\begin{equation*}
    \tilde{L}' z^* = F',
\end{equation*}
with $F' = (GF_1^*, F_2^*, F_3^*, F_4^*)^t$, and below we shall drop the asterisks of notations for simplicity.

To verify that a weak solution in $H^{m,\gamma}_{tan}$ of \eqref{qtr-eq} in $\{x^*\in\R, 0<x_1^*, x_2^*<\delta\}$ is a strong one, we shall first perform a tangential mollification in the directions $\partial_t, x_1\partial_1, x_2\partial_2$, to obtain a tangentially smooth sequence $\{z_\varepsilon\}_{\varepsilon>0}$ such that $(u_i)_\varepsilon = 0$ on $x_i = 0$ for $i=1,2$, and 
$$z_\varepsilon \to z, \quad \tilde{L}' z_\varepsilon \to \tilde{L}' z \text{ in } H^m_{\tg} \text{ as }\varepsilon\to 0. $$ 
However, the equations only ensure that $\nabla \p_\varepsilon$, and $\nabla\cdot u_\varepsilon$ are tangentially smooth, and the $H^1$-regularity in the normal directions (including both $\partial_1$ and $\partial_2$) is not guaranteed for $u_\varepsilon$.
Thus, in the second step, we avoid normal translations for $u_\varepsilon$ and instead employ a suitable extension and mollification to construct a sequence $\{z_{\varepsilon'}\}$ satisfying \eqref{strong}.

    \underline{Step~1. Tangential approximation.} Let us first construct $\{z_\varepsilon=(u_\varepsilon,b_\varepsilon,\p_\varepsilon,s_\varepsilon)^t\}_{\varepsilon>0}$ such that: 
    \begin{equation}\label{semi-strong}
        \begin{aligned}
            &z_\varepsilon \in \cap_{s\in\mathbb{N}}H_\tg^s(\R\times \Omega), (\nabla \p_\varepsilon, \nabla\cdot u_\varepsilon) \in H^m_{\tg}(\R\times \Omega),\\& u_\varepsilon \cdot \nu = 0 \text{ on }\partial\Omega\backslash\{q_1,\cdots, q_N\}, \text{ and } z_\varepsilon\to z, \tilde{L}' z_\varepsilon \to \tilde{L}' z \text{ in } H^m_{\tg}(\R\times \Omega).
        \end{aligned}
    \end{equation}
    
    Inspired by \cite{rauch1985symmetric}, we introduce a set of smoothing operators as variants of Friedrich's mollifiers. Let $j\in C^\infty_0(\R)$ be even and non-negative, $\supp \, j\subset [-1,1]$ and $\int_\R j \d x = 1$. For any $\varepsilon > 0$, define 
    \begin{equation*}
        J^\tg_\varepsilon f := \int_{\R^3} f(t+\varepsilon \tau, x_1 e^{\varepsilon y_1}, x_2 e^{\varepsilon y_2}) j(\tau) j(y_1) j(y_2) \d \tau \d y,
    \end{equation*}
    and, for $i=1,2$,
    \begin{equation*}
        J^{\tg,i}_{\varepsilon,\pm} f := \int_{\R\times\Omega} e^{\pm \varepsilon y_i}f(t+\varepsilon \tau, x_1 e^{\varepsilon y_1}, x_2 e^{\varepsilon y_2}) j(\tau) j(y_1) j(y_2) \d \tau \d y,
    \end{equation*}
    which serves as modified versions of $J^{\tg}_\varepsilon$. It is easy to know that
    $$\partial_{x_i} J^{\tg}_\varepsilon = J^{\tg, i}_{\varepsilon,+} \partial_{x_i}, \quad \nabla\cdot \left(\diag(J^{\tg,1}_{\varepsilon,-}, J^{\tg,2}_{\varepsilon,-})(\cdot)\right) = J^{\tg}_\varepsilon (\nabla\cdot),$$ 
    moreover, the properties stated below can also be obtained as in \cite[Lemma, p.~174]{rauch1985symmetric}. 
    \begin{lemma}\label{lemma-tan-molif}
    Denote by $\mathcal{J}_\varepsilon$ the set $\{J^{\tg}_\varepsilon, J^{\tg,1}_{\varepsilon,\pm}, J^{\tg,2}_{\varepsilon,\pm}\}$, then for any $J_\varepsilon, J^1_\varepsilon, J^2_\varepsilon\in \mathcal{J}_\varepsilon$, and $\mathcal{T} = \partial_t, x_1\partial_1, x_2\partial_2$, the following hold for any given $z\in H^m_{\tg}(\R\times \Omega)$ with $m\in \mathbb{N}$, 
        \begin{enumerate}
            \item[(1)] $J_\varepsilon z \in \cap_{s\in \mathbb{N}} H^s_{\tg}(\R\times \Omega)$;
            \item[(2)] $J_\varepsilon z \to z$ in $H^{m}_{\tg}(\R\times \Omega)$, as $\varepsilon\to 0$;
            \item[(3)] $\mathcal{T} (J^1_\varepsilon z) - J^2_\varepsilon (\mathcal{T}z) \to 0$ in $H^m_{\tg}(\R\times \Omega)$ as $\varepsilon\to 0$.
        \end{enumerate}
    \end{lemma}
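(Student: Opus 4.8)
The plan is to reduce all three assertions to the classical facts about Friedrichs mollifiers by passing to logarithmic coordinates in the normal variables. Working in the straightened chart near a corner we may take $\Omega=\{x\in\R^2:x_1>0,\ x_2>0\}$ and set $\xi_i=\log x_i$, $\xi_0=t$. Under this diffeomorphism the tangential fields $\mathcal T_0=\partial_t$, $\mathcal T_i=x_i\partial_{x_i}$ become the constant‑coefficient translation generators $\partial_{\xi_0},\partial_{\xi_1},\partial_{\xi_2}$; the group action $(t,x_1,x_2)\mapsto(t+\varepsilon\tau,x_1e^{\varepsilon y_1},x_2e^{\varepsilon y_2})$ defining $J^{\tg}_\varepsilon$ becomes the translation $\xi\mapsto\xi+\varepsilon(\tau,y_1,y_2)$, so $J^{\tg}_\varepsilon$ is ordinary convolution in $\xi$ with the rescaled kernel $\varepsilon^{-3}j(\xi_0/\varepsilon)j(\xi_1/\varepsilon)j(\xi_2/\varepsilon)$; the Lebesgue measure turns into $e^{\xi_1+\xi_2}\,d\xi$, and each $J^{\tg,i}_{\varepsilon,\pm}$ is this convolution composed with an inner multiplication by the bounded positive $y$‑dependent factor $e^{\pm\varepsilon y_i}$. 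Thus $\|z\|_{H^m_{\tg}}^2$ is (up to the $\gamma$‑weights, irrelevant here) $\sum_{|\alpha|\le m}\|\partial_\xi^\alpha\hat z\|^2_{L^2(e^{\xi_1+\xi_2}d\xi)}$, and everything becomes a statement about a mollifier on a weighted Sobolev space over $\R^3$ with the log‑Lipschitz weight $e^{\xi_1+\xi_2}$.

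Granting this, (1) follows since one may differentiate the convolution arbitrarily often, each $\partial_\xi$ landing on the smooth compactly supported kernel at the cost of a power of $\varepsilon^{-1}$: $\partial_\xi^N(J^{\tg}_\varepsilon\hat z)(\xi)=\varepsilon^{-|N|}\int\hat z(\xi-\varepsilon y)(\partial^N j^{\otimes3})(y)\,dy$, an operator of the same type applied to $\hat z$, hence $J_\varepsilon z\in\cap_s H^s_{\tg}$. For the uniform bound behind (1)–(3), Minkowski's inequality gives $\|J^{\tg}_\varepsilon\hat z\|_{L^2(e^{\xi_1+\xi_2}d\xi)}\le e^{2\varepsilon}\|\hat z\|_{L^2(e^{\xi_1+\xi_2}d\xi)}$ because the shift is by at most $\varepsilon$ in $\xi$, with an extra factor $e^{\varepsilon}$ for the modified mollifiers; since $\partial_\xi$ commutes exactly with convolution and with multiplication by a $\xi$‑independent factor, $\|J_\varepsilon\|_{\mathcal L(H^m_{\tg})}\le C$ uniformly for $\varepsilon\le1$. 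Assertion (2) for $J^{\tg}_\varepsilon$ is then the classical statement that a mollifier tends to the identity, applied in $L^2(e^{\xi_1+\xi_2}d\xi)$ to each $\partial_\xi^\alpha\hat z$ with $|\alpha|\le m$ after commuting $J^{\tg}_\varepsilon$ past the derivatives, using dominated convergence to absorb the slowly varying weight; for the modified mollifiers one writes $J^{\tg,i}_{\varepsilon,\pm}=J^{\tg}_\varepsilon+\varepsilon\widetilde J_\varepsilon$, where $\widetilde J_\varepsilon$ (produced by $e^{\pm\varepsilon y_i}-1=\pm\varepsilon y_i\int_0^1e^{\pm s\varepsilon y_i}\,ds$) is again a uniformly $H^m_{\tg}$‑bounded mollifier‑type operator, so the correction vanishes in $H^m_{\tg}$.

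For (3), the crucial point is that in $\xi$‑coordinates $\mathcal T$ is a translation generator $\partial_{\xi_j}$, which commutes exactly with every element of $\mathcal J_\varepsilon$, and this exact commutation persists on distributions; hence $\mathcal T(J^1_\varepsilon z)=J^1_\varepsilon(\mathcal T z)$ and the difference in (3) equals $(J^1_\varepsilon-J^2_\varepsilon)(\mathcal T z)$. Any two members of $\mathcal J_\varepsilon$ differ only through the factors $e^{\pm\varepsilon y_i}$, so $J^1_\varepsilon-J^2_\varepsilon=\varepsilon\widetilde J_\varepsilon$ with $\widetilde J_\varepsilon$ uniformly bounded on every $H^s_{\tg}$ and satisfying the usual gain estimate $\|\widetilde J_\varepsilon\|_{\mathcal L(H^{m-1}_{\tg}\to H^m_{\tg})}\le C\varepsilon^{-1}$. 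Applied to $g:=\mathcal T z\in H^{m-1}_{\tg}$ this gives only the uniform bound $\|(J^1_\varepsilon-J^2_\varepsilon)g\|_{H^m_{\tg}}\le C\|g\|_{H^{m-1}_{\tg}}$, so to obtain convergence I would split $g=g'+(g-g')$ with $g'\in H^m_{\tg}$ chosen by density (e.g. $g'=J^{\tg}_\delta g$, invoking (2) at level $m-1$) so that $\|g-g'\|_{H^{m-1}_{\tg}}$ is small, whence $\|(J^1_\varepsilon-J^2_\varepsilon)g'\|_{H^m_{\tg}}\le C\varepsilon\|g'\|_{H^m_{\tg}}\to0$; letting $\varepsilon\to0$ and then $g'\to g$ yields (3). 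The main obstacle is exactly this last point — the commutator difference is $O(\varepsilon)$ only as a map $H^m_{\tg}\to H^m_{\tg}$ while $\mathcal T z$ lies merely in $H^{m-1}_{\tg}$ — and it is overcome by the bounded‑plus‑vanishing/density argument rather than by any new estimate; the remainder is a routine translation of the classical mollifier lemma through the logarithmic change of variables, as in the Lemma on p.~174 of \cite{rauch1985symmetric}.
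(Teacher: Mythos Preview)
Your proof is correct and follows the same route the paper invokes: the paper does not give its own argument but simply cites the Lemma on p.~174 of \cite{rauch1985symmetric}, and your logarithmic change of variables $\xi_i=\log x_i$ turning the dilation group into translations, the tangential fields into constant vector fields, and the mollifiers into ordinary convolutions on the weighted space $L^2(e^{\xi_1+\xi_2}d\xi)$ is precisely Rauch's argument transplanted to the two-normal-direction setting. The only point worth flagging is that your treatment of (3) is slightly more explicit than strictly necessary: since every $J_\varepsilon\in\mathcal J_\varepsilon$ commutes exactly with each $\mathcal T=\partial_{\xi_j}$, the difference reduces to $(J^1_\varepsilon-J^2_\varepsilon)(\mathcal T z)$, and your uniform-bound-plus-density argument to pass from $\mathcal T z\in H^{m-1}_{\tg}$ to convergence in $H^m_{\tg}$ is the right way to close it.
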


    Define
    \begin{equation*}
        (u_i)_\varepsilon = J^{\tg,i}_{\varepsilon,-}u_i\quad  (i=1,2),\quad
        (b_\varepsilon,\p_\varepsilon,s_\varepsilon)^t = J^\tg_\varepsilon (b,\p,s).
    \end{equation*}
    From Lemma \ref{lemma-tan-molif}, one has $z_\varepsilon \in \cap_{s\in \mathbb{N}} H^s_{\tg}(\R\times \Omega)$ and $z_\varepsilon \to z$ in $H^m_{\tg}(\R\times \Omega)$.
    
    By applying $J^{\tg,1}_{\varepsilon,+}$ and $J^{\tg,2}_{\varepsilon, +}$ to the first and second components of the momentum equation in \eqref{qtr-eq} respectively, it yields for each $i=1,2$, 
    \begin{equation*}
        \begin{aligned}
            \partial_{x_i} \p_\varepsilon = J^{\tg, i}_{\varepsilon,+}(\partial_{x_i}\p) =&  J^{\tg, i}_{\varepsilon,+} (F'_1)_i - (L'_u(u_\varepsilon,b_\varepsilon))_i\\
            & - \left(J^{\tg, i}_{\varepsilon,+} (L'_u(u,0))_i - \big(L'_u((J^{\tg, 1}_{\varepsilon,+}u_1, J^{\tg, 2}_{\varepsilon,+}u_2)^t, 0 )\big)_i
            \right)\\
            & - \left(J^{\tg, i}_{\varepsilon,+} (L'_u(0,b))_i - \big(L'_u(0, J^{\tg}_\varepsilon b)\big)_i
            \right).
        \end{aligned}
    \end{equation*}
    By Lemma~\ref{lemma-tan-molif}, it follows that $\partial_{x_i} \p_\varepsilon \in H^{m}_{\tg}(\R\times \Omega)$, and hence
    \begin{equation*}
    \begin{aligned}
        L'_u(u_\varepsilon,b_\varepsilon) + \nabla \p_\varepsilon \to F_1' \quad\text{in } H^m_{\tg}(\R\times \Omega) \text{ as }\varepsilon \to 0.
    \end{aligned}
    \end{equation*}
    Similarly, by acting $J^\tg_\varepsilon$ on the continuity equation it gives that
    \begin{equation*}
    \begin{aligned}
        \nabla\cdot u_\varepsilon = J^{\tg}_{\varepsilon} (\nabla\cdot u)  = & J^{\tg}_{\varepsilon} F'_3 - L'_p(u_\varepsilon, b_\varepsilon, \p_\varepsilon)\\
        & - \left(J^{\tg}_{\varepsilon} L'_p(u,0,0) - L'_p\big((J^{\tg, 1}_{\varepsilon,+}u_1, J^{\tg, 2}_{\varepsilon,+}u_2)^t,0,0\big)
        \right)\\
        & - \left(J^{\tg}_{\varepsilon} L'_p(0,b,\p) - L'_p\big(0,J^{\tg}_{\varepsilon} b, J^{\tg}_{\varepsilon} \p\big)
        \right),
    \end{aligned}
    \end{equation*}
    which implies $\nabla\cdot u_\varepsilon \in H^{m}_{\tg}(\R\times \Omega)$ and
    \begin{equation*}
    \begin{aligned}
        & L'_p(u_\varepsilon,b_\varepsilon,\p_\varepsilon) + \nabla\cdot u_\varepsilon \longrightarrow F'_3 \quad\text{in } H^m_{\tg}(\R\times \Omega) \text{ as }\varepsilon \to 0.
    \end{aligned}
    \end{equation*}
    Applying $J^\tg_\varepsilon$ in the same manner to the evolution equations of $b$ and $s$, and summarizing the above calculations, we conclude that 
    $$\tilde{L}' z_\varepsilon \longrightarrow F' = \tilde{L}' z \text{ in } H^{m}_{\tg}(\R\times \Omega), \text{ as }\varepsilon \to 0$$ 
    thus verify the assertion \eqref{semi-strong}.

    \underline{Step~2. Normal approximation.} The next task is to mollify $z_\epsilon$ obtained in \eqref{semi-strong} to get better regularity in the normal variables. For simplicity of notations, rewrite $z_\epsilon$ as $z$, and assume now that all of $z$, $\nabla\p$ and $ \nabla\cdot u$ belong to  $\cap_{s\in \mathbb{N}} H^s_{\tg}(\R\times \Omega)$.
    Let us smoothing $u$, and $(b,\p,s)^t$ by different approaches respectively. 

    Define two standard Friedrich-type mollifiers $J_\varepsilon$ and $\tilde{J}_\varepsilon$ as follows: for $f$ defined on $\R\times \Omega$ and $g$ defined on $\R^3$ respectively, let
    \begin{equation*}
        (J_\varepsilon f)(t,x) =\int_{\R^2} \frac{1}{\varepsilon^2} j(\frac{x_1- y_1 +2 \varepsilon}{\varepsilon}) j(\frac{x_2-y_2 + 2\varepsilon}{\varepsilon}) f(t,y)\d y,
    \end{equation*}
    and
    \begin{equation*}
        (\tilde{J}_\varepsilon g)(t,x) =\int_{\R^2} \frac{1}{\varepsilon^2} j(\frac{x_1- y_1}{\varepsilon}) j(\frac{x_2-y_2}{\varepsilon}) g(t,y)\d y,
    \end{equation*}
    with $j\in C_0^\infty(\R)$ being the function given below \eqref{semi-strong}.

    In analog to those given in Lemma~\ref{lemma-tan-molif}, these two operators have the following properties.
    
    \begin{lemma}\label{lemma-molif} 
    For given $z\in H^m_{\tg}(\R\times \Omega), \tilde{z} \in H^m_{\tg}(\R^3)$ with some $m\in \mathbb{N}$, one has
        \begin{enumerate}
            \item[(1)] $J_\varepsilon z \in \cap_{s\in \mathbb{N}} H^s(\R\times \Omega)$ and $\tilde{J}_\varepsilon \tilde{z} \in \cap_{s\in \mathbb{N}} H^s(\R^3)$;
            \item[(2)] $J_\varepsilon z \to z$ in $H^{m}_{\tg}(\R\times \Omega)$ and $\tilde{J}_\varepsilon \tilde{z} \to \tilde{z}$ in $H^{m}_{\tg}(\R^3)$, as $\varepsilon\to 0$;
            \item[(3)] For any operator $\mathcal{D}\in \{\partial_t, x_1\partial_1, x_2\partial_2,\partial_1,\partial_2\}$,
            $[\mathcal{D},J_\varepsilon]z \to 0$ in $H^m_{\tg}(\R\times \Omega)$, $[\mathcal{D},\tilde{J}_\varepsilon]\tilde{z} \to 0$ in $H^m_{\tg}(\R^3)$, as $\varepsilon\to 0$.
        \end{enumerate}
    \end{lemma}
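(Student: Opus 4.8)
The plan is to follow the argument of Lemma~\ref{lemma-tan-molif} and the classical commutator lemma of \cite[Lemma, p.~174]{rauch1985symmetric}, working in the local half-space/quadrant model obtained after the partition of unity and the straightening of coordinates used in Step~1. Write $J_\varepsilon f(t,x)=\int_{\R^2}K_\varepsilon(x,y)\,f(t,y)\,\d y$ with $K_\varepsilon(x,y)=\varepsilon^{-2}j\!\big(\tfrac{x_1-y_1+2\varepsilon}{\varepsilon}\big)j\!\big(\tfrac{x_2-y_2+2\varepsilon}{\varepsilon}\big)$. Since $\supp j\subset[-1,1]$, for every $x\in\overline{\Omega}$ one has $\supp K_\varepsilon(x,\cdot)\subset\{y:\ x_i+\varepsilon\le y_i\le x_i+3\varepsilon,\ i=1,2\}$, which lies in the open quadrant at distance $\ge\varepsilon$ from $\partial\Omega$; thus $J_\varepsilon f$ depends only on the values of $f$ inside $\Omega$, requires no extension, and is $C^\infty$ up to the boundary. (This is why $J_\varepsilon$ is used for $(b,\p,s)$, whereas $\tilde{J}_\varepsilon$, the standard centered mollifier on $\R^3$ for which no boundary issue arises, will be applied to an extension of $u$ built in the main text.)

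For assertion (1): $\partial_t$ commutes with $J_\varepsilon$ and $\tilde{J}_\varepsilon$, while every $\partial_{x_i}$ falls onto the kernel, giving $\|\partial_x^\alpha J_\varepsilon f\|_{L^2}\lesssim\varepsilon^{-|\alpha|}\|f\|_{L^2}$ on the relevant strip; combined with the hypothesis — in the application $z\in\cap_{s}H^s_{\tg}$, so $\partial_t^k z\in L^2$ for all $k$ — this yields $\partial_x^\alpha\partial_t^k J_\varepsilon z\in L^2$ for all $\alpha,k$, i.e. $J_\varepsilon z\in\cap_s H^s(\R\times\Omega)$, and likewise $\tilde{J}_\varepsilon\tilde z\in\cap_s H^s(\R^3)$.

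The core is assertion (3), from which (2) follows. For $\mathcal{D}=\partial_t$ the commutator vanishes. For $\mathcal{D}=\partial_{x_i}$, since $K_\varepsilon$ depends on $x_i-y_i$ we have $\partial_{x_i}K_\varepsilon=-\partial_{y_i}K_\varepsilon$, and because $K_\varepsilon(x,\cdot)$ is compactly supported in the interior, an integration by parts gives $\partial_{x_i}J_\varepsilon z=J_\varepsilon(\partial_{x_i}z)$ in the distributional sense, so $[\partial_{x_i},J_\varepsilon]z=0$. For the Euler fields $\mathcal{D}=x_i\partial_{x_i}$, the same identity together with an integration by parts — boundary-term-free precisely because of the $2\varepsilon$-shift — yields
\begin{equation*}
[x_i\partial_{x_i},J_\varepsilon]f(t,x)=\int_{\R^2}\Big((y_i-x_i)\,\partial_{y_i}K_\varepsilon(x,y)+K_\varepsilon(x,y)\Big)f(t,y)\,\d y .
\end{equation*}
On $\supp K_\varepsilon(x,\cdot)$ one has $|y_i-x_i|\le 3\varepsilon$ and $|\partial_{y_i}K_\varepsilon|\lesssim\varepsilon^{-3}$, while this support has measure $\lesssim\varepsilon^2$; hence the kernel $(y_i-x_i)\partial_{y_i}K_\varepsilon+K_\varepsilon$ of the commutator is bounded in $L^1_y$ with bounded total mass, uniformly in $\varepsilon$; so $[x_i\partial_{x_i},J_\varepsilon]$ is bounded on $L^2(\R\times\Omega)$ uniformly in $\varepsilon$, and since it tends to $0$ on the dense set of smooth functions it tends to $0$ strongly on $L^2$ by the Friedrichs lemma. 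The same identities and bounds hold for $\tilde{J}_\varepsilon$ on $\R^3$, with $\partial_{x_i}$ and $x_i\partial_{x_i}$ treated identically and without any support restriction. Iterating on the number of tangential fields — using that $[x_i\partial_{x_i},J_\varepsilon]$ again commutes with $\partial_t,x_1\partial_1,x_2\partial_2$ modulo operators of the same type — upgrades $[\mathcal{D},J_\varepsilon]z\to0$ and $[\mathcal{D},\tilde{J}_\varepsilon]\tilde z\to0$ to convergence in $H^m_{\tg}$, which is (3). Finally, for (2) write $\mathcal{T}^\alpha J_\varepsilon z=J_\varepsilon(\mathcal{T}^\alpha z)+(\text{commutators handled by (3)})$ for $|\alpha|\le m$: the first term converges to $\mathcal{T}^\alpha z$ in $L^2$ by the standard property of Friedrichs mollifiers and the commutators tend to $0$ by (3), so $J_\varepsilon z\to z$ in $H^m_{\tg}(\R\times\Omega)$, and similarly $\tilde{J}_\varepsilon\tilde z\to\tilde z$ in $H^m_{\tg}(\R^3)$.

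The only points that are not entirely routine are the verification that the $2\varepsilon$-shift makes the integrations by parts against $\partial_{x_i}K_\varepsilon$ boundary-term-free — so that the normal derivatives commute exactly with $J_\varepsilon$ and the Euler-field commutator has the displayed form — and the uniform-in-$\varepsilon$ $L^2$-bound for $[x_i\partial_{x_i},J_\varepsilon]$ near $\partial\Omega$; once these are in place the rest is the standard Friedrichs bookkeeping, essentially identical to the proof of Lemma~\ref{lemma-tan-molif}.
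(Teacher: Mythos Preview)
The paper does not give a proof of this lemma; it simply states the properties as ``in analog to those given in Lemma~\ref{lemma-tan-molif}'' and refers implicitly to the classical Friedrichs/Rauch commutator argument. Your proposal correctly supplies exactly those details along the intended line: the $2\varepsilon$-shift forces $\supp K_\varepsilon(x,\cdot)$ into the interior so that all integrations by parts are boundary-free, whence $[\partial_{x_i},J_\varepsilon]=0$ and the Euler-field commutators are shifted convolutions with uniformly $L^1$-bounded kernels, after which the Friedrichs lemma and iteration give (3) and then (2). You also correctly flag that assertion~(1) as literally stated needs $z\in\cap_s H^s_{\tg}$ rather than merely $z\in H^m_{\tg}$ (since $J_\varepsilon$ does not smooth in $t$), which is precisely the hypothesis in force at the point of use in Step~2.
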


    \underline{Smoothing $(b,\p,s)$:}
    Define $(b_\varepsilon,\p_\varepsilon,s_\varepsilon)^t := J_\varepsilon (b,\p,s)^t$.
    Obviously, one has $(b_{\varepsilon},\p_{\varepsilon},s_{\varepsilon})^t\in \cap_{s\in \mathbb{N}} H^s_{\tg}(\R\times \Omega)$ and $(b_{\varepsilon},\p_{\varepsilon},s_{\varepsilon})^t \to (b,\p,s)^t$ in $H^m_{\tg}(\R\times \Omega)$ as $\varepsilon\to 0$. Moreover,
    \begin{equation}\label{weak=strong-1}
        \begin{aligned}
            \tilde{L}'(0,b_{\varepsilon},\p_{\varepsilon},s_{\varepsilon})^t & = J_\varepsilon \tilde{L}'(0,b,\p,s)^t - [J_\varepsilon, \tilde{L}'](0,b,\p,s)^t\\
            & \longrightarrow \tilde{L}'(0,b,\p,s)^t \quad \text{in }H^m_{\tg}(\R\times \Omega), \text{ as }\varepsilon\to 0.
        \end{aligned}
    \end{equation}

    \underline{Smoothing $u$:} Denote by $E$ the extension operator on $u=(u_1, u_2)^t$ in the way of
    extending $u_1$ oddly across $x_1=0$ and evenly across $x_2=0$, and $u_2$ oddly across $x_2=0$ and evenly across $x_1=0$.  It is straightforward to have that
    \begin{equation*}
   \nabla\cdot (E u) (t,x_1,x_2) = \nabla \cdot u (t,|x_1|,|x_2|), \quad E: H^{m}_{\tg}(\R\times \Omega) \to H^{m}_{\tg}(\R\times \R^2),\quad \forall m\ge 0, 
    \end{equation*}
    so
    $\nabla\cdot (Eu) \in \cap_{s\in \mathbb{N}} H^s_{\tg}(\R^3)$.
    Let $u_\varepsilon = \tilde{J}_\varepsilon (E u)|_{\R\times\Omega}$. 
    Then $u_\varepsilon \in \cap_{s\in \mathbb{N}}H^s(\R\times \Omega)$ and $u_\varepsilon \to u$ in $H^m_{\tg}(\R\times\Omega)$ as $\varepsilon\to 0$. 
    Since $j$ is even, $u_\varepsilon$ inherits the same boundary conditions as $u$.
    Furthermore,
    \begin{equation}\label{weak=strong-2}
        \begin{aligned}
            \tilde{L}'(u_\varepsilon,0,0,0)^t  = & \left(\tilde{J}_\varepsilon \tilde{L}'(Eu,0,0,0)^t - [\tilde{J}_\varepsilon, \tilde{L}'](Eu,0,0,0)^t\right)_{|\R\times \Omega}\\
            &\longrightarrow \tilde{L}'(u,0,0,0)^t \quad\text{in } H^m_{\tg}(\R\times \Omega), \text{ as }\varepsilon\to 0.
        \end{aligned}
    \end{equation}
    Combining \eqref{weak=strong-1} and \eqref{weak=strong-2}, we conclude that $z_\varepsilon = (u_\varepsilon,b_\varepsilon,\p_\varepsilon,s_\varepsilon)^t$ fullfills all requirements of \eqref{strong}, which completes the proof of Proposition~\ref{prop-weak=strong}.

\begin{proof}[Proof of Proposition~\ref{prop-tan-well}]
    Let $\{z_\varepsilon\}_{\varepsilon>0}$ be the approximating sequence given in \eqref{strong}. By Proposition~\ref{prop-apriori-tan}, it satisfies
    \begin{equation}\label{zeps-tan-est}
        \sum_{|\alpha|\le m}\gamma^{2(m-|\alpha|)}\|\mathbb{T}^\alpha z_\varepsilon(t)\|^2_{L^2(\Omega)} + \gamma \|z_\varepsilon\|^2_{H^{m,\gamma}_{\tg}(\R\times \Omega)}\le C\frac{1}{\gamma}\|(\tilde{L}+\mathbb{B}+\gamma S_0 A_0)z_\varepsilon\|^2_{H^{m,\gamma}_{\tg}(\R\times\Omega)}.
    \end{equation}
    It follows that $\{z_\varepsilon\}_\varepsilon$ is a Cauchy sequence in $CH^m_{\tg}(\R\times \Omega)$. Consequently, the weak solution $z$ obtained in Proposition~\ref{prop-tan-weak} actually belongs to $CH^m_{\tg}\cap H^m_{\tg}(\R\times\Omega)$.
    Passing to the limit yields that $z$ satisfies estimate \eqref{z-tan-est}.
    The uniqueness follows directly from this estimate. Finally, the last assertion given in Proposition~\ref{prop-tan-well} can be proved in the same manner as in \cite[Section~2.3]{metivier2006stability}, and is therefore omitted.
\end{proof}

\subsubsection{Normal regularity}

Finally, we employ the divergence-curl structure of the system to show that,
for a given $F\in H^m_*(\R\times \Omega)$, the solution $z \in CH^m_{\tg}\cap H^m_{\tg}(\R\times\Omega)$ of \eqref{gamma-prob} obtained in Proposition~\ref{prop-tan-well} actually belongs to $X^m_*\cap H^m_*(\R\times\Omega)$.

For $j\in \R$, $s\in \mathbb{N}$,  define 
\begin{equation*}
    H^j_x(H^s_{\tg})(\R\times \Omega) : =\{z\in L^2(\R; H^j(\Omega)): \partial_*^\alpha z\in L^2(\R; H^j(\Omega)), \forall \alpha\in \mathbb{N}^5, |\alpha|_*\le s\},
\end{equation*}
and
\begin{equation*}
    CH^j_x(H^{s}_\tg)(\R\times \Omega): =\{z\in C(\R; H^j(\Omega)): \partial_*^\alpha z\in C(\R; H^j(\Omega)), \forall \alpha\in \mathbb{N}^5, |\alpha|_*\le s\},
\end{equation*}
both equipped with their canonical norms. It follows directly from the definitions that
\begin{equation*}
    H^m_*(\R\times \Omega) = \bigcap_{j=0}^{[\frac{m}{2}]} H^j_x (H^{m-2j}_{\tg})(\R\times \Omega),\quad X^m_*(\R\times \Omega) = \bigcap_{j=0}^{[\frac{m}{2}]} CH^j_x (H^{m-2j}_{\tg})(\R\times \Omega).
\end{equation*}

Now, we  proceed by induction on $0\le j\le [\frac{m}{2}]$ to prove that $z\in \big( CH^j_x(H^{m-2j}_\tg) \cap H^j_x(H^{m-2j}_\tg) \big) (\R\times \Omega)$.
The strategy is as follows:
first use the equations to obtain the regularity of $\nabla\p$ and $\nabla\cdot u$, then analyze the system for $\nabla\times u, \nabla \cdot b, \nabla\times b$ to get the full regularity of $\nabla u, \nabla b$, finally, study the transport equation of $s$ to deduce its normal regularity.

As computed in \eqref{eq-div-b}, \eqref{vort} of Section~\ref{section-linear}, from \eqref{gamma-prob} we deduce that $(\nabla\times u, \nabla \cdot b, \nabla\times b)$
satisfies the following system
\begin{equation}\label{normal-sys}
    \begin{cases}
    R(D_t+\gamma)(\nabla\times u) - B\cdot \nabla(\nabla\times b) = G_1,\\
    (D_t+\gamma)(\nabla\cdot b) = G_2,\\
    (1+\frac{1}{Q}|B|^2)(D_t+\gamma)(\nabla\times b) - B\cdot \nabla(\nabla\times u) = G_3,
    \end{cases}
\end{equation}
where
\begin{equation*}
    \begin{aligned}
    G_1 = & \nabla\times F_1 + R[D_t, \nabla\times u] - [B\cdot\nabla, \nabla\times ] b - (\nabla R \times D_t u),
        \\
    G_2  = &\nabla\times (F_2 + F_3 B) + \partial_i B_j \partial_j u_i - (\nabla\cdot B)(\nabla\cdot u) \\
    & - \partial_i U_j \partial_j b_i + \nabla\cdot \left(h(x,B)u- h(x,U)b\right),\\
        G_3  = & \nabla\times F_2 - \frac{1}{Q} B\times D_t F_1 -[\nabla\times, D_t]b + [\nabla\times, B\cdot \nabla] u \\ & + \nabla\times \left(h(x,B)u-h(x,U)b\right)
            + \frac{1}{Q}(\nabla\times B) (D_t \p - B\cdot D_t b)  - B\times [\nabla, \frac{1}{Q} D_t] \p \\
            & + B\times [\nabla, \frac{1}{Q} B\cdot] D_t b 
            + \sum_{j=1,2}\frac{1}{Q} B\times ([B_j \partial_1, D_t] b_j, [B_j \partial_2, D_t] b_j)^t \\ &- \frac{1}{Q} (B\cdot D_t B)(\nabla\times b) + \frac{1}{Q} B\times [D_t, R] D_t u + \frac{1}{Q} B\times (R D_t^2 u).
    \end{aligned}   
\end{equation*}
We aim to express all normal derivative terms, $\partial_x u, \partial_x b$, appearing in the expression of $G_1,G_2, G_3$, in terms of the quantities $\nabla\cdot u, \nabla\times u, \nabla\cdot b, \nabla\times b$.
To this end, first we have the following result on the Div-Curl system.

\begin{prop}[Div-Curl system]\label{prop-D-C}
Let $\Omega$ be as introduced in Section~1, and assume that for a fixed $m\in \mathbb{N}$, the angles of $\partial\Omega$ satisfy $\omega_n \in (0,\frac{\pi}{m+1})$ for all $1\le n\le N$.
Then for any $v = (v_1,v_2)^t\in H^m_x(H^s_\tg)(\R\times \Omega)$ with $s\in \mathbb{N}$, the boundary value problem
    \begin{equation}\label{D-C}
    \begin{cases}
        \nabla\cdot u = v_1 - a(v_1),\quad
        \nabla\times u = v_2, \quad \text{for }x\in \Omega,\\
        u \cdot \nu = 0, \quad \text{on } \partial\Omega\backslash\{q_1,\ldots,q_N\},
    \end{cases}
\end{equation}
with $a(v_1):= \frac{1}{|\Omega|}\int_\Omega v_1 \, \d x$ being the average operator, admits a unique solution $u\in H^{m+1}_x(H^{s}_{\tg})(\R\times \Omega)$. Moreover, there exists a constant $C>0$, independent of $v$ and $u$, such that
\begin{equation*}
    \|u\|_{H^{m+1}_x(H^s_{\tg})(\R\times \Omega)}\le C \|v\|_{H^m_x(H^s_{\tg})(\R\times \Omega)}. 
\end{equation*}
\end{prop}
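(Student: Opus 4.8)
The plan is to construct $u$ via a Helmholtz-type decomposition, obtain the base spatial regularity from elliptic theory in corner domains, and then upgrade to the full anisotropic regularity by commuting the tangential operators through the Div--Curl system.

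\emph{Construction and uniqueness.} Since $\Omega$ is simply connected, I would set $u=\nabla\phi+\nabla^{\perp}\psi$, where $\phi$ solves the Neumann problem $\Delta\phi=v_1-a(v_1)$ in $\Omega$ with $\partial_\nu\phi=0$ on $\partial\Omega\backslash\{q_1,\ldots,q_N\}$ (solvable because $v_1-a(v_1)$ has zero mean over $\Omega$), and $\psi$ solves the Dirichlet problem $\Delta\psi=-v_2$ in $\Omega$ with $\psi|_{\partial\Omega}=0$. A direct computation gives $\nabla\cdot u=\Delta\phi=v_1-a(v_1)$, $\nabla\times u=-\Delta\psi=v_2$, and, since $\psi$ is constant on $\partial\Omega$, $u\cdot\nu=\partial_\nu\phi\pm\partial_\tau\psi=0$ on $\partial\Omega\backslash\{q_1,\ldots,q_N\}$. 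For uniqueness, the difference $w$ of two solutions is curl-free, hence $w=\nabla h$ by simple connectedness, with $\Delta h=0$ and $\partial_\nu h=0$, forcing $w\equiv0$.

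\emph{Base spatial regularity.} For a.e.\ fixed $t$, the hypothesis $\omega_n<\pi/(m+1)$ is exactly the threshold allowing the elliptic regularity theory in curvilinear polygons (cf.\ \cite{grisvard1985elliptic}) to yield $\phi(t,\cdot),\psi(t,\cdot)\in H^{m+2}(\Omega)$, hence $u(t,\cdot)\in H^{m+1}(\Omega)$; equivalently, once the $H^{m+1}$-membership is secured one may simply quote Lemma~\ref{lemma-Hodge} with $s=m+1$, together with the simply-connected improvement, to obtain $\|u(t,\cdot)\|_{m+1}\lesssim\|\nabla\cdot u(t,\cdot)\|_{m}+\|\nabla\times u(t,\cdot)\|_{m}\lesssim\|v(t,\cdot)\|_{m}$. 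Integrating in $t$ gives $u\in H^{m+1}_x(\R\times\Omega)$, i.e.\ the case $s=0$.

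\emph{Anisotropic upgrade.} I would induct on the tangential order $s$. The operator $\partial_t$ commutes with $\nabla\cdot$, $\nabla\times$ and the time-independent boundary condition, so $\partial_t^{\alpha_0}u$ solves the same problem with data $\partial_t^{\alpha_0}v$. For the spatial tangential fields I would use the modified operators $\mathscr{T}_i=\partial_{w^i}+h(x,w^i)$ of Lemma~\ref{lemma-h-def}, which by construction preserve the constraint $X\cdot\nu=0$ on $\partial\Omega\backslash\{q_1,\ldots,q_N\}$. Applying $\mathscr{T}^\beta$ to the Div--Curl system, $\mathscr{T}^\beta u$ again satisfies such a system, whose data consists of at most $|\beta|$ tangential derivatives of $v$ plus commutator terms built from $[\nabla\cdot,\mathcal{T}_i]$, $[\nabla\times,\mathcal{T}_i]$ and $\nabla\cdot(h(x,w^i)\,\cdot)$; by Lemma~\ref{lemma-observ} and Corollary~\ref{coro-tan-struc} the divergence-side commutators reduce to at most one extra \emph{tangential} derivative of the lower-order tangential derivatives of $u$, so by the inductive hypothesis the whole right-hand side lies in $H^m_x(\R\times\Omega)$. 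Feeding this into the $s=0$ result (existence plus the $H^{m+1}$-estimate, and uniqueness to identify $\mathscr{T}^\beta u$ with the unique solution) yields $\partial_t^{\alpha_0}\mathscr{T}^\beta u\in H^{m+1}_x$ with the correct bound. The remaining $\partial_1,\partial_2$-derivatives entering $\partial_*^\alpha$ are then obtained from the already-established tangential regularity combined with the Poisson identities $\Delta u_j=\partial_j(\nabla\cdot u)\mp\partial_{j'}(\nabla\times u)$ and the corner elliptic estimates; summing all estimates over $|\alpha|_*\le s$ produces $\|u\|_{H^{m+1}_x(H^s_{\tg})}\lesssim\|v\|_{H^m_x(H^s_{\tg})}$.

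\emph{Main obstacle.} The crux is that $w^1,w^2$ degenerate at the corners (Definition~\ref{def-tan}), so a priori commuting them through the Div--Curl operator would cost genuine normal derivatives of $u$; everything hinges on the cancellation in Lemma~\ref{lemma-observ} (and the $h$-correction of Lemma~\ref{lemma-h-def}) which shows these commutators are in fact tangential of the right order, so that the induction does not lose normal regularity. The second key input is that $\omega_n<\pi/(m+1)$ is precisely the Grisvard threshold at which the one-order gain of the Div--Curl estimate (Lemma~\ref{lemma-Hodge}) would otherwise fail, as noted in the remark following that lemma.
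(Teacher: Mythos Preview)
Your construction and uniqueness argument (Neumann potential for the divergence, Dirichlet potential for the curl, simple connectedness for uniqueness) is exactly the paper's. The base spatial step is also the same.

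The real difference is in how the anisotropic tangential regularity is obtained. The paper does \emph{not} commute $\mathscr{T}^\beta$ through the Div--Curl system at all: it proves once and for all (Proposition~\ref{prop-PD-PN} in Appendix~B) that the solution operators $\Delta_D^{-1}$ and $\Delta_N^{-1}$ map $H^m_x(H^s_\tg)$ to $H^{m+2}_x(H^s_\tg)$, by commuting $D^k_{\mathcal{T}_x}$ through the scalar Laplacian and inducting on $k$. Proposition~\ref{prop-D-C} is then a two-line corollary: set $K_1=\Delta_D^{-1}v_2$, $K_2=\Delta_N^{-1}(v_1-a(v_1))$, and take $u=\nabla^\perp K_1+\nabla K_2$. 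This is cleaner because on the Poisson side the boundary conditions (Dirichlet, Neumann) are scalar and survive tangential differentiation with only lower-order boundary errors.

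Your induction on the Div--Curl system can also be made to work, but the reason you give is wrong. Lemma~\ref{lemma-observ} and Corollary~\ref{coro-tan-struc} do \emph{not} say that the commutator $\mathscr{H}_i u=\nabla\cdot(\mathscr{T}_i u)-\mathcal{T}_i(\nabla\cdot u)$ is tangential; it is a genuine first-order operator involving full $\nabla u$. Those results are inner-product statements (they say $f\,\mathscr{H}_i u$ becomes tangential after pairing with an appropriate multiplier that vanishes on the right edge), used in the energy estimates of Section~2, not in a pointwise/normed sense. What actually closes your induction is the simple order count: $[\nabla\cdot,\mathscr{T}^\beta]u$ and $[\nabla\times,\mathscr{T}^\beta]u$ contain one normal derivative and at most $|\beta|-1$ tangential ones, so they lie in $H^m_x$ by the inductive hypothesis $u\in H^{m+1}_x(H^{|\beta|-1}_\tg)$; then Lemma~\ref{lemma-Hodge} applied to $\mathscr{T}^\beta u$ gives the gain back. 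If you rewrite your ``anisotropic upgrade'' paragraph with this justification (and drop the appeal to Lemma~\ref{lemma-observ}), the argument is correct; the extra step you propose via $\Delta u_j=\partial_j(\nabla\cdot u)\mp\partial_{j'}(\nabla\times u)$ is then unnecessary.
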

\begin{proof}
    By Proposition~\ref{prop-PD-PN}, for $v\in H^m_x(H^s_{\tg})(\R\times \Omega)$, one can define $K_1 := \Delta_D^{-1} (v_2)$, $K_2 := \Delta_N^{-1} (v_1-a(v_1))$ so that $K_1, K_2\in H^{m+2}_x(H^s_\tg(\R\times \Omega))$. Define $u = \nabla^\perp K_1 + \nabla K_2$, then $u\in H^{m+1}_x(H^s_{\tg})(\R\times \Omega)$ clearly satisfies \eqref{D-C}. 
    Moreover, $\|u\|_{H^{m+1}_x(H^s_{\tg})}\le \|K_1, K_2\|_{H^{m+2}_x (H^s_{\tg})} \le C\|v_1,v_2\|_{H^{m}_{\tg}}$. The uniqueness follows from Lemma~\ref{lemma-Hodge}, as $\Omega$ is simply connected.
\end{proof}

\begin{nota}
\label{def-phi}
    Let $\Omega$ and $\omega_n$ be as in Proposition~\ref{prop-D-C}. Let
$$\phi \in \cap_{s\in \mathbb{N}} \mathcal{L}\left(H^m_x(H^s_{\tg})(\R\times \Omega)\right),$$ be the map from $v\in H^m_x(H^s_{\tg})(\R\times \Omega)$ to $(\partial_1 u_1, \partial_2 u_1, \partial_1 u_2, \partial_2 u_2)^t =: \tilde{\nabla} u$, where $u$ is the solution of \eqref{D-C} provided by Proposition~\ref{prop-D-C}.
    
\end{nota}

\begin{remark}\label{rmk-phi}
    If $v$ is fixed as $(v_1,v_2)=(\nabla\cdot u, \nabla\times u)$ for some $u\in H^1_x (\R\times \Omega)$ with $u\cdot \nu =0$ on $\partial\Omega\backslash\{q_1,\ldots,q_N\}$, then $\phi(v) = \tilde{ \nabla} u$. Thus, $\phi$ provides a reconstruction of the gradient of a vector field from its divergence and curl.
\end{remark}

To analyze the higher-order normal regularity of the solution from \eqref{normal-sys}, introduce the following Helmholtz decomposition.

\begin{prop}[Helmholtz decomposition]\label{prop-Helmholtz}
Let $\Omega$ be as in Section~1, and assume that for a fixed integer $m\ge 1$, $\omega_n \in (0,\frac{\pi}{m})$ for all $n= 1,\ldots, N$.
Then for any $v=(v_1,v_2)^t \in H^m_x(H^s_{\tg})(\R\times \Omega)$ with $s\in \mathbb{N}$, there exists a unique decomposition
\begin{equation*}
    v = \nabla f + g, 
\end{equation*}
where $f\in H^{m+1}_x(H^s_{\tg})(\R\times \Omega)$, $g\in H^{m}_x(H^s_{\tg})(\R\times \Omega)$, with $\nabla\cdot g = 0$ (in the distributional sense if $m=0$). Moreover, there exists a constant $C>0$, independent of $v,f$ and $g$, such that
\begin{equation*}
    \|\nabla f\|_{H^m_x(H^s_{\tg})(\R\times \Omega)} \le C\|v\|_{H^m_x(H^s_{\tg})(\R\times \Omega)}.
\end{equation*}
Additionally, if $\int_\Omega f \d x = 0$, then $f$ is unique.
\end{prop}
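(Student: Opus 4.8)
The plan is to build the divergence-free component $g$ as the rotated gradient of a stream function produced by a Dirichlet problem — the curl-side counterpart of the divergence-side construction in the proof of Proposition~\ref{prop-D-C} — and then to recover $f$ from the fact that the residual $v-g$ is curl-free on the simply connected domain $\Omega$. Throughout, $\nabla^\perp$ is normalized so that $\nabla\cdot\nabla^\perp=0$ and $\nabla\times\nabla^\perp=\Delta$.

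\emph{Existence and the estimate.} Since $v\in H^m_x(H^s_{\tg})(\R\times\Omega)$, its scalar curl $\nabla\times v$ lies in $H^{m-1}_x(H^s_{\tg})(\R\times\Omega)$ (an $H^{-1}$-valued distribution when $m=0$). The hypothesis $\omega_n<\pi/m$ is exactly the threshold under which the Dirichlet Laplacian on the corner domain $\Omega$ gains two $x$-derivatives, i.e.\ $\Delta_D^{-1}\colon H^{m-1}_x(H^s_{\tg})\to H^{m+1}_x(H^s_{\tg})$ boundedly — the same mechanism as in Grisvard and in Proposition~\ref{prop-PD-PN} (invoked there for $\Delta_D^{-1},\Delta_N^{-1}$ in the proof of Proposition~\ref{prop-D-C}). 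Set $\psi:=\Delta_D^{-1}(\nabla\times v)$, so that $\Delta\psi=\nabla\times v$ in $\Omega$ and $\psi=0$ on $\partial\Omega$, and put $g:=\nabla^\perp\psi$. Then $g\in H^m_x(H^s_{\tg})$, $\nabla\cdot g=0$ (distributionally if $m=0$), and $g\cdot\nu=0$ weakly on $\partial\Omega\setminus\{q_1,\dots,q_N\}$ because $\psi\equiv0$ there forces $\nabla\psi$ to be normal, hence $\nabla^\perp\psi$ tangential. Moreover $\nabla\times(v-g)=\nabla\times v-\Delta\psi=0$, so $v-g$ is curl-free; as $\Omega$ is simply connected there is $f$, determined up to an additive function of $t$ alone, with $\nabla f=v-g$, and normalizing $\int_\Omega f(t,\cdot)\,\d x=0$ makes it unique, with $f\in H^{m+1}_x(H^s_{\tg})(\R\times\Omega)$. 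Finally $\|\nabla f\|_{H^m_x(H^s_{\tg})}\le\|v\|_{H^m_x(H^s_{\tg})}+\|\nabla^\perp\psi\|_{H^m_x(H^s_{\tg})}\le C\|v\|_{H^m_x(H^s_{\tg})}$ by the elliptic estimate for $\psi$ (for $m=0$, directly from testing $\Delta\psi=\nabla\times v$ against $\psi$).

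\emph{Propagating the estimate through the tangential scale.} To justify that the corner elliptic estimate passes through all the tangential operators $\partial_*^\alpha$ with $|\alpha|_*\le s$, I would induct on $|\alpha|_*$: $\partial_t$ commutes with $\Delta$ and with the Dirichlet trace on $\R\times\Omega$, while each $\partial_{w^i}$ commutes with $\Delta$ modulo first-order operators with smooth coefficients by \eqref{tan-cmtt-1}, and, $w^i$ being tangent to $\partial\Omega$, it preserves the homogeneous Dirichlet condition up to lower-order terms already controlled; thus $\partial_*^\alpha\psi$ solves a Dirichlet problem whose data is bounded by $\partial_*^{\alpha'}(\nabla\times v)$ with $|\alpha'|_*\le|\alpha|_*$ plus previously estimated contributions, and Proposition~\ref{prop-PD-PN} closes the step.

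\emph{Uniqueness, and the main difficulty.} The construction produces $g$ with $g\cdot\nu=0$, and uniqueness is understood relative to this normalization: if $v=\nabla f_1+g_1=\nabla f_2+g_2$ with $\nabla\cdot g_j=0$ and $g_j\cdot\nu=0$, then $h:=f_1-f_2$ satisfies $\Delta h=\nabla\cdot(g_2-g_1)=0$ and $\partial_\nu h=(g_2-g_1)\cdot\nu=0$, so $h$ depends on $t$ only, and imposing $\int_\Omega f(t,\cdot)\,\d x=0$ forces $f_1=f_2$ and hence $g_1=g_2$. The one genuinely delicate ingredient is the sharpness and stability of the angle condition $\omega_n<\pi/m$ in the corner elliptic regularity: near a corner two ``normal'' legs meet, so one must ensure that neither the matching of traces along the two legs nor the commutators with $\partial_{w^i}$ lowers the admissible regularity exponent — precisely what Proposition~\ref{prop-PD-PN} is designed to furnish, after which the remaining steps are routine.
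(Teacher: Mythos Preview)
Your approach is correct and genuinely different from the paper's. The paper constructs $f$ directly as the solution of the Neumann problem $\Delta f=\nabla\cdot v$, $\partial_\nu f=v\cdot\nu$ (citing Sohr for the $L^2$ decomposition, then bootstrapping regularity), whereas you construct the divergence-free part $g=\nabla^\perp\psi$ via the homogeneous Dirichlet problem $\Delta_D\psi=\nabla\times v$ and recover $f$ as a primitive of the curl-free residual $v-g$. Both routes yield the same decomposition (your $f$ satisfies exactly the Neumann problem above), and both use the angle condition $\omega_n<\pi/m$ through Proposition~\ref{prop-PD-PN} applied at spatial index $m-1$.

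Your route is in fact a bit slicker at the tangential step: since $\psi|_{\partial\Omega}=0$ is a homogeneous Dirichlet condition and the tangential fields $w^i$ preserve it, you may invoke Proposition~\ref{prop-PD-PN}(1) directly, and your ``propagating'' paragraph is essentially redundant with its proof. The paper's Neumann route must instead cope with the non-homogeneous datum $v\cdot\nu$, which $\mathcal{T}^\alpha$ does not preserve; this forces, for $m=0$, an explicit energy computation involving $\mathscr{T}^\alpha$, the commutator $\mathscr{H}^\alpha$, and an appeal to Lemma~\ref{lemma-J2}, and for $m\ge1$ the non-homogeneous Neumann estimate~\eqref{PN-non-est}. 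One point you glossed over: passing from $\nabla f=v-g\in H^m_x(H^s_{\tg})$ to $f\in H^{m+1}_x(H^s_{\tg})$ needs a short commutator induction on $|\alpha|_*$ (using \eqref{tan-cmtt-1}) together with Poincar\'e under $\int_\Omega f=0$, but this is routine. Your remark that uniqueness implicitly requires the normalization $g\cdot\nu=0$ is well taken; the paper inherits this from Sohr's $L^2$ Helmholtz decomposition, while your construction delivers it automatically from $\psi|_{\partial\Omega}=0$.
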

The proof of Proposition~\ref{prop-Helmholtz} will be given in Appendix~B.

Denote by $I-\mathcal{P}$ the mapping $v \mapsto \nabla f$, where $\mathcal{P}$ is the Leray projection, and $f$ is given in the above proposition. Then, $I-\mathcal{P} \in \cap_{s\in \mathbb{N}} \mathcal{L}(H^m_x(H^s_{\tg})(\R\times \Omega))$.

\begin{defn}\label{def-psi}
Let $\Omega$ and $\omega_n$ be as in Proposition~\ref{prop-Helmholtz}.
We define a mapping 
    $$\psi \in \cap_{s \in \mathbb{N}} \mathcal{L}\left(H^{m}_x(H^s_{\tg})(\R\times \Omega) \to H^{m+1}_x(H^s_{\tg})(\R\times \Omega)\right),$$
    which maps any $v \in H^{m}_x(H^s_{\tg})(\R\times \Omega)$ to the function $f$ given in Proposition~\ref{prop-Helmholtz} with $\int_\Omega f \d x = 0$.
\end{defn}
\begin{remark}\label{remark-psi}
    If $v = \nabla u$ for some $u\in H^1_x(\R\times \Omega)$, then
    $\psi(v) = u - a(u)$.
\end{remark}

Now we start to prove the  Proposition~\ref{prop-regularity}.
\begin{proof}[Proof of Proposition~\ref{prop-regularity}]
We proceed by induction on $j=1, \ldots , m$ to prove that $z\in \big( CH^j_x(H^{m-2j}_\tg) \cap H^j_x(H^{m-2j}_\tg) \big) (\R\times \Omega)$.
    
    \underline{Step~1. The case $j=1$.}
    Since $z \in (CH^{m}_\tg\cap H^{m}_\tg)(\R\times \Omega)$ solves \eqref{gamma-prob}, it follows that
    \begin{equation*}
        \nabla\p, \nabla\cdot u \in H^m_*-(CH^{m-1}_\tg\cap H^{m-1}_\tg) \subset (CH^{m-2}_\tg\cap H^{m-2}_\tg)(\R\times \Omega).
    \end{equation*}
    The system \eqref{normal-sys} can then be written in the form
    \begin{equation}\label{P-normal}
        \mathbb{P}_\gamma (\nabla\times u, \nabla\cdot b, \nabla\times b)^t = \mathbb{Q}^1_1 (D_*^2 F) + \mathbb{Q}^1_2 (D_\mathcal{T}^2 z, \nabla \p) + \mathbb{Q}^1_3 (\tilde{\nabla} u, \tilde{\nabla} b),
    \end{equation}
where
\begin{equation*}
    \mathbb{P}_\gamma = \begin{pmatrix}
        R & & \\
          & 1 & \\
          & & 1 + \frac{1}{Q}|B|^2
    \end{pmatrix} (\partial_t+\gamma) + \sum_{i=1,2}
    \begin{pmatrix}
        RU_i & & - B_i \\
         & U_i & \\
        -B_i & & (1+\frac{1}{Q}|B|^2) U_i
    \end{pmatrix}\partial_i
\end{equation*}
is a symmetric transport operator, whose normal matrix vanishes on the boundary as $U\cdot \nu = B\cdot \nu = 0$ on $\partial\Omega \backslash\{q_1,\ldots, q_N\}$.
The terms on the right-hand side of \eqref{P-normal} are defined as follows:
\begin{enumerate}
\item[-] $\mathbb{Q}_1^1 (D_*^2 F)$ is a sum of terms of the form $\Theta(x,Z) \partial_*^{\alpha_n} F_n$ with $|\alpha_n|_*\le 2$ and $\Theta\in C^\infty(\overline{\Omega}\times\mathcal{K})$;

\item[-] $\mathbb{Q}_2^1 (D_\mathcal{T}^2 z, \nabla \p)$ is a sum of terms of the form $\Theta(x,Z) \partial_{\mathcal{T}}^{\alpha_n} z_n$ with $|\alpha_n|_*\le 2$, or $\Theta(x,Z) \partial_{x_i} p$, $i=1,2$, with $\Theta\in C^\infty(\overline{\Omega}\times\mathcal{K})$; 

\item[-] $\mathbb{Q}_3^1 (\tilde{\nabla} u, \tilde{\nabla} b)$ is a sum of terms of the form $\Theta(x,Z) \partial_{x_i} u_j$ or $\Theta(x,Z) \partial_{x_i} b_j$ with $i,j=1,2$ and $\Theta\in C^\infty(\overline{\Omega}\times\mathcal{K})$.

\end{enumerate}

Since $\nabla\p, \nabla \cdot u$ are already known to belong to $(CH^{m-2}_\tg \cap H^{m-2}_\tg)(\R\times\Omega)$, 
we recover $\tilde{\nabla} u$ by $$ \phi(\nabla\cdot u, 0) + \phi(0, \nabla\times u) $$ where $\phi$ is defined in Notation~\ref{def-phi}. Similarly, recover $\nabla b$ by $\phi(\nabla\cdot b, \nabla \times b)$ where both $\nabla \cdot b$ and $\nabla \times b$ are unknowns determined in \eqref{normal-sys}. 
Accordingly, define two bounded linear mappings $\Phi_1^I, \Phi_1^{II} \in \mathcal{L}(H^{m-2}_{\tg}(\R\times \Omega))$ by
\begin{equation}\label{Phi-1}
    \Phi^I_1(v_1,v_2,v_3) = (\phi(0,v_1), \phi(v_2,v_3)), \quad \Phi^{II}_1(v_4) = (\phi(v_4, 0) , 0 ),
\end{equation}
so that for any $u,b \in H^1(\Omega)$,
\begin{equation*}
    (\tilde{\nabla} u, \tilde{\nabla} b) = \Phi^I_1(\nabla\times u, \nabla\cdot b, \nabla\times b) + \Phi^{II}_1(\nabla\cdot u).
\end{equation*}

Hence we are led to the following transport equation for $v = (v_1,v_2,v_3)^t$:
\begin{equation}\label{v-eq-1}
    (\mathbb{P}_\gamma + \Phi^1) v = \mathbb{Q}^1_1(D_*^2 F) + \widetilde{\mathbb{Q}}^1_2 (D_{\mathcal{T}}^2 z, \nabla \p, \nabla\cdot u),
\end{equation}
where 
\begin{equation*}
    \widetilde{\mathbb{Q}}^1_2 (D_{\mathcal{T}}^2 z, \nabla \p, \nabla\cdot u) = \mathbb{Q}^1_2 (D_{\mathcal{T}}^2 z, \nabla \p) + \mathbb{Q}^1_3 \Phi^{II}_1 (\nabla\cdot u),\quad 
    \Phi^1(v) := -\mathbb{Q}^1_3 (\Phi_1^I(v)).
\end{equation*}
Noting that \eqref{v-eq-1} is a linear transport system that requires no any boundary condition, and the right-hand side belongs to $H^{m-2}_\tg (\R\times \Omega)$. Therefore, by adapting the arguments of Proposition~\ref{prop-tan-well}, the system is well posed in $(CH^{m-2}_\tg\cap H^{m-2}_\tg)(\R\times\Omega)$.

It remains to verify that the solution $v\in (CH^{m-2}_\tg\cap H^{m-2}_\tg)(\R\times\Omega)$ of \eqref{v-eq-1} indeed coincides with $(\nabla\times u, \nabla\cdot b, \nabla\times b)^t$. To this end, we establish the identity in the distributional sense on $\R\times \Omega$.

Let $(\Phi^1)^*\in \mathcal{L}(H^{m-2}_\tg(\R\times\Omega))$ be the adjoint of $\Phi^1$, and set $\mathscr{D} = C^\infty_0(\R\times \Omega)$. For any $\varphi= (\varphi_1,\varphi_2,\varphi_3)^t \in \mathscr{D}$, we have
\begin{equation}\label{test}
\begin{aligned}
    &\quad \inp*{v}{(\mathbb{P}_\gamma^*+(\Phi^1)^*)\varphi}_{L^2} = \inp*{(\mathbb{P}_\gamma+\Phi) v}{\varphi}_{L^2} = \inp*{\mathbb{Q}_1(D_*^2 F) + \widetilde{\mathbb{Q}}_2 (D_{\mathcal{T}}^2 z, \nabla \p, \nabla\cdot u)}{\varphi}_{\mathscr{D}'\times \mathscr{D}}\\
    & = \inp*{(\mathbb{P}_\gamma+\Phi^1)(\nabla\times u, \nabla\cdot b, \nabla\times b)^t}{\varphi}_{\mathscr{D}'\times \mathscr{D}}\\
    &= \inp*{u^\perp}{\nabla \big((\mathbb{P}_\gamma^*+(\Phi^1)^*) \varphi\big)_1}_{L^2} - \inp*{b}{\nabla \big((\mathbb{P}_\gamma^*+(\Phi^1)^*) \varphi\big)_2}_{L^2} + \inp*{b^\perp}{\nabla \big((\mathbb{P}_\gamma^*+(\Phi^1)^*) \varphi\big)_3}_{L^2},
\end{aligned}
\end{equation}
where $\inp*{\cdot}{\cdot}_{\mathscr{D}'\times \mathscr{D}}$ is the duality pairing of a distribution and a test function.
Let $\mathscr{D}_1 = (\mathbb{P}_\gamma^*+(\Phi^1)^*)\mathscr{D}\subset\mathscr{D}$. We claim that $\mathscr{D}_1$ is dense in $\mathscr{D}$ with respect to the $H^1(\R\times\Omega)$-norm. If it is true, then for any $\varphi \in \mathscr{D}$, there exists an approximating sequence $\{\varphi_\varepsilon\}_{\varepsilon >0}\subset \mathscr{D}_1$, such that $\varphi_\varepsilon \to \varphi$ in $H^1(\R\times\Omega)$ as $\varepsilon \to 0$, and we have the identity as given in \eqref{test} with $(\mathbb{P}_\gamma^*+(\Phi^1)^*) \varphi$ being replaced by $\varphi_\varepsilon$, from which by passing the limit $\varepsilon \to 0$, one obtains 
\begin{equation*}
    \inp*{v}{\varphi}_{L^2} = \inp*{u^\perp}{\nabla \varphi_1}_{L^2} - \inp*{b}{\nabla \varphi_2}_{L^2} + \inp*{b^\perp}{\nabla \varphi_3}_{L^2}, \quad \forall \varphi= (\varphi_1,\varphi_2,\varphi_3)^t \in \mathscr{D}.
\end{equation*}
So, we conclude that $v = (\nabla\times u, \nabla\cdot b, \nabla\times b)^t$.

The above density statement, $\mathscr{D}_1$ is dense in $\mathscr{D}$, follows from the solvability of the operator $\mathbb{P}_\gamma^* + (\Phi^1)^*$. 
Indeed, since $\mathbb{P}_\gamma^* + (\Phi^1)^*$ is a tangential transport operator that contains a damping term $\gamma \cdot \diag\left(R, 1, 1+ \frac{1}{Q}|B|^2\right)$, we deduce that for any $\varphi \in \mathscr{D}\subset H^2_0(\R\times\Omega)$, the solution $\bar{\varphi} = (\mathbb{P}_\gamma^*+(\Phi^1)^*)^{-1}\varphi$ belongs to $H^2_0(\R\times\Omega)$. 
Hence, there exists a sequence $\{\bar{\varphi}_\varepsilon\}_{\varepsilon>0}\subset\mathscr{D}$, such that $\bar{\varphi}_\varepsilon\to \bar{\varphi}$ in $H^2(\R\times \Omega)$ as $\varepsilon\to 0$. Consequently, $\mathscr{D}_1 \ni\varphi_\varepsilon : = (\mathbb{P}_\gamma^* + (\Phi^1)^*)\bar{\varphi}_\varepsilon$ satisfies $\varphi_\varepsilon \to \varphi$ in $H^1(\R\times \Omega)$. Therefore, the density claim is true.

As a result, we have established that $\nabla\times u, \nabla\cdot b, \nabla\times b \in (CH^{m-2}_\tg\cap H^{m-2}_\tg)(\R\times\Omega)$. Moreover, $\nabla\cdot b \in (CH^{m-2}_\tg\cap H^{m-2}_\tg)(\R\times\Omega)$ implies that the trace $(b\cdot \nu)|_{\partial\Omega\backslash\{q_1,\ldots,q_N\}}$ is well-defined in $H^{-{\frac{1}{2}}}_x H^{m-2}_\tg(\R\times \partial\Omega)$. Formally, $b\cdot \nu$ satisfies the transport equation \eqref{bcd-b-eq}. Then, by a similar argument as above, one obtains that $(b\cdot \nu)|_{\partial\Omega\backslash\{q_1,\ldots,q_N\}} = 0$. This leads to $\tilde{\nabla} u = \phi(\nabla\cdot u, \nabla\times u), \tilde{\nabla} b = \phi(\nabla\cdot b, \nabla\times b)\in (CH^{m-2}_\tg\cap H^{m-2}_\tg)(\R\times \Omega)$. As the entropy $s$ satisfies a purely tangential transport equation, a similar argument yields that it also has the same regularity. Thus, we get
\begin{equation*}
    z\in \Big(CH^1_x\big(H^{m-2}_\tg\big)\cap H^1_x\big(H^{m-2}_\tg\big)\Big)(\R\times \Omega).
\end{equation*}
Therefore, the case $j=1$ is complete.

\underline{Step~2. The case $j\ge 2$.}
Assume that the solution of \eqref{gamma-prob},
\begin{equation}\label{normal-11}
    z\in \big(CH^l_x(H^{m-2l}_\tg)\cap H^l_x(H^{m-2l}_\tg)\big)(\R\times \Omega)
\end{equation}
holds for all $l=0,\ldots, j-1$. From the equations given in \eqref{gamma-prob}, one immediately deduces that
\begin{equation}\label{normal-22}
    \nabla \p, \nabla\cdot u \in \cap_{l=0}^{j-1} \big(CH^l_x(H^{m-2l-2}_\tg)\cap H^l_x(H^{m-2l-2}_\tg)\big)(\R\times \Omega).
\end{equation}

By acting $\nabla^{j-1}$ on the system \eqref{P-normal}, one obtains
\begin{equation}\label{v-eq-j}
\begin{aligned}
    &(I_{2^{j-1}}\otimes \mathbb{P}_\gamma) \left(\tilde{\nabla}^{j-1} (\nabla\times u), \tilde{\nabla}^{j-1}(\nabla\cdot b), \tilde{\nabla}^{j-1}(\nabla\times b)\right)^t \\
    &\quad = \mathbb{Q}^j_1(D^{2j}_* F) + \mathbb{Q}^j_2(D^2_{\mathcal{T}}D^{2(j-1)}_* z, D^{j-1}_x\nabla \p) + \mathbb{Q}^j_3(\tilde{\nabla}^j u, \tilde{\nabla}^j b),
\end{aligned}
\end{equation}
where $I_{2^{j-1}}$ is the $2^{j-1}$-dimensional identity matrix, $\otimes$ denotes the Kronecker product, and $\mathbb{Q}_1^j, \mathbb{Q}_2^j,\mathbb{Q}^j_3$ are smooth coefficients and $D_{\mathcal{T}}^q, D^q_{*}, D^q_x$ are derivatives in the form of $\mathcal{T}^\alpha$ ($\alpha\in \mathbb{N}^3$, $|\alpha|\le q$),
, $\partial_{*}^\beta$ ($\beta\in \mathbb{N}^5, |\beta|_*\le q$), $\partial_x^\sigma$ ($\sigma\in \mathbb{N}^2, |\sigma|\le q$), and
\begin{equation*}
    \tilde{\nabla}^j = \left(\partial_1^j, \partial_2\partial_1^{j-1},\partial_2^2\partial_1^{j-2},\ldots, \partial_2^{j-1}\partial_1, \partial_2^j\right)^t
\end{equation*}
is the $2^{j}$-dimensional vectorized gradient operator, $\tilde{\nabla}^j u = ((\tilde{\nabla}^j u_1)^t, (\tilde{\nabla}^j u_2)^t)^t$. 

Since the operator $(I_{2^{j-1}}\otimes \mathbb{P}_\gamma)$ inherits the essential properties of $\mathbb{P}_\gamma$, the key step -- as in the case $j=1$ -- is to construct a set of bounded linear mappings 
\begin{equation}\label{Phi-j-cdt}
\begin{aligned}
    & \Phi^I_j \in \cap_{s\in \mathbb{N}}\mathcal{L}\left(H^{s}_{\tg}(\R\times \Omega)\to H^{j-1}_x (H^{s}_{\tg})(\R\times \Omega)\right),\\
    & \Phi^{II}_{j,0} \in \cap_{l=1}^{[\frac{m}{2}]}\mathcal{L}\left(H^{l-1}_x (H^{m-2l}_{\tg})(\R\times \Omega)\right),\\
    & \Phi^{II}_{j,1} \in \cap_{l=j}^{[\frac{m}{2}]}\mathcal{L}\left(H^{l-2}_x(H^{m-2(l-1)}_{\tg})(\R\times \Omega)\to H^{l-1}_x (H^{m-2l}_{\tg})(\R\times \Omega)\right),
\end{aligned}
\end{equation}
such that for any $u, b \in C_c^\infty(\R\times \overline{\Omega})$,
\begin{equation}\label{Phi-j-cdt'}
    \begin{aligned}
        (\tilde{\nabla} u, \tilde{\nabla} b)  = & \Phi^I_j\left(\tilde{\nabla}^{j-1}(\nabla\times u), \tilde{\nabla}^{j-1}(\nabla\cdot b), \tilde{\nabla}^{j-1}(\nabla\times b)\right) \\
        & +\Phi^{II}_{j,0} (\nabla\cdot u) + \Phi^{II}_{j,1} \left(\nabla\times u, \nabla \cdot b, \nabla\times b\right).
    \end{aligned}
\end{equation}

In particular, for the case $j=1$, such mappings have already been constructed by setting $$\Phi^{II}_{1,0} = \Phi^{II}_1,\quad \Phi^{II}_{1,1} = 0,$$
where $\Phi^I_1, \Phi^{II}_1$ are defined in \eqref{Phi-1}. 
Assume now that the mappings $\Phi^I_{j-1}, \Phi^{II}_{j-1,0}, \Phi^{II}_{j-1,1}$ have been constructed and satisfy conditions \eqref{Phi-j-cdt}-\eqref{Phi-j-cdt'} with $j$ replaced by $j-1$. We shall then proceed to construct the next set of mappings $\Phi^I_{j}, \Phi^{II}_{j,0}, \Phi^{II}_{j,1}$, ensuring that they satisfy \eqref{Phi-j-cdt}-\eqref{Phi-j-cdt'} for the index $j$.

Denote by $(\tilde{\nabla}^{j-1})_i$ the $i$-th component of the vectorized gradient $\tilde{\nabla}^{j-1}$, with $1\le i\le 2^{j-1}$. Then, one observes that for $i=1,\ldots, 2^{j-2}$,
\begin{equation*}
    \left((\tilde{\nabla}^{j-1})_{2(i-1)+1}, (\tilde{\nabla}^{j-1})_{2(i-1)+2}\right)  = \left(\partial_1 (\tilde{\nabla}^{j-2})_{i}, \partial_2 (\tilde{\nabla}^{j-2})_{i} \right).
\end{equation*}
Thus, by Remark~\ref{remark-psi}, for $i=1,\ldots, 2^{j-2}$, $X = \nabla\times u, \nabla\cdot b, \nabla\times b$, one has
\begin{equation*}
    (\tilde{\nabla}^{j-2})_i X = \psi\left((\tilde{\nabla}^{j-1})_{2(i-1)+1}X, (\tilde{\nabla}^{j-1})_{2(i-1)+2}X\right) + a\left((\tilde{\nabla}^{j-2})_{i}X\right),
\end{equation*}
Below we denote by $\left\{f_i\right\}_{1\le i\le n}$ the $n$-dimensional vector, whose $i$-th entry is $f_i$.
Then for any $u, b \in C_c^\infty(\R\times\overline{\Omega})$, 
\begin{equation*}
    \begin{aligned}
        (\tilde{\nabla} u, \tilde{\nabla} b) & = \Phi^I_{j-1}\left(\tilde{\nabla}^{j-2}(\nabla\times u), \tilde{\nabla}^{j-2}(\nabla\cdot b), \tilde{\nabla}^{j-2}(\nabla\times b)\right) \\
        & +\Phi^{II}_{j-1,0} (\nabla\cdot u) + \Phi^{II}_{j-1,1} \left(\nabla\times u, \nabla \cdot b, \nabla\times b\right)\\
        & \begin{aligned}
            =\Phi^I_{j-1}\Bigg( &
        \left\{
        \psi\left((\tilde{\nabla}^{j-1})_{2(i-1)+1}(\nabla\times u), (\tilde{\nabla}^{j-1})_{2(i-1)+2}(\nabla\times u)\right)
        \right\}_{1\le i\le 2^{j-2}},\\
        & \left\{
        \psi\left((\tilde{\nabla}^{j-1})_{2(i-1)+1}(\nabla\cdot b), (\tilde{\nabla}^{j-1})_{2(i-1)+2}(\nabla\cdot b)\right)
        \right\}_{1\le i\le 2^{j-2}}\\
        & \left\{
        \psi\left((\tilde{\nabla}^{j-1})_{2(i-1)+1}(\nabla\times b), (\tilde{\nabla}^{j-1})_{2(i-1)+2}(\nabla\times b)\right)
        \right\}_{1\le i\le 2^{j-2}}\Bigg)
        \end{aligned} \\
        & \begin{aligned}
            + 
        \Phi^I_{j-1}\bigg(&
        \left\{a\left((\tilde{\nabla}^{j-2})_{i}(\nabla\times u)\right)
        \right\}_{1\le i\le 2^{j-2}}
        , \left\{a\left((\tilde{\nabla}^{j-2})_{i}(\nabla\cdot b)\right)
        \right\}_{1\le i\le 2^{j-2}}
        , \\ &\left\{a\left((\tilde{\nabla}^{j-2})_{i}(\nabla\times b)\right)
        \right\}_{1\le i\le 2^{j-2}}\bigg)
        \end{aligned}\\
        & +\Phi^{II}_{j-1,0} (\nabla\cdot u) + \Phi^{II}_{j-1,1} \left(\nabla\times u, \nabla \cdot b, \nabla\times b\right)\\
        &=: \Phi^I_j\left(\tilde{\nabla}^{j-1}(\nabla\times u), \tilde{\nabla}^{j-1}(\nabla\cdot b), \tilde{\nabla}^{j-1}(\nabla\times b)\right). 
    \end{aligned}
\end{equation*}

Now we define $\Phi^I_j$ by
\begin{equation*}
    \begin{aligned}
        &\quad \Phi^I_j \left(\tilde{\nabla}^{j-1}(\nabla\times u), \tilde{\nabla}^{j-1}(\nabla\cdot b), \tilde{\nabla}^{j-1}(\nabla\times b)\right) \\
    & \begin{aligned}
            =\Phi^I_{j-1}\Bigg( &
        \left\{
        \psi\left((\tilde{\nabla}^{j-1})_{2(i-1)+1}(\nabla\times u), (\tilde{\nabla}^{j-1})_{2(i-1)+2}(\nabla\times u)\right)
        \right\}_{1\le i\le 2^{j-2}},\\
        & \left\{
        \psi\left((\tilde{\nabla}^{j-1})_{2(i-1)+1}(\nabla\cdot b), (\tilde{\nabla}^{j-1})_{2(i-1)+2}(\nabla\cdot b)\right)
        \right\}_{1\le i\le 2^{j-2}}\\
        & \left\{
        \psi\left((\tilde{\nabla}^{j-1})_{2(i-1)+1}(\nabla\times b), (\tilde{\nabla}^{j-1})_{2(i-1)+2}(\nabla\times b)\right)
        \right\}_{1\le i\le 2^{j-2}}\Bigg),
        \end{aligned}
    \end{aligned}
\end{equation*}
$\Phi^{II}_{j,0}$ by $\Phi^{II}_{j-1,0}$, and $\Phi^{II}_{j,1}$ by
\begin{equation*}
    \begin{aligned}
        & \quad \Phi^{II}_{j,1} \left(\nabla\times u, \nabla\cdot b, \nabla\times b\right)  = \Phi^{II}_{j-1,1} \left(\nabla\times u, \nabla\cdot b, \nabla\times b\right)
        \\
     & \begin{aligned}
            + 
        \Phi^I_{j-1}\bigg(&
        \left\{a\left((\tilde{\nabla}^{j-2})_{i}(\nabla\times u)\right)
        \right\}_{1\le i\le 2^{j-2}}
        , \left\{a\left((\tilde{\nabla}^{j-2})_{i}(\nabla\cdot b)\right)
        \right\}_{1\le i\le 2^{j-2}}
        , \\ &\left\{a\left((\tilde{\nabla}^{j-2})_{i}(\nabla\times b)\right)
        \right\}_{1\le i\le 2^{j-2}}\bigg).
        \end{aligned}
    \end{aligned}
\end{equation*}
Since $\omega_n \in (0,\frac{\pi}{[\frac{m}{2}]})$, the constructed mappings $\Phi^{I}_j, \Phi^{II}_{j,0}, \Phi^{II}_{j,1}$ satisfy the desired conditions \eqref{Phi-j-cdt}-\eqref{Phi-j-cdt'}.

Now, for any $v^j = ((v^j_1)^t, (v^j_2)^t, (v^j_3)^t)^t \in H^{m-2j}_{\tg}(\R\times \Omega)$ with each $v^j_i$ for $i=1,2,3$ being a $2^{j-1}$-dimensional vector, we define the mapping
\begin{equation*}
    \Phi^j (v^j) := - \mathbb{Q}^j_3 \left(\tilde{\nabla}^{j-1}\Phi^I_j\left(\tilde{\nabla}^{j-1}(v_1^j), \tilde{\nabla}^{j-1}(v_2^j), \tilde{\nabla}^{j-1}(v_3^j)\right)\right).
\end{equation*}
Then $\Phi^j \in \mathcal{L}\left(H^{m-2j}_{\tg}(\R\times \Omega)\right)$.

From \eqref{v-eq-j}, we are led to consider the purely tangential transport equation
\begin{equation}\label{v-eq-j'}
    \begin{aligned}
    (I_{2^{j-1}}\otimes \mathbb{P}_\gamma + \Phi^j) v^j
    & = \mathbb{Q}^j_1(D^{2j}_* F) +
        \mathbb{Q}^j_2(D^2_{\mathcal{T}}D^{2(j-1)}_* z, D^{j-1}_x\nabla \p) \\
    & + \mathbb{Q}^j_3\left(\tilde{\nabla}^{j-1}\left(
    \Phi^{II}_{j,0} (\nabla\cdot u) + \Phi^{II}_{j,1} \left(\nabla\times u, \nabla \cdot b, \nabla\times b\right)
    \right)\right).
    \end{aligned}
\end{equation}
By \eqref{normal-11}-\eqref{normal-22} and \eqref{Phi-j-cdt}, the right-hand side of \eqref{v-eq-j'} belongs to $H^{m-2j}_{\tg}(\R\times \Omega)$.
Following a similar procedure as in Step~1, the well-posedness of \eqref{v-eq-j'} in $(CH^{m-2j}_\tg\cap H^{m-2j}_\tg)(\R\times \Omega)$ implies that $\tilde{\nabla}^j (u,b)\in (CH^{m-2j}_\tg\cap H^{m-2j}_\tg)(\R\times \Omega)$. 

Since $s$ also satisfies a tangential transport equation, an analogous but simpler argument yields $$z\in \big(CH^j_x(H^{m-2j}_{\tg})\cap H^j_x(H^{m-2j}_{\tg})\big)(\R\times \Omega)\quad\text{for all }j=0,\ldots, [\frac{m}{2}].$$ 
Consequently, $z\in X^{m}_*(\R\times\Omega)\cap H^{m}_*(\R\times \Omega)$.

Having established the full regularity of the solution $z$ to \eqref{gamma-prob}, we now return to the original boundary value problem \eqref{linear},~\eqref{bcd-linear}, namely, $(L+\mathbb{B},{\cal B})$. 
For any $F\in e^{\gamma t} H^m_{*}(\R\times \Omega)$, let $$\tilde{F}:= e^{-\gamma t} S_0(Z) F \in H^m_{*}(\R\times \Omega).$$ 
Then there exists a unique solution $\tilde{z} \in X^{m}_*(\R\times\Omega)\cap H^{m}_*(\R\times \Omega)$ to \eqref{gamma-prob} with $F$ replaced by $\tilde{F}$. Setting $z:= e^{\gamma t}\tilde{z}$, we obtain the solution of 
$$(L+\mathbb{B})(Z) z = F \text{ in }\R\times \Omega, \quad \mathcal{B} z = 0 \text{ on }\R\times (\partial\Omega\backslash\{q_1,\ldots,q_N\}).$$ 

For $F\in H^m_{*}((-\infty,T)\times \Omega)$ and $F = 0$ for $t<0$, one can extend $F$ to a function $\tilde{F}'\in H^m_*(\R\times \Omega)$ that vanishes for $t\ge T+1$. Since $\tilde{F}' \in e^{\gamma t}H^m_*(\R\times \Omega)$ for any $\gamma >0$, then by the above analysis and a restriction to $(-\infty, T]$, the existence part of Proposition~\ref{prop-regularity} follows immediately.

The uniqueness of the solution, together with the property that $z$ vanishes when $t<0$, follows from Corollary~2.3.4 and Remark~2.3.5 of \cite{metivier2006stability}.
\end{proof}

\begin{remark}\label{rmk-regularity}
From the proof of Proposition~\ref{prop-regularity}, we note that the condition $$\omega_n \in \left(0,\frac{\pi}{[\frac{m}{2}]}\right),\quad n=1,\ldots, N$$ 
can be weakened to be
\begin{equation*}
    \omega_n \in \left(0,\frac{\pi}{[\frac{\tilde{m}}{2}]}\right),\quad n=1,\ldots, N,
\end{equation*}
for some $\tilde{m}\in \mathbb{N}\cap [2,m]$.
In this case, the regularity $\big(X^{m}_*\cap H^{m}_*\big)((-\infty,T]\times\Omega)$ of the solution $z$ obtained in Proposition~\ref{prop-regularity} is also weakened as
\begin{equation*}
    \bigcap_{l=0}^{[\frac{\tilde{m}}{2}]}\left( CH^l_x(H^{m-2l}_{\tg})\cap H^l_x(H^{m-2l}_{\tg})\right)((-\infty,T]\times \Omega).
\end{equation*}
\end{remark}

\subsection{Compatibility lifting}
When treating the IBVP \eqref{linear}-\eqref{icd-linear} with inhomogeneous initial data, we shall first construct an auxiliary function satisfying the prescribed initial condition and the compatibility condition at $\{t=0, x\in\partial\Omega\}$, thereby transforming the IBVP into a pure boundary value problem, which we have studied in the previous subsection. 
If the data $z_0, F$ are taken in corresponding $H^m$ spaces and satisfy compatibility conditions only up to order $m-1$, then the constructed auxiliary function $\zeta$ will only belong to $CH^m([0, T]\times \Omega)$ and it will eventually appear on the right-hand side of the pure boundary value problem in the term $-(L+\mathbb{B})(Z)\zeta$, which only belongs to $H^{m-1}(\R\times \Omega)$ after a further extension on the time direction. 
Thus, the solution of the boundary value problem will only belong to $CH^{m-1}_*\cap H^{m-1}_*(\R\times\Omega)$ in general.
Therefore, to preserve the desired regularity of the resulting solutions, we first approximate $(z_0, F)$ by a sequence $\{(z_0^k, F^k)\}_k$ with higher-order regularity and meanwhile satisfying more compatibility conditions. The latter is called the compatibility lifting.

As noted in Remark~\ref{rmk-sp}, functions in $H^m_*(\Omega)$ exhibit different behaviors in the interior, near smooth portions of the boundary, and in neighborhoods of corner points. 
Therefore, our approach begins by localizing the data near finitely many points, and then applying arguments based on the finite propagation speeds of the hyperbolic operator to reconstruct solutions corresponding to general data. 
Consequently, if the data $z_0, F$ given in \eqref{linear}-\eqref{icd-linear} are globally supported in $\Omega$, then even for the linearized problem, we can only obtain a local-in-time solution.

\begin{prop}\label{prop-lift-cpbt}
    Assume that the angles $\omega_n$ ($1\le n\le N$) on the boundary $\partial\Omega$ obey the condition \eqref{angle-cdt} for a fixed integer $m\ge 3$, and the background state satisfies \eqref{assum-Z}. Then, for any given $(z_0, F)$ satisfying \eqref{b-icd}, \eqref{F-cdt} and \eqref{z0F-cdt}, and for each $\bar{x}\in \overline{\Omega}$, there exist two open neighborhoods $V_1(\bar{x})\subset V_2(\bar{x})$ of $\bar{x}$ in $\overline{\Omega}$, one can find $z_0^{\bar{x}}\in H^{m}(\Omega), F^{\bar{x}}\in H^{m}((0,T)\times\Omega)$ satisfying the following properties:
    \begin{enumerate}
        \item[(1)] $z^{\bar{x}}_0 = z_0(x)$ in $V_1(\bar{x})$ and $z^{\bar{x}}_0(x) = 0$ in $(V_2(\bar{x}))^c$, $F^{\bar{x}} = \varphi^{\bar{x}}(x) F(t,x)$ for some $\varphi^{\bar{x}}\in C^\infty(\bar{\Omega})$ independent of $z_0, F$, with $\varphi^{\bar{x}} = 1$ in $V_1(\bar{x})$ and vanishes outside $V_2(\bar{x})$. The mapping $(z_0, F)\mapsto z_0^{\bar{x}}$ is linear, and there exists a continuous, increasing function $\phi: \overline{\R^+}\to \overline{\R^+}$ such that, for $l=0,1$,
        \begin{equation}\label{t=0-est}
            \|z^{\bar{x}}_0\|_{m-l} \le \phi(\normmm{Z(0)}_{m-1})(\|z_0\|_{m-l} + \normmm{F(0)}_{m-l-1}).
        \end{equation}
        \item[(2)] $z_0^{\bar{x}}, F^{\bar{x}}$ satisfy the compatibility conditions of \eqref{linear} and \eqref{bcd-linear}  up to order $m-1$ when $t=0$, and $\mathcal{B}' z^{\bar{x}}_0 = 0$ on $\partial\Omega\backslash\{q_1,\ldots, q_N\}$.
        
        \item[(3)] 
        One can find approximate sequences
        $\{(z^{\bar{x}}_0)^k\}_{k} \subset H^{m+\frac{5}{2}}(\Omega)$, $\{(F^{\bar{x}})^k\}_{k} \subset C^\infty([0,T]\times\overline{\Omega})$
        satisfying the compatibility conditions  of \eqref{linear} and \eqref{bcd-linear} up to order $m+1$, $(F^{\bar{x}})^k$ satisfies \eqref{F-cdt}, $\mathcal{B}' (z_0^{\bar{x}})^k = 0$ on $\partial\Omega\backslash\{q_1,\ldots, q_N\}$,
        and $(z_0^{\bar{x}})^k \to z_0^{\bar{x}}$ in $H^{m}(\Omega)$ and $(F^{\bar{x}})^k \to F^{\bar{x}}$ in $H^{m}((0,T)\times\Omega)$ as $k\to +\infty$.
    \end{enumerate}
\end{prop}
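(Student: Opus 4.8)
The plan is to produce the localized data by a cut\nobreakdash-off followed by a correction on the smooth part of $\partial\Omega$, and then to build the smoother approximating sequences by mollification followed by a second correction that restores the boundary conditions and lifts the compatibility by two further orders; the angle hypothesis \eqref{angle-cdt} will be needed only in this last step, and only when $\bar{x}$ is a corner.

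\emph{Localization (items (1)--(2)).} Fix concentric neighborhoods $V_1(\bar{x})\Subset V_2(\bar{x})$ of $\bar{x}$ in $\overline{\Omega}$, small enough that $V_2(\bar{x})$ meets $\partial\Omega$ only in its smooth part when $\bar{x}\in\partial\Omega\setminus\{q_1,\dots,q_N\}$ and only in $\gamma_1^n\cup\gamma_2^n$ when $\bar{x}=q_n$, and pick $\varphi^{\bar{x}}\in C^\infty(\overline{\Omega})$ with $\varphi^{\bar{x}}\equiv1$ on $V_1(\bar{x})$ and $\supp\varphi^{\bar{x}}\subset V_2(\bar{x})$. Set $F^{\bar{x}}:=\varphi^{\bar{x}}F$; since $\nu$ only multiplies the scalar factor, \eqref{F-cdt} is preserved. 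Because \eqref{linear} is linear with smooth coefficients, the formal Taylor coefficients are $u_{(j)}=\Gamma_j\big(z_0,\,\partial_t^k F(0)\ (k\le j-1);\,Z\big)$ for linear $x$-differential operators $\Gamma_j$ of order $\le j$ with coefficients built from $Z$ and its derivatives; hence for $z_0^{(0)}:=\varphi^{\bar{x}}z_0$ one has $u_{(j)}^{(0)}=\varphi^{\bar{x}}u_{(j)}+R_j$ with $R_j$ carrying at least one derivative of $\varphi^{\bar{x}}$. Since $u_{(j)}\cdot\nu=0$ on $\partial\Omega$, the compatibility defect of $(z_0^{(0)},F^{\bar{x}})$ at order $j$ equals $R_j\cdot\nu$, supported in the compact set $\big(\,\overline{V_2(\bar{x})\setminus V_1(\bar{x})}\,\big)\cap\partial\Omega$, which lies in the smooth part of $\partial\Omega$ and away from every corner. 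On a thin collar around that set one straightens the boundary and performs a standard lifting of the (characteristic) compatibility conditions --- linear in the data and compatible with $\mathcal{B}'$ --- producing a correction $\tilde{z}_0\in H^m(\Omega)$ that annihilates $R_0\cdot\nu,\dots,R_{m-1}\cdot\nu$ (cf. \cite{secchi1996initial,yanagisawa1991fixed}); then $z_0^{\bar{x}}:=z_0^{(0)}+\tilde{z}_0$ satisfies (1) and (2), and the tame bound \eqref{t=0-est} follows by tracking, at each step, the number of $x$-derivatives lost in $\Gamma_j$ (at most $j\le m-1$) and the continuity of the lifting operator, the constant being controlled by $\normmm{Z(0)}_{m-1}$.

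\emph{Smoother approximants (item (3)).} When $\bar{x}\in\Omega$ one simply mollifies $z_0^{\bar{x}}$ and $F^{\bar{x}}$: no boundary condition is present and the compatibility conditions are void. When $\bar{x}\in\partial\Omega\setminus\{q_1,\dots,q_N\}$ one straightens the boundary, mollifies tangentially and then normally in the spirit of the ``weak $=$ strong'' argument (cf. \cite{rauch1985symmetric}) so as to keep $u\cdot\nu=0$ (resp. $F_2\cdot\nu=0$), and re-lifts the compatibility conditions at the two new orders $m$ and $m+1$ for the smoothed data; the output is $C^\infty$ near the boundary, hence lies in $H^{m+5/2}(\Omega)$. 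The delicate case is $\bar{x}=q_n$: one first mollifies $z_0^{\bar{x}}$ (for instance after the inward shift used in the proof of Lemma~\ref{lemma-density}) to obtain data that is $C^\infty$ up to $\gamma_1^n\cup\gamma_2^n$ away from $q_n$ and converges in $H^m$, and then restores $\mathcal{B}'$ and the compatibility conditions up to order $m+1$ along both legs. The restoration reduces, near $q_n$, to solving a finite linear system for the corner ``Taylor-type'' coefficients of a correction so that the conditions prescribed along $\gamma_1^n$ and along $\gamma_2^n$ match through the vertex; by the structure of solutions in a plane sector (Grisvard \cite{grisvard1985elliptic}, Kozlov \cite{kozlov1997elliptic}) this system is solvable with a correction in $H^{m+5/2}(\Omega)$ precisely when $\omega_n\in(0,\pi/[\frac{m}{2}])$ and $\omega_n\notin\{\pi/([\frac{m}{2}]+1),\dots,\pi/m\}$: the smallness of $\omega_n$ keeps the relevant exponents $r^{\lambda}$ above the Sobolev threshold, and the excluded resonant values are exactly those forcing an $r^{\lambda}\log r$ corrector of insufficient regularity --- the obstruction already visible in Remark~\ref{rmk-2}. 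The same scheme applied to $F^{\bar{x}}=\varphi^{\bar{x}}F$, keeping $F_2\cdot\nu=0$, yields $(F^{\bar{x}})^k\in C^\infty([0,T]\times\overline{\Omega})$.

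\emph{Main obstacle.} The hard part is the corner case of item (3): simultaneously raising the Sobolev order of the localized datum, enforcing two additional orders of compatibility along both legs, and preserving the separate conditions $\mathcal{B}'z=0$ and $F_2\cdot\nu=0$, against a geometry in which naive mollification destroys the boundary conditions while naive lifting may require corner functions $r^{\lambda}\log r\notin H^{m+5/2}$. The precise role of \eqref{angle-cdt}, including the exclusion of $\pi/([\frac{m}{2}]+1),\dots,\pi/m$, is to rule out exactly these logarithmic obstructions and to make the vertex-matching problem solvable with the required regularity.
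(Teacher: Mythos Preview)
Your overall strategy---localize by cutoff plus a compatibility correction supported in the annulus away from the corner, then mollify and re-correct at two higher orders---matches the paper's plan, and your treatment of items (1)--(2) is essentially the paper's Lemma~\ref{lemma-local-reduce}. The gap is in the corner case of item (3), where you misidentify both the mechanism and the substance of the construction.

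First, the role of the angle condition \eqref{angle-cdt} here is \emph{not} elliptic corner regularity in the Grisvard--Kozlov sense; it is purely algebraic. After straightening the corner to a quarter-plane, the compatibility conditions \eqref{cptb-low-0}--\eqref{cptb-odd} of order $n$, restricted to the vertex, form a linear system $B_n\bar\alpha^k_n=\ldots$ for the vertex Taylor coefficients of the corrector, where $B_n$ is an explicit $(2[\frac{n+1}{2}])\times(n+1)$ matrix built from the frozen coefficients of $G^{-1}\nabla$ (hence from the angle $\omega$). The key input is Godin's Lemma~\ref{lemma-B-surj}: under \eqref{angle-cdt}, all $B_1,\ldots,B_{m+1}$ are surjective except possibly $B_{m+1}$ when $\omega=\frac{\pi}{m+1}$, and that single defect can be absorbed by adjusting one high-order trace of $F^k$ (see \eqref{F-arrg}). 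The excluded angles $\frac{\pi}{[\frac{m}{2}]+1},\ldots,\frac{\pi}{m}$ are precisely those at which \emph{two or more} of the $B_n$ fail to be surjective, making the vertex systems genuinely overdetermined; this has nothing to do with $r^\lambda\log r$ correctors or Sobolev thresholds for corner singular functions.

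Second, solving for the vertex coefficients is only Step~1 of three. You still have to build the edge traces $\mathrm{z}^k_{1j},\mathrm{z}^k_{2j}$ for $0\le j\le m-1$ satisfying the $(m-1)$-th order compatibility conditions \emph{along the full legs} while also converging to zero in $H^{m-j-\frac12}$ and satisfying the matching condition \eqref{zkj-2} needed for trace-lifting to $H^m(\mathcal U^\delta)$. In the paper this is done by deleting one column from a block matrix $\operatorname{diag}(B_{m-1},B_{m-2})$ to make it invertible and then solving a Volterra-type integral equation \eqref{Bsys} on $[0,\delta]$ for small $\delta$ (Lemma~\ref{solution}); this is the step that fixes $\delta_1$ in the statement and is where most of the work lies. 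Your proposal skips this entirely. Mollification followed by ``restoring compatibility along both legs'' does not close without this edge analysis, because the corrections on the two legs must agree at the vertex to all orders $\le m+1$ and must simultaneously tend to zero in the right trace norms.
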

The proof of this proposition closely follows that of \cite[Proposition~3.1]{godin20212d}, so we shall provide only an outline and highlight the key steps in Appendix A.

\subsection{Proof of Theorem~\ref{thm-LIBVP}}

Let $\bar{x}, (z_0^{\bar{x}})^k, (F^{\bar{x}})^k$ be given as in Proposition~\ref{prop-lift-cpbt}. To prove Theorem~\ref{thm-LIBVP}, we first construct an auxiliary function to eliminate the initial data.
The following lemma provides a modified trace lifting theorem.
\begin{lemma}\label{lemma-modified-lift}
    Let $m\ge 1$ be an integer and $T>0$ fixed. For any given $f_j \in H^{m-j}((\overline{\R^+})^2)$ ($j = 0,\ldots, m$) satisfying $f_j = 0$ at $x_1 = 0$ for all $0\le j\le m-1$, one can find a function $f \in CH^{m}([0,T]\times (\overline{\R^+})^2)$ such that $\partial_t^j f(0,x) = f_j(x)$ for all $j=0,\ldots,m$, and $f = 0$ at $x_1 = 0$.
\end{lemma}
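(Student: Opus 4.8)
The plan is to reduce the problem, by means of a standard time--Taylor lifting that ignores the boundary condition, to correcting the resulting boundary trace, and then to build a corrector that vanishes to high order at $\{t=0\}$. First I would invoke (or establish directly via the explicit formula $g(t,x)=\sum_{j=0}^{m}\frac{t^{j}}{j!}\,\theta(t\langle D\rangle)f_{j}(x)$, where $\theta\in C_{c}^{\infty}$ with $\theta\equiv 1$ near $0$, $\langle D\rangle=(1-\Delta)^{1/2}$, and the $f_{j}$ are first extended to $\R^{2}$ by a bounded extension operator) the classical trace--lifting theorem: there is $g\in\bigcap_{k=0}^{m}C^{k}([0,T];H^{m-k}((\overline{\R^{+}})^{2}))$ with $\partial_{t}^{j}g(0,\cdot)=f_{j}$ for $j=0,\dots,m$. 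The frequency truncation $\theta(t\langle D\rangle)$ is precisely what upgrades the bare Taylor polynomial to the $CH^{m}$ class: each loss of a time derivative produces a multiplier supported on $\langle\xi\rangle\lesssim 1/t$, hence a factor $t^{-1}$ which is absorbed by the accompanying power of $t$.

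The key observation is that although $g$ need not vanish on $\{x_{1}=0\}$, its trace $\beta:=g|_{x_{1}=0}$ vanishes to high order at $t=0$: for $k\le m-1$ one has $\partial_{t}^{k}g\in C([0,T];H^{m-k})$ with $m-k\ge 1$, so the trace is well defined and $\partial_{t}^{k}\beta(0)=(\partial_{t}^{k}g(0))|_{x_{1}=0}=f_{k}|_{x_{1}=0}=0$, using exactly the hypothesis that $f_{k}=0$ at $x_{1}=0$ for $k\le m-1$; note that $f_{m}\in L^{2}$ has no trace on $\{x_{1}=0\}$, which is why no compatibility is forced at the top order. I would then construct a corrector $W\in CH^{m}([0,T]\times(\overline{\R^{+}})^{2})$ with $W|_{x_{1}=0}=\beta$ and $\partial_{t}^{j}W(0)=0$ for all $j=0,\dots,m$, so that $f:=g-W$ satisfies $f|_{x_{1}=0}=0$ and $\partial_{t}^{j}f(0)=f_{j}$. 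The corrector is built by lifting $\beta$ into $\{x_{1}>0\}$ through an elliptic, Poisson--type extension in the normal variable composed with a mild smoothing in $t$ at a scale depending on $x_{1}$, cut off away from $x_{1}=0$; the $m-1$ vanishing time--jets of $\beta$ are used both to extend $\beta$ across $\{t=0\}$ and to force all time derivatives of $W$ up to order $m$ to vanish at $t=0$. (Alternatively one can avoid the split and repeat the construction of the first paragraph with $\langle D\rangle$ replaced by a self-adjoint operator generated by the Dirichlet Laplacian in $x_{1}$, whose spectral calculus automatically preserves the homogeneous condition at $x_{1}=0$; the cutoff $\theta$ being compactly supported, $\theta(t\langle D\rangle)f_{j}$ then lies in $H^{1}_{0}$ for $t>0$, hence vanishes on $\{x_{1}=0\}$.)

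The main obstacle I expect is exactly the corner $\{t=0\}\times\{x_{1}=0\}$. The trace $\beta$ carries only $m-1$ genuine time derivatives, so a naive lift of it narrowly fails to lie in $CH^{m}$, and the missing top-order regularity must be recovered from the combination of (i) the vanishing of $\beta$ at $t=0$ and (ii) the normal decay of the Poisson-type kernel, by tuning the smoothing scale as a slowly decaying function of $x_{1}$ so that the bound for $\partial_{t}^{m}W(t,x_{1},\cdot)$ stays square-integrable in $x_{1}$ up to the boundary. Establishing the uniform estimates and the continuity in $t$ of every $\partial_{t}^{k}W$, $k\le m$, down to $\{t=0\}$ and $\{x_{1}=0\}$ simultaneously is the delicate part; the remaining ingredients---boundedness of the truncated-frequency lifting, the mapping properties of the extension operator and mollifiers, and the convergences $\partial_{t}^{k}f(t)\to f_{k}$ in $H^{m-k}$ as $t\to 0$---are routine and follow from dominated convergence applied to the corresponding Fourier multipliers.
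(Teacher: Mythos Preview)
Your parenthetical alternative---replacing $\langle D\rangle$ by the Dirichlet realization so that the smoothing operators automatically preserve the condition $f|_{x_1=0}=0$---is exactly the paper's approach, only the paper implements it through odd reflection rather than spectral calculus. The paper extends each $f_j$ oddly in $x_1$ to $\tilde f_j\in H^{m-j}(\R^2)$, solves the strictly hyperbolic Cauchy problem
\[
\Big(\prod_{k=1}^{m+1}\big(\partial_t-k(-\Delta)^{1/2}\big)\Big)\tilde f=0,\qquad \partial_t^j\tilde f(0,\cdot)=\tilde f_j\quad(0\le j\le m),
\]
on $[0,T]\times\R^2$, and then observes that $-\tilde f(t,-x_1,x_2)$ is another solution of the same Cauchy problem; uniqueness forces $\tilde f$ to be odd in $x_1$, hence to vanish on $\{x_1=0\}$, and one restricts to the quadrant. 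Using a genuine hyperbolic equation (rather than the soft multiplier $\theta(t\langle D\rangle)$) makes the $CH^m$ regularity a direct citation to classical theory, and the symmetry/uniqueness step replaces any boundary argument.

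Your primary route, by contrast, trades the original problem for the construction of a corrector $W\in CH^m$ with $W|_{x_1=0}=\beta$ and $\partial_t^jW(0)=0$ for all $j\le m$, and you correctly flag the corner $\{t=0,\,x_1=0\}$ as the obstacle: $\beta$ carries only $m-1$ time derivatives, so lifting it back into $CH^m$ with vanishing $m$-jet at $t=0$ is at least as delicate as the lemma itself, and you do not actually carry out that construction. The paper's symmetry argument sidesteps this entirely---since the evolution preserves oddness, the boundary condition and the prescribed time-jets are achieved in one stroke, and no corrector is needed. In short, promote your parenthetical remark to the main argument and execute it as an odd-extension plus uniqueness step.
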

\begin{proof}
First, we extend $f_j$ ($0\le j\le m$) to be defined in the whole $\R^2$ as $\tilde{f}_j\in H^{m-j}(\R^2)$ satisfying the odd symmetry $\tilde{f}_j(x_1,x_2) = -\tilde{f}_j(-x_1,x_2)$. Now, for the following Cauchy problem of a strictly hyperbolic equation
    \begin{equation}\label{CP}
        \begin{cases}
            \left(\prod_{k=1}^{m+1} (\partial_t - k (-\Delta)^{\frac{1}{2}})\right) \tilde{f} = 0,  \text{ in } [0,T]\times \R^2,\\
            \partial_t^j \tilde{f}(0,x) = \tilde{f}_j(x), \quad x\in \R^2,\, j=0,\ldots,m
        \end{cases}
    \end{equation}
  there is a unique solution $\tilde{f}\in CH^{m}([0,T]\times \R^2)$ by using the classical theory of hyperbolic problems. cf.  \cite[Chapter~6, Theorem~4.9]{chazarain2011introduction}. Moreover, since $-\tilde{f}(-x_1, x_2)$ is also a solution of \eqref{CP}, the uniqueness implies $\tilde{f} = 0$ when $x_1 = 0$. Hence, the restriction of $\tilde{f}$ to $(\overline{\R^+})^2$ provides the desired function $f$.
\end{proof}

\begin{lemma}\label{lemma-0thappx} For the approximate data $(z_0^{\bar{x}})^k, (F^{\bar{x}})^k$ given in Proposition~\ref{prop-lift-cpbt}, 
    there exists $(\zeta^{\bar{x}})^k\in CH^{m+2}([0,T]\times\Omega)$ such that
    \begin{equation*}
        (\zeta^{\bar{x}})^k(0,x) = (z^{\bar{x}}_0)^k(x),\quad \partial_t^j\big((L+\mathbb{B})(Z)(\zeta^{\bar{x}})^k - (F^{\bar{x}})^k\big)(0,x) = 0\quad \text{ for } 0\le j\le m+1,
    \end{equation*}
    and  ${\cal B} (\zeta^{\bar{x}})^k = {\cal B}' (\zeta^{\bar{x}})^k = 0$ on $\partial\Omega\backslash\{q_1,\ldots,q_N\}$.
\end{lemma}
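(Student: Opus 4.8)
The plan is to construct the auxiliary function $(\zeta^{\bar{x}})^k$ by first prescribing its time derivatives at $t=0$ from the compatibility data and then lifting to a function of $t$ via the modified trace lifting of Lemma~\ref{lemma-modified-lift}. Concretely, write $z_j := \partial_t^j(\zeta^{\bar{x}})^k|_{t=0}$ for $0\le j\le m+1$: set $z_0 := (z_0^{\bar{x}})^k$, and for $1\le j\le m+1$ define $z_j$ recursively from the equation $(L+\mathbb{B})(Z)(\zeta^{\bar{x}})^k = (F^{\bar{x}})^k$ by solving for the highest time derivative. Since $L(Z)=A_0(Z)\partial_t+A_1(Z)\partial_1+A_2(Z)\partial_2$ with $A_0(Z)$ invertible on $\mathcal{K}$, differentiating the equation $j-1$ times in $t$ and evaluating at $t=0$ gives $z_j = A_0^{-1}\big(\partial_t^{j-1}(F^{\bar{x}})^k - \sum(\text{lower-order terms in } z_0,\ldots,z_{j-1} \text{ and derivatives of }Z)\big)\big|_{t=0}$. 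Because $(z_0^{\bar{x}})^k\in H^{m+5/2}(\Omega)$ and $(F^{\bar{x}})^k\in C^\infty$, one checks inductively that $z_j\in H^{m+2-j}(\Omega)$, with enough room to spare; in particular $z_j\in H^{m+2-j}$ for $0\le j\le m+1$, so the collection $(z_0,\ldots,z_{m+1})$ is admissible data for a lifting into $CH^{m+2}([0,T]\times\Omega)$.

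Next I would verify the boundary behaviour of the $z_j$. The compatibility conditions of \eqref{linear}, \eqref{bcd-linear} up to order $m+1$ satisfied by $(z_0^{\bar{x}})^k,(F^{\bar{x}})^k$ give precisely that the velocity components $(u\text{-part of }z_j)\cdot\nu = 0$ on $\partial\Omega\backslash\{q_1,\ldots,q_N\}$ for $0\le j\le m+1$, and $\mathcal{B}'(z_0^{\bar{x}})^k=0$ gives $(b\text{-part of }z_0)\cdot\nu=0$; moreover, as recorded after \eqref{cptb}, the relation $\partial_t^j b\cdot\nu=0$ at $t=0$ for $1\le j\le m+1$ then follows from the transport structure \eqref{bcd-b-eq} of the $b$-equation. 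So every $z_j$ has the property that its $u$- and $b$-components are normal-vanishing on the smooth part of the boundary. After straightening the boundary near $\bar{x}$ (the interior case is trivial; near a smooth boundary point one coordinate, near a corner both coordinates), this translates into: the relevant scalar components vanish at $x_1=0$ (resp.\ $x_1=0$ and $x_2=0$). Then Lemma~\ref{lemma-modified-lift} — applied componentwise, once at a smooth boundary point and with an obvious two-variable analogue at a corner (lifting in both normal directions, using the odd extension in each) — produces $f\in CH^{m+2}$ with $\partial_t^j f(0,\cdot)=z_j$ and $f$ vanishing on the appropriate coordinate hyperplanes, i.e.\ $\mathcal{B}f=\mathcal{B}'f=0$. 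Patching these local lifts with the partition of unity from Proposition~\ref{prop-lift-cpbt} (recall $z_0^{\bar{x}}$ is supported in $V_2(\bar{x})$, so only finitely many patches are involved) yields $(\zeta^{\bar{x}})^k\in CH^{m+2}([0,T]\times\Omega)$ with the prescribed initial time-derivatives and with $\mathcal{B}(\zeta^{\bar{x}})^k=\mathcal{B}'(\zeta^{\bar{x}})^k=0$ on $\partial\Omega\backslash\{q_1,\ldots,q_N\}$.

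Finally, the identity $\partial_t^j\big((L+\mathbb{B})(Z)(\zeta^{\bar{x}})^k-(F^{\bar{x}})^k\big)(0,x)=0$ for $0\le j\le m+1$ is immediate by construction: expanding $\partial_t^j$ of $(L+\mathbb{B})(Z)(\zeta^{\bar{x}})^k$ at $t=0$ produces exactly the algebraic combination of $z_0,\ldots,z_{j}$ and the coefficients of $Z$ that was used to \emph{define} $z_j$, hence it equals $\partial_t^j(F^{\bar{x}})^k(0,x)$. The main obstacle is purely bookkeeping: one must be careful that the recursion for the $z_j$ does not lose more than the available regularity margin (starting from $H^{m+5/2}$ and needing $z_j\in H^{m+2-j}$ leaves a comfortable half-derivative of slack), and one must confirm that straightening the corner and using the two-dimensional version of Lemma~\ref{lemma-modified-lift} genuinely preserves all $m+2$ orders of normal vanishing — this is where the odd reflection across \emph{both} $\{x_1=0\}$ and $\{x_2=0\}$ and the strict hyperbolicity of the auxiliary operator in Lemma~\ref{lemma-modified-lift} are used, together with the angle condition \eqref{angle-cdt} inherited through Proposition~\ref{prop-lift-cpbt}. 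No new analytic difficulty arises beyond what those two results already supply.
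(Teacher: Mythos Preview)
Your approach is the same as the paper's: define the time traces $z_j=\partial_t^j\zeta(0,\cdot)$ recursively from the equation, verify $\mathcal{B}z_j=\mathcal{B}'z_j=0$ via the compatibility conditions and the transport identity \eqref{bcd-b-eq}, and then lift using Lemma~\ref{lemma-modified-lift}. The paper's proof is precisely this, stated more tersely.

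There is, however, an off-by-one error in your bookkeeping that creates a genuine gap. You stop the recursion at $j=m+1$ and assert that $(z_0,\ldots,z_{m+1})$ is admissible data for a lifting into $CH^{m+2}$. But Lemma~\ref{lemma-modified-lift} with index $m+2$ requires data $f_0,\ldots,f_{m+2}$; more importantly, the condition $\partial_t^{m+1}\big((L+\mathbb{B})(Z)\zeta-(F^{\bar x})^k\big)(0,\cdot)=0$ involves $A_0(Z)\partial_t^{m+2}\zeta(0,\cdot)$, so you must also prescribe $z_{m+2}$ by one more step of the recursion. If you leave $z_{m+2}$ undetermined (or set it to zero just to feed the lifting lemma), the $j=m+1$ identity will not hold. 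The paper explicitly computes $(z^{\bar x})^k_{(j)}$ for $j=0,\ldots,m+2$. The regularity $(z_0^{\bar x})^k\in H^{m+5/2}$ leaves exactly enough room: $z_{m+2}\in H^{1/2}\subset L^2$, and Lemma~\ref{lemma-modified-lift} only asks for the boundary vanishing of $f_j$ for $j\le m+1$, which you have. So the fix is simply to run the recursion one step further.
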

\begin{proof} If $z\in CH^{m+2}((0,T)\times\Omega)$ satisfies
the following problem
$$\begin{cases}
(L+\mathbb{B})(Z) z = (F^{\bar{x}})^k\\
z|_{t=0}=(z^{\bar{x}}_0)^k(x)
\end{cases}$$
then one can easily determine
$$
\partial_t^j z(0,x)=(z^{\bar{x}})^k_{(j)}(x)\in H^{m+2-j}(\Omega) 
$$    
for all $j=0,1,\ldots, m+2$.
     The compatibility conditions satisfied by $(z^{\bar{x}}_0)^k, (F^{\bar{x}})^k$ imply ${\cal B}(z^{\bar{x}})^k_{(j)} = 0$ on $\partial\Omega\backslash\{q_1,\ldots,q_N\}$, for $j =0,\ldots, m+1$. Moreover, ${\cal B}' (z^{\bar{x}}_0)^k = 0$ on $\partial\Omega\backslash\{q_1,\ldots, q_N\}$. By \eqref{bcd-b-eq} and its time derivatives, we also obtain ${\cal B}'(z^{\bar{x}})^k_{(j)} = 0$ on $\partial\Omega\backslash\{q_1,\ldots,q_N\}$ for $j = 0,\ldots, m+1$. Then, by Lemma~\ref{lemma-modified-lift}, one can choose $(\zeta^{\bar{x}})^k \in CH^{m+2}([0,T]\times\Omega)$ such that $\partial_t^j(\zeta^{\bar{x}})^k(0,x) = (z^{\bar{x}})^k_{(j)}(x)$ for $j=0,\ldots, m+2$, and ${\cal B} (\zeta^{\bar{x}})^k = {\cal B}' (\zeta^{\bar{x}})^k = 0$ on $\partial\Omega\backslash\{q_1,\ldots,q_N\}$.
\end{proof}

\begin{proof}[Proof of Theorem~\ref{thm-LIBVP}]
    Let $\bar{x}, (z^{\bar{x}}_0)^k, (F^{\bar{x}})^k, (\zeta^{\bar{x}})^k$ be as in Lemma~\ref{lemma-0thappx}. Define $(G^{\bar{x}})^k = (F^{\bar{x}})^k - (L+\mathbb{B})(Z) (\zeta^{\bar{x}})^k$. Then $(G^{\bar{x}})^k \in CH^{m+1}([0,T]\times\Omega)\subset H^{m+1}((0,T)\times \Omega)$, $(G^{\bar{x}})^k$ satisfies \eqref{F-cdt}, and $\partial_t^j (G^{\bar{x}})^k = 0$ at $t=0$ for $0\le j\le m+1$. By extending $(G^{\bar{x}})^k$ by zero for $t<0$, we obtain a function belonging to $H^{m+1}((-\infty,T)\times \Omega)\subset H^{m+1}_*((-\infty,T)\times \Omega)$.
    
    By Proposition~\ref{prop-regularity} and Remark~\ref{rmk-regularity}, there exists $$(\sigma^{\bar{x}})^k \in \bigcap_{j=0}^{[\frac{m}{2}]} \left(C H^j_x(H^{m+1-2j}_{\tg})\cap H^j_x(H^{m+1-2j}_{\tg})\right)((-\infty,T]\times\Omega),$$ 
    such that $(L+\mathbb{B})(Z)(\sigma^{\bar{x}})^k = (G^{\bar{x}})^k$ in $(-\infty,T)\times \Omega$, ${\cal B}(\sigma^{\bar{x}})^k={\cal B}'(\sigma^{\bar{x}})^k=0$ on $\partial\Omega\backslash\{q_1,\ldots,q_N\}$, and $(\sigma^{\bar{x}})^k=0$ when $t\le 0$.

    Since
    $(\zeta^{\bar{x}})^k \in CH^{m+2}([0,T]\times\Omega) \subset X^{m+1}_*([0,T]\times\Omega)$, we set $$ (z^{\bar{x}})^k = ((u^{\bar{x}})^k, (b^{\bar{x}})^k, (\p^{\bar{x}})^k, (s^{\bar{x}})^k)^t = {(\sigma^{\bar{x}})^k}+ (\zeta^{\bar{x}})^k.$$ 
    Then $(z^{\bar{x}})^k \in \cap_{j=0}^{[\frac{m}{2}]} C H^j_x(H^{m+1-2j}_{\tg})([0,T]\times\Omega)$ is the solution to the IBVP \eqref{linear}-\eqref{icd-linear} with the data $z_0,F$ being replaced by $(z^{\bar{x}}_0)^k, (F^{\bar{x}})^k$ respectively, and satisfies ${\cal B}(z^{\bar{x}})^k={\cal B}'(z^{\bar{x}})^k=0$ on $\partial\Omega\backslash\{q_1,\ldots,q_N\}$.
    
    Hence, $\nabla (\p^{\bar{x}})^k, \nabla\cdot (u^{\bar{x}})^k \in \cap_{j=0}^{[\frac{m}{2}]} C H^j_x(H^{m-2j}_{\tg})([0,T]\times\Omega) = X^m_*([0,T]\times \Omega)$. 
    By Proposition~\ref{prop-regularity}, the estimate \eqref{apriori} (and \eqref{apriori-fine} if $m\ge 8$) holds with $z,F$ being replaced by $(z^{\bar{x}})^k$, $(F^{\bar{x}})^k$. Moreover, $\{(z^{\bar{x}})^k\}_{k}$ forms a Cauchy sequence in $X^{m}_*([0,T]\times\Omega)$, and its limit $z^{\bar{x}}$ solves \eqref{linear}-\eqref{icd-linear} in $X^{m}_*([0,T]\times\Omega)$, satisfying the same estimates as given in \eqref{apriori} and \eqref{apriori-fine} respectively,
    with $z_0,F$ being replaced by $z^{\bar{x}}_0, F^{\bar{x}}$.

    Now we turn to the general case of $z_0, F$ (not localized near $\bar{x}$). For $\bar{x}\in \overline{\Omega}$, let $V_1(\bar{x})$ be as in Proposition~\ref{prop-lift-cpbt}, and choose $r_{\bar{x}}>0$ such that $\{x\in \overline{\Omega}, |x-\bar{x}| < r_{\bar{x}}\}\subset V_1(\bar{x})$. Set $B^{\bar{x}}_r = \{x\in \overline{\Omega}, |x-\bar{x}|< r\}$, $C^{\bar{x}}_{\varepsilon} = \{(t,x)\in [0,T]\times \overline{\Omega}, t+\varepsilon |x-\bar{x}| < \varepsilon r_{\bar{x}}\}$, where $\varepsilon>0$ is small. We fix $\varepsilon < (S_p)^{-1}$, where 
    \begin{equation*}
        \begin{aligned}
            S_p &= \esssup_{\omega \in \mathbb{S}^1, (t,x)\in \overline{(0,T)\times\Omega}} \max\{|\text{eigenvalue of  } \omega_1 A_1(Z) + \omega_2 A_2(Z)|\}\\
            &\le \|U\|_{L^\infty}+2\|\sqrt{R(|B|^2 + Q)}\|_{L^\infty}.
        \end{aligned}
    \end{equation*}
    This ensures that $C^{\bar{x}}_\varepsilon$ lies within the domain of dependence of $B^{\bar{x}}_{r_{\bar{x}}}$. 
    Choose finitely many points $\bar{x}_1,\ldots, \bar{x}_d\in \overline{\Omega}$ so that $B^{\bar{x}_1}_{\frac{1}{2}r_{\bar{x}_1}}, \ldots, B^{\bar{x}_d}_{\frac{1}{2}r_{\bar{x}_d}}$ cover $\overline{\Omega}$. 
    For each $i=1,\ldots, d$, there exists a unique solution $z^{\bar{x}_i}\in X^{m}_*([0,T]\times\Omega)$ of \eqref{linear}-\eqref{icd-linear} associated with $z_0^{\bar{x}_i}, F^{\bar{x}_i}$.
    Let $ T_0 = \frac{\varepsilon}{2}\min (r_{\bar{x}_1},\ldots,r_{\bar{x}_d})$, so that $[0,T_0]\times\overline{\Omega} \subset \cup_{1\le i\le d} C^{\bar{x}_i}_\varepsilon$. If $C^{\bar{x}_i}_\varepsilon\cap C^{\bar{x}_j}_\varepsilon \ne \emptyset$, then $z^{\bar{x}_i} = z^{\bar{x}_j}$ in $C^{\bar{x}_i}_\varepsilon\cap C^{\bar{x}_j}_\varepsilon$, that is because $(z^{\bar{x}_i}_0,F^{\bar{x}_i}) =(z_0,F)= (z^{\bar{x}_j}_0,F^{\bar{x}_j})$ on $B^{\bar{x}_i}_{r_{\bar{x}_i}}\cap B^{\bar{x}_j}_{r_{\bar{x}_j}}$.
    Set $z(t,x) = z^{\bar{x}_i}(t,x)$ for $(t,x)\in C^{\bar{x}_i}_\varepsilon$ and $t\le T_0$. Then $z$ is the unique solution in $X^{m}_*([0,T_0]\times \Omega)$ to the linear IBVP \eqref{linear}-\eqref{icd-linear} associated with the data $z_0, F$. Proposition~\ref{prop-lift-cpbt} implies that $r_{\bar{x}}$ depends on the geometry quantities of $\Omega$ and $\mathcal{K}$, while $\varepsilon$ also depends on $\mathcal{K}$. Hence, the lifespan $T_0$ of $z$ depends on both $\Omega$ and $\mathcal{K}$.

    Finally, since each $(z^{\bar{x}})^k$ satisfies the estimate \eqref{apriori} (and \eqref{apriori-fine} if $m\ge 8$) with $z_0, F$ being replaced by $z^{\bar{x}}_0, F^{\bar{x}}$, it follows, together with \eqref{t=0-est}, that $z$ satisfies \eqref{apriori} (and \eqref{apriori-fine} if $m\ge 8$) with $z_0, F$ for $t\le T_0$.
\end{proof}

\section{Linearized problem with a non-smooth background state}

The main proposal of this section is to study the  linearized IBVP \eqref{linear}-\eqref{icd-linear} with non-smooth background state. 
We begin by defining the space
\begin{equation*}
 	 \begin{aligned}
 	 Y^{m}_*((0,T)\times\Omega) = \cap_{k=0}^m W^{k,\infty}((0,T);H^{m-k}_*(\Omega))
 	 \end{aligned}
\end{equation*}
with the norm $\|f\|_{Y^{m}_*((0,T)\times\Omega)} = \sum_{k=0}^m \esssup_{t\in (0,T)}\|\partial_t^k f(t)\|_{H^{m-k}_*(\Omega)}$.
This gives the embedding
\begin{equation*}
\begin{aligned}
    Y^m_*((0,T)\times \Omega) \hookrightarrow \bigcap_{j=0}^{m-1} C^j([0,T];H^{m-1-j}_*(\Omega)) = X^{m-1}_*([0,T]\times\Omega),
\end{aligned}
\end{equation*}
which implies that for any $z \in Y^m_*((0,T)\times \Omega)$, the time traces $\partial_t^j z(0)$ are well-defined and belong to $H^{m-1-j}_*(\Omega)$ for all $0\le j\le m-1$.

From now on, we study the linearized IBVP \eqref{linear}-\eqref{icd-linear} with background states $Z\in \tilde{Y}^{m}_*([0,T]\times\Omega)$, where
\begin{equation*}
    \tilde{Y}^{m}_*([0,T]\times \Omega) = \{z\in Y^{m}_*((0,T)\times \Omega):\partial_t^j z(0) \in H^{m-j}(\Omega), \text{ for } 0\le j\le m-1\},
\end{equation*}
where $(0)$ refers to the value at $t=0$.
We say that $Z^k\to Z$ in $\tilde{Y}^{m}_*([0,T]\times \Omega)$ if
\begin{equation*}
    \begin{aligned}
        Z^k\to Z \text{ in }Y^{m}_*((0,T)\times \Omega), \quad
        \partial_t^j Z^k(0)\to \partial_t^j Z(0) \text{ in }H^{m-j}(\Omega), \quad 0\le j\le m-1.
    \end{aligned}
\end{equation*}

In this section, we shall extend Theorem~\ref{thm-LIBVP} to the case $Z\in \tilde{Y}^{m}_*([0,T]\times\Omega)$ by using an approximation procedure. Introduce the notations
\begin{equation}\label{4.1}
    M^\flat_m(z,t) = \esssup_{\tau\in (0,t)} \normmm{z(\tau)}_{m,*},\quad M^\sharp_m(z,t) = M^\flat_m(z,t) + \sum_{j=0}^{m-1} \|\partial_t^j z(0)\|_{H^{m-j}_*(\Omega)}.
\end{equation}
\begin{theorem}\label{thm-LIBVP'}
    Assume that $m\ge 7$ is an integer, the angle $\omega_n$ obeys \eqref{angle-cdt} for all $n=1,\ldots, N$, that \eqref{assum-Z} holds except that now $Z \in \tilde{Y}^{m}_*([0,T]\times\Omega)$, and that \eqref{F-cdt}, \eqref{z0F-cdt} hold. Then the linearized problem \eqref{linear}-\eqref{icd-linear} has a unique solution $z\in \tilde{Y}^{m}_*([0,T_0]\times \Omega)$ for some $T_0\in (0,T]$ depending on $\Omega$ and $\mathcal{K}$. 
    Moreover, there exist continuous increasing functions $\phi,\phi_1,\phi_2 : \overline{\R^+} \to \overline{\R^+}$, independent of $T_0$ and may depending on $\mathcal{K}$, such that for $0\le t\le T_0$, the solution $z$ satisfies the following estimates:
    \begin{equation}\label{apriori-Linfty}
        M^\flat_m(z,t) \le \phi(M^\sharp_m(Z,T)) \left(
        \sum_{j=0}^{m-1} \|\partial_t^j z(0)\|_{H^{m-j}_*(\Omega)} + \normmm{F(0)}_{m-1,*} + \int_0^t \normmm{F(\tau)}_{m,*} \d \tau
        \right),
    \end{equation}
and \begin{equation}\label{apriori'-Linfty}
        \begin{aligned}
            M^\flat_m(z,t) &\le \phi_1(M_{m-1}(Z,T))\Bigg( \sum_{j=0}^{m-1} \|\partial_t^j z(0)\|_{H^{m-j}_*(\Omega)} + \normmm{F(0)}_{m-1,*} + \int_0^t \normmm{F(\tau)}_{m,*} \d \tau\\
            & +\phi_2(M^\sharp_m(Z,t)) \left(
            \int_0^t \normmm{F(\tau)}_{m-1,*} + \Lambda_{m-1}(z,F,\tau) \d \tau
            \right)
            \Bigg),
        \end{aligned}
    \end{equation}
        as $m\ge 8$.   
\end{theorem}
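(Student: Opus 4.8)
The plan is to realize the solution for the rough background state $Z\in\tilde Y^m_*([0,T]\times\Omega)$ as a limit of the solutions provided by Theorem~\ref{thm-LIBVP} for a sequence of smooth background states $Z^k$, the passage to the limit being governed by the uniform a priori bounds \eqref{apriori} and \eqref{apriori-fine}. The first step is to construct $\{Z^k\}_k\subset C^\infty([0,T]\times\overline\Omega)$ satisfying \eqref{assum-Z} with a fixed compact set $\tilde{\mathcal K}\supset\mathcal K$, with $U^k\cdot\nu=B^k\cdot\nu=0$ holding \emph{exactly} on $\partial\Omega\backslash\{q_1,\ldots,q_N\}$, and with $Z^k\to Z$ in $\tilde Y^m_*$, so that in particular $\sup_k\esssup_{\tau}\normmm{Z^k(\tau)}_{m,*}\le C$, $Z^k\to Z$ in $L^\infty$, and $\partial_t^j Z^k(0)\to\partial_t^j Z(0)$ in $H^{m-j}(\Omega)$ for $0\le j\le m-1$. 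Since ordinary mollification destroys the wall conditions near the corners, I would proceed patch by patch subordinate to the covering of Notation~\ref{nota-1}: flatten the boundary by $x^*=\Phi(x)$ near a smooth point (resp. $x^*=(\Phi_1(x),\Phi_2(x))$ near a corner), so that the constraint becomes the vanishing of the appropriate pulled-back components on the coordinate hyperplanes $\{x_i^*=0\}$; then smooth by composing the anisotropic tangential mollifiers of Lemma~\ref{lemma-tan-molif} (which preserve that vanishing because they only rescale $x_i^*\mapsto x_i^*e^{\varepsilon y_i}$) with a normal regularization built to respect the boundary condition (odd reflection of the vanishing components across the relevant faces followed by a standard mollifier, which is admissible in the weighted spaces since pure normal derivatives appear in $H^m_*$ only below the critical order). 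The time variable is handled by first extending $Z$ to $t<0$ by the Taylor polynomial $\sum_{j=0}^{m-1}\tfrac{t^j}{j!}\partial_t^j Z(0)$ and then mollifying in $t$; reassembling with a partition of unity gives the desired $Z^k$.

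The second step is to restore the compatibility conditions, which now refer to $Z^k$. I would keep $z_0^k=z_0$ (so that \eqref{b-icd} and the order-zero condition are untouched) and correct only $F_1$: writing $u^k_{(j)}$ for the time-traces computed from \eqref{linear},\eqref{icd-linear} with $Z^k$, the convergence $\partial^\beta Z^k(0)\to\partial^\beta Z(0)$ together with the Moser estimates forces $u^k_{(j)}\to u_{(j)}$ in $H^{m-j}(\Omega)$ for $0\le j\le m-1$ (the products are controlled since $m\ge7$), so the defect $u^k_{(j)}\cdot\nu|_{\partial\Omega\backslash\{q_1,\ldots,q_N\}}\to0$ in $H^{m-j-1/2}$; adjusting $\partial_t^{j-1}F_1(0)$ by a trace lifting of $R(P^k(0),S^k(0))$ times an extension of that defect kills the order-$j$ condition, and iterating over $j=1,\ldots,m-1$ produces $F^k\to F$ in $H^m((0,T)\times\Omega)$ with $F^k$ still obeying \eqref{F-cdt} and $(z_0,F^k)$ compatible up to order $m-1$ relative to $Z^k$. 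Theorem~\ref{thm-LIBVP} then yields $z^k\in X^m_*([0,T_0]\times\Omega)$ solving \eqref{linear}-\eqref{icd-linear} for $(Z^k,z_0,F^k)$, with $T_0$ independent of $k$ because it depends only on $\Omega$ and $\tilde{\mathcal K}$, and \eqref{apriori} (and, for $m\ge8$, \eqref{apriori-fine}) holds for $z^k$ with constants uniform in $k$; the top-order time-trace term $\|\partial_t^m z^k(0)\|_{L^2}$ implicit in $\normmm{z^k(0)}_{m,*}$ is bounded via the equations by the lower-order traces and by $\normmm{F^k(0)}_{m-1,*}$, which is precisely what converts \eqref{apriori},\eqref{apriori-fine} into the shape of \eqref{apriori-Linfty},\eqref{apriori'-Linfty}.

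Finally I would pass to the limit. Since $z^k-z^l$ solves the linear system with coefficients $Z^k$, zero initial data, and source $(F^k-F^l)-\big((L+\mathbb B)(Z^k)-(L+\mathbb B)(Z^l)\big)z^l$, and since the last term is small in $L^2$ — the coefficient difference tends to $0$ in $L^\infty$ by $Z^k\to Z$ in $L^\infty$, while $\partial_{t,x}z^l$ is bounded in $C([0,T_0];L^2(\Omega))$ — the $L^2$ energy method underlying Lemma~\ref{lemma-L2}, applied with background $Z^k$ (uniform constants), shows $\{z^k\}$ is Cauchy in $C([0,T_0];L^2(\Omega))$; interpolating against the uniform $X^m_*$-bound gives convergence in $X^{m-\delta}_*$ for every $\delta>0$, and weak-$\ast$ lower semicontinuity promotes the limit $z$ to $\bigcap_{k=0}^m W^{k,\infty}(0,T_0;H^{m-k}_*(\Omega))$, i.e. $z\in Y^m_*$, with $\partial_t^j z(0)=u_{(j)}\in H^{m-j}(\Omega)$ for $j\le m-1$, so in fact $z\in\tilde Y^m_*([0,T_0]\times\Omega)$. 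Passing to the limit in the equations and the boundary condition shows that $z$ solves \eqref{linear}-\eqref{icd-linear} for $(Z,z_0,F)$, and the lower semicontinuity of the norms against the uniform bounds for $z^k$ yields \eqref{apriori-Linfty} and, when $m\ge8$, \eqref{apriori'-Linfty}. Uniqueness in $\tilde Y^m_*$ follows from the same $L^2$ energy estimate applied to the difference of two solutions, which is legitimate because $\partial_{t,x}Z\in L^\infty$ for $m\ge7$. I expect the main obstacle to be the first two steps — building smooth approximate background states that respect both the impermeable/perfectly conducting constraints and the $H^m$-regularity of the time-traces despite the corners, and simultaneously restoring the $Z^k$-dependent compatibility conditions for the data with quantitative smallness; everything afterwards is a by-now standard stability-and-limit argument.
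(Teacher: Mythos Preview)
Your overall strategy matches the paper's: approximate $Z$ by smooth $Z^k$, restore compatibility for the data, apply Theorem~\ref{thm-LIBVP}, and pass to the limit via the uniform bounds. Your first step (the paper splits $Z=(Z-g)+g$ with $g\in CH^m$ carrying the time-traces at $t=0$ via Lemma~\ref{lemma-modified-lift}, then mollifies each piece; your Taylor-extension variant is morally the same) and your third step (the paper uses weak-$\ast$ compactness and the compact embedding $X^m_*\hookrightarrow X^{m-2}_*$ instead of your Cauchy-in-$L^2$ plus interpolation, but either works) are fine.

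The gap is in your second step. Keeping $z_0^k=z_0\in H^m$ and correcting only $F_1$ does not produce $F^k\in H^m((0,T)\times\Omega)$. The boundary defect $u^k_{(j)}\cdot\nu$ lies in $H^{m-j-\frac12}(\partial\Omega)$, so any interior vector $\delta$ with $\delta\cdot\nu$ matching it is at best in $H^{m-j}(\Omega)$; hence your correction to $\partial_t^{j-1}F_1(0)$ is in $H^{m-j}(\Omega)$. But the time-trace lifting into $H^m((0,T)\times\Omega)$ requires $\partial_t^{j-1}(F^k-F)(0)\in H^{m-(j-1)-\frac12}=H^{m-j+\frac12}(\Omega)$, which is half a derivative more than you have. (Using the $CH^m$-type lifting of Lemma~\ref{lemma-modified-lift} is no help: it demands $H^{m-j+1}$.) With this deficit $F^k$ lands only in $H^{m-1}$, Theorem~\ref{thm-LIBVP} gives $z^k\in X^{m-1}_*$, and the argument does not close at order $m$.

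The paper's fix is exactly the missing ingredient: after localizing near each $\bar x$, it first approximates $z_0^{\bar x}$ by $(z_0^{\bar x})^k\in H^{m+\frac12}(\Omega)$ that still satisfies the compatibility conditions with the original $Z$ (Proposition~\ref{prop-nonC}(2)--(3)). This extra half-derivative propagates through the recursion \eqref{eq-zjk} to give $z^k_{(j)}\in H^{m-j+\frac12}$, so the correction terms in \eqref{F-fix} land in $H^{m-j+\frac12}(\Omega)$ and can be lifted to $(F^{\bar x})^k\in H^m((0,T)\times\Omega)$ while preserving \eqref{F-cdt}. The localization is there because this $H^{m+\frac12}$ approximation of $z_0$ under the compatibility constraints is produced by the machinery of Proposition~\ref{prop-lift-cpbt}, which is intrinsically local.
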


The proof of Theorem~\ref{thm-LIBVP'} will be done by an approximation procedure, where we shall first approximate $Z$ by a sequence $\{Z^k\}_k \subset C^\infty ([0, T]\times \overline{\Omega})$. This necessitates the simultaneous approximation of $z_0$ and $F$ by sequences $\{z_0^k\}_k$ and $\{F^k\}_k$, respectively, so that the compatibility conditions for $L(Z^k)$ and \eqref{bcd-linear} are satisfied up to order $m-1$. In the process of preserving (though not improving) these compatibility conditions, we have to localize $z_0^k$ and $F^k$ to the neighborhoods of finitely many points in $\overline{\Omega}$ as was done in Proposition~\ref{prop-lift-cpbt}. Accordingly, we establish the following proposition.
\begin{prop}\label{prop-nonC}
    Assume that $m\ge 3$ is an integer, the angle $\omega_n$ obeys the condition \eqref{angle-cdt} for all $n=1,\ldots, N$, that \eqref{assum-Z} holds except that $Z\in \tilde{Y}^{m}_*([0,T]\times \Omega)$, and that \eqref{z0F-cdt} holds. Then, the following four points hold.
    \begin{enumerate}
        \item[(1)] For $k \in \mathbb{N}\backslash\{0\}$, there exists $Z^k \in C^\infty([0,T]\times \overline{\Omega})$, such that
        \begin{itemize}
            \item ${\cal{B}} Z^k = {\cal{B}}' Z^k = 0$ on $\partial\Omega\backslash\{q_1,\ldots,q_N\}$;
            \item $Z^k \to Z$ in $\tilde{Y}^{m}_*([0,T]\times \Omega)$ as $k\to +\infty$;
            \item the sequence $\{Z^k\}_{k}$ is bounded in $X^m_*([0,T]\times \Omega)$, and there exists a constant $C$ independent of $Z,k,t$, such that
            \begin{equation}\label{Z_ess}
                \normmm{Z^k(t)}_{m,*} \le C\big( M^\sharp_m(Z,t) + \frac{1}{k}\big);
            \end{equation}
            \item each $Z^k$ takes its value in a compact subset $\mathcal{K}'$ of $\R^4\times \mathcal{V}$ independent of $k$.
        \end{itemize}
        
        \item[(2)] For any $\bar{x}\in \overline{\Omega}$, one can construct the localized $z_0^{\bar{x}}\in H^{m}(\Omega)$, $F^{\bar{x}}\in H^{m}((0,T)\times\Omega)$, such that (1) and (2) of Proposition~\ref{prop-lift-cpbt} are satisfied. 
        
        \item[(3)]
        There exists a sequence $(z^{\bar{x}}_0)^k\in H^{m+\frac{1}{2}}(\Omega)$ with $(z^{\bar{x}}_0)^k\to z^{\bar{x}}_0$ in $H^{m}(\Omega)$, such that the compatibility conditions up to order $m-1$ are satisfied for the equation $(L+\mathbb{B})(Z)z= F^{\bar{x}}$ with the boundary condition \eqref{bcd-linear} and the initial data $z(0,x)=(z^{\bar{x}}_0)^k(x)$, and ${\cal B} (z^{\bar{x}}_0)^k = {\cal B}' (z^{\bar{x}}_0)^k = 0$ hold on $\partial\Omega\backslash\{q_1,\ldots,q_N\}$.
        
        \item[(4)] One can find $(F^{\bar{x}})^k \in H^{m}((0,T)\times\Omega)$
        satisfying \eqref{F-cdt}, and $(F^{\bar{x}})^k \to F^{\bar{x}}$ in $H^{m}((0,T)\times\Omega)$ as $k\to +\infty$, such that the compatibility conditions up to order $m-1$ hold for the equation $(L+\mathbb{B})(Z^{l(k)})z=(F^{\bar{x}})^k$ with the boundary condition \eqref{bcd-linear} and the initial data $z(0,x)=(z^{\bar{x}}_0)^k(x)$, where $l(k)\to +\infty$ as $k\to \infty$. 

    \end{enumerate}    
\end{prop}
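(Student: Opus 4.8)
The plan is to establish the four points in the stated order, the heart of the matter being point~(1). To construct the smooth approximations $Z^k$ we work locally: by a $C^\infty$ partition of unity subordinate to a finite cover of $\overline\Omega$ by the neighborhoods $V(\tilde x)$ of Notation~\ref{nota-1}, it suffices to smooth $Z$ in each chart while preserving $U\cdot\nu=B\cdot\nu=0$ on the local boundary. First one extends $Z$ across $t=0$ and $t=T$ keeping the $\tilde Y^m_*$-regularity, e.g.\ by reflecting finitely many time derivatives as in Lemma~\ref{lemma-modified-lift} at $t=0$ and using the $H^{m-k}_*$-traces at $t=T$. On an interior patch one then applies an ordinary (one-sided-in-time) Friedrichs mollifier in $(t,x)$. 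On a patch around a smooth boundary point $\tilde x\in\partial\Omega\backslash\{q_1,\ldots,q_N\}$ one straightens the boundary by $x_1^*=\Phi(x),\,x_2^*=x_2$ (assuming $\partial_1\Phi\ne0$); by Lemma~\ref{lemma-tangential}, $U\cdot\nu=0$ is equivalent to $U_1^*=x_1^*\bar U_1^*$ and similarly $B_1^*=x_1^*\bar B_1^*$. One reflects $\bar U_1^*,U_2^*,\bar B_1^*,B_2^*,P,S$ evenly across $\{x_1^*=0\}$, mollifies in $(t,x^*)$ by the anisotropic operator $J_\varepsilon^{\tg}$ of Lemma~\ref{lemma-tan-molif} followed by an ordinary normal mollifier, and then multiplies the first velocity and magnetic components back by $x_1^*$; transforming back, the resulting $C^\infty$ field has $U^k\cdot\nu=B^k\cdot\nu=0$ on the local boundary \emph{exactly}. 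Around a corner $\tilde x=q_n$ one proceeds identically with the two-variable straightening $(x_1^*,x_2^*)=(\Phi_1(x),\Phi_2(x))$, writing $U_i^*=x_i^*\bar U_i^*$, $B_i^*=x_i^*\bar B_i^*$ for $i=1,2$, reflecting evenly across both $\{x_i^*=0\}$, mollifying, and restoring the factors.

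Assembling the charts, since every mollifier used maps $H^m_*$ boundedly into itself and converges to the identity there (Lemmas~\ref{lemma-tan-molif}--\ref{lemma-molif}), and since $Z\in Y^m_*$ gives $\esssup_t\normmm{Z(t)}_{m,*}<\infty$, one obtains $Z^k\in C^\infty([0,T]\times\overline\Omega)$ with $Z^k\to Z$ in $Y^m_*$ on letting the mollification parameter $\varepsilon_k\to0$; the time traces $\partial_t^jZ^k(0)$ converge to $\partial_t^jZ(0)$ in $H^{m-j}(\Omega)$ because at $t=0$ the data $\partial_t^jZ(0)\in H^{m-j}(\Omega)$ can be mollified in the full Sobolev norm and glued through the time-extension, so $Z^k\to Z$ in $\tilde Y^m_*$. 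The one-sided time mollification and the $\tilde Y^m_*$-regular time-extension give the pointwise bound $\normmm{Z^k(t)}_{m,*}\le C\big(M^\sharp_m(Z,t)+\tfrac1k\big)$, i.e.\ \eqref{Z_ess}, whence $\{Z^k\}_k$ is bounded in $X^m_*([0,T]\times\Omega)$; and since each mollified value lies in the closed convex hull of the nearby values of $Z$, all $Z^k$ take their values in a fixed compact set $\mathcal K'\Subset\R^4\times\mathcal V$.

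For points~(2) and (3) I would run, for each fixed $\bar x\in\overline\Omega$, the localization and compatibility-lifting of Proposition~\ref{prop-lift-cpbt}. Its construction enters the equations \eqref{linear} and \eqref{bcd-linear} only through the time traces $\partial_t^\beta Z(0)$ with $|\beta|\le m-1$, which belong to $H^{m-|\beta|}(\Omega)$ since $Z\in\tilde Y^m_*$; hence points~(1) and (2) of Proposition~\ref{prop-lift-cpbt} hold verbatim, with \eqref{t=0-est} read with $\normmm{Z(0)}_{m-1}$ replaced by $\sum_{j=0}^{m-1}\|\partial_t^jZ(0)\|_{H^{m-1-j}(\Omega)}$. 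For point~(3) one repeats the lifting scheme of Proposition~\ref{prop-lift-cpbt}(3), but now each application of $(L+\mathbb B)(Z)$ and of its time derivatives costs one spatial derivative of $Z$, which is only $\tilde Y^m_*$; the gain therefore stops at $(z^{\bar x}_0)^k\in H^{m+\frac12}(\Omega)$ (rather than $H^{m+\frac52}$) and at compatibility order $m-1$ (no improvement), while still $(z^{\bar x}_0)^k\to z^{\bar x}_0$ in $H^m(\Omega)$, $\mathcal B(z^{\bar x}_0)^k=\mathcal B'(z^{\bar x}_0)^k=0$ on $\partial\Omega\backslash\{q_1,\ldots,q_N\}$, and the compatibility conditions up to order $m-1$ hold exactly for $(L+\mathbb B)(Z)z=F^{\bar x}$ with $z(0)=(z^{\bar x}_0)^k$. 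The extra half derivative of $(z^{\bar x}_0)^k$ over $z^{\bar x}_0$ is precisely what makes point~(4) go through without a derivative mismatch in the boundary traces.

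For point~(4), fix $k$ and feed the smooth background $Z^l$ from point~(1) and the source $F^{\bar x}$ from point~(2) into the equations to recompute the time traces $z_{(j)}^l:=\partial_t^jz(0)$ from $(L+\mathbb B)(Z^l)z=F^{\bar x}$, $z(0)=(z^{\bar x}_0)^k$; since $(z^{\bar x}_0)^k$ already satisfies the compatibility conditions for the background $Z$, the defects $g_j^l:=(z_{(j)}^l\cdot\nu)|_{\partial\Omega\backslash\{q_1,\ldots,q_N\}}$ are produced solely by $Z^l-Z$, and by $\partial_t^iZ^l(0)\to\partial_t^iZ(0)$ in $H^{m-i}(\Omega)$ together with Moser-type estimates one gets $\sum_{j=0}^{m-1}\|g_j^l\|_{H^{m-j}(\partial\Omega\backslash\{q_1,\ldots,q_N\})}\to0$ as $l\to\infty$, for each fixed $k$. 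One then corrects only the momentum source, adding a $\delta F_1\in H^m((0,T)\times\Omega)$ supported in a boundary collar: choosing its time traces $\partial_t^i\delta F_1(0)$ recursively so that (using $R>0$ and the structure of \eqref{linear}) the recomputed $z_{(j)}\cdot\nu$ vanish on $\partial\Omega\backslash\{q_1,\ldots,q_N\}$ for $0\le j\le m-1$, with $\|\delta F_1\|_{H^m}\le C\sum_j\|g_j^l\|_{H^{m-j}}$; since only $F_1$ is touched, \eqref{F-cdt} is unaffected. Taking $l=l(k)\to\infty$ so that $\sum_j\|g_j^{l(k)}\|\le\tfrac1k$ and setting $(F^{\bar x})^k:=F^{\bar x}+(\delta F_1,0,0,0)$ yields the required sequence. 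The main obstacle is point~(1): producing genuinely $C^\infty$ background approximations that preserve the corner-adapted conditions $\mathcal B Z^k=\mathcal B'Z^k=0$ \emph{exactly}, converge in the anisotropic $\tilde Y^m_*$-norm (including the full-Sobolev convergence of the time traces), and obey the uniform bound \eqref{Z_ess}; the device of mollifying the factored coefficients $\bar U_i^*,\bar B_i^*$ after straightening and even reflection — rather than $U,B$ themselves — is what lets the boundary conditions survive the smoothing.
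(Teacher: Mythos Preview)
Your route to point~(1) has a regularity gap and also departs from the paper's construction. Factoring $U_1^*=x_1^*\bar U_1^*$ costs orders: by the estimate in Lemma~\ref{lemma-tangential} (or Hardy's inequality), $\bar U_1^*$ lies only in $H^{m-2}_*$ when $U\in H^m_*$, and multiplication by $x_1^*$ is merely bounded on each $H^s_*$ --- it does not regain the lost orders --- so $x_1^*\bar U_1^{*,\varepsilon}\to U_1^*$ is only guaranteed in $H^{m-2}_*$, not in $H^m_*$, and the $H^{m-j}(\Omega)$-convergence of the time traces needed for $\tilde Y^m_*$ is likewise unclear. The paper instead extends the vanishing components \emph{oddly} across the straightened boundary and mollifies them directly (Lemma~\ref{lemma-Jsp}), avoiding any factoring. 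Moreover, the paper's construction is global rather than chart-by-chart: it lifts the traces $\partial_t^jZ(0)\in H^{m-j}(\Omega)$ to a single $g\in CH^m([0,T+1]\times\Omega)$ with $\mathcal{B}g=\mathcal{B}'g=0$ via Lemma~\ref{lemma-modified-lift}, extends $Z-g$ (whose time derivatives at $t=0$ all vanish) by zero to $t<0$, and then mollifies the two pieces with \emph{opposite} time shifts $\pm2\varepsilon$. This splitting is exactly what produces \eqref{Z_ess}: the forward shift makes $Y^\varepsilon(t)$ depend only on values at times $\tau\le t$, while for the $g$-part one uses the time-continuity $\normmm{g(\tau+4\varepsilon)-g(\tau)}_{m,*}\le\tfrac1k$ together with $\normmm{g(0)}_{m,*}\le\sum_j\|\partial_t^jZ(0)\|_{H^{m-j}_*}$. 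Your one-line justification (``one-sided time mollification and the $\tilde Y^m_*$-regular time-extension give the pointwise bound'') skips this mechanism; without splitting off a time-continuous piece that carries the initial traces, there is no evident way to bound $\normmm{Z^k(t)}_{m,*}$ by $M^\sharp_m(Z,t)$ rather than $M^\flat_m(Z,T)$.

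For point~(4) your defect-correction through $\delta F_1$ alone is a plausible alternative, but the paper's route is different and cleaner: it modifies \emph{all} time traces of $F^{\bar x}$ via \eqref{F-fix} so that the $z^k_{(j)}$ computed from $(L+\mathbb{B})(Z^{l(k)})z=(F^{\bar x})^k$ coincide identically with those computed from $(L+\mathbb{B})(Z)z=F^{\bar x}$. Since the latter already satisfy $\mathcal{B}z^k_{(j)}=\mathcal{B}'z^k_{(j)}=0$, no boundary-defect measurement or trace lifting from $\partial\Omega$ is required; the extra half-derivative of $(z^{\bar x}_0)^k$ is precisely what places each correction term in \eqref{F-fix} in $H^{m-j+\frac12}(\Omega)$ and ensures $(F^{\bar x})^k\to F^{\bar x}$ in $H^m$.
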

The proof of Proposition~\ref{prop-nonC} shall be given later. Now, we verify how from Proposition~\ref{prop-nonC}, one can derive Theorem~\ref{thm-LIBVP'}.

\begin{proof}[Proof of Theorem~\ref{thm-LIBVP'}.]
    Let $\bar{x}, Z^k, (z^{\bar{x}}_0)^k, (F^{\bar{x}})^k$ be as given in Proposition~\ref{prop-nonC}. By using Theorem~\ref{thm-LIBVP}, one can find a solution $(z^{\bar{x}})^k\in X^{m}_*([0,T]\times \Omega)$ to the problem 
    \begin{equation}\label{4.5}
        \begin{cases}
            (L+\mathbb{B})(Z^k) (z^{\bar{x}})^k = (F^{\bar{x}})^k, & (0,T)\times \Omega,\\
        {\cal B} (z^{\bar{x}})^k = 0,&(0,T)\times (\partial\Omega\backslash\{q_1,\ldots,q_N\}),\\
        (z^{\bar{x}})^k(0,x) = (z^{\bar{x}}_0)^k(x), & t=0.
        \end{cases}
    \end{equation}
Moreover, the estimates in Theorem~\ref{thm-LIBVP} shows that $\{(z^{\bar{x}})^k\}_{k\ge 1}$ is bounded in $X^{m}_*([0,T]\times \Omega)\subset Y^m_*((0,T)\times \Omega)$. By the weak-$\star$ compactness of $Y^m_*((0,T)\times \Omega)$, and the compact embedding $X^{m}_*([0,T]\times\Omega)\hookrightarrow X^{m-2}_*([0,T]\times\Omega)$, 
one can find $z^{\bar{x}}\in Y^m_*((0,T)\times \Omega)$ such that, up to extracting a subsequence, $(z^{\bar{x}})^{k} \to z^{\bar{x}}$ weakly-$\star$ in $Y^m_*((0,T)\times \Omega)$ and strongly in $X^{m-2}_*([0, T]\times \Omega)$. 

    The strong convergence in $X^{m-2}_*([0,T]\times \Omega)$ implies that $z^{\bar{x}}$ actually solves the problem \begin{equation*}
        \begin{cases}
            (L+\mathbb{B})(Z) z^{\bar{x}} = F^{\bar{x}}, & (0,T)\times \Omega,\\
        {\cal B} z^{\bar{x}} = 0,&(0,T)\times (\partial\Omega\backslash\{q_1,\ldots,q_N\}),\\
        z^{\bar{x}}(0,x) = z^{\bar{x}}_0(x), & t=0.
        \end{cases}
    \end{equation*}
    Recall that  $L(Z) = A_0(Z)\partial_t + A_1(Z)\partial_1 + A_2(Z)\partial_2$.
    Since $z^{\bar{x}}\in Y^m_*((0,T)\times\Omega) \hookrightarrow X^{m-1}_*([0,T]\times \Omega)$, one can apply times-derivatives on $(L+\mathbb{B})(Z) z^{\bar{x}} = F^{\bar{x}}$ and then take the restriction at $t=0$, obtaining for $1\le j\le m-1$,
    \begin{equation}\label{t-trace-formula}
        \begin{aligned}
            \partial_t^j z^{\bar{x}}(0)  = & A_0^{-1}(Z)(0) \Bigg( \partial_t^{j-1} F^{\bar{x}}(0) 
            - \sum_{q=1}^{j-1} \binom{j-1}{q}\partial_t^q A_0(Z)(0) \partial_t^{j-q} z^{\bar{x}}(0)\\
            & - \sum_{q=0}^{j-1} \binom{j-1}{q} \left( \sum_{i=1,2}\partial_t^q A_i(Z)(0) \partial_t^{j-1-q} \partial_i z^{\bar{x}}(0) + \partial_t^{q} \mathbb{B}(Z)(0)\partial_t^{j-1-q} z^{\bar{x}}(0)
        \right)
        \Bigg),
        \end{aligned}
    \end{equation}
    where $f(0)$ denotes the value of a related function $f$ at $t=0$ as usual.
    Since $\partial_t^j Z(0) \in H^{m-j}(\Omega)$ for $0\le j\le m-1$, by
    \eqref{t-trace-formula} and $z^{\bar{x}}(0) = z^{\bar{x}}_0$, one has $\partial_t^{j} z^{\bar{x}}(0) \in H^{m-j}(\Omega)$ for $0\le j\le m-1$. Therefore, $z^{\bar{x}}\in \tilde{Y}^m_*([0,T]\times \Omega)$.

    Moreover, by applying Theorem~\ref{thm-LIBVP} for the problem \eqref{4.5}, it follows that  $(z^{\bar{x}})^k$ satisfies the estimate
    \begin{equation}\label{zbark-est}
        \begin{aligned}
            \normmm{(z^{\bar{x}})^k(t)}_{m,*}  \le & \phi (\sup_{t\in [0,T]}\normmm{Z^k(t)}_{m,*}) \bigg(\sum_{j=0}^m \|\partial_t^j
            (z^{\bar{x}})^k(0)\|_{H^{m-j}_*(\Omega)}\\
            & + \normmm{(F^{\bar{x}})^k(0)}_{m-1,*} + \int_0^t \normmm{(F^{\bar{x}})^k(\tau)}_{m,*} \d \tau\bigg),
        \end{aligned}
    \end{equation}
    with $\phi:\overline{\R^+}\to \overline{\R^+}$ being a continuous increasing function.
    On the other hand, from the equation given in \eqref{4.5}, we have
    \begin{equation*}
        \begin{aligned}
            &\partial_t^m (z^{\bar{x}})^k(0)  = A_0^{-1}(Z^k)(0) \Bigg( \partial_t^{m-1} (F^{\bar{x}})^k(0) 
            - \sum_{q=1}^{m-1} \binom{m-1}{q}\partial_t^q A_0(Z^k)(0) \partial_t^{m-q} (z^{\bar{x}})^k(0)\\
            & - \sum_{q=0}^{m-1} \binom{m-1}{q} \bigg( \sum_{i=1,2}\partial_t^q A_i(Z^k)(0) \partial_t^{m-1-q} \partial_i (z^{\bar{x}})^k(0)
            + \partial_t^{q} \mathbb{B}(Z)(0)\partial_t^{m-1-q} (z^{\bar{x}})^k(0)
        \bigg)
        \Bigg),
        \end{aligned}
    \end{equation*}
   which implies
    \begin{equation*}
        \begin{aligned}
            \|\partial_t^m (z^{\bar{x}})^k (0)\|_{L^2(\Omega)} \le & P(\normmm{Z^k(0)}_{m-1,*}) \bigg(\normmm{(z^{\bar{x}})^k(0)}_{m-1,*} \\
            &+ \|\partial_t^{m-1} (z^{\bar{x}})^k(0)\|_{H^1_*(\Omega)} + \normmm{(F^{\bar{x}})^k(0)}_{m-1,*}
        \bigg).
        \end{aligned}
    \end{equation*}
    Plugging the above estimate and \eqref{Z_ess} into \eqref{zbark-est}, one obtains
    \begin{equation*}
        \begin{aligned}
            &\esssup_{\tau \in (0,t)}\normmm{(z^{\bar{x}})^k(\tau)}_{m,*} \le 
            \phi_1 \left(
            M^\sharp_m(Z,T) + \frac{1}{k}
            \right) \cdot
            \\
            &\quad \bigg(\sum_{j=0}^{m-1} \|\partial_t^j
            (z^{\bar{x}})^k(0)\|_{H^{m-j}_*(\Omega)}
            + \normmm{(F^{\bar{x}})^k(0)}_{m-1,*} + \int_0^t \normmm{(F^{\bar{x}})^k(\tau)}_{m,*} \d \tau\bigg).
        \end{aligned}
    \end{equation*}
    Using the lower semi-continuity of the norm $\|\cdot\|_{Y^m_*((0,T)\times\Omega)}$ with respect to the weak-$\star$ convergence, and taking $k\to +\infty$, one obtains the estimate \eqref{apriori-Linfty} with $z, Z, F$ being replaced by $z^{\bar{x}}, Z, F^{\bar{x}}$.
    If $m\ge 8$, by a similar analysis, one also can deduce the estimate \eqref{apriori'-Linfty} with $z, Z, F$ being replaced by $z^{\bar{x}}, Z, F^{\bar{x}}$ respectively.

    For general, non-localized $z_0, F$, we can use the same arguments as given in the last part of the proof of Theorem~\ref{thm-LIBVP} to conclude the results given in Theorem \ref{thm-LIBVP'}.
\end{proof}

Before proving Proposition~\ref{prop-nonC}, we introduce another auxiliary lemma.
\begin{lemma}\label{lemma-Jsp}
    There exists a smoothing operator $J^{\text{sp}}_\varepsilon$, such that for any $s\in \mathbb{N}$, if $f_1\in H^s(\Omega)$, $f_2 \in H^s_*(\Omega)$, ${\cal B}f_1 = {\cal B}' f_1 = {\cal B} f_2 = {\cal B}' f_2 = 0 $ on $\partial\Omega \backslash\{q_1,\ldots,q_N\}$, then
    \begin{itemize}
        \item $J^{\text{sp}}_\varepsilon f_1, J^{\text{sp}}_\varepsilon f_2 \in C^\infty(\overline{\Omega})$;
        \item $J^{\text{sp}}_\varepsilon f_1 \to f_1$ in $H^s(\Omega)$, $J^{\text{sp}}_\varepsilon f_2 \to f_2$ in $H^s_*(\Omega)$, as $\varepsilon\to 0$;
        \item ${\cal B} (J^{\text{sp}}_\varepsilon f_1) = {\cal B}' (J^{\text{sp}}_\varepsilon f_1) = {\cal B} (J^{\text{sp}}_\varepsilon f_2) = {\cal B}' (J^{\text{sp}}_\varepsilon f_2) = 0 $ on $\partial\Omega \backslash\{q_1,\ldots,q_N\}$.
    \end{itemize}
\end{lemma}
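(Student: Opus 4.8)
The plan is to assemble $J^{\mathrm{sp}}_\varepsilon$ from model smoothing operators glued by a $C^\infty$ partition of unity $\{\chi_\ell\}$ subordinate to a finite cover of $\overline{\Omega}$ by three kinds of sets: balls contained in the interior of $\Omega$, neighbourhoods $V(\tilde x)$ of points $\tilde x\in\partial\Omega\setminus\{q_1,\dots,q_N\}$, and neighbourhoods $V(q_n)$ of the corners (finitely many suffice since $\overline{\Omega}$ is compact and there are finitely many corners). Since each $\chi_\ell$ is scalar and smooth up to $\overline{\Omega}$, multiplication by $\chi_\ell$ is bounded on $H^s(\Omega)$ and on $H^s_*(\Omega)$ and preserves the conditions $\mathcal B f=0$, $\mathcal B'f=0$ (because $\mathcal B(\chi_\ell f)=\chi_\ell|_{\partial\Omega}\,\mathcal B f$, and likewise for $\mathcal B'$). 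Hence it is enough to produce, on each chart, a \emph{linear} operator which maps into $C^\infty(\overline{\Omega})$, converges to the identity in both $H^s$ and $H^s_*$, and preserves the two normal-trace conditions; then one sets $J^{\mathrm{sp}}_\varepsilon f:=\sum_\ell \chi_\ell\,(\text{chart operator})(\chi_\ell f)$.

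In the interior charts there is no boundary condition and a standard Friedrichs mollifier does everything. Near $\tilde x\in\partial\Omega\setminus\{q_1,\dots,q_N\}$ I would straighten the boundary with the diffeomorphism of Remark~\ref{rmk-sp} (so $\Omega$ becomes $\{x_1>0\}$ and, by that remark, $H^s_*$ becomes the classical anisotropic space with weight $x_1$) and transform $u,b$ as in the proof of Lemma~\ref{lemma-tangential}; then $\mathcal B f=0$, $\mathcal B'f=0$ become the vanishing on $\{x_1=0\}$ of the first transformed component of $u$ and of $b$, all other components being unconstrained. The unconstrained components are smoothed by an ``inward shift plus mollification'' operator of the $J_\varepsilon$-type used in Step~2 of the proof of Proposition~\ref{prop-weak=strong}, which maps into $C^\infty(\overline{\Omega})$ and converges in the classical anisotropic space (hence in $H^s$ and $H^s_*$), exactly as in the proof of Lemma~\ref{lemma-density}. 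The two constrained components are extended oddly across $\{x_1=0\}$ and then regularised by a symmetric (even) Friedrichs mollifier: an even kernel applied to an odd function yields an odd function, which vanishes on $\{x_1=0\}$, so the boundary conditions survive. Near a corner $q_n$ the analogous step uses the change of variables of Notation~\ref{nota-1}(4), which maps $\Omega\cap V(q_n)$ to the quadrant $\mathcal U=\{x_1>0,\ x_2>0\}$ and turns $H^s_*$ into the weighted Sobolev space with norm $\sum_{|\alpha|+2|\beta|\le s}\|x_1^{\alpha_1}x_2^{\alpha_2}\partial_1^{\alpha_1+\beta_1}\partial_2^{\alpha_2+\beta_2}f\|_{L^2}$; after transforming $u,b$, the conditions read $u_1,b_1$ vanish on $\{x_1=0\}$ and $u_2,b_2$ vanish on $\{x_2=0\}$, which is precisely the structure handled by the extension operator $E$ of the ``Smoothing $u$'' step of Proposition~\ref{prop-weak=strong}, now applied to both $u$ and $b$, followed by a symmetric product mollifier $\widetilde J_\varepsilon$ on $\mathbb R^2$ (smooth on all of $\mathbb R^2$, hence $C^\infty$ on $\overline{\mathcal U}$; and, the product kernel being even in each variable, all four reflection symmetries, hence $\mathcal B\cdot=\mathcal B'\cdot=0$, are preserved). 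The $\p$, $s$ and tangential components are again treated by inward shift plus mollification.

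The hard part will be the corner case, and specifically two points: first, that $E$ is bounded on the weighted space $H^s_*(\mathcal U)$ of the quadrant — this is exactly where the hypotheses $\mathcal B f=\mathcal B'f=0$ enter, the vanishing of the normal components on the respective legs being what prevents the odd extensions from creating distributional terms supported on $\{x_i=0\}$ that the weights $x_i^{\alpha_i}$ cannot absorb; and second, that $\widetilde J_\varepsilon$ (and the inward-shift mollifier) converge to the identity in the weighted norm, which reduces to commutator estimates between multiplication by the weights $x_i^{\alpha_i}$ and the mollifier, of the kind used in weighted Sobolev theory. Both are to be carried out by adapting the weighted mollification arguments of \cite[Lemma~B.1]{ohno1995initial} and \cite[Theorem~7.2]{kufner1985weighted}, together with the symmetric-mollifier bookkeeping of \cite[Theorem~4]{rauch1985symmetric}; once they are in place, linearity of $J^{\mathrm{sp}}_\varepsilon$ is immediate from the construction and the three charts combine through the partition of unity.
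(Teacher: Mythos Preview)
Your strategy is the paper's: localize by a partition of unity, flatten, and for the scalar component that must vanish on a face odd-reflect across that face and convolve with an even product kernel so the zero trace survives; unconstrained components are handled by an ordinary extension/shift plus mollification. The paper's proof is a one-paragraph sketch of exactly this.

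The one genuine divergence is your treatment of a corner. For, say, $u_1$ (which vanishes on $\{x_1=0\}$ but carries \emph{no} constraint on $\{x_2=0\}$) you borrow the operator $E$ from the proof of Proposition~\ref{prop-weak=strong}, i.e.\ odd reflection across $\{x_1=0\}$ and \emph{even} reflection across $\{x_2=0\}$, followed by $\tilde J_\varepsilon$. But in Proposition~\ref{prop-weak=strong} the only boundedness established for $E$ is on the purely tangential space $H^m_{\tg}$; here you need $H^s$ and $H^s_*$, whose norms contain \emph{unweighted} $\partial_2$-derivatives once $s\ge 2$ (respectively $s\ge 4$). Since nothing forces $\partial_2 u_1|_{x_2=0}=0$, the even reflection has a jump in $\partial_2 u_1$ at $x_2=0$; mollifying this jump produces a term of size $\sim\varepsilon^{-1/2}$ in the unweighted $L^2$ piece of $\partial_2^2(\tilde J_\varepsilon E u_1)$, so neither the $H^s$ nor the $H^s_*$ convergence holds. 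Your diagnosis that ``the weights $x_i^{\alpha_i}$ absorb the distributional terms'' is correct for the \emph{odd} leg (where the constraint lives) but not for the \emph{even} leg. The paper avoids this by first performing a Stein-type extension across $\{x_2=0\}$ --- which is $H^s$-bounded for every $s$ and leaves the trace on $\{x_1=0\}$ untouched --- and only then odd-reflects in $x_1$; you should do the same.
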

\begin{proof}
    After applying a partition of unity and a coordinate transformation, the proof reduces to the case that $f \in H^s(\Omega')$ or $H^s_*(\Omega')$, and $f=0$ when $x_1 = 0$, where $\Omega' = \R^+\times \R$ or $\R^+\times \R^+$. The case $\Omega' = \R^+\times \R^+$ can be reduced to $\Omega' = \R^+\times \R$ by a simple Stein-type extension of $f$ to be defined in $x_2<0$.
    We then extend $f$ oddly in $x_1<0$ to obtain $\tilde{f}$. Choose $j\in C^\infty_0(\R)$, with $\supp \, j\subset [-1,1]$, $j$ is even, non-negative, and $\int_\R j \d x = 1$.
    Define 
    $$
    (J^{\text{sp}}_\varepsilon f)(x) = \left(\int_{\R^2} \frac{1}{\varepsilon^2}j(\frac{x_1-y_1}{\varepsilon})j(\frac{x_2-y_2}{\varepsilon}) \tilde{f}(y) \d y\right)|_{x\in \Omega},$$ 
    which gives the desired operator.
\end{proof}

Now we prove Proposition~\ref{prop-nonC}.
\begin{proof}[Proof of Proposition~\ref{prop-nonC}]
(1)
Let $g_j = \partial_t^j Z(0)$ for $j=0,\ldots, m-1$. Obviously, $g_j \in H^{m-j}(\Omega)$, ${\cal B} g_j = {\cal B}' g_j = 0$ on $\partial\Omega\backslash\{q_1,\ldots, q_N\}$.
By Lemma~\ref{lemma-modified-lift}, one can construct $g\in CH^m([0,T+1]\times \Omega)$, such that $\partial_t^j g(0) = g_j$ for $j=0,\ldots, m-1$, $\partial_t^m g(0) = 0$, and ${\cal B} g = {\cal B}' g = 0$ on $\partial\Omega\backslash\{q_1,\ldots, q_N\}$. Then, one has
\begin{equation*}
    \normmm{g(0)}_{m,*} \le \sum_{j=0}^{m-1} \|\partial_t^j Z(0)\|_{H^{m-j}_*(\Omega)}.
\end{equation*}

Since $Z-g \in Y^{m}_*([0,T]\times \Omega)$, and $\partial_t^j(Z-g)(0) = 0$ for $0\le j \le m-1$, let $E_t(Z-g)$ denote the zero extension of $Z-g$ for $t<0$. Then $E_t(Z-g)\in Y^{m}_*((-\infty,T]\times \Omega)$. 
Let $J^{\text{sp}}_\varepsilon$ be the spatial smoothing operator given in Lemma~\ref{lemma-Jsp}, and let $j\in C^\infty_0(\R)$ satisfy $\supp \, j\subset [-1,1]$, $j$ even and non-negative, and $\int_\R j \d x = 1$.
For small $\varepsilon>0$, define
\begin{equation*}
    \begin{aligned}
    & Y^\varepsilon = \left(\int_{\R} \frac{1}{\varepsilon}j(\frac{t-\tau - 2\varepsilon}{\varepsilon}) \big(J^{\text{sp}}_\varepsilon E_t(Z-g)\big)(\tau) \d \tau \right)_{\big| t\in [0,T]},\\
    & W^\varepsilon = \left(\int_{\R} \frac{1}{\varepsilon}j(\frac{t-\tau + 2\varepsilon}{\varepsilon}) \big(J^{\text{sp}}_\varepsilon g\big)(\tau) \d \tau \right)_{\big| t\in [0,T]}.
\end{aligned}
\end{equation*}
Then, it is easy to have 
$$
\begin{cases}
Y^{\varepsilon}, W^{\varepsilon} \in C^\infty([0,T]\times \overline{\Omega}), \quad
\partial_t^j Y^{\varepsilon}(0) = 0 \quad (j=0,\ldots, m-1),\\ 
{\cal B} Y^\varepsilon = {\cal B}' Y^{\varepsilon} = {\cal B} W^{\varepsilon} = {\cal B}' W^{\varepsilon} = 0\quad {\rm on}~\partial\Omega\backslash\{q_1,\ldots, q_N\},
\end{cases}
$$
and
$$
\begin{cases}
Y^{\varepsilon} \to Z-g \quad {\rm in}~ Y^{m}_*((0,T)\times \Omega), \quad W^{\varepsilon} \to g\quad {\rm in}~CH^{m}([0,T]\times \Omega), \\
\partial_t^j W^{\varepsilon} (0) \to \partial_t^j g(0) = g_j\quad {\rm in}~H^{m-j}(\Omega), \quad j= 0,\ldots, m-1,     
\end{cases}
$$
as $\varepsilon\to 0$.

Obviously, one has, 
\begin{equation*}
    \begin{aligned}
        Y^\varepsilon + W^\varepsilon = \left(\int_{\R} \frac{1}{\varepsilon}j(\frac{t-\tau - 2\varepsilon}{\varepsilon}) J^{\text{sp}}_\varepsilon 
        \Big(
        E_t(Z-g)(\tau) + g(\tau+4\varepsilon)
        \Big)
        \d \tau \right)_{\big| t\in [0, T]},
    \end{aligned}
\end{equation*}
which implies that for any $t\in [0,T]$,
\begin{equation*}
    \begin{aligned}
        &\quad\normmm{(Y^\varepsilon + W^\varepsilon) (t)}_{m,*} = \sum_{j=0}^m \|\partial_t^j(Y^\varepsilon + W^\varepsilon) (t)\|_{H^{m-j}_*(\Omega)}\\
        & \le \sum_{j=0}^m 
        \int_{\R} \frac{1}{\varepsilon}j(\frac{t-\tau- 2\varepsilon}{\varepsilon}) \left\|J^{\text{sp}}_\varepsilon \Big(
        \partial_t^j E_t(Z-g)(\tau) + \partial_t^j g(\tau+4\varepsilon)
        \Big)\right\|_{H^{m-j}_*(\Omega)} \d\tau \\
        & \lesssim \esssup_{\tau \in [t-3\varepsilon, t-\varepsilon]} 
        \normmm{E_t(Z-g)(\tau) + g(\tau + 4\varepsilon)}_{m,*}
        +c(\varepsilon)
        \\
        & \lesssim
        \esssup_{\substack{\tau \in [t-3\varepsilon, t-\varepsilon]\\ \tau >0}} \left(\normmm{Z(\tau)}_{m,*}
        + \normmm{g(\tau+4\varepsilon) - g(\tau) }_{m,*}
        \right)\\
        & 
        \hspace{.2in}
        + \esssup_{\substack{\tau \in [t-3\varepsilon, t-\varepsilon]\\ \tau <0}} \normmm{g(\tau+4\varepsilon)}_{m,*} + c(\varepsilon),
    \end{aligned}
\end{equation*}
where $c(\varepsilon) \to 0$ as $\varepsilon\to 0$.
Since $g\in CH^m([0,T+1]\times \Omega)\subset X^m_*([0,T]\times \Omega)$, then for any $k\in \mathbb{N}\backslash\{0\}$, there exists $\varepsilon(k)\in (0,\frac{1}{k})$, such that for any $\tau_1,\tau_2 \in [0,T+1]$ with $|\tau_1-\tau_2|\le 4\varepsilon(k)$, one has
$\normmm{g(\tau_1) - g(\tau_2)}_{m,*} \le \frac{1}{k}$ and, moreover, $c(\varepsilon(k)) \le \frac{1}{k}$. Therefore, letting $\varepsilon = \varepsilon(k)$, one has
\begin{equation*}
    \begin{aligned}
        \normmm{(Y^\varepsilon + W^\varepsilon) (t)}_{m,*} &\lesssim M^\flat_m(Z,t) + \esssup_{\tau\in [0,t]} \normmm{g(\tau+4\varepsilon(k)) - g(\tau)}_{m,*}\\
        &+ \esssup_{\tau \in [0,4\varepsilon(k)]} \normmm{g(\tau)-g(0)}_{m,*} + \normmm{g(0)}_{m,*} + \varepsilon(k)\\
        &\lesssim M^\sharp(Z,t) + \frac{1}{k}.
    \end{aligned}
\end{equation*}
Hence, setting $Z^k = Y^{\varepsilon(k)} + W^{\varepsilon(k)}$, we find that $Z^k$ satisfies all the properties listed in Proposition~\ref{prop-nonC}(1).

(2)
Since we have $\partial_t^j Z(0) \in H^{m-j}(\Omega)$ for $0\le j\le m-1$, and no additional compatibility conditions are required in this proposition, the proofs of the second and third points given in Proposition \ref{prop-nonC} follow from an adaptation of Proposition~\ref{prop-lift-cpbt}, and are completely parallel to the argument in \cite[Proposition~5.1]{godin20212d}. We remark that the extra $\frac{1}{2}$-order regularity of $(z^{\bar{x}}_0)^k$ is introduced to facilitate the proof of the fourth point given in Proposition~\ref{prop-nonC}.

(3) Now, we turn to prove the assertion given in the fourth point of Proposition \ref{prop-nonC}.
Given $(z^{\bar{x}}_0)^k\in H^{m+\frac{1}{2}}(\Omega)$, $Z\in \tilde{Y}^m_*([0,T]\times \Omega)$, $F^{\bar{x}}\in H^m((0,T)\times \Omega)$, one can formally compute the values of $\partial_t^jz(0)$ for $0\le j\le m-1$
from the equations
\begin{equation}\label{sys-zjk}
z(0) = (z^{\bar{x}}_0)^k,\quad 
\partial_t^{j-1}\left((L+\mathbb{B})(Z)z\right)(0) = (\partial_t^{j-1} F^{\bar{x}})(0),\quad \forall 1\le j\le m-1,
\end{equation}
where $f(0)$ indicates the value a related function $f$ at $t=0$.
Explicitly, we define the functions $z^k_{(j)}(x)$, $j=0,\ldots, m-1$, by 
$z^k_{(0)} (x)= (z^{\bar{x}}_0)^k(x)$, and
\begin{equation}\label{eq-zjk}
\begin{aligned}
    z^k_{(j)}(x) = A^{-1}_0(Z)(0)\bigg(& \partial_t^{j-1} F^{\bar{x}}(0)- \sum_{q=1}^{j-1}\binom{j-1}{q} \partial_t^q A_0(Z)(0) z^k_{(j-q)}\\
    &-\sum_{q=0}^{j-1} \binom{j-1}{q} \big(\sum_{i=1,2} \partial_t^q A_i(Z)(0) \partial_i z^k_{(j-1-q)} + \partial_t^q \mathbb{B}(Z)(0) z^k_{(j-1-q)}\big)\bigg)
\end{aligned}
\end{equation}
for $j = 1,\ldots, m-1$. 
It is easy to have that $z^k_{(j)} \in H^{m-j+\frac{1}{2}}(\Omega)$, $0\le j\le m-1$ and ${\cal B} z^k_{(j)} = 0$ on $\partial\Omega\backslash\{q_1,\ldots, q_N\}$.

Recall that $F^{\bar{x}}$ satisfies \eqref{F-cdt}, we have ${\cal B}'(\partial_t^{j-1}F^{\bar{x}}(0)) = 0$ for all $0\le j\le m-1$ on $\partial\Omega\backslash\{q_1,\ldots, q_N\}$.
Comparing with \eqref{bcd-b-eq}, we see that \eqref{eq-zjk} implies, for $j= 1,\ldots, m-1$,
\begin{equation*}
    {\cal B}'z^k_{(j)} + \sum_{q=0}^{j-1} \binom{j-1}{q} \partial_t^q U(0) \cdot \nabla ({\cal B}'z^k_{(j-1-q)} ) = 0,
    \quad \text{on } \partial\Omega\backslash\{q_1,\ldots, q_N\}.
\end{equation*}
Since ${\cal B}' (z^{\bar{x}}_0)^k = 0$ as $x\in \partial\Omega\backslash\{q_1,\ldots, q_N\}$, we conclude that for all $0\le j\le m-1$, ${\cal B}'z^k_{(j)} = 0$ on $x\in \partial\Omega\backslash\{q_1,\ldots, q_N\}$. 

Now, we replace $Z$ with $Z^l$ and aim to find an approximate sequence $(F^{\bar{x}})^k$ such that the corresponding $\partial_t^j z(0)$ obtained from the system \eqref{sys-zjk} (with $Z, F^{\bar{x}}$ replaced by $Z^l, (F^{\bar{x}})^k$) also satisfies ${\cal B} (\partial_t^j z(0)) =0$ for $j=0,\ldots,m-1$.
It suffices to arrange that
    \begin{equation}\label{F-fix}
        \begin{aligned}
            \partial_t^{j-1}(F^{\bar{x}})^{k}(0) &= \partial_t^{j-1} F^{\bar{x}}(0) + \sum_{q=1}^{j-1} \binom{j-1}{q} \partial_t^q \left(A_0(Z^{l(k)}) - A_0(Z)\right)(0) z^k_{(j-q)}\\
            & + \sum_{q=0}^{j-1} \binom{j-1}{q} 
            \Bigg(
            \sum_{i=1,2}\partial_t^q \left(A_i(Z^{l(k)}) - A_i(Z)\right)(0) \partial_i z^k_{(j-1-q)}\\
            &
            + \partial_t^q \left(\mathbb{B}(Z^{l(k)}) - \mathbb{B}(Z)\right)(0) z^k_{(j-1-q)}
            \Bigg)
        \end{aligned}
    \end{equation}
    for $1\le j\le m-1$, where $l(k)$ is an increasing function.
    Since for any $q= 0,\ldots, m-1$, we have $z^k_{(q)} \in H^{m-q+\frac{1}{2}}(\Omega)$ for each $k$, and $\partial_t^q Z^l(0)\to \partial_t^q Z(0)$ in $H^{m-q}(\Omega)$ as $l\to +\infty$, one can take $l = l(k)$ sufficiently large for each $k$, so that by standard Sobolev embeddings, the right-hand side of \eqref{F-fix} belongs to $H^{m-j+\frac{1}{2}}(\Omega)$ and converges to $\partial_t^{j-1} F^{\bar{x}}(0)$ in $H^{m-j+\frac{1}{2}}(\Omega)$ as $k\to +\infty$.
    
    Moreover, we observe from \eqref{bcd-b-eq} that for $j=1,\ldots, m-1$,
    \begin{equation*}
        \begin{aligned}
        {\cal B}' (\text{RHS of \eqref{F-fix}}) &= 
        {\cal B}' \partial_t^{j-1} F^{\bar{x}}(0) + 
        {\cal B}'z^k_{(j)} \\
        &+ \sum_{q=0}^{j-1} \binom{j-1}{q} \partial_t^q \big(U^{l(k)}-U\big)(0) \cdot \nabla ({\cal B}'z^k_{(j-1-q)} ) = 0.
        \end{aligned}
    \end{equation*}
    
    Therefore, by applying a modified trace lifting theorem, one can construct $(F^{\bar{x}})^{k}\in H^{m}((0, T)\times \Omega)$, such that \eqref{F-fix} holds for all $j=1,\ldots, m-1$,
    $\partial_t^{m-1} (F^{\bar{x}})^k(0) = 0$,
    ${\cal B}'(F^{\bar{x}})^k = 0$ on $\partial\Omega \backslash\{q_1,\ldots, q_N\}$ and $(F^{\bar{x}})^k\to F^{\bar{x}}$ in $H^m((0,T)\times \Omega)$, as $k\to +\infty$.
\end{proof}

\section{Proof of Theorem \ref{main-thm}}

Based on the well-posedness result of the linearized problem given in Theorem \ref{thm-LIBVP'}, we shall use the standard Picard iteration scheme to prove the well-posedness of the nonlinear problem \eqref{MHD}-\eqref{bcd}.

Consider the initial boundary value problem \eqref{MHD}-\eqref{bcd}, for given initial data $z_0= (u_0,b_0,p_0,s_0)^t \in H^m(\Omega)$ satisfying the assumption given in Theorem \ref{main-thm}, one can inductively compute $z_{(j)}(x)=(\partial_t^ju,\partial_t^jb,\partial_t^jp,\partial_t^js)|_{t=0} $, $j=1,\ldots, m-1$, in terms of $z_{(0)} = z_0(x)$ and its derivatives in $x$. 
It is easy to have $z_{(j)} \in H^{m-j}(\Omega)$ for all  $0\le j\le m-1$. Since $z_0$ satisfies the compatibility conditions of \eqref{MHD} and \eqref{bcd} up to order $m-1$, it follows that $u_{(j)}\cdot \nu = b_{(j)} \cdot \nu = 0$ on $\partial\Omega\backslash\{q_1,\ldots,q_N\}$ for $0\le j\le m-1$.
Then, by Lemma~\ref{lemma-modified-lift}, for any $T>0$, one can construct $z^0\in CH^m([0, T]\times \Omega)\subset X^m_*([0, T]\times \Omega)$ such that $\partial_t^j z^0(0,x) = z_{(j)}(x)$ for $0\le j\le m-1$,  $\partial_t^m z^0(0) = 0$, and
${\cal B} z^0 = {\cal B}' z^0 = 0$ on $\partial\Omega\backslash\{q_1,\ldots, q_N\}$.

Now we introduce the following iteration scheme
\begin{equation}\label{Picard}
\begin{cases}
    (L+\mathbb{B})(z^k) y^{k+1} = 0, & \text{ in }    (0,T)\times \Omega,\\
    {\cal B}y^{k+1} = 0, & \text{ on } (0,T)\times (\partial\Omega \backslash\{q_1,\ldots,q_N\}),\\
    y^{k+1}(0,x) = \mathrm{p}(z_0), & \text{ in } \Omega,\\
    z^{k+1} = \mathrm{p}^{-1}(y^{k+1}),
\end{cases}
\end{equation}
where $\mathrm{p}$ is the mapping 
$$\mathrm{p}(u,b,p,s)^t = (u,b,p+\frac{1}{2}|b|^2,s)^t,\quad \mathrm{p}^{-1}(u,b,\p,s)^t = (u,b,\p-\frac{1}{2}|b|^2,s)^t.$$
Define
\begin{equation*}
\begin{aligned}
    Y(\lambda,T)= \{& z\in Y^m_*([0,T]\times \Omega): \partial_t^j z (0)= z_{(j)} \text{ for } j=0,\ldots, m-1 ;\, M^\flat_m(z,T)\le \lambda; \\
    &{\cal B}z = {\cal B}' z = 0 \text{ on }\partial\Omega\backslash\{q_1,\ldots, q_N\}; \, z([0,T]\times \Omega) \subset \mathcal{K}_\delta
    \},
\end{aligned}
\end{equation*}
where $\mathcal{K}_\delta = \{x\in \R^6: \dist (x,\mathcal{K}) \le \delta\} \subset\subset \R^4\times \mathcal{V}$ for some fixed small $\delta>0$.

The next goal is to prove that the approximate solution sequence $z^k(t,x)$ determined by \eqref{Picard} is bounded in 
$Y(\lambda,T)$ by choosing  $\lambda$ and $T$ properly. First, assume that $z^k \in Y(\lambda,T)$ by induction, since $\mathrm{p}(z_0)$ satisfies the compatibility conditions of \eqref{Picard} up to order $m-1$, by using Theorem~\ref{thm-LIBVP'}, there exists a unique solution $y^{k+1}\in \tilde{Y}^m_*([0,T_0]\times \Omega)$ to \eqref{Picard}, where $T_0$ depends only on $\Omega$ and $\mathcal{K}_\delta$. Thus, by taking $T$ properly small, we may assume $T_0 = T$, and $z^{k+1} = \mathrm{p}(y^{k+1})\in \tilde{Y}^m_*([0,T]\times\Omega)$. It is easy to check that $\partial_t^j z^{k+1}(0) = z_{(j)}$ for $j=0,\ldots, m-1$, and ${\cal B}z^{k+1} = {\cal B}' z^{k+1} = 0$ on $\partial\Omega\backslash\{q_1,\ldots,q_N\}$. 

Fix a constant $c_0 \ge \sum_{j=0}^{m-1} \normmm{z_{(j)}}_{H^{m-j}_*(\Omega)}$. We shall choose $\lambda$ sufficiently large and $T$ properly small so that $z^{k+1}\in Y(\lambda,T)$. By applying the estimate \eqref{apriori'-Linfty} in the problem \eqref{Picard}, one has
\begin{equation*}
\begin{aligned}
    M^\flat_m(y^{k+1},t) &\le \phi_1\left(\normmm{z^k(0)}_{m-1,*} + \int_0^T \normmm{z^k(\tau)}_{m,*} \d \tau\right)\cdot\\
    & \hspace{.2in} \left(
    \sum_{j=0}^{m-1} \|z_{(j)}\|_{H^{m-j}_*(\Omega)} + \phi_2 \left(M^\sharp_m(z^k, t)\right) \cdot t \cdot \normmm{z^{k+1}(0)}_{m-1,*}
    \right)\\
    & \le c_0 \phi_1(c_0 + \lambda T) \left(1+\phi_2(c_0+\lambda)\cdot T\right).
\end{aligned}
\end{equation*}
Therefore, we get
\begin{equation*}
\begin{aligned}
    M^\flat_m(z^{k+1},t) &
    \le c_1(1+M^\flat_m(y^{k+1},t)^2)\\
    &\le c_1\left(1 + c_0^2 \phi_1^2(c_0 + \lambda T) \left(1+\phi_2(c_0+\lambda)\cdot T\right)^2\right),
\end{aligned}
\end{equation*}
where $c_1$ is some fixed constant depending on the Moser-type estimate in $Y^m_*((0,T)\times \Omega)$,
and
\begin{equation*}
    \begin{aligned}
        \|z^{k+1} - z_0\|_{L^\infty((0,T)\times \Omega)} &\le {\operatorname{ess}\sup}_{t\in (0,T)} \int_0^t \|\partial_t z^{k+1}(\tau)\|_{L^\infty(\Omega)} \d \tau \\
        & \le c_2 T M^\flat_5(z^{k+1},T),
    \end{aligned}
\end{equation*}
where $c_2$ is some fixed constant depending on the Sobolev embedding theorem.
Then by choosing $\lambda, T$ properly such that
\begin{equation*}
    \lambda T\le 1, \quad \phi_2(c_0+ \lambda) T \le 1, \quad c_1(1+4c_0^2\phi_1^2(c_0 + 1)) \le \lambda,\quad c_2 T\lambda \le \delta,
\end{equation*}
one has $z^{k+1}\in Y(\lambda,T)$.

Thus, we obtain a sequence $\{z^k\}_{k\in \mathbb{N}}$ bounded in $Y(\lambda,T)$.
As usual,  by applying the estimate \eqref{apriori'-Linfty} again for the problem of $z^{k+1}-z^k$ derived from \eqref{Picard}, one can show that $z^k$ converges to a limit $z$ in $X^{m-2}_*([0,T]\times \Omega)$ as $k\to +\infty$, and $z\in Y_*^m((0, T)\times\Omega)$. Passing to the limit in \eqref{Picard}, we conclude that $z$ is a solution of the problem \eqref{MHD}-\eqref{bcd}. The uniqueness of this solution follows directly by using the estimate \eqref{apriori'-Linfty} again, and the proof of Theorem~\ref{main-thm} is complete.

\section{Appendix A: Proof of Proposition \ref{prop-lift-cpbt}}
\renewcommand{\thelemma}{A.\arabic{lemma}}
\setcounter{lemma}{0}
\renewcommand{\theprop}{A.\arabic{prop}}
\setcounter{prop}{0}
\renewcommand{\thecoro}{A.\arabic{coro}}
\setcounter{coro}{0}
\renewcommand{\theremark}{A.\arabic{remark}}
\setcounter{remark}{0}

The proof of Proposition~\ref{prop-lift-cpbt} follows in an argument similar to that given in \cite[Proposition 3.1]{godin20212d}. 
For completeness, we provide an outline here and highlight the differences that arise in our setting,
in particular, the additional complexity introduced by the magnetic field $b$ 
and the fact that we only require $\omega_n \in (0, \frac{\pi}{[\frac{m}{2}]})$ for each $n=1,\ldots, N$, instead of asking $\omega_n \in (0,\frac{\pi}{m})$.

We focus on the case where $\bar{x}$ in the statement of Proposition~\ref{prop-lift-cpbt} is a corner point, since the other cases -- when $\bar{x}$ is an interior point of $\Omega$ or lies on the smooth part of $\partial\Omega$ -- are much simpler. 
Let $\omega \in (0,\pi)$ be the original angle at a corner $\bar{x}$. Recall that in \eqref{angle-cdt}, we require
\begin{equation}\label{angle-cdt'}
    \omega \in \left(0, \frac{\pi}{[\frac{m}{2}]}\right) \backslash \left\{\frac{\pi}{[\frac{m}{2}]+1}, \frac{\pi}{[\frac{m}{2}]+2}, \ldots, \frac{\pi}{m}\right\}.
\end{equation}
By introducing a coordinate transformation as in the proof of Proposition~\ref{prop-weak=strong}, we may assume without loss of generality that $\bar{x} = 0$, the angle at $\bar{x}$ becomes $\frac{\pi}{2}$, and the IBVP \eqref{linear}-\eqref{icd-linear} in a neighborhood $\mathcal{U}^{\delta_0} = \{x\in \R^2, 0< x_1, x_2 < \delta_0\}$ of $\bar{x}$ is reduced to
\begin{equation}\label{qtr}
\begin{cases}
    \begin{aligned}
        &R(P,S) \big(\partial_t u + U \cdot \nabla u \big) + G^{-1}\nabla \p - B \cdot\nabla b \\
        &\qquad\qquad = F_1 + E(B)b - R(P,S) E(U) u,
    \end{aligned}\\
    \begin{aligned}
        \partial_t b + U \cdot \nabla b - B \cdot \nabla u + B (\nabla \cdot u) = F_2 - B(e \cdot u),
    \end{aligned}\\
    \begin{aligned}
        &\partial_t \p + U\cdot \nabla \p - (G B) \cdot \big(
        B\cdot \nabla u
        - B(\nabla\cdot u)\big) + Q(P,S) \nabla\cdot u\\
        &\qquad\qquad= F_3 + (GB)\cdot \big(E(U) b - B(e\cdot u)\big)- Q(P,S) e\cdot u,
    \end{aligned}\\
    \begin{aligned}
        \partial_t s + U \cdot \nabla s = F_4.
    \end{aligned}
\end{cases}
\end{equation}
in $(0,T)\times \mathcal{U}^{\delta_0} $, supplemented with
\begin{equation}\label{qtr-bcd}
    u_i|_{x_i=0} = 0, \quad\text{ for } i=1,2.
\end{equation}
and
\begin{equation}\label{qtr-icd}
\begin{aligned}
    & (u,b,\p, s)^t|_{t=0} = (u_0, b_0, \p_0, s_0)^t, \quad x\in \mathcal{U}^{\delta_0},\\ 
    &\text{where }(b_0)_i|_{x_i=0} = (F_2)_i|_{x_i=0} = 0, \quad i=1,2,
\end{aligned}
\end{equation}
where $G$, $E(\cdot)$, $e$ are a $2\times 2$ invertible matrix, a $2\times 2\times 2$ tensor and a $2$-dimensional vector, respectively, as introduced before or after \eqref{qtr-eq}. They are related to the Jacobian matrix of the coordinate transformation, and hence are essentially influenced by $\omega$.

Let $\mathcal{K}_1\subset \R^6$ be a compact set of $\R^4\times \mathcal{V}$ and such that $Z([0,T]\times \overline{\mathcal{U}^{\delta_0}})\subset \mathcal{K}_1$.
Set $\mathcal{U}_1 = \{0\}\times (0,\delta_0)$, $\mathcal{U}_2 = (0,\delta_0)\times \{0\}$.

Denote by 
$$z^{(j)}(x_1,x_2)=(\partial_t^jz)(0,x_1,x_2)$$
the value of the $j-th$ derivative in time of the solution to the problem \eqref{qtr}-\eqref{qtr-icd} for any fixed integer $j\ge 0$. The compatibility condition of order $j$ for the problem \eqref{qtr}-\eqref{qtr-icd} is that
$$z_1^{(j)}(0,x_2)=0,\quad z_2^{(j)}(x_1,0)=0.
$$ 
Obviously, it can be represented by the initial data $z|_{t=0}=z_0(x_1,x_2):=(u_0, b_0, \p_0, s_0)^t$ and its derivatives up to order $j$ in space variables from the equations \eqref{qtr}.
More precisely, at $x_i = 0$ ($i=1,2$), one has
\begin{enumerate}
    \item[(1)] $(u_i)^{(1)} = - \frac{1}{R(P,S)}J_i \p_0 + \mathcal{P}^i_0 z_0 + \tilde{\mathcal{P}}^i_0 F(0)$;
    \item[(2)] $(u_i)^{(2l)} = \frac{1}{(\partial_p R)^l (P,S)} J_i K(Z) H^{l-1}(Z) u_0 + \mathcal{P}^i_{2l-1} z_0 + \tilde{\mathcal{P}}^i_{2l-1} F(0)$, as $2l\le m-1$;
    \item[(3)] $(u_i)^{(2l+1)} = \frac{1}{((\partial_p R)^l R)(P,S)} J_i K(Z) H^{l-1}(Z) (B\cdot \nabla b_0 - G^{-1} \nabla \p_0) + \mathcal{P}^i_{2l} z_0 + \tilde{\mathcal{P}}^i_{2l} F(0)$, as $2l+1\le m-1$,
\end{enumerate}
where $f(0)$ denotes the value at $t=0$ of a related function, $J_1$ and $J_2$ are two first-order differential operators such that $(J_1, J_2)^t = G^{-1} \nabla$, $K(Z)$ and $H(Z)$ are differential operators defined by
    $$
    \begin{array}{l}
        K(Z)v = \nabla\cdot v + {Q(P,S)}^{-1} (GB)\cdot \big(B(\nabla \cdot v) - B\cdot \nabla v\big),
        \\
        H(Z)v = {Q(P,S)}^{-1} (B\cdot\nabla)\big(B(\nabla \cdot v) - B\cdot \nabla v\big) + G^{-1}\nabla \big(K(Z)v\big),
    \end{array}
    $$
 for any $C^1$ vector field $v(x)=(v_1(x),v_2(x))^t$, and $\mathcal{P}^i_q$, $\tilde{\mathcal{P}}^i_q$ represent classes of differential operators of order $q$, which may differ from line to line. In each instance, $\mathcal{P}^i_q z$ refers to a sum of terms either being $\Theta(x_{i'}, Z)\partial_x^{\beta_n}{z}_n$ with $|\beta_n|\le q$ or $\Theta(x_{i'}, Z)\big(\Pi_{\gamma,\sigma}\partial^{\alpha_\sigma^\gamma} Z_\sigma\big)\partial_x^{\beta_n}{z}_n$ with $\alpha_\sigma^\gamma\ne 0$, $\sum_{\gamma,\sigma} |\alpha_\sigma^\gamma|+|\beta_n|\le q+1$, $\sum_{\gamma,\sigma} |\alpha_\sigma^\gamma|,|\beta_n|\le q$, $\Theta \in C^\infty([0,\delta_0]\times \mathcal{K}_1)$, while
$\tilde{\mathcal{P}}^i_q F$ refers to a sum of terms either being $\Theta(x_{i'}, Z)\partial^{\beta_n} F_n$ with $|\beta_n|\le q$, or $\Theta(x_{i'}, Z)(\Pi_{\gamma,\sigma} \partial^{\alpha_\sigma^\gamma} Z_\sigma)\partial^{\beta_n}F_n$ with $\alpha_\sigma^\gamma \ne 0, \sum_{\gamma,\sigma} |\alpha_\sigma^\gamma|+|\beta_n|\le q$, $\Theta \in C^\infty([0,\delta_0]\times \mathcal{K}_1)$.

Therefore, the zeroth-order compatibility condition reads
        \begin{equation}\label{cptb-low-0}
                (u_0)_i|_{x_i=0} = 0, \quad (i=1,2),
        \end{equation}
the first order compatibility condition is equivalent to
        \begin{equation}\label{cptb-low-1}
                J_i \p_0 = \mathcal{P}^i_{0}{z}_0 + \tilde{\mathcal{P}}^i_0{F}(0), \quad {\rm on}~x_i=0~(i=1,2), 
        \end{equation}
and for $l\in \mathbb{N}$ and $2l\le m-1$, the $(2l)$-th compatibility conditions are equivalent to
        \begin{equation}\label{cptb-even}
                J_i K(Z) H^{l-1}(Z) u_0 = \mathcal{P}^i_{2l-1}{z}_0 + \tilde{\mathcal{P}}^i_{2l-1}{F}(0), \quad {\rm on}~x_i=0~(i=1,2), 
        \end{equation}
while for $l\in \mathbb{N}$ and $2l + 1\le m-1$, the $(2l+1)$-th compatibility conditions are equivalent to
        \begin{equation}\label{cptb-odd}
            J_i K(Z) H^{l-1}(Z) (G^{-1}\nabla \p_0) = J_i K(Z) H^{l-1}(Z) (B\cdot \nabla b_0)+\mathcal{P}^i_{2l}{z}_0 + \tilde{\mathcal{P}}^i_{2l}{F}(0),  ~{\rm on}~x_i=0~(i=1,2).
        \end{equation}
\begin{remark}\label{rmk-PF-struc}
    For later use, we point out that for fixed $Z$, $\tilde{\mathcal{P}}^i_0 F = \frac{1}{R} (F_1)_i$;
        $\tilde{\mathcal{P}}^i_{2l}{F}$ ($l\ge 1$) is a sum of terms of the form:
        \begin{equation*}
            \tilde{P}_{2l-1} F, \quad \tilde{P}_{2l-1}\partial_t F, \quad \tilde{P}_{2l-1}(\partial_1 F_1 + \partial_2 F_2), \quad a(x_{i'})\tilde{P}_{2l} F,
        \end{equation*}
        where $\tilde{P}_\lambda$ is of the form $\sum_{|\alpha|\le \lambda} c_{\alpha}(x_{i'}) \partial^\alpha$ with $c_\alpha\in C^\infty([0,\delta_0])$, whereas $a\in C^\infty([0,\delta])$ and $a(0) = 0$.
\end{remark}

The localized version in the neighborhood $\mathcal{U}^\delta$ of Proposition~\ref{prop-lift-cpbt} can be stated as the following one.

\begin{prop}\label{prop-lift-local}
    Assume that $m\ge 3$ is an integer, the angle at the origin of $\partial\Omega$ satisfies $\omega \in (0,\frac{\pi}{[\frac{m}{2}]})$, and assume also that $z_0\in H^{m}({\mathcal U}^{\delta_0}), F\in H^{m}((0,T)\times {\mathcal U}^{\delta_0})$ satisfy the compatibility conditions up to order $m-1$ of the problem \eqref{qtr}-\eqref{qtr-bcd}.
    Then there exists $0<\delta_1\le \delta_0$, depending on $\mathcal{K}$ and the geometry of $\Omega$, such that for any $0<\delta\le \delta_1$, one can find sequences $z_0^k \in H^{m+\frac{5}{2}}(\mathcal{U}^\delta)$, $F^k \in C^\infty([0,T]\times \overline{\mathcal{U}^\delta})$ with ${\mathcal U}^\delta=\{x\in \R^2: 0<x_1, x_2<\delta\}$, such that the following holds:
    \begin{enumerate}
        \item[(1)] $z_0^k, F^k$ satisfies the compatibility conditions up to order $m+1$ for the problem \eqref{qtr}-\eqref{qtr-bcd} with $(b_0^k)_i = (F^k_2)_i = 0$ on $x_i = 0$ for $i=1,2$;
        \item[(2)] $z_0^k \to z_0$ in $H^{m}(\mathcal{U}^\delta)$ and $F^k \to F$ in $H^{m}((0,T)\times \mathcal{U}^\delta)$ as $k\to +\infty$.
    \end{enumerate}
\end{prop}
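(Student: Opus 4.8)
The plan is to construct $z_0^k$ and $F^k$ by correcting the given data $(z_0,F)$ with finitely many terms that enforce the higher-order compatibility conditions \eqref{cptb-even}-\eqref{cptb-odd} at orders $m$ and $m+1$, while preserving the conditions already satisfied at orders $\le m-1$ and keeping the perturbations small in $H^m$. The key observation, as in \cite{godin20212d}, is that the conditions \eqref{cptb-low-1}-\eqref{cptb-odd} are of Cauchy--Kovalevskaya type: the $(2l)$-th condition prescribes a trace of an $l$-th order elliptic-type operator applied to $u_0$ (through $J_iK(Z)H^{l-1}(Z)$, whose leading part is $\partial_i(-\Delta)^{l}$-like up to the geometric factors $G,e$), and the $(2l+1)$-th one prescribes a trace involving $\p_0$ and $b_0$. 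Since the angle condition $\omega\in(0,\tfrac{\pi}{[m/2]})$ together with the excluded values $\tfrac{\pi}{[m/2]+1},\dots,\tfrac{\pi}{m}$ in \eqref{angle-cdt'} is precisely what guarantees (via Lemma~\ref{lemma-Hodge} and the Grisvard-type elliptic regularity near a corner) that these trace problems are solvable in the right Sobolev classes without loss of regularity, one can find correction functions realizing the prescribed traces.

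First I would mollify: replace $(z_0,F)$ by smooth approximants $(z_0^{(\varepsilon)},F^{(\varepsilon)})$, using a Stein-type extension across the two legs $\mathcal{U}_1,\mathcal{U}_2$ that respects the odd/even parities making $(u_i)_i$ and $(b_i)_i$ vanish on $x_i=0$, so that after mollification the boundary conditions \eqref{qtr-bcd}, \eqref{qtr-icd} still hold exactly but the compatibility conditions \eqref{cptb-even}-\eqref{cptb-odd} are only \emph{approximately} satisfied, with defects $d^{i,(\varepsilon)}_j$ (for $j\le m-1$) that are smooth on the legs and tend to $0$. Next, for each order $j$ from $0$ up to $m+1$, I would solve for a correction $w_j$ supported near the corner such that $J_iK(Z)H^{\lfloor\cdot\rfloor}(Z)$ (or the analogous operator) applied to $w_j$ produces exactly the required trace on $x_i=0$; concretely one reduces to a sequence of Dirichlet/Neumann problems for $-\Delta$ on the wedge $\mathcal{U}^\delta$ with prescribed Cauchy data on the boundary pieces, solvable because $\omega<\pi/[m/2]$ avoids all the singular exponents $\pi/\omega\in\{[m/2]+1,\dots,m\}$; the parity structure of $u_0$ versus $\p_0,b_0$ dictates whether a Dirichlet- or Neumann-type lift is used. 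The corrections at orders $j\le m-1$ are taken to cancel the defects $d^{i,(\varepsilon)}_j$ (so they vanish as $\varepsilon\to0$), and at orders $j=m,m+1$ they cancel the genuinely new defects. One multiplies each $w_j$ by an appropriate power of the distance-type function and a cutoff so that (i) it does not disturb lower-order conditions, (ii) it is $O(\varepsilon)$ small in $H^m$, and (iii) the total $z_0^k$ has the extra $\tfrac52$-order of regularity claimed (which is available because the trace data, being built from smoothed quantities, are as smooth as needed). Then set $z_0^k := z_0^{(\varepsilon_k)} + \sum_j w_j^{(\varepsilon_k)}$, and similarly build $F^k\in C^\infty$ by smoothing $F$ and adding source corrections: here Remark~\ref{rmk-PF-struc} is used to see that the $F$-dependence in the compatibility conditions enters only through the operators $\tilde{\mathcal{P}}^i_q$ of the stated structure, so one can absorb the needed adjustments of $\partial_t^{j-1}F^k(0)$ into a trace-lifting construction (as in Lemma~\ref{lemma-modified-lift}) that also preserves $\mathcal{B}'$-type conditions $(F_2)_i|_{x_i=0}=0$.

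The main obstacle I anticipate is the bookkeeping of the corner geometry: the operators $J_i, K(Z), H(Z)$ carry the transformed metric $G$, the tensor $E(\cdot)$ and the vector $e$, whose coefficients depend on $\omega$ and on $Z$, so the ``elliptic'' operators $J_iK(Z)H^{l-1}(Z)$ are only principally $\partial_i\Delta^{l}$ and their lower-order terms couple $u_0$, $b_0$, $\p_0$ and derivatives of $Z$. One must check that the solvability of the iterated wedge problems is governed purely by the principal symbol, so that the angle condition \eqref{angle-cdt'} --- which controls $\Delta$ on the wedge via Lemma~\ref{lemma-Hodge} and the Grisvard edge-asymptotics --- suffices, with the lower-order and $Z$-dependent terms being treated perturbatively (moved to the right-hand side and handled by induction on the order $j$). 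A secondary difficulty is ensuring that the corrections added at order $j$ are harmless for all orders $<j$ and $>j$ up to $m+1$; this is arranged by giving $w_j$ a vanishing of order $j$ (resp.\ $j-1$) in the distance functions $\Phi_1,\Phi_2$, exactly as in \cite{godin20212d}, the only new point being that the presence of $b$ doubles the number of fields to be lifted but does not change the structure because the $b$-equation is an ODE along the flow and the constraint $(b_0)_i|_{x_i=0}=0$ is linear. The estimate \eqref{t=0-est} then follows by tracking the operator norms of the elliptic solution operators, which depend only on $\Omega$ and $\mathcal{K}$ through $\|Z(0)\|$.
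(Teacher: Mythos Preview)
Your proposal misidentifies where the angle condition enters and, correspondingly, the mechanism of the construction. You frame the problem as solving a sequence of elliptic wedge problems (Dirichlet/Neumann for $-\Delta$) whose solvability is governed by Lemma~\ref{lemma-Hodge} and Grisvard asymptotics. But the paper's proof does not solve any PDE on $\mathcal{U}^\delta$; it constructs the correctors $w^k,r^k$ purely through their traces $\mathrm{z}^k_{1j}=\partial_1^j(r^k,w^k)|_{x_1=0}$, $\mathrm{z}^k_{2j}=\partial_2^j(r^k,w^k)|_{x_2=0}$ on the two legs, and then lifts those traces. The angle condition \eqref{angle-cdt'} enters through the \emph{algebraic} Lemma~\ref{lemma-B-surj}: restricting the compatibility relations \eqref{csys-odd}--\eqref{csys-even} to the corner $x=0$ yields a linear system $B_n\bar\alpha^k_n=\cdots$ for the mixed partials of $r^k$ (and similarly for $\nabla\cdot w^k$), and it is the surjectivity of $B_n$ for $n\le m+1$ --- a combinatorial fact about the symbols of $J_1,J_2$ --- that requires $\omega<\pi/[\frac{m}{2}]$ and fails exactly at $\omega=\pi/j$. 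Your ``Cauchy data on the wedge'' picture misses this: the obstruction is not elliptic regularity but whether the over-determined system of trace conditions on \emph{both} legs can be made consistent at the vertex.

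Consequently your scheme also misses the two technical ingredients that make the trace construction work. First, the corner values must be compatible in the Grisvard trace sense, i.e.\ one needs the weighted condition in \eqref{zkj-2}, $\int_0^\delta|\partial_2^i\mathrm{z}^k_{1j}(0,y)-\partial_1^j\mathrm{z}^k_{2i}(y,0)|^2y^{-1}\,\d y\to0$ for $i+j=m-1$, to lift the traces back to an $H^m$ function on the square; this is why the auxiliary functions $D^{I,k}_j,D^{II,k}_j,D^{III,k}_j$ of Lemma~\ref{lemma-D-exst} appear. Second, after fixing corner values and the free traces, the remaining traces on $\mathcal{U}_1$ are determined not by an elliptic problem but by a Volterra-type integral equation \eqref{Bsys} along the edge, whose solvability for small $\delta$ (Lemma~\ref{solution}) is what produces the $\delta_0$ in the statement. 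Your perturbative handling of ``lower-order and $Z$-dependent terms'' by moving them to the right-hand side of a wedge problem does not capture this structure; in particular the smallness parameter is the length of the edge, not an operator norm in an elliptic estimate.
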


Once we have the above proposition, to conclude the results given in Proposition \ref{prop-lift-cpbt}, it suffices to localize the data as stated in the next lemma.
\begin{lemma}\label{lemma-local-reduce}
    For any given $z_0 \in H^m(\mathcal{U}^\delta)$, $F\in H^m((0,T)\times \mathcal{U}^\delta)$ with $m\ge 1$, satisfying the compatibility conditions  of \eqref{qtr}, \eqref{qtr-bcd} up to order $m-1$  and $((b_0)_i, (F_2)_i)|_{t=x_i=0} = 0$ ($i=1,2$), then for any fixed $\epsilon\in (0, \frac{\delta}{4})$, there exists $\hat{z}_0 \in H^m(\mathcal{U}^\delta)$, $\hat{F}\in H^m((0,T)\times \mathcal{U}^\delta)$, satisfying the compatibility conditions of \eqref{qtr}, \eqref{qtr-bcd} up to order $m-1$, and
    \begin{enumerate}
        \item[(1)] $\hat{z}_0(x) = z_0(x)$ when $|x|\le \varepsilon$, and vanishes when $\max\{x_1,x_2\} \ge \delta - \varepsilon$,
        \item[(2)] $\hat{F}(t,x) = \varphi(x) F(t,x)$ with a smooth function $\varphi$ satisfying that $\varphi(x) = 1$ when $|x|\le \varepsilon$, and vanishes when $\max\{x_1,x_2\} \ge \delta - \varepsilon$.
       
    \end{enumerate}
\end{lemma}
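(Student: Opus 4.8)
The plan is to leave the forcing term as a plain localization, $\hat F=\varphi F$, and to obtain $\hat z_0$ by correcting $\varphi z_0$ with a term supported near the two legs of the corner, away from both the vertex and the other leg, chosen so as to restore the compatibility relations. First I would fix $\varphi\in C^\infty(\overline{\mathcal{U}^\delta})$ with $\varphi\equiv 1$ on $\{|x|\le\varepsilon\}$ and $\varphi\equiv 0$ on $\{\max\{x_1,x_2\}\ge\delta-\varepsilon\}$; since $(F_2)_i=0$ on $\{x_i=0\}$ and $\varphi$ is smooth, $\hat F=\varphi F$ inherits $(\hat F_2)_i|_{x_i=0}=0$, and likewise $(\varphi b_0)_i|_{x_i=0}=0$. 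The compatibility conditions \eqref{cptb-low-0}--\eqref{cptb-odd} are, for a fixed background state, linear and homogeneous in $(z_0,F)$; hence where $\varphi$ is constant the pair $(\varphi z_0,\hat F)$ already satisfies all of them of order $j\le m-1$ — near the vertex because $(\varphi z_0,\hat F)=(z_0,F)$, near the outer boundary because $(\varphi z_0,\hat F)=(0,0)$. Consequently the defect $D^i_j:=\bigl(\text{order-}j\text{ compatibility expression for }(\varphi z_0,\hat F)\bigr)\big|_{x_i=0}$, $i=1,2$, $0\le j\le m-1$, is a function supported in the transition zone of the leg $\{x_i=0\}$, in particular away from the vertex and away from the other leg $\{x_{i'}=0\}$.

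Next I would construct corrections $w_1,w_2$, with $w_i$ supported in a thin neighbourhood of the transition zone of $\{x_i=0\}$ — so that $w_i$, with all its derivatives, vanishes on $\{|x|\le\varepsilon'\}$ for some $\varepsilon<\varepsilon'<\delta-\varepsilon$ and on a neighbourhood of $\{x_{i'}=0\}$ — and set $\hat z_0:=\varphi z_0+w_1+w_2$. Near $\{x_i=0\}$ the other correction $w_{i'}$ vanishes identically, so the order-$j$ compatibility expression of $\hat z_0$ splits as the expression for $\varphi z_0$ plus the expression assembled from the normal-derivative traces $\gamma^i_k:=\partial_i^k w_i|_{x_i=0}$, $k=0,\dots,j$, and requiring it to vanish means $\gamma^i_0,\dots,\gamma^i_j$ must cancel $D^i_j$. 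Here I would use the explicit triangular structure of \eqref{cptb-low-0}--\eqref{cptb-odd}: the order-$0$ condition constrains the trace of $(u_0)_i$, the order-$1$ condition determines $\partial_i\p_0|_{x_i=0}$ through the principal part of $J_i$, and for $l\ge1$ the order-$2l$ (resp. order-$(2l+1)$) condition determines, modulo lower order, a fixed nonzero combination of $\partial_i^{2l}u_0|_{x_i=0}$ (resp. of $\partial_i^{2l+1}\p_0|_{x_i=0}$) through the principal part of $J_iK(Z)H^{l-1}(Z)$, the terms carrying extra normal derivatives through $B\cdot\nabla$ being harmless on the legs since $B\cdot\nu=0$ there. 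Solving this recursion in $j$ yields the constrained components of the traces $\gamma^i_k$, $k=0,\dots,m-1$ (the remaining, unconstrained components — in particular the $b_i$-trace of $w_i$ — being set to zero); because $z_0\in H^m(\mathcal{U}^\delta)$, $F(0)\in H^{m-1/2}(\mathcal{U}^\delta)$, and, by Remark~\ref{rmk-PF-struc}, the operators $\tilde{\mathcal{P}}^i_q$ involve at most one time derivative of $F$ and $F$ only through the combination $\partial_1F_1+\partial_2F_2$, each $D^i_j$ with $j\le m-1$ lies in $H^{m-j-1/2}$ of the leg and $\gamma^i_k\in H^{m-k-1/2}(\{x_i=0\})$; a trace-lifting theorem in the normal variable, analogous to Lemma~\ref{lemma-modified-lift}, then produces $w_i\in H^m(\mathcal{U}^\delta)$ with exactly these traces and the prescribed supports.

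Putting the pieces together, $\hat z_0=\varphi z_0+w_1+w_2\in H^m(\mathcal{U}^\delta)$ and $\hat F=\varphi F\in H^m((0,T)\times\mathcal{U}^\delta)$ satisfy items (1) and (2), the compatibility conditions of \eqref{qtr}--\eqref{qtr-bcd} up to order $m-1$, and $\bigl((\hat b_0)_i,(\hat F_2)_i\bigr)|_{x_i=0}=0$; glueing this localized statement over the finitely many corners and boundary pieces by a partition of unity reduces Proposition~\ref{prop-lift-cpbt} to Proposition~\ref{prop-lift-local}, as in \cite{godin20212d}. The step I expect to be the main obstacle is making the recursion well posed: one has to verify that, after subtracting the contributions of the already-chosen lower-order traces $\gamma^i_0,\dots,\gamma^i_{j-1}$, the order-$j$ condition reduces to a relation in which the relevant component of the new trace $\gamma^i_j$ appears with an explicit invertible coefficient on $\{x_i=0\}$ — this is precisely where the structure of $J_i$, $K(Z)$, $H(Z)$ and the vanishing of $B\cdot\nu$ on the legs enter — while simultaneously checking that the construction does not lose regularity at the borderline order $j=m-1$, where $z_0\in H^m$ and $F(0)\in H^{m-1/2}$ are exactly sufficient.
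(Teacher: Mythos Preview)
Your proposal is correct and follows essentially the same strategy as the paper: cut off by a smooth $\varphi$, then repair the compatibility conditions on each leg by adding a correction whose normal jet is determined recursively from the triangular structure of \eqref{cptb-low-0}--\eqref{cptb-odd}, and lift via the trace theorem. The paper packages this slightly differently --- it uses one-dimensional cutoffs $\varphi(x_2)$, $\varphi(x_1)$ in sequence, constructs the corrected traces $\bar z^j$ explicitly rather than via defects, and glues the two resulting localized data with the product cutoff $\chi(x_1x_2/\varepsilon)$ --- but the content is the same; your simultaneous two-sided construction with a single $\varphi(x)$ and corrections $w_1,w_2$ supported away from each other is an equivalent way to organize it (provided you pick $\varphi$ so that it is identically $1$, respectively $0$, on open neighbourhoods of $\{|x|\le\varepsilon\}$ and $\{\max\{x_1,x_2\}\ge\delta-\varepsilon\}$, so that all normal derivatives of $\varphi$ vanish there and the defects are genuinely compactly supported in the open legs).

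Two small remarks. First, your appeal to Remark~\ref{rmk-PF-struc} is not quite what that remark says (it does allow a full-order term $a(x_{i'})\tilde P_{2l}F$), and you do not actually need it: the bound $D^i_j\in H^{m-j-1/2}$ follows directly from $z_0\in H^m$, $F\in H^m$, and two successive trace theorems. Second, the paper in fact obtains one half-derivative more, $w_j\in H^{m-j+1/2}$ and hence $\tilde w\in H^{m+1}$, by observing that $D^i_j=[\mathrm{comp}_j,\varphi](z_0,F)$ is a commutator and therefore of order $j-1$; you do not exploit this, but for the present lemma $H^m$ is all that is required.
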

\begin{proof}
    First construct $z_0^I\in H^m(\mathcal{U}^\delta), F^I \in H^m((0,T)\times \mathcal{U}^\delta)$, such that $(z_0^I,F^I)$ equals to $(z_0, F)$ when $x_2 \le 3\varepsilon$, vanishes when $x_2 \ge \delta - \varepsilon$, and they satisfy the compatibility conditions of \eqref{qtr}, \eqref{qtr-bcd} on $\mathcal{U}_1$ up to order $m-1$. 

    Note that the compatibility conditions on $\mathcal{U}_1$ given in the formula \eqref{cptb-low-0}-\eqref{cptb-odd} can be rewritten as
    \begin{equation*}
        (u_0)_1 = 0, \quad \partial_1 \p_0 = \mathcal{E}_1\Big((\partial_2^{\alpha} \p_0)_{|\alpha|\le 1}, u_0, b_0, F(0)\Big);
    \end{equation*}
    for $l\in \mathbb{N}$ and $2l\le m-1$,
    \begin{equation*}
        \partial_1^{2l} (u_0)_1 = \mathcal{E}_{2l}\Big((\partial_x^{\alpha} \p_0)_{|\alpha|\le 2l-1}, (\partial_x^\beta u_0)_{|\beta|\le 2l, \beta_1 \le 2l-1}, (\partial_x^\gamma b_0)_{|\gamma|\le 2l-1}, (\partial^\sigma F(0))_{|\sigma|\le 2l-1}\Big);
    \end{equation*}
    and for $l\in \mathbb{N}$ and $2l+1\le m-1$,
    \begin{equation*}
        \partial_1^{2l+1} \p_0 = \mathcal{E}_{2l+1}\Big((\partial_x^{\alpha} \p_0)_{|\alpha|\le 2l+1, \alpha_1 \le 2l}, (\partial_x^\beta u_0)_{|\beta|\le 2l}, (\partial_x^\gamma b_0)_{|\gamma|\le 2l+1}, (\partial^\sigma F(0))_{|\sigma|\le 2l}\Big),
    \end{equation*}
    where $\mathcal{E}_j$ are linear in their arguments with $C^\infty(\overline{\mathcal {U}^\delta})$ coefficients.
    
    Choose $\varphi\in C^\infty_0(\overline{\R^+})$ such that $\varphi(x_2)= 1$ as $x_2 \le \delta_1$, $\varphi(x_2) = 0$ as $x_2 \ge \delta_2$, for certain $\delta_1, \delta_2$ satisfying $3\varepsilon< \delta_1< \delta_2 < \delta - \varepsilon$.
    Inductively define $\bar{z}^j(x_2) = (\bar{u}^j, \bar{b}^j, \bar{\p}^j, \bar{s}^j)^t(x_2)$ with $j=0,\ldots, m-1$ by:
    \begin{equation*}
        \bar{z}^0(x_2) = \varphi(x_2) z_0(0,x_2);
    \end{equation*}
    \begin{equation*}
        \begin{cases}
            (\bar{u}^1, \bar{b}^1, \bar{s}^1)(x_2) = \varphi(x_2) \partial_1 (u_0, b_0, s_0) (0,x_2),\\
            \bar{\p}^1(x_2) = \mathcal{E}_1\Big((\partial_2^{\alpha} 
            \bar{\p}^0)_{|\alpha|\le 1}, \bar{u}^0, \bar{b}^0, \varphi(x_2)F(0,0,x_2)\Big);
        \end{cases}
    \end{equation*}
    for $l\in \mathbb{N}$ and $2l\le m-1$,
    \begin{equation*}
        \begin{cases}
            (\bar{u}_2^{2l}, \bar{b}^{2l}, \bar{\p}^{2l}, \bar{s}^{2l})(x_2) = \varphi(x_2) \partial_1^{2l} ((u_0)_2, b_0, \p_0, s_0) (0,x_2),\\
            \begin{aligned}
                \bar{u}^{2l}_1(x_2) = \mathcal{E}_{2l}\Big(
                &(\partial_{x_2}^{\alpha_2} \bar{\p}^{\alpha_1})_{|\alpha|\le 2l-1}, (\partial_{x_2}^{\beta_2} \bar{u}^{\beta_1})_{|\beta|\le 2l, \beta_1 \le 2l-1},\\
                &(\partial_{x_2}^{\gamma_2} \bar{b}^{\gamma_1})_{|\gamma|\le 2l-1}, (\partial^\sigma (\varphi F(0,0,x_2)))_{|\sigma|\le 2l-1}\Big);
            \end{aligned}
        \end{cases}
    \end{equation*}
    for $l\in \mathbb{N}$ and $2l+1\le m-1$,
    \begin{equation*}
        \begin{cases}
            (\bar{u}^{2l+1}, \bar{b}^{2l+1}, \bar{s}^{2l+1})(x_2) = \varphi(x_2) \partial^{2l+1}_1 (u_0, b_0, s_0) (0,x_2),\\
            \begin{aligned}
                \partial_1^{2l+1} \p_0(x_2) = \mathcal{E}_{2l+1}\Big(&(\partial_{x_2}^{\alpha_2} \bar{\p}^{\alpha_1})_{|\alpha|\le 2l+1, \alpha_1 \le 2l}, (\partial_{x_2}^{\beta_2} \bar{u}^{\beta_1})_{|\beta|\le 2l}, \\&(\partial_{x_2}^{\gamma_2} \bar{b}^{\gamma_1})_{|\gamma|\le 2l+1}, (\partial^\sigma (\varphi F(0,0,x_2)))_{|\sigma|\le 2l}\Big).
            \end{aligned}
        \end{cases}
    \end{equation*}
    Let $w_j(x_2) = \bar{z}^j(x_2) - 
    \partial_1^j
    (\varphi z_0)(0,x_2)$ for $j=0,\ldots, m-1$, then 
    $w_0\equiv 0$, and it can be shown by induction that for all $1\le j \le m-1$, there are at most $(j-1)$-th order derivatives falling on $z_0$ in the expression of $w_j$, thus
    $w_j \in H^{m-j+\frac{1}{2}}
    (0,\delta)$,
    also $w_j(x_2) = 0$ if $x_2 \not\in (\delta_1, \delta_2)$. Now, by using the classical trace lifting in Sobolev spaces, one can find 
    $\tilde{w} \in H^{m+1}(\mathcal{U}^\delta)$
    such that $\partial_1^j\tilde{w} = w_j$ for $j=0,\ldots, m-1$. Let $\psi \in C^\infty_0(\overline{\R^+})$ such that $\psi (x_2) = 1$ when $x_2 \in (\delta_1, \delta_2)$, $\psi (x_2) = 0$ when $x_2\in (0, 3\varepsilon)\cup (\delta- \varepsilon,\delta)$. Set $w(x) = \psi(x_2) \tilde{w}(x)$. Then $\supp_{x_2}w(x_1, x_2)
    \subset (3\varepsilon,\delta - \varepsilon)$
    and $\partial_1^j w(0,x_2) = w_j(x_2)$ for $j=0,\ldots, m-1$. Define
    $$z_0^I(x) = \varphi(x_2) z_0(x) + w(x), \quad
    F^I(t,x) = \varphi(x_2) F(t,x).$$ 
    Then, it is easy to see that $z_0^I, F^I$ satisfy the compatibility conditions of \eqref{qtr}, \eqref{qtr-bcd}  up to order $m-1$ on $\mathcal{U}_1$.

    Next, one can  construct $z_0^{I\!\!I}\in H^m(\mathcal{U}^\delta), F^{I\!\!I} \in H^m((0,T)\times \mathcal{U}^\delta)$ in the same way as above, such that $(z_0^{I\!\!I},F^{I\!\!I})$ equals to $(z_0, F)$ when $x_1 \le 3\varepsilon$, vanishes when $x_1 \ge \delta - \varepsilon$, and satisfies the compatibility conditions of \eqref{qtr}, \eqref{qtr-bcd}  up to order $m-1$ on $\mathcal{U}_2$. Note that $z_0^I = z_0^{I\!\!I} = z_0$ when $x\in (0,3\varepsilon)^2$.

    Define $\tilde{z}_0$ on $\mathcal{U}^\delta$ by: 
    \begin{equation*}
        \tilde{z}_0(x) = \begin{cases}
            z_0^I(x) & \text{if } x_1\in (0,3\varepsilon), x_2 \in (0,\delta),\\
            z_0^{I\!\!I}(x) & \text{if } x_2\in (0,3\varepsilon), x_1 \in (0,\delta),\\
            0 & \text{if } x\in (3\varepsilon,\delta)^2.
        \end{cases}
    \end{equation*}
    Choose $\chi\in C^\infty_0(\overline{\R^+})$ such that $\chi(\xi) = 1$ when $\xi\le 1$, and $\chi(\xi) = 0$ when $\xi \ge 2$. Finally, by setting $\hat{z}_0(x) = \chi(\frac{x_1 x_2}{\varepsilon}) \tilde{z}_0(x)$ and $\hat{F}(t,x) = \varphi(x_1)\varphi(x_2)F(t,x)$, one concludes the results given in Lemma \ref{lemma-local-reduce}.
\end{proof}

Now we proceed to prove Proposition \ref{prop-lift-local}.
To shorten the notations, we shall  write $z_0 = (u_1,u_2,b_1,b_2,\p,s)^t$ (instead of $((u_0)_1, (u_0)_2, (b_0)_1, (b_0)_2, \p_0, s_0 )^t$), $z^k_0 = (u^k_1,u^k_2,b^k_1,b^k_2,\p^k, s^k)^t$ and omit the index $\delta$ of the domain $\mathcal U$ throughout the proof. 

We shall first choose smooth sequences $v^k = (v^k_1, v^k_2)^t, b^k = (b^k_1, b^k_2)^t, q^k,s^k$ and $F^k = (F_1^k,\ldots, F_4^k)^t$, such that $(v^k, b^k, q^k, s^k)\to (u,b,\p,s)$ in $H^{m}(\mathcal{U})$ and $F^k \to F$ in $H^{m}((0, T)\times \mathcal{U})$ as $k\to +\infty$, and $v^k_i = b^k_i = (F^k_2)_{i} = 0$ at $x_i = 0, i=1,2$. 
Next, seek correctors $w^k=(w^k_1,w^k_2)^t, r^k \in H^{m+\frac{5}{2}}(\mathcal{U})$,
satisfying $(w^k, r^k) \to 0$ in $H^m(\mathcal{U})$ as $k\to +\infty$, and by setting $u^k = v^k - w^k, \p^k = q^k - r^k$, such that $z^k = (u^k, b^k, \p^k, s^k)^t$ and $F^k$ will satisfy all the properties given in Proposition~\ref{prop-lift-local}.

Denote by
\begin{equation*}
    \begin{aligned}
        & \alpha_j^k=\partial_1^j r^k(0, \cdot), \quad
        \xi_j^k=\partial_1^j w_1^k(0, \cdot), \quad
        \tau_j^k=\partial_1^j w_2^k(0, \cdot),
        \\
        & \beta_j^k=\partial_2^j r^k(\cdot, 0), \quad
        \eta_j^k=\partial_2^j w_1^k(\cdot, 0), \quad
        \theta_j^k=\partial_2^j w_2^k(\cdot, 0),
    \end{aligned}
\end{equation*}
and
\begin{equation*}
    \mathrm{z}_{1 j}^k(0, \cdot)=\left(\alpha_j^k, \xi_j^k, \tau_j^k \right),\quad \mathrm{z}_{2 j}^k(\cdot, 0)=\left(\beta_j^k, \eta_j^k, \theta_j^k \right).
\end{equation*}

The proof of Proposition \ref{prop-lift-local} will be based on the construction of 
$\mathrm{z}_{1 j}^k \in C^\infty(\overline{\mathcal{U}}_1),
 \mathrm{z}_{2 j}^k \in C^\infty(\overline{\mathcal{U}}_2)$, satisfying the following properties:
\begin{enumerate}
    \item[(1)] \begin{equation}\label{zkj-1}
    \partial_2^i \mathrm{z}_{1 j}^k (0,0) = \partial_1^j \mathrm{z}_{2 i}^k (0,0) \quad \text{for } i+j \le m+1.
\end{equation}
\item[(2)] There exists $\delta>0$, such that for any $j\le m-1$, \begin{equation}\label{zkj-2}
    \begin{aligned}
    &\mathrm{z}_{1 j}^k \to 0 \text{ in } H^{m-j-\frac{1}{2}}(\mathcal{U}_1),\quad 
        \mathrm{z}^k_{2 j} \to 0 \text{   in } H^{m-j-\frac{1}{2}}(\mathcal{U}_2),\quad \text{ as } k\to +\infty; \\
        & \int_0^\delta |\partial_2^i \mathrm{z}^k_{1 j}(0,y)-\partial_1^j \mathrm{z}^k_{2 i}(y,0)|^2 y^{-1} \d y \to 0  \text{ as }k\to +\infty, \text{ when }i+j = m-1.
    \end{aligned}
\end{equation}
\item[(3)] \begin{equation}\label{zkj-3}
\begin{aligned}
    & (u^k,b^k,\p^k,s^k) \text{ and } F^k
    \text{ satisfy the compatibility conditions }\\
    & \text{ of \eqref{qtr}  and \eqref{qtr-bcd} up to order } m+1  \text{ when } t=0.
\end{aligned}
\end{equation}
\end{enumerate}

\begin{lemma}\label{lemma-zkj}
    There exists $\delta_0>0$ depending on the matrix $G$ and the compact set $\mathcal{K}$, such that for any $0<\delta \le \delta_0$, one can find $\mathrm{z}_{1 j}^k \in C^\infty(\overline{\mathcal{U}}_1), \mathrm{z}_{2 j}^k \in C^\infty(\overline{\mathcal{U}}_2)$ such that \eqref{zkj-1}-\eqref{zkj-3} hold.
\end{lemma}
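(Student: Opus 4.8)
The plan is to construct the sequences $\mathrm{z}^k_{1j}$ and $\mathrm{z}^k_{2j}$ by a nested induction: an outer induction on the derivative order $j=0,1,\ldots,m+1$, and for each $j$ a reduction of the compatibility conditions \eqref{cptb-low-0}--\eqref{cptb-odd} to a \emph{linear elliptic-type recursion} in the normal variable that expresses the top-order normal derivatives of the correctors in terms of lower-order data. Concretely, recall that the compatibility condition of order $2l$ (resp. $2l+1$) for the corrector pair $(w^k,r^k)$ has, after localization and straightening, the schematic form $J_i K(Z) H^{l-1}(Z) w^k = \text{(lower order)}$ on $x_i=0$ (resp.\ an analogous identity involving $J_iK(Z)H^{l-1}(Z)(G^{-1}\nabla r^k)$). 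The operators $J_i,K(Z),H(Z)$ are, up to lower-order and zeroth-order terms, powers of the Laplacian in the straightened coordinates (this is where the angle $\omega$ enters: the leading symbol of $H(Z)$ is governed by $G$, hence by $\omega$); so the recursion at step $2l$ (or $2l+1$) reads, on $\{x_i=0\}$, as a linear system determining $\partial_1^{j} r^k(0,\cdot)$ and $\partial_1^{j} w^k_i(0,\cdot)$ (the quantities $\alpha^k_j,\xi^k_j,\tau^k_j$ and their $\mathcal{U}_2$-counterparts) from $\{\alpha^k_{j'},\xi^k_{j'},\tau^k_{j'}\}_{j'<j}$ and from the already-chosen smooth approximations $v^k,b^k,q^k,s^k,F^k$ restricted to the two legs. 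I would first treat the case $j\le m-1$ where the right-hand side is a genuine constraint forced by the data $(z_0,F)$, and then freely \emph{prescribe} $\mathrm{z}^k_{1j},\mathrm{z}^k_{2j}$ for $j=m,m+1$ (these correspond to compatibility orders $m,m+1$ with no data constraint, only the corner-matching \eqref{zkj-1} and the requirement of being traces of a single $H^{m+5/2}$ function).

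The key steps, in order, are: (i) fix smooth sequences $v^k,b^k,q^k,s^k\to (u,b,\p,s)$ in $H^m$ and $F^k\to F$ in $H^m$ with the required boundary vanishing $v^k_i=b^k_i=(F^k_2)_i=0$ on $\{x_i=0\}$ (using Lemma~\ref{lemma-Jsp} or the Stein-type odd/even extensions already used for $J^{\mathrm{sp}}_\varepsilon$); (ii) on the leg $\mathcal{U}_1=\{x_1=0\}$, solve the linear recursion for $\alpha^k_j,\xi^k_j,\tau^k_j$ ($0\le j\le m-1$) by inverting the leading operator — which is an ODE/elliptic operator in $x_2$ of order $2l$ whose invertibility on the half-line (with suitable decay) is guaranteed precisely by $\omega\notin\{\pi/([\tfrac m2]+1),\ldots,\pi/m\}$, the non-resonance assumption quoted in Remark~\ref{rmk-2}(2); the mismatch between the true data and $(v^k,q^k,\ldots)$ ensures the right-hand side, call it $\rho^k_j$, tends to $0$ in the appropriate $H^{m-j-1/2}$ norm, whence $\mathrm{z}^k_{1j}\to0$; (iii) symmetrically construct $\mathrm{z}^k_{2j}$ on $\mathcal{U}_2$; (iv) reconcile the two constructions at the corner so that \eqref{zkj-1} holds for $i+j\le m+1$ — this is done by modifying the lower-order choices near the corner by a smooth cutoff times a polynomial matching the finitely many corner Taylor coefficients, exploiting that for $i+j\le m-1$ the true data already satisfy the matching (since $z_0,F$ satisfy the compatibility conditions of the original problem) up to a term that vanishes in $H^{1/2}$-with-weight, which is exactly the Hardy-type condition $\int_0^\delta|\partial_2^i\mathrm z^k_{1j}-\partial_1^j\mathrm z^k_{2i}|^2 y^{-1}\,dy\to0$ in \eqref{zkj-2}; (v) for $j=m,m+1$ simply prescribe $\mathrm z^k_{1j},\mathrm z^k_{2j}$ to be smooth, compactly supported (away from the corner, or polynomial near it) and corner-compatible, which is always possible as there is no data constraint. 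Finally, invoke a trace lifting theorem (the one in Lemma~\ref{lemma-modified-lift}, or the weighted variant of \cite{grisvard1985elliptic} on the quadrant) to produce $w^k,r^k\in H^{m+5/2}(\mathcal U)$ realizing these traces on both legs, with $w^k_i|_{x_i=0}$ compatible with the boundary condition \eqref{qtr-bcd}; set $u^k=v^k-w^k$, $\p^k=q^k-r^k$. The smallness $\delta\le\delta_0$ enters in (ii)–(iii) to keep the coefficients of the recursion (which depend on $G$ and $Z$) within a range where the leading operator stays invertible uniformly.

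The main obstacle — and the step I expect to consume most of the work — is (ii)/(iii): showing that the linear recursion determining the normal-derivative traces of the correctors is solvable with quantitative control, and that its solution tends to zero as $k\to\infty$. The difficulty is twofold. First, the leading operator at compatibility order $2l$ is (a power of) a second-order operator whose symbol is not the flat Laplacian but is twisted by the coordinate straightening at the corner, i.e.\ by $\omega$; one must identify its kernel/cokernel on the half-line and check that the excluded values $\pi/([\tfrac m2]+1),\ldots,\pi/m$ are exactly the resonant angles where a homogeneous solution $x_2^{k\pi/\omega}$ (or with a logarithm) fails to be compatible — this is the analytic heart of why the punctured interval appears in \eqref{angle-cdt}. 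Second, one must propagate the $H^{m-j-1/2}$ convergence through the recursion without losing derivatives: each application of the inverse of a $2l$-th order operator formally gains $2l$ derivatives, but the right-hand side only has the regularity of $(j-1)$-st order data, so a careful bookkeeping (as in the inductive regularity counting already sketched in Lemma~\ref{lemma-local-reduce}, where $w_j\in H^{m-j+1/2}$) is needed to land in $H^{m-j-1/2}$ and no worse. Matching at the corner in (iv), while technically fiddly (it requires solving a finite triangular linear system for the corner Taylor jet and checking the Hardy-type integrals), is essentially bookkeeping once (ii)–(iii) are in place; and (v) and the final lifting are routine. I would therefore organize the proof so that the bulk is a single lemma isolating the half-line solvability and convergence of the recursion, with the corner reconciliation and lifting as corollaries.
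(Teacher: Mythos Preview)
Your overall architecture (approximate, build correctors via edge traces, match at the corner, lift) is right, and steps (i), (v), and the final lifting are indeed routine. But the heart of the argument, your steps (ii)--(iv), misidentifies both \emph{where} the angle condition enters and \emph{what} must be inverted.

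First, the compatibility relations \eqref{cptb-even}--\eqref{cptb-odd} at level $n$ are not a recursion determining $\alpha^k_j$ from $\{\alpha^k_{j'}\}_{j'<j}$, nor do they give an ODE or elliptic operator in the tangential variable $x_2$ to be inverted on a half-line. Rather, restricting all compatibility orders $\le n$ to the corner $x=0$ produces a coupled \emph{finite-dimensional linear system} $B_n\bar\alpha^k_n=\mathcal G^k_n(0)-\mathcal F^k_n(0)$ (and analogously $B_{n-1}\bar\lambda^k_{n-1}=\cdots$) for the full vector of corner Taylor coefficients $\bar\alpha^k_n=(\bar\alpha^k_{0,n},\ldots,\bar\alpha^k_{n,0})$ at once; the matrix $B_n$ is built from the frozen operators $\hat J_1,\hat J_2$ and hence depends on $G$, i.e.\ on $\omega$. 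The angle condition \eqref{angle-cdt} enters precisely through the \emph{surjectivity of $B_n$} for $1\le n\le m+1$ (Lemma~\ref{lemma-B-surj}), not through the absence of resonant homogeneous solutions $x_2^{k\pi/\omega}$ of a differential operator. When $\omega=\pi/(m+1)$, $B_{m+1}$ fails to be surjective and one must \emph{modify $F^k$} so that the right-hand side lands in its range; the excluded values $\pi/([\tfrac m2]+1),\ldots,\pi/m$ are exactly those at which \emph{two or more} of the $B_n$ fail simultaneously, making this rescue impossible.

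Second, once the corner jet $\bar\sigma^k_{ij}$ is fixed, the construction on the legs for $0\le j\le m-1$ is not an ODE solve but a \emph{Volterra-type integral equation} (see \eqref{Bsys}): the compatibility conditions at fixed total order $n=m-1$, combined with the Newton--Leibniz formula expressing lower-order traces from their top derivatives and the corner values, yield $B_*\varphi^k_*(y)=y\,a(y,Z)\varphi^k_*(y)+\int_0^y(\cdots)\varphi^k_*\,dy'+\psi^k$, where $B_*$ is a square invertible submatrix of $B$ obtained by freezing one free column. Smallness of $\delta$ is used here for a contraction argument (Lemma~\ref{solution}), not to preserve invertibility of a differential operator. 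Thus the correct order is \emph{corner first, then edges}, and the analytic core is finite-dimensional linear algebra (surjectivity of $B_n$) followed by a Volterra equation, not half-line elliptic theory.
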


One can refer to \cite[p.19]{godin20212d} for the proof of Proposition~\ref{prop-lift-local} from Lemma~\ref{lemma-zkj}.

The proof of Lemma~\ref{lemma-zkj} proceeds by three main steps: first,  determine the values $\partial_2^j\mathrm{z}^k_{1i}(0,0) = \partial_1^i\mathrm{z}^k_{2j}(0,0)$
at the corner point with $i+j\le m+1$ from the compatibility conditions restricted at $x=0$; next,  construct $\mathrm{z}^k_{1j}, \mathrm{z}^k_{2j}$ for $0\le j\le m-1$ on the edges, such that \eqref{zkj-2} and the compatibility conditions up to order $m-1$ are satisfied; finally, choose $\mathrm{z}^k_{1j}, \mathrm{z}^k_{2j}$ for $m\le j\le m+1$ properly such that the compatibility conditions up to order $m+1$ are satisfied.

\paragraph{Step~1. Construction of $\bar{\sigma}^k_{ij}$.}
Set 
$$
\bar{\sigma}^k_{ij} = 
\begin{cases}
\partial_2^j \sigma_i^k(0), \quad {\rm as}~\sigma = \alpha,\, \xi,\,\tau,\\[2mm]
\partial_1^j \sigma_i^k(0), \quad 
{\rm as}~\sigma = \beta, \, \eta, \, \theta.
\end{cases}
$$
Then \eqref{zkj-1} means that
\begin{equation*}
    \bar{\alpha}^k_{ij} = \bar{\beta}^k_{ji}, \quad \bar{\xi}^k_{ij} = \bar{\eta}^k_{ji},\quad \bar{\tau}^k_{ij} = \bar{\theta}^k_{ji},\qquad \text{when } i+j \le m+1,
\end{equation*}
and \eqref{zkj-3} also implies some additional relations which must be satisfied by the $\bar{\sigma}^k_{ij}$.
The next corollary elaborates some relations derived from \eqref{cptb-low-0}-\eqref{cptb-odd} on $\mathcal{U}_h$, $h=1,2$, which would further imply a linear system that must be satisfied by $\bar{\sigma}^k_{ij}$.

\begin{coro}\label{coro-corner-cpbt}
    If $z^k \in X^{m+2}_*([0,T]\times \Omega)$ is a solution of \eqref{qtr}-\eqref{qtr-icd} with $F$ being replaced by $F^k$, and $(u_0,b_0,\p_0,s_0)$ replaced by $(v^k+w^k, b^k, q^k + r^k, s^k)$, then for all $0\le n \le m+1$, $h,h' \in \{1,2\}$ and $h\ne h'$, 
    the following relations hold when $t=0$ and $x\in \mathcal{U}_h$:
    \begin{enumerate}
        \item[(1)] \begin{equation}\label{csys-low}
                    \partial_{h'}^n w^{k}_h = 0,
                    \end{equation}
        \item[(2)] if $n\ge 1$, for $0\le l\le [\frac{n-1}{2}]$,
        \begin{equation}\label{csys-odd}
            \hat{\mathcal{L}}^h_{n-2l-1,2l+1} r^k = \mathcal{G}^{h,k}_{n,l} - \partial_{h'}^{n-2l-1} \tilde{\mathcal{P}}^h_{2l} F^k + \big(\hat{\mathcal{L}}^h_{n-2l-1,2l+1}-\mathcal{L}^h_{n-2l-1,2l+1}\big)r^k,
        \end{equation}
        \item[(3)] if $n\ge 2$, for $1\le l\le [\frac{n}{2}]$,
        \begin{equation}\label{csys-even}
            \hat{\mathcal{L}}^h_{n-2l,2l-1} (\nabla\cdot w^k) = \mathcal{H}^{h,k}_{n,l} - \partial_{h'}^{n-2l} \tilde{\mathcal{P}}^h_{2l-1} F^k + \big(\hat{\mathcal{L}}^h_{n-2l,2l-1}(\nabla\cdot w^k)-\mathcal{L}^h_{n-2l,2l}(w^k)\big),
        \end{equation}
    \end{enumerate}
    where 
    \begin{equation*}
        \hat{\mathcal{L}}^h_{n-2l-1,2l+1} = \partial_{h'}^{n-2l-1} \hat{J}_h (\partial_1 \hat{J}_1 + \partial_2 \hat{J}_2)^{l},\quad
        \hat{\mathcal{L}}^h_{n-2l-1,2l-1} = \partial_{h'}^{n-2l-1} \hat{J}_h (\partial_1 \hat{J}_1 + \partial_2 \hat{J}_2)^{l-1},
    \end{equation*}
    $\hat{J}_j$ denotes that the coefficients of $J_j$ are frozen at $x=0$; 
    \begin{equation*}
        \begin{aligned}
        &\mathcal{L}^h_{n-2l-1,2l+1} = 
        \begin{cases}
            \partial_{h'}^{n-1} J_h, & \text{if }l=0,\\
            \partial_{h'}^{n-2l-1} \big(J_h K(Z) H^{l-1}(Z) (G^{-1}\nabla (\cdot))\big), &\text{if } l\ge 1,
        \end{cases}
        \\&\mathcal{L}^h_{n-2l,2l} = \partial_{h'}^{n-2l} \big(J_h K(Z) H^{l-1}(Z) (\cdot)\big),
        \end{aligned}
    \end{equation*}
and
\begin{equation*}
    \mathcal{G}^{h,k}_{n,l}=\begin{cases}
        \begin{aligned}
            \partial_{h'}^{n-1} \big( J_h q^k - \mathcal{P}^h_0(v^k-w^k,b^k,q^k-r^k) \big),
        \end{aligned} & \text{if }l=0,
        \\
        \begin{aligned}
            \partial_{h'}^{n-2l-1}\Big(J_h K(Z) H^{l-1}(Z)(G^{-1}\nabla q^k + B\cdot \nabla b^k)&\\
            - \mathcal{P}^h_{2l}(v^k-w^k,b^k,q^k-r^k)& \Big),
        \end{aligned}& \text{if } l\ge 1,
    \end{cases}
\end{equation*}
\begin{equation*}
    \begin{aligned}
        \mathcal{H}^{h,k}_{n,l} = \partial_{h'}^{n-2l} \big(J_h K(Z) H^{l-1}(Z)v^k
        - \mathcal{P}^h_{2l-1}(v^k-w^k,b^k,q^k-r^k)\big),
    \end{aligned}
\end{equation*}
with other notations such as $J_j, {\cal P}^h_{l}, \tilde{\cal P}^h_{l}, K(Z), H(Z)$ being the same as given in
\eqref{cptb-low-0}-\eqref{cptb-odd}.

\end{coro}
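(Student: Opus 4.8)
\textbf{Proof proposal for Corollary~\ref{coro-corner-cpbt}.}

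The plan is to derive the stated relations directly from the compatibility conditions \eqref{cptb-low-0}--\eqref{cptb-odd} restricted to $\mathcal{U}_h$, applied to the initial data $(v^k-w^k, b^k, q^k-r^k, s^k)$ and source $F^k$ in place of $(u_0,b_0,\p_0,s_0)$ and $F$. Since $z^k$ is assumed to solve \eqref{qtr}--\eqref{qtr-icd} with these data, every $\partial_t^n z^k(0,\cdot)$ is determined by the algebraic recursion spelled out in items (1)--(3) preceding \eqref{cptb-low-0}, and imposing $u^k_h|_{x_h=0}=\partial_t^j u^k_h|_{x_h=0}=0$ is precisely what yields the compatibility conditions. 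The first assertion \eqref{csys-low} is then immediate: the $0$-th order condition $(u^k_0)_h|_{x_h=0}=0$ together with $u^k = v^k - w^k$ and the fact that $v^k_h=0$ on $x_h=0$ forces $w^k_h|_{x_h=0}=0$; differentiating in the tangential direction $\partial_{h'}$ up to order $n$ preserves the trace on $\mathcal{U}_h$ (as $\partial_{h'}$ is tangential there), giving $\partial_{h'}^n w^k_h=0$ on $\mathcal{U}_h$ for all $0\le n\le m+1$.

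For the odd-order relations \eqref{csys-odd}, I would start from \eqref{cptb-low-1} (when $l=0$, $n=1$) and \eqref{cptb-odd} (for $l\ge 1$, with $2l+1\le n$), apply the tangential operator $\partial_{h'}^{n-2l-1}$ to both sides, and substitute $\p_0 = q^k - r^k$, $u_0 = v^k - w^k$, $b_0 = b^k$. The left-hand side becomes $\partial_{h'}^{n-2l-1} J_h K(Z)H^{l-1}(Z)(G^{-1}\nabla \p_0)$, i.e. $\mathcal{L}^h_{n-2l-1,2l+1}(q^k - r^k)$; isolating the $r^k$-part and splitting off the difference $(\hat{\mathcal{L}}^h_{n-2l-1,2l+1}-\mathcal{L}^h_{n-2l-1,2l+1})r^k$ (the discrepancy between the frozen-coefficient operator $\hat{J}_j$ and $J_j$, which has lower-order coefficient factors vanishing at $x=0$) produces the leading term $\hat{\mathcal{L}}^h_{n-2l-1,2l+1}r^k$. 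Everything on the right that involves $q^k, v^k, b^k$ and the lower-order operators $\mathcal{P}^h_{2l}$ is collected into $\mathcal{G}^{h,k}_{n,l}$, and the source contribution is $-\partial_{h'}^{n-2l-1}\tilde{\mathcal{P}}^h_{2l}F^k$, matching the claimed form. The even-order relations \eqref{csys-even} are obtained the same way from \eqref{cptb-low-0} (for $l=1$, $n=2$, giving the $\nabla\cdot$ structure through $K(Z)$) and \eqref{cptb-even} (for $l\ge 1$, $2l\le n$), applying $\partial_{h'}^{n-2l}$, substituting $u_0 = v^k - w^k$, and noting that $J_h K(Z)H^{l-1}(Z)$ acting on $w^k$ contributes precisely the divergence term $\hat{\mathcal{L}}^h_{n-2l,2l-1}(\nabla\cdot w^k)$ after freezing coefficients, with the mismatch absorbed into the parenthetical difference term and the rest into $\mathcal{H}^{h,k}_{n,l}$.

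The main obstacle, and the part requiring genuine care rather than routine bookkeeping, is verifying that the operators $K(Z)$, $H(Z)$ compose correctly so that the principal part of $J_h K(Z) H^{l-1}(Z)(G^{-1}\nabla\cdot)$ is exactly $\hat{J}_h(\partial_1\hat{J}_1+\partial_2\hat{J}_2)^l$ after freezing coefficients at $x=0$ — in other words, that all the magnetic-field contributions in $K(Z)$ and $H(Z)$ (the terms with $Q^{-1}(GB)\cdot(\cdots)$) are genuinely lower-order relative to this Laplacian-type leading operator and get swept into the $\mathcal{G}$, $\mathcal{H}$ or difference terms. This is a structural computation on the symbols of $K(Z)$ and $H(Z)$: one must check that $\hat{J}_1,\hat{J}_2$ (the frozen $G^{-1}\nabla$) commute appropriately and that $(\partial_1\hat{J}_1+\partial_2\hat{J}_2)$ is the frozen Laplacian-in-disguise, so that iterating $H^{l-1}$ really does build up the $l$-th power. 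The remaining items — tracking which derivatives land on $Z$ versus on the unknowns to confirm the orders in $\mathcal{P}^h_q$, $\tilde{\mathcal{P}}^h_q$, and checking the claimed regularity (so that $w^k, r^k \in H^{m+\frac52}$ suffices and the differences converge) — follow the corresponding steps in \cite[pp.~17--19]{godin20212d} with the extra field $b$ handled exactly as $u$ is, and require no new ideas.
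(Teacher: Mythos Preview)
Your approach is correct and matches the paper's: the corollary is a direct reformulation of the compatibility conditions \eqref{cptb-low-0}--\eqref{cptb-odd} with the data $(v^k-w^k,b^k,q^k-r^k,s^k)$ and $F^k$ substituted, followed by applying the tangential derivative $\partial_{h'}^{n-2l-1}$ (resp.\ $\partial_{h'}^{n-2l}$) and the algebraic trick of adding and subtracting $\hat{\mathcal{L}}r^k$ (resp.\ $\hat{\mathcal{L}}(\nabla\cdot w^k)$). The paper does not give a separate proof of this corollary precisely because it is bookkeeping.

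One clarification: you describe the verification that the principal part of $J_h K(Z)H^{l-1}(Z)(G^{-1}\nabla\cdot)$ equals $\hat J_h(\partial_1\hat J_1+\partial_2\hat J_2)^l$ at $x=0$ as the ``main obstacle'' for the corollary. It is not. The identity \eqref{csys-odd} is tautological once you observe that $\mathcal{L}^h_{n-2l-1,2l+1}r^k = \mathcal{G}^{h,k}_{n,l}-\partial_{h'}^{n-2l-1}\tilde{\mathcal P}^h_{2l}F^k$ \emph{is} the compatibility condition, and then \eqref{csys-odd} just says $\hat{\mathcal L}r^k = \mathcal{L}r^k + (\hat{\mathcal L}-\mathcal{L})r^k$. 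No symbol computation is required here. The structural claim you are worried about --- that the $n$-th order derivatives of $r^k,w^k$ on the right-hand side carry coefficients vanishing at $x=0$, so that the $B$-dependent top-order pieces of $K(Z),H(Z)$ do not pollute the matrix $B_n$ --- is asserted by the paper only \emph{after} the corollary, when one restricts \eqref{csys-odd}--\eqref{csys-even} to the corner $x=0$ to obtain the linear systems \eqref{lnsys-1}--\eqref{lnsys-2}. That is where the symbol check belongs, and the paper indeed leaves it implicit (deferring to the analogous computation in \cite{godin20212d}); your instinct that this requires care is right, but it is a property used downstream, not part of the corollary's proof.
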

We see that the left-hand sides of \eqref{csys-odd} and \eqref{csys-even} consist of homogeneous spatial derivatives of order $n$, while the right-hand sides consist of three types of terms:  
\begin{itemize}
    \item contributions from $v^k,b^k,q^k,s^k, F^k$;  
    \item $n$-th order spatial derivatives acting on $w^k$ and $r^k$, whose coefficients vanish at $x=0$;  
    \item lower-order derivative terms of $w^k$ and $r^k$.
\end{itemize}

Therefore, if $n\ge 1$, we restrict \eqref{csys-odd} for $h=1,2$, $0\le l\le [\frac{n-1}{2}]$ at $x=0$, and obtain a system of $2[\frac{n+1}{2}]$ equations for the components
\begin{equation*}
    \bar{\alpha}^k_n:= (\bar{\alpha}^k_{0,n},\bar{\alpha}^k_{1,n-1},\ldots,\bar{\alpha}^k_{n,0})^t,
\end{equation*}
which shall be written as
\begin{equation}\label{lnsys-1}
    B_n \bar{\alpha}^k_n = \mathcal{G}^k_n(0)- \mathcal{F}^k_n(0),
\end{equation}
where $B_n$ is a $(2[\frac{n+1}{2}])\times (n+1)$ real matrix defined by the coefficients of $\hat{J}_1, \hat{J}_2$, and thus depends on the matrix $G$ from the coordinate transformation;
$\mathcal{G}^k_n$ is the $2[\frac{n+1}{2}]$-dimensional vector whose $(j+1)$-th entry is 
\begin{equation*}
    \mathcal{G}^{1,k}_{n,j} + \big(\hat{\mathcal{L}}^1_{n-2j-1,2j+1}-\mathcal{L}^1_{n-2j-1,2j+1}\big)r^k,
\end{equation*} 
and $(j+1+[\frac{n+1}{2}])$-th entry is 
\begin{equation*}
    \mathcal{G}^{2,k}_{n,j} + \big(\hat{\mathcal{L}}^2_{n-2j-1,2j+1}-\mathcal{L}^2_{n-2j-1,2j+1}\big)r^k,
\end{equation*}
with $0\le j\le [\frac{n-1}{2}]$, $\mathcal{G}^k_n(0)$ is its value at $x=0$;
\begin{equation*}
    \begin{aligned}
        \mathcal{F}^k_n = \Big( &
    \partial_2^{n-1}\tilde{\mathcal{P}}^1_{0} F^k, \partial_2^{n-3}\tilde{\mathcal{P}}^1_{2} F^k, \ldots, \partial_2^{n-2[\frac{n-1}{2}]-1}\tilde{\mathcal{P}}^1_{2[\frac{n-1}{2}]} F^k, \\
    &\partial_1^{n-1}\tilde{\mathcal{P}}^2_{0} F^k, \partial_1^{n-3}\tilde{\mathcal{P}}^2_{2} F^k,
    \ldots, \partial_1^{n-2[\frac{n-1}{2}]-1}\tilde{\mathcal{P}}^2_{2[\frac{n-1}{2}]} F^k 
    \Big)^t,
    \end{aligned}
\end{equation*}
and $\mathcal{F}^k_n(0)$ is its value at $x=0$.

Similarly, at $x=0$, \eqref{csys-even} for $h=1,2$, $1\le l\le [\frac{n}{2}]$, is a system of $2[\frac{n}{2}]$ equations for the components 
\begin{equation*}
    \begin{aligned}
        &\bar{\lambda}^k_{n-1}:= (\bar{\lambda}^k_{0,n-1},\ldots,\bar{\lambda}^k_{n-1,0})^t \quad \text{ where}\\
        & \bar{\lambda}^k_{i,j} = \partial_1^i\partial_2^j(\nabla\cdot w^k)(0,0) = \bar{\xi}^k_{i+1,j} + \bar{\tau}^k_{i,j+1},
    \end{aligned}
\end{equation*}
which can be written as
\begin{equation}\label{lnsys-2}
    B_{n-1} \bar{\lambda}^k_{n-1} = \mathcal{H}^k_n(0) - \tilde{\mathcal{F}}^k_n(0),
\end{equation}
where $B_{n-1}$ is exactly the matrix $B_n$ in \eqref{lnsys-1} with $n$ replaced by $n-1$, and $\mathcal{H}^k_{n}$ is the $2[\frac{n}{2}]$-dimensional vector whose $j$-th entry is 
\begin{equation*}
    \mathcal{H}^{1,k}_{n,j} + \big(\hat{\mathcal{L}}^1_{n-2j,2j-1}(\nabla\cdot w^k)-\mathcal{L}^1_{n-2j,2j}(w^k)\big),
\end{equation*}
and $(j+[\frac{n}{2}])$-th entry is 
\begin{equation*}
    \mathcal{H}^{2,k}_{n,j} + \big(\hat{\mathcal{L}}^2_{n-2j,2j-1}(\nabla\cdot w^k)-\mathcal{L}^2_{n-2j,2j}(w^k)\big),
\end{equation*}
with $1\le j\le [\frac{n}{2}]$, $\mathcal{H}^k_n(0)$ is its value at $x=0$;
\begin{equation*}
    \begin{aligned}
        \tilde{\mathcal{F}}^k_n = \Big( &
    \partial_2^{n-2}\tilde{\mathcal{P}}^1_{1} F^k, \partial_2^{n-4}\tilde{\mathcal{P}}^1_{3} F^k, \ldots, \partial_2^{n-2[\frac{n}{2}]}\tilde{\mathcal{P}}^1_{2[\frac{n}{2}]-1} F^k, \\
    & \partial_1^{n-2}\tilde{\mathcal{P}}^2_{1} F^k, \partial_1^{n-4}\tilde{\mathcal{P}}^2_{3} F^k, \ldots, \partial_1^{n-2[\frac{n}{2}]}\tilde{\mathcal{P}}^2_{2[\frac{n}{2}]-1} F^k
    \Big)^t,
    \end{aligned}
\end{equation*}
and $\tilde{\mathcal{F}}^k_n(0)$ is its value at $x=0$.

Now we claim the following lemma.
\begin{lemma}\label{lemma-solvable}
    Assume that $m\in \mathbb{N}\backslash\{0\}$, and that $\omega \in (0, \frac{\pi}{[\frac{m}{2}]})$.
    For any $n=0,\ldots, m+1$,
    one can find $\bar{\sigma}^k_{ij}$ for all $i+j \le n$ satisfying that
    \begin{equation}\label{sigma-cdt}
    \eqref{csys-low}-\eqref{csys-even} \text{ hold at } x=0; \quad  \bar{\sigma}^k_{ij} \to 0 \text{ as } k\to +\infty \quad (i+j\le m-2).
\end{equation}
\end{lemma}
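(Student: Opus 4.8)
\emph{Sketch of the argument.} The plan is to argue by induction on $n$, the order of the corner traces being constructed. For $n=0$ there is nothing to solve: \eqref{csys-low} forces the transverse zeroth-order traces of $w^k$ along the edges on which $w^k_h$ vanishes, and all remaining $\bar{\sigma}^k_{ij}$ with $i+j=0$ may be taken to be $0$, so \eqref{sigma-cdt} holds trivially. Assume now that $\bar{\sigma}^k_{ij}$ have been chosen for all $i+j\le n-1$ so that \eqref{csys-low}--\eqref{csys-even} hold at $x=0$ up to order $n-1$ and $\bar{\sigma}^k_{ij}\to 0$ when $i+j\le m-2$; I will produce the traces of order $i+j=n$. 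The new unknowns fall into three groups. First, \eqref{csys-low} at order $n$ directly pins the purely tangential top derivatives of $w^k_h$ along $\mathcal{U}_h$, which vanish since $w^k_h\equiv 0$ there. Second, the coefficients $\bar{\alpha}^k_n=(\bar{\alpha}^k_{0,n},\dots,\bar{\alpha}^k_{n,0})$ of the order-$n$ Taylor polynomial of $r^k$ at the corner solve the linear system \eqref{lnsys-1}, while the divergence-type combinations $\bar{\lambda}^k_{n-1}$ (with $\bar{\lambda}^k_{i,j}=\bar{\xi}^k_{i+1,j}+\bar{\tau}^k_{i,j+1}$) solve \eqref{lnsys-2}. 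Third, once $\bar{\alpha}^k_n$ and $\bar{\lambda}^k_{n-1}$ are known, the residual freedom in $\bar{\xi}^k,\bar{\tau}^k,\bar{\eta}^k,\bar{\theta}^k$ of order $n$ is used to enforce the corner matching relations $\bar{\alpha}^k_{ij}=\bar{\beta}^k_{ji}$, $\bar{\xi}^k_{ij}=\bar{\eta}^k_{ji}$, $\bar{\tau}^k_{ij}=\bar{\theta}^k_{ji}$ demanded by \eqref{zkj-1}, and is otherwise chosen as small as possible.

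The first technical point is the consistency of the right-hand sides of \eqref{lnsys-1} and \eqref{lnsys-2}. By the structure recalled in Corollary~\ref{coro-corner-cpbt}, the vectors $\mathcal{G}^k_n(0)-\mathcal{F}^k_n(0)$ and $\mathcal{H}^k_n(0)-\tilde{\mathcal{F}}^k_n(0)$ depend only on the corner Taylor data of $v^k,b^k,q^k,s^k,F^k$ together with the $\bar{\sigma}^k_{ij}$ of order $\le n-1$: the ``extra'' top-order terms $(\hat{\mathcal{L}}-\mathcal{L})r^k$ and $\hat{\mathcal{L}}(\nabla\cdot w^k)-\mathcal{L}(w^k)$ carry coefficients vanishing at $x=0$, which lowers their effective order, and each $\mathcal{P}^h_q$ has order $q\le n-1$. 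For $n\le m-1$ I will use that $z_0,F$ satisfy the compatibility conditions of \eqref{qtr}--\eqref{qtr-bcd} up to order $m-1$: evaluating \eqref{csys-odd}--\eqref{csys-even} at $x=0$ with $w^k=r^k=0$ reproduces precisely the order-$n$ corner relations holding for $(z_0,F)$, so the systems are consistent in the limit $k\to+\infty$ and their solutions tend to $0$. For $n=m,m+1$ no such compatibility is available; instead I will exploit the freedom in the approximating sequence $F^k$, modifying it by a correction supported near the corner that carries the prescribed corner Taylor data and tends to $0$ in $H^m$ — using the precise form of $\tilde{\mathcal{P}}^h_q F$ in Remark~\ref{rmk-PF-struc}, exactly as in the construction around \eqref{F-fix} — so that the consistency relations are met.

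The crux is the solvability of \eqref{lnsys-1} and \eqref{lnsys-2}, i.e. that $B_n$ (hence $B_{n-1}$) has full row rank $2[\tfrac{n+1}{2}]$ for every $n\le m+1$. Its rows are the constant-coefficient operators $\partial_{h'}^{\,n-2l-1}\hat{J}_h(\partial_1\hat{J}_1+\partial_2\hat{J}_2)^l$, $h=1,2$, $0\le l\le[\tfrac{n-1}{2}]$, acting on a homogeneous degree-$n$ polynomial and evaluated at the corner; since $\hat{J}_1,\hat{J}_2$ are the coefficients of $G^{-1}\nabla$ frozen at $x=0$, $B_n$ is determined entirely by $G$, hence by the opening $\omega$. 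After a further linear change of variables turning $\partial_1\hat{J}_1+\partial_2\hat{J}_2$ into the Laplacian (so $\hat{J}_h(\partial_1\hat{J}_1+\partial_2\hat{J}_2)^l$ becomes $\partial_h\Delta^l$), the corner becomes a wedge of angle $\tilde{\omega}$ depending monotonically on $\omega$, and the non-vanishing of the maximal minors of $B_n$ reduces to that of finitely many trigonometric expressions built from $\sin(j\tilde{\omega})$ with $1\le j\le[\tfrac{n+1}{2}]\le[\tfrac m2]+1$. The hypothesis $\omega\in(0,\pi/[\tfrac m2])$ makes $j\tilde{\omega}<\pi$ for all such $j$ — and when $n$ reaches $m$ or $m+1$ the only borderline values $\pi/([\tfrac m2]+1),\dots,\pi/m$ are exactly those removed in \eqref{angle-cdt} — so these expressions are nonzero and $B_n$ has full row rank. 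Then \eqref{lnsys-1} and \eqref{lnsys-2} are solvable for any consistent right-hand side; taking the minimal-norm solutions of order $n$ and distributing the remaining freedom to meet \eqref{zkj-1} and to force $\bar{\sigma}^k_{ij}\to 0$ for $i+j\le m-2$ closes the induction at $n=m+1$. I expect the main obstacle to be precisely this rank statement together with the bookkeeping identifying which angles must be excluded; the consistency modification of $F^k$ at the top orders is the secondary technical point.
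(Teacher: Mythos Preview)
Your inductive scheme and the reduction to the linear systems \eqref{lnsys-1}--\eqref{lnsys-2} match the paper's proof. There is one genuine gap and one minor over-claim.

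The gap is in your rank argument for $B_{m+1}$. The excluded set $\{\tfrac{\pi}{[m/2]+1},\dots,\tfrac{\pi}{m}\}$ in \eqref{angle-cdt} does \emph{not} contain $\tfrac{\pi}{m+1}$; when $\omega=\tfrac{\pi}{m+1}$ the matrix $B_{m+1}$ fails to be surjective (its range is the codimension-one subspace $\mathrm{R}_{B_{m+1}}$ of Lemma~\ref{lemma-B-surj}(2)). So ``$B_n$ has full row rank for every $n\le m+1$'' is false precisely at $n=m+1$ for this angle, and \eqref{lnsys-1} carries a genuine solvability constraint there. This is the actual reason the paper modifies $F^k$: not because ``no compatibility of $(z_0,F)$ is available at $n=m,m+1$'' (when $B_n$ is surjective any right-hand side is admissible and no compatibility is needed), but to force $\mathcal{G}^k_{m+1}(0)-\mathcal{F}^k_{m+1}(0)\in \mathrm{R}_{B_{m+1}}$ in the exceptional case. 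Concretely, the paper adjusts the single corner datum $\partial_2^m(F_1^k)_1(\mathbf{0})$, which by Remark~\ref{rmk-PF-struc} enters only the first row of $\mathcal{F}^k_{m+1}(0)$; see \eqref{F-arrg}. For $n=m$ no modification is needed since $B_m$ is surjective under \eqref{angle-cdt'}. Your proposed $F^k$-correction is the right mechanism, but its purpose and the order at which it is needed differ from what you describe (and the relevant reference is \eqref{F-arrg}, not \eqref{F-fix}).

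The minor over-claim is convergence at $n=m-1$: pointwise corner values of order-$(m-1)$ derivatives of $H^m$ functions are not well-defined, so $\mathcal{G}^k_{m-1}(0)-\mathcal{F}^k_{m-1}(0)\to 0$ need not hold. The paper (and the lemma) only assert $\bar{\sigma}^k_{ij}\to 0$ for $i+j\le m-2$, which your argument does deliver.
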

\begin{proof}
    When $n=0$, to satisfy \eqref{csys-low}, we simply set $\bar{\xi}^k_{00} = \bar{\eta}^k_{00}=\bar{\tau}^k_{00} = \bar{\theta}^k_{00}=0$, and
    $\bar{\alpha}^k_{00} = \bar{\beta}^k_{00}$, with $\bar{\alpha}^k_{00}\to 0$ as $k\to +\infty$.
    
Now suppose that for some $1\le n\le m+1$, the real numbers $\bar{\sigma}^k_{ij}$ with $i+j\le n-1$ have been determined so that \eqref{sigma-cdt} holds for $i+j\le n-1$,
we are going to solve the linear systems \eqref{lnsys-1} and \eqref{lnsys-2} to determine $\bar{\sigma}^k_{ij}$ for $i+j = n$. Now we cite a lemma from \cite{godin20212d}, which provides the information to solve \eqref{lnsys-1} and \eqref{lnsys-2}. 

\begin{lemma}(\cite[Lemma 3.5]{godin20212d})\label{lemma-B-surj}
Assume that $m\ge 2$ is an integer, and each angle $\omega$ on the boundary $\partial\Omega$ satisfies the condition \eqref{angle-cdt'}.
    \begin{enumerate}
    \item[(1)] If $\omega \ne \frac{\pi}{m+1}$, then for $1\le n \le m+1$, $B_n$ is surjective.
    \item[(2)] If $\omega = \frac{\pi}{m+1}$, then for $n = 1,\ldots, m$, $B_n$ is surjective, and there exist constants $c_j\in \R$ for $2\le j\le 2([\frac{m}{2}]+1)$, such that the range of $B_{m+1}$ contains
    \begin{equation*}
        \mathrm{R}_{B_{m+1}}:=\left\{y=(y_1,\ldots,y_{2([\frac{m}{2}]+1)})^t\in \R^{2([\frac{m}{2}]+1)}: y_1 = \sum_{2\le j\le 2([\frac{m}{2}]+1)} c_j y_j\right\}.
    \end{equation*}
    \end{enumerate}
\end{lemma}
\begin{remark}
    According to \cite[Lemma 3.4]{godin20212d}, 
    if $\omega = \frac{\pi}{[\frac{m}{2}]+1+n}$ with $0\le n \le m- [\frac{m}{2}]$, then $$
    B_{[\frac{m}{2}]+1+n+ 2l}, \quad \text{for all } l =0,1,\ldots, \left[\frac{m-([\frac{m}{2}]+n)}{2}\right]
    $$ are not surjective. 
    So, for any fixed integer $m\ge 2$, whenever $\omega = \frac{\pi}{[\frac{m}{2}]+1}, \frac{\pi}{[\frac{m}{2}]+2},\ldots, \frac{\pi}{m}$, there are at least two of $B_1,\ldots, B_{m+1}$ being not surjective.
\end{remark}

Under the assumption \eqref{angle-cdt'}, we know from Lemma \ref{lemma-B-surj}, only when $\omega = \frac{\pi}{m+1}$, the matrix $B_{m+1}$ is not surjective. So to solve \eqref{lnsys-1} for $n=m+1$, one needs that 
\begin{equation}\label{rangeB}
    \mathcal{G}^k_{m+1}(0)- \mathcal{F}^k_{m+1}(0) \in 
\mathrm{R}_{B_{m+1}}.
\end{equation}

Note from Remark \ref{rmk-PF-struc} that the leading term of the first entry of $\mathcal{F}^k_{m+1}(0)$ is $\frac{1}{R}\partial_2^{m} (F_1^k)_1(\mathbf{0})$, and it appears only once among all the entries of $\mathcal{F}^k_{m+1}(0)$. 
So, in a similar way as given in \cite[p. 18]{godin20212d}, one can modify $F^k$, such that
\begin{equation}\label{F-arrg}
    \partial_2^{m} (F^k_1)_1(\mathbf{0}) = \sum_{|\gamma|\le m, \gamma_2\le m-1} c_\gamma \partial^\gamma (F_1^k)_1(\mathbf{0}) 
    + \sum_{\substack{|\gamma|\le m,\\ 
    \bar{F}_i \in \{(F_1)_2, (F_2)_1,\\ (F_2)_2, F_3, F_4\}
    }} d_{\gamma n} \partial^\gamma \bar{F}^k_n(\mathbf{0}) + a_k,
\end{equation}
with suitable $c_\gamma, d_{\gamma n}, a_k$ determined by the expression of $\mathcal{G}^k_{m+1}(0)- \mathcal{F}^k_{m+1}(0)$, and $\{c_j\}_{j=2}^{2([\frac{m}{2}]+1)}$ given in Lemma \ref{lemma-B-surj}(2), $\mathbf{0} = (0,0,0)$, without changing the boundary condition $(F^k_2)_i = 0$ on $x_i = 0$ for $i=1,2$.

Therefore, we solve \eqref{lnsys-1}, \eqref{lnsys-2} and fix a choice of $\bar{\alpha}^k_n$ and $\bar{\lambda}^k_{n-1}$. Since $z,F$ satisfy the compatibility conditions up to order $m-1$ and $z^k \to z$, $F^k \to F$ in $H^{m}$ spaces, it follows that 
$$\mathcal{G}^k_n(0)-\mathcal{F}^k_n(0)\to 0,\quad \mathcal{H}^k_{n-1}(0)-\tilde{\mathcal{F}}^k_{n-1}(0) \to 0 \quad \text{ as } k\to +\infty \quad (n\le m-2 ),$$ and hence $\bar{\alpha}^k_n, \bar{\lambda}^k_{n-1} \to 0$ as $k\to +\infty$.
Recalling that $\bar{\lambda}^k_{i,j} = \bar{\xi}^k_{i+1,j}+ \bar{\tau}^k_{i,j+1}$, we set $\bar{\xi}^k_{i+1,j}=\bar{\tau}^k_{i,j+1} = \frac{1}{2}\bar{\lambda}^k_{ij}$ for $i+j=n-1$,
and, according to \eqref{csys-low}, we take $\bar{\xi}^k_{0,n} = \bar{\tau}^k_{n,0}$.
With these choices, we have now constructed the desired coefficients $\bar{\sigma}^k_{ij}$ ($\sigma = \alpha,\beta,\xi,\eta,\tau,\theta$) for $i+j=n$ in \eqref{sigma-cdt}.

\end{proof}

\paragraph{Step~2. Construction of $\mathrm{z}^k_{1j}, \mathrm{z}^k_{2j}$ for $0\le j\le m-1$.}
\begin{lemma}\label{lemma-zkj-low}
    There exists $\delta_0 >0$ depending on the matrix $G$ and $\mathcal{K}$, such that for any $\delta \le \delta_0$, one can find $\mathrm{z}^k_{1j}\in C^\infty(\overline{\mathcal{U}_1})$, $ \mathrm{z}^k_{2j} \in C^\infty(\overline{\mathcal{U}_2})$ such that the following properties hold.
    \begin{enumerate}
        \item[(1)] For $i\le m+1-j$,
        \begin{equation*}
        \partial_2^i \mathrm{z}^k_{1j}(0,0) = (\bar{\alpha}^k_{ji}, \bar{\xi}^k_{ji},\bar{\tau}^k_{ji}), \quad \partial_1^i \mathrm{z}^k_{2j}(0,0) = (\bar{\beta}^k_{ji},\bar{\eta}^k_{ji},\bar{\theta}^k_{ji}).
    \end{equation*}
    \item[(2)] \eqref{zkj-2} holds.
    \item[(3)] \eqref{csys-low}-\eqref{csys-even} hold with $n=m-1$, on both $\mathcal{U}_1$ and $\mathcal{U}_2$.
    \end{enumerate}
\end{lemma}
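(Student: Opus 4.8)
The plan is to build the edge profiles $\mathrm{z}^k_{1j}=(\alpha^k_j,\xi^k_j,\tau^k_j)$ on $\mathcal{U}_1$ and $\mathrm{z}^k_{2j}=(\beta^k_j,\eta^k_j,\theta^k_j)$ on $\mathcal{U}_2$ recursively in the order $n=0,1,\dots,m-1$, where at stage $n$ one determines all transverse derivatives of transverse order exactly $n$. At each stage the $3(n+1)$ new scalar profiles split into \emph{free} and \emph{constrained} ones. On $\mathcal{U}_1$ the free profiles are the even transverse derivatives of $r^k$ (the $\alpha^k_{2l}$), the odd transverse derivatives of $w_1^k$ (the $\xi^k_{2l+1}$, with $\xi^k_0\equiv0$ forced by \eqref{csys-low}), and the even transverse derivatives of $w_2^k$ (the $\tau^k_{2l}$); the constrained profiles are the odd transverse derivatives of $r^k$, coming from \eqref{csys-odd}, and the combinations $\xi^k_{2l}+\partial_2\tau^k_{2l-1}=\partial_1^{2l-1}(\nabla\cdot w^k)|_{x_1=0}$, coming from \eqref{csys-even}; symmetrically on $\mathcal{U}_2$ with $x_1,x_2$ interchanged. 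A dimension count shows that at each $n$ the number of constrained profiles and the number of free profiles add up exactly to $n+1$ in each of the three components, so the scheme is neither over- nor under-determined. The free profiles will be taken as smooth functions on $[0,\delta]$ whose Taylor jets at the corner are the prescribed numbers $\{\bar\sigma^k_{i\ast}\}$ produced in Step~1 (a Borel/Seeley summation along the edge), chosen so that they tend to $0$ in the relevant Sobolev norm whenever the corresponding corner data do, i.e.\ for $i+j\le m-2$.

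For the constrained profiles I restrict \eqref{csys-odd} and \eqref{csys-even} to $\{x_h=0\}$. The operator $\hat{\mathcal{L}}^h_{n-2l-1,2l+1}=\partial_{h'}^{n-2l-1}\hat J_h(\partial_1\hat J_1+\partial_2\hat J_2)^l$ has a distinguished top $\partial_h$-order part $\partial_h^{2l+1}\partial_{h'}^{n-2l-1}$ with a nonzero frozen coefficient; collecting the conditions for all admissible $l$ at a fixed $n$, and evaluating at the corner, reduces the order-$n$ corner jet precisely to the linear systems \eqref{lnsys-1}--\eqref{lnsys-2}, solved in Step~1 thanks to the surjectivity of the $B_n$ for $1\le n\le m-1$ under $\omega\in(0,\tfrac{\pi}{[m/2]})$ (Lemma~\ref{lemma-B-surj}). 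Along the edge the same restriction gives, at top transverse order, an equation of the form $\big(c+O(x_{h'})\big)\alpha^k_{2l+1}=(\text{data})+(\text{lower transverse order }\alpha^k,\xi^k,\tau^k)+(\text{corrector terms with }O(x_{h'})\text{ coefficients})$, where the data come from $v^k,b^k,q^k,s^k,F^k$, the lower-order profiles have already been constructed at earlier stages or are among the free profiles, and the correctors $(\hat{\mathcal{L}}^h-\mathcal{L}^h)r^k$, $\mathcal{P}^h(\,\cdot\,)$, $K(Z)$, $H(Z)$ are upper triangular in the transverse order up to coefficients vanishing at $x=0$. Choosing $\delta\le\delta_0(G,\mathcal{K})$ makes the resulting $(n+1)\times(n+1)$ matrix (a fixed invertible matrix built from the data of $B_n$ plus an $O(\delta)$ perturbation) boundedly invertible on $\mathcal{U}_h$, which determines the constrained profiles. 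The magnetic field enters only through $B$ inside $K(Z),H(Z)$ and the $\mathcal{P}^h$'s; since $b^k$ is not a corrector, these are merely additional smooth data and do not affect the scheme.

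It remains to verify the three conclusions. Conclusion~(1) holds because the constructed profiles have corner Taylor coefficients $\bar\sigma^k_{ij}$ for $i+j\le m+1$ by construction of the free parts and by the above solvability at the corner, while the Step~1 identities $\bar\alpha^k_{ij}=\bar\beta^k_{ji}$, $\bar\xi^k_{ij}=\bar\eta^k_{ji}$, $\bar\tau^k_{ij}=\bar\theta^k_{ji}$ give \eqref{zkj-1}. Conclusion~(3) is automatic: \eqref{csys-low} was imposed by taking $\xi^k_0\equiv\theta^k_0\equiv0$, and \eqref{csys-odd}--\eqref{csys-even} at $n=m-1$ (and at every $n\le m-1$ along the way) are exactly the relations used to define the constrained profiles. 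For conclusion~(2), the $H^{m-j-\frac12}(\mathcal{U}_h)$-convergence to $0$ follows from $v^k\to v$, $b^k\to b$, $q^k\to q$, $s^k\to s$ in $H^{m}$, $F^k\to F$ in $H^{m}$, $\bar\sigma^k_{ij}\to0$ for $i+j\le m-2$, and the triangular structure, which shows the constrained profile of transverse order $n$ loses at most the expected half-derivative; the weighted statement for $i+j=m-1$ uses that $\partial_2^i\mathrm{z}^k_{1j}(0,\cdot)-\partial_1^j\mathrm{z}^k_{2i}(\cdot,0)$ is supported in $[0,\delta]$, vanishes at the corner by \eqref{zkj-1}, and is uniformly bounded in $H^{1/2}$, so a one-dimensional Hardy inequality gives $\int_0^\delta|\partial_2^i\mathrm{z}^k_{1j}(0,y)-\partial_1^j\mathrm{z}^k_{2i}(y,0)|^2 y^{-1}\,\d y\to0$ as $k\to\infty$.

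The main obstacle is the bookkeeping of the second and (partly) first paragraphs: checking that the split into free and constrained profiles is exactly balanced at every stage, that the leading $(n+1)\times(n+1)$ matrix along each edge is invertible for $\delta$ small, and that the recursion closes without circular dependencies — precisely the place where the angle restriction and the surjectivity of the $B_n$ are indispensable. Once this is in place, the rest is a routine though lengthy induction, which follows \cite[pp.~16--19]{godin20212d} with only notational changes, the sole genuinely new feature being the extra data carried by the magnetic field through $K(Z)$ and $H(Z)$.
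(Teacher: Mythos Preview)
Your stage-by-stage recursion has a structural gap. At ``stage $n$'' you propose to determine the three new edge profiles $\alpha^k_n,\xi^k_n,\tau^k_n$ on $\mathcal{U}_1$ (and their counterparts on $\mathcal{U}_2$) and simultaneously to impose \eqref{csys-odd}--\eqref{csys-even} at level $n$. But those relations at level $n$ number $[(n+1)/2]+[n/2]=n$ on each edge, and their top-order terms are the \emph{tangential} derivatives $(\alpha^k_{2l+1})^{(n-2l-1)}$ and $(\partial_1^{2l-1}\nabla\cdot w^k)^{(n-2l)}$, i.e.\ derivatives along $\mathcal{U}_h$ of profiles you already froze at earlier stages. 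Once $\alpha^k_{2l+1}$ has been chosen at stage $2l+1$, its $(n-2l-1)$-th tangential derivative is no longer adjustable, so for $n>3$ the system is over-determined and the scheme does not close. Your dimension count (``add up exactly to $n+1$ in each of the three components'') conflates corner jets $\bar\sigma^k_{ij}$ with $i+j=n$ --- which do number $n+1$ per component --- with edge profiles of transverse order $n$, of which there are only three.

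The paper's route is essentially different. It imposes \eqref{csys-odd}--\eqref{csys-even} \emph{only} at $n=m-1$ (which is all the lemma asks) and takes as unknowns the \emph{highest tangential derivatives} $(\alpha^k_j)^{(m-1-j)},(\xi^k_j)^{(m-1-j)}$ for all $j$ at once; lower tangential derivatives are expressed through these via Newton--Leibniz integrals \eqref{NL-1}--\eqref{NL-2}. The resulting system is an \emph{integral} equation \eqref{Bsys} on $[0,\delta]$ with leading matrix $B=\operatorname{diag}(B_{m-1},B_{m-2})$, which is surjective with exactly one excess column; after deleting that column (and freely choosing the corresponding profile together with all the $\tau^k_j$), the reduced matrix is invertible and \eqref{Bsys} is solved by a contraction for $\delta\le\delta_0$ (Lemma~\ref{solution}). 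Your $(n+1)\times(n+1)$ matrices never appear. Finally, the weighted condition in \eqref{zkj-2} is not obtained by a Hardy inequality but is built into the construction via the auxiliary functions $D^{I,k}_{m-1-j},D^{II,k}_{m-1-j},D^{III,k}_{m-1-j}$ of Lemma~\ref{lemma-D-exst} and the coupling \eqref{cysys-*}; this ingredient is absent from your outline.
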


To have Lemma~\ref{lemma-zkj-low}(2), we introduce the next result given in \cite{godin20212d}.
\begin{lemma}\label{lemma-D-exst}(cf.~\cite[Lemma~3.6]{godin20212d})
    There exists $D^{I,k}_{m-1-j}, D^{II,k}_{m-1-j}, D^{III,k}_{m-1-j}\in C^\infty([0,\delta])$ with $j = 0,\ldots, m-1$, such that
    \begin{enumerate}
        \item[(1)] the mappings $y\mapsto yD^{I,k}_{m-1-j}, yD^{II,k}_{m-1-j}, yD^{III,k}_{m-1-j}$ tend to $0$ in $H^{\frac{1}{2}}(0,\delta)\cap L^2((0,\delta),\d y / y)$  as $k\to +\infty$;
        \item[(2)] let $(E,\sigma)$ be one of $(D^{I}, \alpha), (D^{II}, \xi), (D^{III}, \tau)$, then
        \begin{equation*}
            2^{l} (E^k_{m-1-j})^{(l)}(0) = \bar{\sigma}^k_{m-1-j,j+l+1}- \bar{\sigma}^k_{m-j+l,j},
        \end{equation*}
        for $l=0,1$, and $0\le j\le m-1$.
    \end{enumerate}
\end{lemma}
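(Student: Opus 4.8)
The plan is to produce the functions $D^{I,k}_{m-1-j}, D^{II,k}_{m-1-j}, D^{III,k}_{m-1-j}$ explicitly, following the construction of \cite[Lemma~3.6]{godin20212d}; the presence of the magnetic field here only enlarges the list of corner data $\bar{\sigma}^k_{ij}$ entering the prescribed values and first derivatives, but plays no role in the construction itself. For a fixed $j\in\{0,\dots,m-1\}$ and each pair $(E,\sigma)\in\{(D^{I},\alpha),(D^{II},\xi),(D^{III},\tau)\}$ I would put $a^k_j:=\bar{\sigma}^k_{m-1-j,\,j+1}-\bar{\sigma}^k_{m-j,\,j}$ and $b^k_j:=\tfrac12\big(\bar{\sigma}^k_{m-1-j,\,j+2}-\bar{\sigma}^k_{m-j+1,\,j}\big)$; these are exactly the value and the first derivative at $y=0$ demanded in item~(2), with the factor $2^{l}$ absorbed into $b^k_j$.

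The first thing to record is that $\{a^k_j\}_k$ and $\{b^k_j\}_k$ are \emph{bounded} (but in general do not tend to $0$): every $\bar{\sigma}^k_{pq}$ with $p+q\le m+1$ is, by its construction in Lemma~\ref{lemma-solvable}, a fixed (e.g.\ minimal-norm) solution of a linear system whose right-hand side is a fixed differential expression in $z^k$ and $F^k$, hence bounded in view of $z^k\to z$ in $H^{m}$ and $F^k\to F$ in $H^{m}$. This is precisely why item~(1) is phrased in terms of the weighted spaces rather than as plain convergence of $E^k_{m-1-j}$ to $0$. Next I would fix once and for all a cut-off $\chi\in C^\infty_0([0,2))$ with $\chi\equiv 1$ on $[0,1]$ and $\chi'(0)=0$, and choose a sequence $\delta_k\downarrow 0$ with $2\delta_k\le\delta$ for all $k$ (the admissible range of $\delta$, hence the constant $\delta_0$, being dictated only by the geometry so that $\mathcal{U}^{\delta}$ is a legitimate coordinate patch, exactly as in \cite{godin20212d}). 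Then I would set
\begin{equation*}
    E^k_{m-1-j}(y):=\chi\!\left(\tfrac{y}{\delta_k}\right)\big(a^k_j+b^k_j\,y\big),\qquad 0\le y\le\delta,
\end{equation*}
so that $E^k_{m-1-j}\in C^\infty([0,\delta])$, and since $\chi(0)=1$, $\chi'(0)=0$ one gets $E^k_{m-1-j}(0)=a^k_j$, $(E^k_{m-1-j})'(0)=b^k_j$, which is item~(2).

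For item~(1), write $g^k:=y\,E^k_{m-1-j}(y)$, a smooth function supported in $[0,2\delta_k]$. A direct computation, using $|y\chi'(y/\delta_k)/\delta_k|\le 2\|\chi'\|_{L^\infty}$ on the support of $\chi'$, shows $\|g^k\|_{L^\infty}\lesssim\delta_k$ and $\|(g^k)'\|_{L^\infty}\lesssim 1$ uniformly in $k$; consequently $\|g^k\|_{L^2(0,\delta)}^2\lesssim\delta_k^3$ and $\|(g^k)'\|_{L^2(0,\delta)}^2\lesssim\delta_k$, hence $\|g^k\|_{H^{1/2}(0,\delta)}\le\|g^k\|_{H^1(0,\delta)}\to 0$. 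Moreover
\begin{equation*}
    \int_0^\delta |g^k(y)|^2\,\frac{\d y}{y}=\int_0^{2\delta_k} y\,|E^k_{m-1-j}(y)|^2\,\d y\lesssim\delta_k^2\big(|a^k_j|^2+|b^k_j|^2\delta_k^2\big)\longrightarrow 0,
\end{equation*}
which finishes item~(1).

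I do not expect any real obstacle: the mechanism is simply that multiplying a function that merely interpolates \emph{bounded} corner data by a cut-off concentrating at a vanishing scale $\delta_k$ annihilates it in $H^{1/2}(0,\delta)\cap L^2((0,\delta),\d y/y)$ while leaving the prescribed trace and first normal derivative at the corner untouched. The only genuinely necessary precaution is the choice of $\chi$ with $\chi'(0)=0$, so that the localisation does not perturb the first-order data; once that is in place, everything reduces to the two displayed estimates above, and $\delta_0$ may be taken as any fixed bound on the geometry ensuring that the coordinate patch and the matrix $G$ are well defined, exactly as in the cited reference.
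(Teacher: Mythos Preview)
The paper does not give a proof of this lemma; it simply records the statement and defers to \cite[Lemma~3.6]{godin20212d}. Your explicit construction via a shrinking cut--off is exactly the standard device behind that reference, and the verification of item~(2) together with the $H^{1/2}\cap L^2(\d y/y)$ estimates in item~(1) is correct as written. (The requirement $\chi'(0)=0$ is already implied by $\chi\equiv1$ on $[0,1]$, so that precaution is automatic.)

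There is, however, one point that is not quite justified. You assert that the corner data $\bar\sigma^k_{pq}$ with $p+q\in\{m,m+1\}$ are uniformly bounded in $k$ because they solve linear systems whose right--hand sides are ``fixed differential expressions'' in $z^k,F^k$. But those right--hand sides at level $n=m,m+1$ involve point values at the origin of derivatives of $v^k,b^k,q^k,F^k$ of orders up to $n$, and in two dimensions convergence in $H^{m}$ controls pointwise values of derivatives only up to order $m-2$. Since the approximating sequences are merely smooth with $H^m$--convergence (no uniform higher--order control is imposed in the setup), these high--order corner values need not be bounded in $k$.

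The fix costs nothing and leaves your construction intact: instead of claiming boundedness, let $\delta_k$ shrink fast enough relative to the finitely many numbers $a^k_j,b^k_j$, for instance
\[
\delta_k\le\Big(k\big(1+\max_{0\le j\le m-1}(|a^k_j|^2+|b^k_j|^2)\big)\Big)^{-1}.
\]
Your estimates then become $\|g^k\|_{H^1(0,\delta)}^2\lesssim(|a^k_j|^2+|b^k_j|^2\delta_k^2)\,\delta_k$ and $\int_0^\delta|g^k|^2\,\d y/y\lesssim(|a^k_j|^2+|b^k_j|^2\delta_k^2)\,\delta_k^2$, both of which tend to $0$ by this choice of $\delta_k$. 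With this single adjustment the argument is complete.
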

\noindent Then, by setting that for $j = 0,\ldots, m-1$,
\begin{equation}\label{cysys-*}
\begin{aligned}
    &(\beta^k_j)^{(m-1-j)}(y) = (\alpha^k_{m-1-j})^{(j)}(y) - y D^{I,k}_{m-1-j}(y),\\
    &(\eta^k_j)^{(m-1-j)}(y) = (\xi^k_{m-1-j})^{(j)}(y) - y D^{II,k}_{m-1-j}(y),\\
    &(\theta^k_j)^{(m-1-j)}(y) = (\tau^k_{m-1-j})^{(j)}(y) - y D^{III,k}_{m-1-j}(y),
\end{aligned}
\end{equation}
one gets \eqref{zkj-2} for $0\le j\le m-1$.

To obtain Lemma~\eqref{lemma-zkj-low}(1), we study \eqref{csys-low}-\eqref{csys-even} on both $\mathcal{U}_1$ and $\mathcal{U}_2$.
First, we take
\begin{equation}\label{xi-theta}
    (\xi^k_0)^{(m-1)} \equiv (\theta^k_0)^{(m-1)} \equiv 0
\end{equation}
to take care of \eqref{csys-low}.
Then by \eqref{cysys-*}, one immediately has
\begin{equation}\label{eta-tau}
    (\eta^k_{m-1})^{(0)} \equiv - y D^{II,k}_0(y),\quad (\tau^k_{m-1})^{(0)}(y) \equiv y D^{III,k}_{m-1}(y).
\end{equation}

Next, by the Newton-Leibniz formula and \eqref{cysys-*}, we have that for $i+j\le m-2$ and $\sigma = \alpha,\xi,\tau$,
\begin{equation}\label{NL-1}
    \begin{aligned}
        (\sigma^k_i)^{(j)}(y) = &\sum_{0\le l\le m-2-i-j} \bar{\sigma}^k_{i,j+l} \frac{y^l}{l!} \\ &+ \frac{1}{(m-2-i-j)!} \int_0^y (y-y')^{m-2-i-j} (\sigma^k_i)^{(m-2-i)}(y') \d y';
    \end{aligned}
\end{equation}
and for $(\sigma,\zeta,E) = (\beta,\alpha,D^I), (\eta,\xi,D^{II}), (\theta,\tau,D^{III})$,
\begin{equation}\label{NL-2}
\begin{aligned}
    &(\sigma^k_i)^{(j)}(y) = \sum_{0\le l\le m-2-i-j} \bar{\zeta}^k_{j+l,i} \frac{y^l}{l!} \\& \qquad + \frac{1}{(m-2-i-j)!} \int_0^y (y-y')^{m-2-i-j} \Big((\zeta^k_{m-1-i})^{(i)}(y') - y' E^k_{m-1-i}(y')\Big)\d y'.
\end{aligned}
\end{equation}
Hence, the equations \eqref{csys-odd}-\eqref{csys-even} with $n= m-1$, $h=1,2$ can be regarded as an integral equation for
\begin{equation*}
    \varphi^k(y) = \left(\hat{\alpha}^k,\hat{\xi}^k\right)^t (y)
    = \left((\alpha^k_0)^{(m-1)}, \ldots, (\alpha^k_{m-1})^{(0)},(\xi^k_1)^{(m-2)}, \ldots, (\xi^k_{m-1})^{(0)}\right)^t(y),
\end{equation*}
whose leading-order coefficient matrix at $x=0$ is given by
\begin{equation*}
    B = \begin{pmatrix}
        B_{m-1} & \\ & B_{m-2}
    \end{pmatrix}.
\end{equation*}
Then $B$ is a $\left(2[\frac{m}{2}]+2[\frac{m-1}{2}]\right) \times (2m-1)$ surjective matrix.
Note that $2m-1 = 2[\frac{m}{2}] + 2[\frac{m-1}{2}] + 1$. This means that $B$ has exactly one more column to be a square matrix. 
This allows us to delete a column from $B$, say the $d$-th column, so that the resulting matrix $B^*$ is invertible.
When $1\le d\le m$, we delete $(\alpha^k_{d-1})^{m-d}$ from $\varphi^k$, and while $m+1\le d\le 2m-1$, we delete $(\xi^k_{d-m})^{2m-1-d}$ from $\varphi^k$. Denote by $\varphi^k_*$ the resulting vector.

Now we are free to choose $\tau^k_j\in C^\infty([0,\delta])$ for $j = 0,\ldots, m-2$, and $\alpha^k_{d-1} \in C^\infty([0,\delta])$ if $1\le d\le m$; $\xi^k_{d-m} \in C^\infty([0,\delta])$ if $m+1\le d\le 2m-1$,
such that
\begin{equation}\label{tau}
    \begin{aligned}
        \tau^k_j \to 0 \text{ in } H^{m-\frac{1}{2}-j} \text{ as } k\to +\infty,\quad (\tau^k_j)^{(i)}(0) = \bar{\tau}^k_{ji},\, 0\le i\le m+1-j;
    \end{aligned}
\end{equation}
\begin{equation}\label{alpha}
    \begin{aligned}
        \alpha^k_{d-1} \to 0 \text{ in } H^{m+\frac{1}{2}-d} \text{ as } k\to +\infty,\quad (\alpha^k_{d-1})^{(i)}(0) = \bar{\alpha}^k_{d-1,i},\, 0\le i\le m+2-d,
    \end{aligned}
\end{equation}
if $1\le d\le m$; and
\begin{equation}\label{xi}
    \begin{aligned}
        \xi^k_{d-m} \to 0 \text{ in } H^{2m-\frac{1}{2}-d} \text{ as } k\to +\infty,\quad (\xi^k_{d-m})^{(i)}(0) = \bar{\xi}^k_{d-m,i},\, 0\le i\le 2m+1-d,
    \end{aligned}
\end{equation}
if $m+1\le d\le 2m-1$. This is achievable, see \cite[Lemma~3.7]{godin20212d} for the proof.

Finally, \eqref{csys-odd}-\eqref{csys-even} are transformed into the integral equation
\begin{equation}\label{Bsys}
    B_* \varphi^k_*(y) = y a(y,Z) \varphi^k_*(y) + \sum_{\gamma\in I} b_\gamma (y,Z)\int_0^y P_\gamma (y-y') \varphi^k_*(y') \d y' + \psi^k(y,Z),
\end{equation}
where $a, b_\gamma$ and $P_\gamma$ are given $(2m-2)\times (2m-2)$ smooth matrices (independent of $\varphi^k_*$),
the set $I$ is finite, and $\psi^k \in C^\infty([0,\delta]\times \mathcal{K}_1)$ is a given $(2m-2)$-dimensional vector (independent of $\varphi^k_*$) consisting of the terms contributed from
\begin{enumerate}
    \item[(1)] $\bar{\alpha}^k_{ij},i+j \le m-2$, except for $i = d-1$ if $1\le d\le m$, and $\bar{\xi}^k_{ij},i+j \le m-2$, except for $i = d-m$ if $m+1\le d\le 2m-1$;
    \item[(2)] $\tau^k_j(y)$, $0\le j\le m-2$, and $q^k(y),v^k(y),b^k(y),{F}^k(y)$;
    \item[(3)] $\alpha^k_{d-1}(y)$ if $1\le d\le m$, or $\xi^k_{d-m}$ if $m+1\le d\le 2m-1$;
    \item[(4)] $y D^{I,k}_j(y), y D^{II,k}_j(y), y D^{III,k}_j(y)$, $0\le j\le m-1$,
\end{enumerate}
with coefficients being the form of $\Theta(y,Z)$ for some $\Theta\in C^\infty ([0,\delta]\times \mathcal{K}_1)$.

Then \eqref{sigma-cdt}, \eqref{tau}-\eqref{xi}, the fact that $q^k, v^k, b^k, {F}^k \to \p, u, b, {F}$ in respective $H^{m}$ spaces and the convergence properties of $D^{I,k}_j,D^{II,k}_j,D^{III,k}_j$ show that for any fixed $Z$, $\psi^k(y,Z) \to 0$ in $H^{\frac{1}{2}}(0,\delta)$ as $k\to +\infty$.

\begin{lemma}\label{solution}(cf.~\cite[Lemma~3.9]{godin20212d})
    There exists $\delta_0 >0$ depending on the matrix $G$ and $\mathcal{K}$, such that for any $\delta \in (0,\delta_0]$, \eqref{Bsys} has a unique solution $\varphi^k_* \in C^\infty([0,\delta])$; moreover, $\varphi^k_* \to 0$ in $H^{\frac{1}{2}}(0,\delta)$ as $k\to +\infty$.
\end{lemma}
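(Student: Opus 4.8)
The plan is to rewrite \eqref{Bsys} as a fixed-point problem for a contraction on $H^{\frac12}(0,\delta)$, and then to upgrade the regularity of the fixed point by a Volterra-type bootstrap. Since $B_*$ is invertible, \eqref{Bsys} is equivalent to $\varphi^k_* = \mathcal{T}^k\varphi^k_*$, where
\[
\mathcal{T}^k\varphi(y) := B_*^{-1}y\,a(y,Z)\varphi(y) + B_*^{-1}\sum_{\gamma\in I} b_\gamma(y,Z)\int_0^y P_\gamma(y-y')\varphi(y')\,\d y' + B_*^{-1}\psi^k(y,Z),
\]
and we denote by $\mathcal{T}_0 := \mathcal{T}^k - B_*^{-1}\psi^k(\cdot,Z)$ its linear homogeneous part, which is independent of $k$. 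First I would verify that $\mathcal{T}_0\in\mathcal{L}\big(H^{\frac12}(0,\delta)\big)$ with $\|\mathcal{T}_0\|_{\mathcal{L}(H^{\frac12}(0,\delta))}\le\frac12$, provided $\delta$ is small, depending only on the matrix $G$ (through $B_*$, which is fixed by the coordinate change straightening the corner) and on the $C^1$-bounds of $a,b_\gamma,P_\gamma$ over $[0,\delta_0]\times\mathcal{K}_1$ (hence on $\mathcal{K}$). For the multiplication part, with $\chi(y)=B_*^{-1}y\,a(y,Z)$ one splits $\chi(y)\varphi(y)-\chi(y')\varphi(y') = \chi(y)(\varphi(y)-\varphi(y')) + (\chi(y)-\chi(y'))\varphi(y')$ in the Gagliardo seminorm, bounding the first term by $\|\chi\|_{L^\infty([0,\delta])}[\varphi]_{\frac12}\le\delta\,\|B_*^{-1}a\|_{L^\infty}[\varphi]_{\frac12}$ and the second by using $|\chi(y)-\chi(y')|\le\|\chi'\|_{L^\infty([0,\delta])}|y-y'|$ together with $|y-y'|<\delta$, which gives $\|\chi\varphi\|_{H^{\frac12}(0,\delta)}\lesssim\delta^{\frac12}\|\varphi\|_{H^{\frac12}(0,\delta)}$. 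For the Volterra part $V$, one has $\|V\varphi\|_{L^2(0,\delta)}\lesssim\delta^{\frac32}\|\varphi\|_{L^2(0,\delta)}$ and $\|V\varphi\|_{H^1(0,\delta)}\lesssim\|\varphi\|_{L^2(0,\delta)}$, so interpolation yields $\|V\|_{\mathcal{L}(H^{\frac12}(0,\delta))}\lesssim\delta^{\frac34}$. Banach's fixed point theorem then produces, for each $k$, a unique $\varphi^k_*\in H^{\frac12}(0,\delta)$ solving \eqref{Bsys}.

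Next I would establish smoothness. For $\delta$ small the pointwise matrix $I - B_*^{-1}y\,a(y,Z)$ is invertible for every $y\in[0,\delta]$ with a $C^\infty([0,\delta])$ inverse (Neumann series), so \eqref{Bsys} can be recast as a pure Volterra equation $\varphi^k_* = \widetilde V\varphi^k_* + g^k$, where $\widetilde V\varphi(y)=\int_0^y\widetilde K(y,y')\varphi(y')\,\d y'$ with $\widetilde K\in C^\infty$, and $g^k=(I-B_*^{-1}y\,a)^{-1}B_*^{-1}\psi^k(\cdot,Z)\in C^\infty([0,\delta])$. Since $\widetilde V$ maps $H^s(0,\delta)$ into $H^{s+1}(0,\delta)$ for every $s\ge0$ — indeed $(\widetilde V\varphi)'(y)=\widetilde K(y,y)\varphi(y)$ plus a term at least as regular as $\widetilde V\varphi$, and multiplication by a smooth function preserves $H^s$ on the interval — one bootstraps from $\varphi^k_*\in H^{\frac12}(0,\delta)\subset L^2(0,\delta)$ to $\varphi^k_*\in\bigcap_j H^j(0,\delta)=C^\infty([0,\delta])$; uniqueness is automatic as $\widetilde V$ is a Volterra operator.

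Finally, the convergence $\varphi^k_*\to 0$ in $H^{\frac12}(0,\delta)$ is immediate from the linear structure: since $\|\mathcal{T}_0\|_{\mathcal{L}(H^{\frac12}(0,\delta))}\le\frac12$, the operator $I-\mathcal{T}_0$ is boundedly invertible on $H^{\frac12}(0,\delta)$ with $\|(I-\mathcal{T}_0)^{-1}\|\le 2$, whence $\varphi^k_* = (I-\mathcal{T}_0)^{-1}\big(B_*^{-1}\psi^k(\cdot,Z)\big)$ and $\|\varphi^k_*\|_{H^{\frac12}(0,\delta)}\le 2\|B_*^{-1}\|\,\|\psi^k(\cdot,Z)\|_{H^{\frac12}(0,\delta)}\to 0$ as $k\to+\infty$, using the convergence of $\psi^k$ established just before the lemma. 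I expect the main obstacle to be the first step: because $H^{\frac12}$ is the critical Sobolev index on an interval, multiplication by $y\,a(y,Z)$ is \emph{not} rendered small merely by its $W^{1,\infty}$-norm, so one must carefully exploit both the boundedness of the interval $(0,\delta)$ and the vanishing of the multiplier at $y=0$ to extract the gain $\delta^{\frac12}$; once the contraction estimate is in place, the smoothness and convergence steps are routine Volterra theory.
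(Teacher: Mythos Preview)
The paper does not supply its own proof here; it simply cites \cite[Lemma~3.9]{godin20212d}. Your contraction argument in $H^{\frac12}(0,\delta)$ followed by a Volterra bootstrap to $C^\infty$ is a correct and standard way to establish such a lemma, and the convergence step via $(I-\mathcal{T}_0)^{-1}$ is exactly right. One small correction: your claimed bound $\|V\varphi\|_{L^2}\lesssim\delta^{3/2}\|\varphi\|_{L^2}$ is off---Cauchy--Schwarz gives $|V\varphi(y)|\le C\sqrt{y}\,\|\varphi\|_{L^2}$, so $\|V\varphi\|_{L^2}\lesssim\delta\,\|\varphi\|_{L^2}$---but this only changes $\delta^{3/4}$ to $\delta^{1/2}$ after interpolation and does not affect the argument.
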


By solving \eqref{Bsys}, we have constructed $\mathrm{z}^k_{1j}, \mathrm{z}^k_{2j}$ for $j=0,\ldots, m-1$ such that all properties stated in Lemma~\ref{lemma-zkj-low} are satisfied. Then the proof of Lemma~\ref{lemma-zkj-low} is done.

\paragraph{Step~3. Suitable choices of $\mathrm{z}^k_{1j}, \mathrm{z}^k_{2j}$ with $j=m,m+1$.}
\begin{lemma}\label{lemma-final}
    There exists $\delta_0 >0$ depending on the matrix $G$ and $\mathcal{K}$, such that for any $\delta \in (0,\delta_0]$, one can find
    $\mathrm{z}^k_{1j}\in C^\infty(\overline{\mathcal{U}}_1)$, $\mathrm{z}^k_{2j} \in C^\infty(\overline{\mathcal{U}}_2)$, such that for $j=m, m+1$,
    \begin{enumerate}
        \item[(1)] $\partial_2^i \mathrm{z}^k_{1j}(0,0) = (\bar{\alpha}^k_{ji}, \bar{\xi}^k_{ji}, \bar{\tau}^k_{ji})$, with  $i\le m+1-j, j = m, m+1$, and $\partial_1^j \mathrm{z}^k_{2i}(0,0) = (\bar{\alpha}^k_{ji}, \bar{\xi}^k_{ji}, \bar{\tau}^k_{ji})$ with $j\le m+1-i, i=m,m+1$;
        \item[(2)] \eqref{csys-odd}-\eqref{csys-even} hold for $n=m, m+1$.
    \end{enumerate}
\end{lemma}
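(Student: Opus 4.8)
The plan is to prove Lemma~\ref{lemma-final} by essentially the same mechanism as in Step~2, but now the situation is much simpler because we no longer need to preserve any convergence as $k\to+\infty$: the correctors $w^k, r^k$ only have to belong to $H^{m+\frac52}(\mathcal{U})$ and vanish suitably, so the orders $j=m, m+1$ produce traces $\mathrm{z}^k_{1j}, \mathrm{z}^k_{2j}$ that are only required to lie in $C^\infty$ of the edges, with no smallness. First I would determine the corner values $\bar\sigma^k_{ij}$ for $i+j\le m+1$: these have already been fixed in Lemma~\ref{lemma-solvable} (i.e.\ \eqref{sigma-cdt} for $n=m+1$), so the compatibility relations \eqref{csys-low}--\eqref{csys-even} restricted to $x=0$ hold up to order $m+1$, and the matching condition \eqref{zkj-1} at the corner is automatic. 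Thus the task reduces to extending these corner jets to functions on the two edges $\mathcal{U}_1, \mathcal{U}_2$ that also satisfy the edgewise compatibility identities \eqref{csys-odd}--\eqref{csys-even} for $n=m$ and $n=m+1$ identically in $y\in(0,\delta)$.

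The key step is to recognize that, exactly as in Step~2, for each $n\in\{m, m+1\}$ the identities \eqref{csys-low}--\eqref{csys-even} on $\mathcal{U}_h$ ($h=1,2$) form a linear first-order ODE (or Volterra integral) system for the top-order edge-derivatives $(\alpha^k_i)^{(n-i)}, (\xi^k_i)^{(n-i)}$, whose leading coefficient matrix at $y=0$ is the block-diagonal matrix $B_{n}$ (with $B_{n-1}$ coming from the divergence equations). Under the hypothesis $\omega\in(0,\tfrac{\pi}{[\frac m2]})$, Lemma~\ref{lemma-B-surj} tells us that $B_n$ is surjective for all $n\le m$, and for $n=m+1$ it is surjective unless $\omega=\tfrac{\pi}{m+1}$; in the latter exceptional case its range is the hyperplane $\mathrm{R}_{B_{m+1}}$, and we have already arranged in \eqref{F-arrg} that the relevant data vector lies in that hyperplane. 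So in every case the systems are solvable. I would again delete the appropriate number of ``extra'' columns (the matrices $B_n$ have one more column than rows once the two parity blocks are combined), use that freedom to prescribe $\tau^k_j$ (and one of the $\alpha^k$ or $\xi^k$ components) as arbitrary smooth functions with the prescribed corner jet $\bar\sigma^k_{ij}$, and then invert the resulting square Volterra system $B_*\varphi^k = y\,a(y)\varphi^k + \int_0^y(\cdots)\varphi^k + \psi^k$ by Picard iteration on a small interval $(0,\delta)$ with $\delta\le\delta_0$ depending only on $G$ and $\mathcal{K}$; this is the content of the analogue of Lemma~\ref{solution}. The only real change from Step~2 is bookkeeping: one must check the newly obtained $(\alpha^k_i)^{(n-i)}, (\xi^k_i)^{(n-i)}$ for $n=m, m+1$ have enough derivatives, i.e.\ $\varphi^k\in C^\infty([0,\delta])$, which follows because the inhomogeneity $\psi^k$ is built from $v^k, b^k, q^k, F^k\in C^\infty$ and from lower-order data already constructed in Steps~1--2 that are smooth for each fixed $k$ (smoothness, not smallness, is what \eqref{zkj-2} and Lemma~\ref{lemma-zkj-low} supply at the top orders).

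Having solved the edge ODEs, point~(1) of Lemma~\ref{lemma-final} — namely $\partial_2^i\mathrm{z}^k_{1j}(0,0)=(\bar\alpha^k_{ji},\bar\xi^k_{ji},\bar\tau^k_{ji})$ for $i\le m+1-j$, $j=m,m+1$, and the symmetric statement on $\mathcal{U}_2$ — is enforced directly as the initial conditions of the Volterra system, using the corner values from Lemma~\ref{lemma-solvable}; the consistency $\bar\alpha^k_{ij}=\bar\beta^k_{ji}$ etc.\ guarantees that the $\mathcal{U}_1$ and $\mathcal{U}_2$ constructions agree at $x=0$ to order $m+1$, which is precisely \eqref{zkj-1}. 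Point~(2) — that \eqref{csys-odd}--\eqref{csys-even} hold for $n=m, m+1$ identically on the edges — is exactly what the Volterra system encodes, so it holds by construction. Finally I would assemble $\mathrm{z}^k_{1j}, \mathrm{z}^k_{2j}$ for all $0\le j\le m+1$ from Steps~2 and~3, note that \eqref{zkj-1} holds for all $i+j\le m+1$, and conclude Lemma~\ref{lemma-zkj}, hence Proposition~\ref{prop-lift-local} and Proposition~\ref{prop-lift-cpbt}.

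I expect the main obstacle to be the exceptional angle $\omega=\tfrac{\pi}{m+1}$ at the top order $n=m+1$, where $B_{m+1}$ fails to be surjective and one must verify that the data vector $\mathcal{G}^k_{m+1}(0)-\mathcal{F}^k_{m+1}(0)$ genuinely lands in $\mathrm{R}_{B_{m+1}}$ after the modification \eqref{F-arrg} of $F^k$; this requires tracking which entries of $\mathcal{F}^k_{m+1}(0)$ contain the single ``free'' leading term $\tfrac1R\partial_2^m(F_1^k)_1(\mathbf0)$ and checking that no hidden linear constraint among the other entries is violated. The second, more technical, obstacle is the careful choice of which column to delete from $B$ at each order $n=m, m+1$ so that the reduced matrix $B_*$ is invertible and the prescribed components (the $\tau^k_j$ and one $\alpha^k$ or $\xi^k$) can indeed be chosen freely with the right corner jets — this is a finite linear-algebra verification but it is delicate because the answer depends on $\omega$ through $[\tfrac m2]$; I would handle it exactly as the cited Lemma~3.7 of \cite{godin20212d}, adapting only the dimension count to account for the extra magnetic-field components $b^k$ (which enter $\psi^k$ but not the leading matrix $B$, since $b^k$ is already prescribed smoothly and carries no boundary constraint of the $w^k$-type).
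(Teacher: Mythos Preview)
Your proposal is broadly on track and agrees with the paper's remark that Step~3 is ``much easier'' than Step~2, but you import too much of the Step~2 machinery and in doing so conflate the corner problem with the edge problem. The matrix $B_n$ you invoke is the \emph{corner} matrix, formed by stacking the $h=1$ and $h=2$ rows; it governs solvability of the jets $\bar\sigma^k_{ij}$ at $x=0$ (Step~1), and its surjectivity (or the hyperplane condition when $\omega=\tfrac{\pi}{m+1}$) has already been used and discharged there via \eqref{F-arrg}. On the edges in Step~3 the two sides $\mathcal U_1, \mathcal U_2$ are fully decoupled---there is no analogue of the coupling \eqref{cysys-*}, precisely because \eqref{zkj-2} is no longer required---so no combined $B_n$ arises, and your proposed Volterra system with that leading matrix cannot be set up as written.

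The actual mechanism is more direct. The traces $\alpha_i^k, \xi_i^k, \tau_i^k$ for $i\le m-1$ are already fixed from Step~2, and by the argument after \eqref{NL-2} the compatibility conditions at all orders $\le m-1$ already hold identically on each edge. Hence among the equations \eqref{csys-odd}--\eqref{csys-even} at levels $n=m, m+1$, all but two are tangential derivatives of relations that are already satisfied; what remains on each edge is just the undifferentiated compatibility at order $m$ and at order $m+1$. These are two scalar algebraic relations for two of the six new functions (e.g.\ $\alpha_m^k$ and $\xi_{m+1}^k$ when $m$ is odd), with the remaining new traces $\tau_m^k, \tau_{m+1}^k,\dots$ chosen freely subject only to the corner-jet conditions of Lemma~\ref{lemma-solvable}. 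Solvability needs only that a single scalar leading coefficient (a power of $(G^{-1})_{11}>0$ plus magnetic corrections) be nonzero near $y=0$, which holds for $\delta$ small depending on $G$ and $\mathcal K$. Thus neither of your anticipated obstacles---the exceptional angle, and the column-deletion bookkeeping---actually enters Step~3.
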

In contrast to Step~2, the construction of $\mathrm{z}^k_{1j}$ and $\mathrm{z}^k_{2j}$ for $j = m, m+1$ is much easier, as these cases possess more freedom and do not require properties like \eqref{zkj-2}.
The proof is similar to that given in \cite[pp.~29–30]{godin20212d} and is therefore omitted.

Therefore, Lemma~\ref{lemma-zkj} is established. This in turn implies Proposition~\ref{prop-lift-cpbt}.

\section{Appendix B}
\renewcommand{\thelemma}{B.\arabic{lemma}}
\setcounter{lemma}{0}
\renewcommand{\theprop}{B.\arabic{prop}}
\setcounter{prop}{0}
\renewcommand{\theremark}{B.\arabic{remark}}
\setcounter{remark}{0}
In this appendix, we collect some auxiliary materials that are used throughout the paper.

\begin{lemma}\label{lemma-h}
    Let $\tilde{x} = q_n$ be a corner on $\partial\Omega$ for some $n$, and $\Phi_1(x),\Phi_2(x)$ be as introduced in Notation \ref{nota-1}.
    The two matrices $h^1, h^2$ given in  Lemma \ref{lemma-h-def} can be explicitly represented as
    \begin{equation*}
        h^1 = \frac{1}{\partial_1\Phi_1 \partial_2 \Phi_2 - \partial_1 \Phi_2 \partial_2 \Phi_1} \begin{pmatrix}
            \partial_2 \Phi_2 \partial_1^2 \Phi_1 - \partial_2 \Phi_1 \partial_1^2 \Phi_2 & \partial_2 \Phi_2 \partial_1\partial_2 \Phi_1 - \partial_2 \Phi_1 \partial_1\partial_2 \Phi_2\\
            -\partial_1 \Phi_2 \partial_1^2 \Phi_1 + \partial_1 \Phi_1 \partial_1^2 \Phi_2 & -\partial_1 \Phi_2 \partial_1\partial_2 \Phi_1 + \partial_1 \Phi_1 \partial_1\partial_2 \Phi_2
        \end{pmatrix},
    \end{equation*}
    \begin{equation*}
        h^2 = \frac{1}{\partial_1\Phi_1 \partial_2 \Phi_2 - \partial_1 \Phi_2 \partial_2 \Phi_1} \begin{pmatrix}
            \partial_2 \Phi_2 \partial_1\partial_2 \Phi_1 - \partial_2 \Phi_1 \partial_1\partial_2 \Phi_2 & \partial_2 \Phi_2 \partial_2^2 \Phi_1 - \partial_2 \Phi_1 \partial_2^2 \Phi_2\\
            -\partial_1 \Phi_2 \partial_1\partial_2 \Phi_1 + \partial_1 \Phi_1 \partial_1\partial_2 \Phi_2 & -\partial_1 \Phi_2 \partial_2^2 \Phi_1 + \partial_1 \Phi_1 \partial_2^2 \Phi_2
        \end{pmatrix}.
    \end{equation*}
\end{lemma}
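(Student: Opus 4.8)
The statement is a direct consequence of the defining relations for $h^1,h^2$ established in the proof of Lemma~\ref{lemma-h-def}. The plan is simply to solve the linear system \eqref{h-1} explicitly at a corner point, using the fact that $\nabla\Phi_1$ and $\nabla\Phi_2$ are linearly independent there. No new analytic input is needed; the only care required is in keeping track of transposes when passing from the relations on the covectors $\tilde\nu^l$ to the matrix entries of $h^i$.

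\textbf{Step 1: rewrite the defining relations in matrix form.} Recall that near the corner $\tilde x=q_n$ the matrices $h^i$ ($i=1,2$) are uniquely characterized by \eqref{h-1}, i.e. $(h^i)^t\tilde\nu^l=\partial_i\tilde\nu^l$ for $l=1,2$, where $\tilde\nu^l=-\nabla\Phi_l$. Cancelling the common sign, this is equivalent to
\begin{equation*}
    (h^i)^t\,\nabla\Phi_l=\partial_i\nabla\Phi_l,\qquad l=1,2 .
\end{equation*}
Introduce the $2\times2$ matrices $M$ and $N^i$ whose columns are $M=(\nabla\Phi_1\ \nabla\Phi_2)$ and $N^i=(\partial_i\nabla\Phi_1\ \partial_i\nabla\Phi_2)$; explicitly $M_{jk}=\partial_j\Phi_k$ and $N^i_{jk}=\partial_i\partial_j\Phi_k$. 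Then the two vector identities above collapse into the single matrix identity $(h^i)^t M=N^i$.

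\textbf{Step 2: invert $M$ and multiply out.} Since $\omega_n\in(0,\pi)$, Notation~\ref{nota-1}(4) and the discussion after \eqref{U-5} give $\det M=\partial_1\Phi_1\,\partial_2\Phi_2-\partial_1\Phi_2\,\partial_2\Phi_1=\nabla\Phi_1\times\nabla\Phi_2\neq0$ in a small neighborhood of $q_n$, so $M$ is invertible there and $(h^i)^t=N^i M^{-1}$, hence $h^i=(M^{-1})^t\,(N^i)^t$. Using the standard cofactor formula
\begin{equation*}
    M^{-1}=\frac{1}{\det M}\begin{pmatrix}\partial_2\Phi_2 & -\partial_1\Phi_2\\ -\partial_2\Phi_1 & \partial_1\Phi_1\end{pmatrix},
\end{equation*}
one substitutes $i=1$ and $i=2$ and reads off the four entries of each $h^i$, transposing at the end. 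A short computation yields exactly the formulas displayed in the statement (e.g. for $i=1$ the $(1,1)$ entry of $(h^1)^t=N^1M^{-1}$ is $\bigl(\partial_1^2\Phi_1\,\partial_2\Phi_2-\partial_1^2\Phi_2\,\partial_2\Phi_1\bigr)/\det M$, which becomes the $(1,1)$ entry of $h^1$, and so on). Uniqueness of $h^i$ satisfying \eqref{h-1} was already noted in Lemma~\ref{lemma-h-def}, so these expressions are the matrices constructed there.

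\textbf{Main obstacle.} There is essentially none: the only point demanding attention is the bookkeeping of the transpose — the relation \eqref{h-1} constrains $(h^i)^t$ acting on the columns of $M$, so one must remember to transpose $N^iM^{-1}$ before matching entries — and checking that $\det M\neq0$ near the corner, which follows from $\omega_n\in(0,\pi)$.
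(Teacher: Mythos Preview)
Your proposal is correct and is exactly the intended argument: the paper does not give a separate proof of this lemma but simply records the explicit solution of the linear system \eqref{h-1}, which its proof of Lemma~\ref{lemma-h-def} already observed determines $h^1,h^2$ uniquely near a corner. Your matrix formulation $(h^i)^t M=N^i$ and the subsequent inversion reproduce the stated formulas verbatim, and the invertibility of $M$ is precisely the condition $\nabla\Phi_1\times\nabla\Phi_2\neq0$ used elsewhere in the paper.
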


The next proposition is important in the proof of the normal regularity through the divergence-curl system in Section~\ref{Section-wellposed}.
First introduce the notations of the spatial tangential operators with $\alpha = (\alpha_1, \alpha_2)\in \mathbb{N}^2$:
\begin{equation*}
    \mathcal{T}^\alpha_x = \mathcal{T}_1^{\alpha_1} \mathcal{T}_2^{\alpha_2}, \quad \mathscr{T}^\alpha_x = \mathscr{T}_1^{\alpha_1} \mathscr{T}_2^{\alpha_2}, \quad \partial_x^\alpha = \partial_1^{\alpha_1}\partial_2^{\alpha_2}.
\end{equation*}
Denote by $D_{\mathcal{T}_x}^s$ (resp. $D_x^s$) a family of tangential operators of the form $\sum_{|\alpha|\le s}\Theta^\alpha(x) \mathcal{T}_x^{\alpha}$ (resp. $\sum_{|\alpha|\le s}\Theta^\alpha(x) \partial_x^\alpha$), with $\Theta^\alpha$ being some smooth functions, and may differ from line to line.

\begin{prop}\label{prop-PD-PN}
    Assume that $m, s\in \mathbb{N}$, and $\omega_n < \frac{\pi}{m+1}$ for $n=1,\ldots, N$. 
    One can find $C>0$ such that the following holds: 
    \begin{enumerate}
        \item[(1)] For any given $f_1\in H^{m}_x (H^s_\tg (\R\times \Omega))$, the homogeneous Poisson-Dirichlet problem 
    \begin{equation}\label{PD-eq}
        \begin{cases}
            \Delta K_1 = f_1, & \text{in }\R\times \Omega,\\
            K_1 = 0, & \text{on }\R\times (\partial\Omega \backslash\{q_1,\ldots,q_N\})
        \end{cases}
    \end{equation}
    has a unique solution $K_1 := \Delta_D^{-1} f_1$ in $H^{m+2}_x(H^s_\tg (\R\times\Omega))$, satisfying 
    $$\|K_1\|_{H^{m+2}_x(H^{s}_\tg(\R\times \Omega))}\le C\|f_1\|_{H^{m}_x(H^{s}_\tg(\R\times \Omega))}.$$
    \item[(2)]  For any given $f_2\in H^{m}_x (H^s_\tg (\R\times \Omega))$ with $\int_\Omega f_2 ~\d x = 0$, the homogeneous Poisson-Neumann problem 
    \begin{equation}\label{PN-eq}
        \begin{cases}
            \Delta K_2 = f_2, & \text{in }\R\times \Omega,\\
            \partial_\nu K_2 = 0, & \text{on }\R\times(\partial\Omega \backslash\{q_1,\ldots,q_N\})\\
            \int_\Omega K_2 ~\d x = 0 &
        \end{cases}
    \end{equation}
    has a unique solution $K_2 := \Delta_N^{-1} f_2$ in $H^{m+2}_x(H^s_\tg (\R\times\Omega))$ satisfying 
    $$\|K_2\|_{H^{m+2}_x(H^{s}_\tg(\R\times \Omega))}\le C\|f_2\|_{H^{m}_x(H^{s}_\tg(\R\times \Omega))}.$$
    \end{enumerate}
\end{prop}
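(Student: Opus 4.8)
The plan is to reduce the anisotropic estimate to the classical elliptic shift theorem in a curvilinear polygon, and then to add the tangential derivatives by a commutator induction on the tangential order $s$.

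For the base case $s=0$ we fix $t$ and invoke the classical regularity theory of the Dirichlet and Neumann Laplacian in polygonal domains (\cite{grisvard1985elliptic}). Since $\omega_n<\frac{\pi}{m+1}$ for every $n$, we have $\frac{k\pi}{\omega_n}>m+1$ for all $k\ge 1$, so no corner singularity obstructs the shift, and the solution operators map $H^m(\Omega)$ boundedly into $H^{m+2}(\Omega)$ (for the Neumann problem, restricted to mean-zero data and normalised by $\int_\Omega K_2\,\d x=0$); uniqueness follows from the energy identity for Dirichlet and from the Poincaré--Wirtinger inequality for Neumann. Integrating in $t$ yields the statements when $s=0$. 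Alternatively, for the Neumann problem one obtains the same bound by applying the Hodge estimate of Lemma~\ref{lemma-Hodge} to $X=\nabla K_2$, which satisfies $X\cdot\nu=0$, $\nabla\cdot X=f_2$ and $\nabla\times X=0$, and for Dirichlet by the dual Hodge inequality (for fields with $X\times\nu=0$ on the smooth boundary, again valid under $\omega_n<\frac{\pi}{m+1}$) applied to $X=\nabla K_1$.

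For the inductive step, assume the proposition holds with $s$ replaced by $s-1$. Given $f_1\in H^m_x(H^s_{\tg})(\R\times\Omega)$, the inductive hypothesis already gives $K_1\in H^{m+2}_x(H^{s-1}_{\tg})(\R\times\Omega)$, so it remains to bound $\partial_*^\alpha K_1$ in $L^2(\R;H^{m+2}(\Omega))$ for each tangential multi-index $\alpha$ (built only from $\partial_t,\partial_{w^1},\partial_{w^2}$) with $|\alpha|_*=s$. Since $\partial_t$ commutes with $\Delta$ and with the boundary operators, and since each $w^i$ is tangent to $\partial\Omega\backslash\{q_1,\dots,q_N\}$ (Definition~\ref{def-tan}, $w^i\cdot\nu=0$ there), the function $\partial_*^\alpha K_1$ solves $\Delta(\partial_*^\alpha K_1)=\partial_*^\alpha f_1+[\Delta,\partial_*^\alpha]K_1$ with the homogeneous Dirichlet condition still holding for $\partial_*^\alpha K_1$ on the smooth part of $\partial\Omega$. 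The commutator $[\Delta,\partial_*^\alpha]$ is a differential operator each of whose terms carries at most two flat spatial derivatives together with a factor $\partial_*^\beta$, $|\beta|_*\le s-1$, with coefficients that are smooth (they are built from derivatives of the smooth fields $w^i$); hence $[\Delta,\partial_*^\alpha]K_1\in L^2(\R;H^m(\Omega))$ by the inductive hypothesis. Applying the base-case a priori estimate to this equation produces $\partial_*^\alpha K_1\in L^2(\R;H^{m+2}(\Omega))$ with the claimed bound, and iterating on $s$ completes part (1). For part (2) one argues identically, except that tangential differentiation does not preserve $\partial_\nu K_2=0$ exactly; this is repaired by replacing $\partial_{w^i}$ with the modified operators $\mathscr{T}_i=\partial_{w^i}+h(x,w^i)$ of Lemma~\ref{lemma-h-def}, equivalently by working with $X=\nabla K_2$ and $\mathscr{T}^\beta X$, which keeps the zero normal trace so that the Hodge estimate still applies, with $\nabla\cdot(\mathscr{T}^\beta X)=\mathscr{T}^\beta f_2$ and $\nabla\times(\mathscr{T}^\beta X)=0$ modulo commutators of strictly lower tangential order; the mean-zero normalisation is restored by subtracting a constant at each stage.

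The main obstacle is twofold. First, the whole argument rests on the classical $H^{m+2}$ shift theorem for the Poisson problem in the curvilinear polygon, and this is exactly the point where the hypothesis $\omega_n<\frac{\pi}{m+1}$ is indispensable — it must fail for larger angles, by the explicit singular-function counterexamples of \cite{grisvard1985elliptic}. Second, one has to keep careful track of the commutators $[\Delta,\partial_*^\alpha]$ near the corners, where $w^i$ degenerates like $\Phi_i$: one checks that the coefficients generated by iterated commutators remain smooth and bounded (being polynomials in the smooth fields $w^i,h^i$ and their derivatives), so that no spurious loss of regularity occurs. Beyond these two points the proof is a routine induction, and no further phenomenon peculiar to the corner geometry arises.
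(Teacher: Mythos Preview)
Your overall strategy---the classical $H^m\to H^{m+2}$ shift theorem in the polygon as base case, followed by an induction on the tangential order via the commutator $[\Delta,\partial_*^\alpha]$---is exactly the route the paper takes, and your Dirichlet argument matches the paper's almost verbatim (the paper additionally inserts a density approximation $f_\varepsilon\in C^\infty_c(\overline\Omega)$ before running the induction, so that the a~priori manipulations are justified on smooth functions and then passed to the limit; this is a routine step you omitted).

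For the Neumann problem the paper proceeds differently and more simply than your proposal. Rather than switching to $X=\nabla K_2$ and using $\mathscr{T}^\beta$ to preserve the zero normal trace (which, as you note, forces you to control $\nabla\times(\mathscr{T}^\beta X)$ via further commutator identities), the paper applies the \emph{unmodified} tangential operator $D_{\mathcal T_x}^k$ directly to the scalar equation, accepts that the resulting Neumann datum $\partial_\nu(D_{\mathcal T_x}^k K_2)=D_{\mathcal T_x}^{k-1}\partial_x K_2$ is nonzero, and invokes the \emph{nonhomogeneous} Neumann shift estimate $\|\nabla u\|_{H^{m+1}}\lesssim \|f\|_{H^m}+\|g\|_{H^{m+1/2}(\partial\Omega)}$ together with the trace theorem. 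This avoids the Hodge route and the attendant curl-commutator bookkeeping entirely. Your approach is valid, but the paper's is shorter.
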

\begin{proof}
We begin with the proof of Proposition~\ref{prop-PD-PN}~(1).

Note that \cite[Proposition~A.3]{godin20212d} has shown that for any $f\in H^{m}(\Omega)$, there exists a unique solution $u = \Delta_D^{-1} f \in H^{m+2}(\Omega)$, satisfying
    \begin{equation*}
        \|u\|_{H^{m+2}(\Omega)}\lesssim \|f\|_{H^{m}(\Omega)},
    \end{equation*}
under the assumptions $\omega_n < \frac{\pi}{m+1}$.
    
Now if, in addition, $f$ has tangential regularity, $f\in H^{m}_x(H^s_\tg(\Omega))$, we approximate $f$ by $f_\epsilon \in C^\infty_c(\overline{\Omega})$, such that $f_\epsilon \to f$ in $H^{m}_x(H^s_\tg(\Omega))$ as $\epsilon\to 0$ (this is guaranteed by the density result given in Lemma~\ref{lemma-density}). Then $u_\epsilon := \Delta_D^{-1} f_\epsilon \in \cap_{n\in \mathbb{N}} H^{n}_x(\Omega)$. 

By applying $D_{\mathcal{T}_x}^k$ with $1\le k\le s$ on $\Delta_D u_\varepsilon = f_\varepsilon$, one has
\begin{equation*}
    \begin{cases}
        \Delta (D^k_{\mathcal{T}_x} u_\epsilon) = D^k_{\mathcal{T}_x} f_\epsilon + D^{k-1}_{\mathcal{T}_x}\partial_x^2 u_\epsilon, & \text{in }\R\times \Omega,\\
            D^k_{\mathcal{T}_x} u_\epsilon = 0, & \text{on }\R\times(\partial\Omega \backslash\{q_1,\ldots,q_N\}),
    \end{cases}
\end{equation*}
which implies $$
\begin{aligned}
    \|D^k_{\mathcal{T}_x} u_\epsilon\|_{H^{m+2}(\Omega)} &\lesssim \|D^k_{\mathcal{T}_x} f_\epsilon\|_{H^m(\Omega)} + \|D^{k-1}_{\mathcal{T}_x} \partial_x^2 u_\epsilon\|_{H^m(\Omega)} \\
    &\lesssim \|f_\epsilon\|_{H^m_x(H^{k}_{\tg})(\Omega)} + \|u_\epsilon\|_{H^{m+2}_x(H^{k-1}_{\tg})(\Omega)}.
\end{aligned} $$
By taking induction on $k=1,\ldots, s$, one concludes the estimate
$\|u_\epsilon\|_{H^{m+2}_x(H^{s}_\tg(\Omega))}\lesssim \|f_\epsilon\|_{H^{m}_x(H^{s}_\tg(\Omega))}$. 

By taking the differences $u_\epsilon-u_{\epsilon'}$, one sees that $\{u_\epsilon\}_{\epsilon>0}$ is a Cauchy sequence in $H^{m+2}_x(H^{s}_\tg(\Omega))$, and it converges to the solution of $\Delta_D u = f$. This shows that $u = \Delta_D^{-1} f$ belongs to $H^{m+2}_x(H^s_\tg(\Omega))$, and satisfies the a priori estimate $\|u\|_{H^{m+2}_x(H^{s}_\tg(\Omega))}\lesssim \|f\|_{H^{m}_x(H^{s}_\tg(\Omega))}$.

    Next, we consider when $f \in L^2(\R;H^{m}_x(H^s_\tg(\Omega)))$. 
    For almost every $t\in \R$, we set $u(t)=\Delta_D^{-1}f(t)$, so that $u(t)\in H^{m+2}_x(H^s_\tg (\Omega))$. 
    Since we have shown that $\Delta_D^{-1}$ is continuous from $H^{m}_x(H^s_\tg(\Omega))$ to $H^{m+2}_x(H^s_\tg(\Omega))$, it follows that the map $t\mapsto u(t)$ is measurable, and
    \begin{equation*}
        \int_{\R} \|u(t)\|^2_{H^{m+2}_x(H^s_\tg(\Omega))}\d t\lesssim \int_{\R} \|f(t)\|^2_{H^{m}_x(H^s_\tg(\Omega))}\d t.
    \end{equation*}
    Hence $u\in L^2(\R;H^{m+2}_x(H^s_\tg(\Omega))$.
    Moreover, if 
    \begin{equation*}
        f\in H^{m}_x(H^s_\tg(\R\times \Omega)) = \bigcap_{j=0}^s H^j(\R; H^{m}_x (H^{s-j}_\tg (\Omega))),
    \end{equation*} then, by applying the difference quotient $D_h u(t) = h^{-1}\big(u(t+h)-u(t)\big)$ to both sides of the equation and proceeding by induction, one shows that $u\in H^{m+2}_x(H^s_\tg(\R\times \Omega))$. The desired estimate follows as a by-product.
    This completes the proof of Proposition~\ref{prop-PD-PN}(1).

    Next, we prove Proposition~\ref{prop-PD-PN}(2).
    Note that \cite[Proposition~A.4]{godin20212d} (for the homogeneous case) and \cite[Theorem~5.1.2.4]{grisvard1985elliptic} (for the existence of an auxiliary function to reduce the nonhomogeneous case into the homogeneous case) have shown that under the assumptions $\omega_n < \frac{\pi}{m+1}$, for any $f\in H^{m}(\Omega), g\in H^{m+{\frac{1}{2}}}(\partial\Omega)$ that satisfy the compatibility conditions at the corner points, and $\int_\Omega f~ \d x = \int_{\partial\Omega} g ~\d \sigma$, there exists a solution $u \in H^{m+2}(\Omega)$ to the nonhomogeneous boundary value problem
    \begin{equation}\label{PN-eq'}
        \begin{cases}
            \Delta  u =  f, & \text{in } \R\times \Omega,\\
            \partial_\nu u = g, & \text{on }\R\times(\partial\Omega\backslash\{q_1,\ldots,q_N\}).
        \end{cases}
    \end{equation}
    Moreover, the solution is unique modulo constants, and the estimate
    \begin{equation}\label{PN-non-est}
        \|\nabla u\|_{H^{m+1}(\Omega)} \lesssim \|f\|_{H^m(\Omega)} + \|g\|_{H^{m+\frac{1}{2}}(\partial\Omega)}
    \end{equation}
    holds.

    Similar to the proof of Proposition~\ref{prop-PD-PN}(1), it suffices to prove the a priori estimates in $H^{m+2}_x(H^s_{\tg})(\Omega)$.

By applying $D_{\mathcal{T}_x}^k$ with $1\le k\le s$ to the equations: $\Delta u = f$ in $\Omega$, $\partial_\nu u = 0$ on $\partial\Omega\backslash\{q_1,\ldots, q_N\}$ with $u, f$ sufficiently smooth, one has
    \begin{equation*}
        \begin{cases}
            \Delta (D_{\mathcal{T}_x}^k u) = D_{\mathcal{T}_x}^k f +  D_{\mathcal{T}_x}^{k-1}\partial_x^2 u, & \text{in }\Omega,\\
            \partial_\nu (D_{\mathcal{T}_x}^k u) = D_{\mathcal{T}_x}^{k-1} \partial_x^1 u, & \text{on }\partial\Omega \backslash\{q_1,\ldots,q_N\}.
        \end{cases}
    \end{equation*}
    Then by \eqref{PN-non-est}, and the trace theorem \cite[Theorem~1.5.1.3]{grisvard1985elliptic}, one has
    \begin{equation*}
        \begin{aligned}
            \|\nabla D_{\mathcal{T}_x}^k u\|_{H^{m+1}(\Omega)} &\lesssim \|D_{\mathcal{T}_x}^k f +  D_{\mathcal{T}_x}^{k-1}\partial_x^2 u\|_{H^m(\Omega)} + \|D_{\mathcal{T}_x}^{k-1}\partial_x^1 u\|_{H^{m+\frac{1}{2}}(\partial\Omega)}\\
            & \lesssim \|f\|_{H^m_x(H^k_{\tg})(\Omega)} + \|\nabla u\|_{H^{m+1}_x(H^{k-1}_{\tg})(\Omega)}.
        \end{aligned}
    \end{equation*}
    By \eqref{tan-cmtt-1} and taking induction on $k=1,\ldots, s$, one gets the estimates $\|\nabla u\|_{H^{m+1}_x(H^{s}_{\tg})(\Omega)} \lesssim \|f\|_{H^m_x(H^s_{\tg})(\Omega)}$.

    The rest of the proof is similar to that of Proposition~\ref{prop-PD-PN}(1). At the end, the estimate given in Proposition~\ref{prop-PD-PN}(2) is obtained by a Poincar\'e-type inequality. 

\end{proof}

In the following, we present the proof of Proposition~\ref{prop-Helmholtz}.
Analogously to the argument for Proposition~\ref{prop-PD-PN}(1), to establish the Helmholtz decomposition in $H^m_x(H^s_{\tg})(\R\times \Omega)$, it suffices to prove the following lemma, which is a purely spatial version of Proposition~\ref{prop-Helmholtz}.
\begin{lemma}\label{lemma-Helmholtz}
    For any given $v=(v_1,v_2)^t \in H^m_x(H^s_{\tg})(\Omega)$ with $m,s \in \mathbb{N}$, there is a unique decomposition
    \begin{equation*}
        v = \nabla f + g,
    \end{equation*}
    for some $f\in H^{m+1}_x(H^s_{\tg})(\Omega)$ and $g\in H^m_x(H^s_{\tg})(\Omega)$, such that $\nabla\cdot g = 0$ (if $m=0$, this is understood in the sense of distribution), and there exists a constant $C$, independent of $v,f$ and $g$, such that
    \begin{equation*}
        \|\nabla f\|_{H^m_x(H^s_{\tg})(\Omega)} \le C\|v\|_{H^m_x(H^s_{\tg})(\Omega)}.
    \end{equation*}
    Moreover, when $\int_\Omega f \d x = 0$, then $f \in H^{m+1}_x(H^s_{\tg})(\Omega)$ is unique.
\end{lemma}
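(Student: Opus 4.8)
The plan is to reduce the Helmholtz decomposition in $H^m_x(H^s_{\tg})(\Omega)$ to the solvability of the Poisson--Neumann problem established in Proposition~\ref{prop-PD-PN}(2). Given $v\in H^m_x(H^s_{\tg})(\Omega)$, I would first set $f:=\Delta_N^{-1}(\nabla\cdot v)$, which requires knowing that $\nabla\cdot v\in H^{m-1}_x(H^s_{\tg})(\Omega)$ (clear from the definition of the space, losing one spatial derivative) and that $\int_\Omega \nabla\cdot v\,\d x=0$. The latter is not automatic: $\int_\Omega\nabla\cdot v\,\d x=\int_{\partial\Omega}v\cdot\nu\,\d\sigma$, which need not vanish for a general $v$. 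This is where the Neumann problem with \emph{inhomogeneous} boundary data enters — the version of Proposition~\ref{prop-PD-PN}(2) one really uses is \eqref{PN-eq'} with $g=v\cdot\nu$, so that $f$ solves $\Delta f=\nabla\cdot v$ in $\Omega$ and $\partial_\nu f=v\cdot\nu$ on $\partial\Omega\setminus\{q_1,\dots,q_N\}$; the compatibility condition $\int_\Omega\nabla\cdot v=\int_{\partial\Omega}v\cdot\nu$ is then exactly the divergence theorem and holds identically. With $f$ so defined I set $g:=v-\nabla f$; then $\nabla\cdot g=\nabla\cdot v-\Delta f=0$ and $g\cdot\nu=v\cdot\nu-\partial_\nu f=0$ on the smooth part of $\partial\Omega$, so $g$ is genuinely divergence-free (in the distributional sense when $m=0$, reading the equation against test functions).

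Next I would establish the regularity and the estimate. By Proposition~\ref{prop-PD-PN}(2) and \eqref{PN-non-est} (its $H^m_x(H^s_{\tg})$-version, obtained there by commuting tangential operators $D^k_{\mathcal{T}_x}$ through the equation and inducting on $k\le s$), one gets $\nabla f\in H^{m+1}_x(H^s_{\tg})(\Omega)$ — hence in particular $\nabla f\in H^m_x(H^s_{\tg})(\Omega)$ — with
\[
\|\nabla f\|_{H^{m+1}_x(H^s_{\tg})(\Omega)}\;\lesssim\;\|\nabla\cdot v\|_{H^m_x(H^s_{\tg})(\Omega)}+\|v\cdot\nu\|_{H^{m+\frac12}_x(H^s_{\tg})(\partial\Omega)}\;\lesssim\;\|v\|_{H^{m+1}_x(H^s_{\tg})(\Omega)},
\]
where the boundary term is controlled by the trace theorem \cite[Theorem~1.5.1.3]{grisvard1985elliptic} applied tangential-order by tangential-order. (Strictly, since $f$ is only claimed to lie in $H^{m+1}_x(H^s_{\tg})$ one wants the estimate of $\nabla f$ at the level $H^m_x(H^s_{\tg})$ controlled by $v$ at the same level; this follows by the same argument starting from $v\in H^m_x(H^s_{\tg})$, using that the Neumann solution operator gains two spatial derivatives on $f$, hence one on $\nabla f$ beyond the datum.) Then $g=v-\nabla f\in H^m_x(H^s_{\tg})(\Omega)$ automatically.

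For uniqueness, suppose $\nabla f_1+g_1=\nabla f_2+g_2$ are two such decompositions; then $\nabla(f_1-f_2)=g_2-g_1$ is both a gradient and divergence-free, so $\Delta(f_1-f_2)=0$ in $\Omega$ and $\partial_\nu(f_1-f_2)=(g_2-g_1)\cdot\nu$. If one additionally knows the normal trace of $g$ vanishes (which it does, by construction, $g\cdot\nu=0$), then $f_1-f_2$ is a harmonic function with vanishing Neumann data, hence constant; this pins down $g_1=g_2$ unconditionally and $f_1=f_2$ once $\int_\Omega f\,\d x=0$ is imposed. I expect the main obstacle to be the careful bookkeeping of the inhomogeneous boundary datum $v\cdot\nu$ — ensuring its $H^{s}_{\tg}$-regularity along $\partial\Omega$ is exactly what the tangential vector fields $w^1,w^2$ of Definition~\ref{def-tan} deliver (they are tangent to the smooth part of $\partial\Omega$, so $\mathcal{T}_x^\alpha$ restricts to genuine tangential differentiation on the boundary) — together with verifying that the corner-angle restriction $\omega_n<\pi/m$ in Proposition~\ref{prop-Helmholtz} is precisely the hypothesis under which Proposition~\ref{prop-PD-PN}(2) (with its $\omega_n<\pi/(m'+1)$ condition at the appropriate $m'=m-1$) applies. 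Finally, the passage from the purely spatial Lemma~\ref{lemma-Helmholtz} to the space-time statement of Proposition~\ref{prop-Helmholtz} is the same measurability-plus-difference-quotient argument used at the end of the proof of Proposition~\ref{prop-PD-PN}, applied to $t\mapsto f(t)=\psi(v(t))$, and I would simply invoke it.
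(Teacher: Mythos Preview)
Your overall strategy — construct $f$ as the solution of the inhomogeneous Neumann problem $\Delta f=\nabla\cdot v$, $\partial_\nu f=v\cdot\nu$, then set $g=v-\nabla f$ — is exactly what the paper does, and your argument for $m\ge 1$ essentially coincides with the paper's: commute $D^k_{\mathcal{T}_x}$ through \eqref{PN-eq'}, apply the elliptic estimate \eqref{PN-non-est}, bound the boundary commutator by the trace theorem, and induct on $k$.

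The gap is the case $m=0$. Your displayed estimate bounds $\|\nabla f\|$ by quantities involving $\|\nabla\cdot v\|_{H^m_x(H^s_{\tg})}$ and $\|v\cdot\nu\|_{H^{m+\frac12}_x(H^s_{\tg})(\partial\Omega)}$, and ends up controlled by $\|v\|_{H^{m+1}_x(H^s_{\tg})}$ — one order too many. Your parenthetical fix (``the same argument starting from $v\in H^m_x(H^s_{\tg})$'') shifts the indices down by one, and for $m\ge 1$ that is fine: $\nabla\cdot v\in H^{m-1}_x(H^s_{\tg})$ and $v\cdot\nu\in H^{m-\frac12}_x(H^s_{\tg})(\partial\Omega)$, so \eqref{PN-non-est} at level $m-1$ gives what you want. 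But for $m=0$ this collapses: $v$ is only in $L^2_x(H^s_{\tg})$, so neither $\nabla\cdot v$ nor the trace $v\cdot\nu$ has positive Sobolev regularity, and \eqref{PN-non-est} is not stated (nor provable in this form) with $H^{-1}$, $H^{-1/2}$ data. In particular, your appeal to the trace theorem ``tangential-order by tangential-order'' fails outright when $m=0$, since $\mathcal{T}^\alpha v$ is merely $L^2$ and has no trace.

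The paper closes this gap by a direct energy argument for $m=0$: one tests $\mathcal{T}^\alpha(\Delta f)=\mathcal{T}^\alpha(\nabla\cdot v)$ against $\mathcal{T}^\alpha f$ and integrates by parts on \emph{both} sides, so that the boundary integrals $\langle\mathcal{T}^\alpha(\partial_\nu f),\mathcal{T}^\alpha f\rangle_{\partial\Omega}$ and $\langle\mathcal{T}^\alpha(v\cdot\nu),\mathcal{T}^\alpha f\rangle_{\partial\Omega}$ cancel exactly (using $\partial_\nu f=v\cdot\nu$ and the identity $(\mathscr{T}^\alpha\nabla f)\cdot\nu=\mathcal{T}^\alpha(\partial_\nu f)$). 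The remaining interior commutators $\langle\mathscr{H}^\alpha(v-\nabla f),\mathcal{T}^\alpha f\rangle$ are then handled by the structural Lemma~\ref{lemma-J2}, which shifts one spatial derivative off the bad factor and onto $f$. This step is not a routine trace-plus-elliptic-estimate; it is where the special tangential calculus $(\mathcal{T},\mathscr{T},\mathscr{H})$ built for the corner geometry does real work.
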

\begin{proof}
If $v\in H^m_x(H^s_{\tg})(\Omega) \subset L^2(\Omega)$, from \cite[Section~II.2.5]{sohr2012navier}, we know that there is a unique decomposition $v = \nabla f + g$, with $f \in H^1_x(\Omega), g\in L^2(\Omega)$, $\int_\Omega f \d x = 0, \nabla \cdot g = 0$, and $\| f\|_{H^1(\Omega)} \le \|v\|_{L^2(\Omega)}$.

Now we approximate $v$ by $v_\epsilon \in C_c^\infty(\overline{\Omega})$, such that $v_\varepsilon \to v$ in $H^m_x(H^s_{\tg})(\Omega)$ as $\epsilon \to 0$. Let $\nabla f_\epsilon + g_\epsilon$ be the $L^2$ Helmholtz decomposition of $v_\epsilon$, then it is easy to see that $f_\epsilon$ is the solution of the following elliptic  problem
    \begin{equation}\label{PN-eq-0}
        \begin{cases}
            \Delta f_\epsilon = \nabla\cdot v_\epsilon, & \text{in } \Omega,\\
            \partial_\nu f_\epsilon = v_\epsilon \cdot \nu, & \text{on } \partial\Omega\backslash\{q_1,\ldots, q_N\}.
        \end{cases}
    \end{equation}
    We claim that for any $m, s\in \mathbb{N}$, the solution $f_\epsilon$ satisfies the estimate
    \begin{equation}\label{epsilon-est}
        \|\nabla f_\epsilon\|_{H^m_x(H^s_{\tg})(\Omega)} \lesssim \|v_\epsilon\|_{H^m_x(H^s_{\tg})(\Omega)}.
    \end{equation}
    Suppose that in addition, $\int_\Omega f_\epsilon \d x = 0$.
    Then by taking the difference $f_\epsilon - f_{\epsilon'}$, one sees that $\{f_\epsilon\}_{\epsilon>0}$ is a Cauchy sequence in $H^{m+1}_x(H^s_{\tg})(\Omega)$, and converges to some $f_0$ in $H^{m+1}_x(H^s_{\tg})(\Omega)$. Then $\int_\Omega f_0 \d x=0$.
    By the uniqueness of the $L^2$ Helmholtz decomposition of $v$, one finds that $f_0$ is identical to $f$. Thus, the Helmholtz decomposition of $v$ is indeed bounded in $H^m_x(H^s_{\tg})(\Omega)$.

    Now we are left to prove the estimate \eqref{epsilon-est}. For simplicity of notations, we drop the $\epsilon$ of $f_\epsilon, v_\epsilon$, and drop the $x$ of $\mathcal{T}_x, \mathscr{T}_x$, and assume that $f, v$ are sufficiently smooth functions.

    For $m = 0$, we apply $\mathcal{T}^\alpha$ with $\alpha\in \mathbb{N}^2, |\alpha| = k$, $1\le k\le s$ on \eqref{PN-eq-0}, and obtain
    \begin{equation}\label{PN-eq-1}
        \begin{cases}
        \mathcal{T}^\alpha (\Delta f) = \mathcal{T}^\alpha (\nabla \cdot v), & \text{in } \Omega,\\
            (\mathscr{T}^\alpha \nabla f)\cdot \nu = \mathcal{T}^\alpha(v\cdot \nu), & \text{on } \partial\Omega\backslash\{q_1,\ldots, q_N\},
        \end{cases}
    \end{equation}
    where we have used the fact $\mathcal{T}^\alpha (\nabla f \cdot \nu) = (\mathscr{T}^\alpha \nabla f)\cdot \nu$.
    Denote by
    \begin{equation*}
        \mathscr{H}^\alpha v:= \nabla\cdot (\mathscr{T}^\alpha v) - \mathcal{T}^\alpha (\nabla\cdot v) =
        \nabla \cdot \left((\mathscr{T}^\alpha - \mathcal{T}^\alpha) v\right) + [\nabla\cdot, \mathcal{T}^\alpha]v.
    \end{equation*}
    Then, on one hand, 
    \begin{equation*}
        \begin{aligned}
            \inp*{\mathcal{T}^\alpha (\nabla\cdot v)}{\mathcal{T}^\alpha f}_\Omega &= \inp*{\nabla\cdot(\mathscr{T}^\alpha v) - \mathscr{H}^\alpha v}{\mathcal{T}^\alpha f}_\Omega\\
            & = \inp*{\mathcal{T}^\alpha (v\cdot \nu)}{\mathcal{T}^\alpha f}_{\partial\Omega} - \inp*{\mathscr{T}^\alpha v}{\nabla \mathcal{T}^\alpha f}_{\Omega} - \inp*{\mathscr{H}^\alpha v}{\mathcal{T}^\alpha f}_\Omega.
        \end{aligned}
    \end{equation*}
    On the other hand,
    \begin{equation*}
        \begin{aligned}
            \inp*{\mathcal{T}^\alpha (\Delta f)}{\mathcal{T}^\alpha f}_\Omega  &= \inp*{\nabla\cdot(\mathscr{T}^\alpha \nabla f) - \mathscr{H}^\alpha (\nabla f)}{\mathcal{T}^\alpha f}_\Omega\\
            & = \inp*{\mathcal{T}^\alpha (\partial_\nu f)}{\mathcal{T}^\alpha f}_{\partial\Omega} - \inp*{\mathscr{T}^\alpha \nabla f}{\nabla \mathcal{T}^\alpha f}_{\Omega} - \inp*{\mathscr{H}^\alpha (\nabla f)}{\mathcal{T}^\alpha f}_\Omega.
        \end{aligned}
    \end{equation*}
    Combining \eqref{PN-eq-1}, one has
    \begin{equation*}
        \inp*{\mathscr{T}^\alpha \nabla f}{\nabla \mathcal{T}^\alpha f}_{\Omega} = 
        \inp*{\mathscr{T}^\alpha v}{\nabla \mathcal{T}^\alpha f}_{\Omega} + \inp*{\mathscr{H}^\alpha (v-\nabla f)}{\mathcal{T}^\alpha f}_\Omega.
    \end{equation*}
        Thus, we get
    \begin{equation*}
        \begin{aligned}
            \|\mathscr{T}^\alpha \nabla f\|_{L^2(\Omega)}^2 & = 
            \inp*{\mathscr{T}^\alpha f}{\mathscr{T}^\alpha \nabla f - \nabla \mathcal{T}^\alpha f}_{\Omega} + \inp*{\mathscr{T}^\alpha \nabla f}{\nabla \mathcal{T}^\alpha f}_{\Omega}\\
            & = & \inp*{\mathscr{T}^\alpha \nabla f}{(\mathscr{T}^\alpha- \mathcal{T}^\alpha)\nabla f + [\mathcal{T}^\alpha, \nabla]f}_\Omega \\
            & + \inp*{\mathscr{T}^\alpha v}{\nabla \mathcal{T}^\alpha f}_{\Omega} + \inp*{\mathscr{H}^\alpha(v-\nabla f)}{\mathcal{T}^\alpha f}_\Omega\\
            & \lesssim  & \|\mathscr{T}^\alpha \nabla f\|_{L^2(\Omega)} \cdot \|D_{\mathcal{T}}^{k-1} \partial_x^1 f\|_{L^2(\Omega)} + \|\mathscr{T}^\alpha v\|_{L^2(\Omega)} \cdot \|D_{\mathcal{T}}^k \partial_x f\|_{L^2(\Omega)} \\
            & + \left|\inp*{\mathscr{H}^\alpha v}{\mathcal{T}^\alpha f}_{\Omega}\right| + \left|\inp*{\mathscr{H}^\alpha (\nabla f)}{\mathcal{T}^\alpha f}_{\Omega}\right|,
        \end{aligned}
    \end{equation*}
    Recall that Lemma~\ref{lemma-J2} implies
    \begin{equation*}
        \inp*{\mathscr{H}^\alpha v}{\mathcal{T}^\alpha f}_{\Omega} = \inp*{D_{\mathcal{T}}^k v}{D_{\mathcal{T}}^{k-1}\partial_x^1 f}_{\Omega}, \quad \inp*{\mathscr{H}^\alpha (\nabla f)}{\mathcal{T}^\alpha f}_{\Omega} = \inp*{D_{\mathcal{T}}^k (\nabla f)}{D_{\mathcal{T}}^{k-1}\partial_x^1 f}_{\Omega}.
    \end{equation*}
    Then, by Cauchy's inequality, and taking the sum of all $\alpha \in \mathbb{N}^2, |\alpha|= k$, one has $$\|\nabla f\|^2_{H^{k}_{\tg}(\Omega)} \lesssim \|\nabla f\|_{H^{k-1}_{\tg}(\Omega)}^2 + \|v\|_{H^k_{\tg}(\Omega)}^2.$$ By taking induction of $k=1,\ldots, s$, one has
    $$\|\nabla f\|^2_{H^{s}_{\tg}(\Omega)} \lesssim \|v\|_{H^s_{\tg}(\Omega)}^2 + \|\nabla f\|^2_{L^2(\Omega)} \lesssim \|v\|_{H^s_{\tg}(\Omega)}^2.$$

    For $m \ge 1$, we rewrite \eqref{PN-eq-1} as
    \begin{equation}\label{PN-eq-2}
        \begin{cases}
        \Delta (\mathcal{T}^\alpha f) = \mathcal{T}^\alpha(\nabla\cdot v) + [\Delta, \mathcal{T}^\alpha] f, & \text{in } \Omega,\\
            \partial_\nu(\mathcal{T}^\alpha f) = \mathcal{T}^\alpha(v\cdot \nu) + [\partial_\nu, \mathcal{T}^\alpha] f, & \text{on } \partial\Omega\backslash\{q_1,\ldots, q_N\},
        \end{cases}
    \end{equation}
    and apply the estimates \eqref{PN-non-est} to the problem \eqref{PN-eq-2}. Then by taking the sum of $\alpha$ over $|\alpha|=k$ and taking inductions on $k=1,\ldots, s$, one has $$\|\nabla f\|_{H^m_x(H^s_{\tg})(\Omega)} \le \|v\|_{H^m_x(H^s_{\tg})(\Omega)} + \|\nabla f\|_{H^m_x(\Omega)} \lesssim \|v\|_{H^m_x(H^s_{\tg})(\Omega)}.$$
    This ends the proof.
\end{proof}

\vspace{.1in}
\hspace{-.25in}{\bf Acknowledgements.} This research was supported by National Key R\&D Program of China under grant 2024YFA1013302, and National Natural Science Foundation of China under grant Nos.12331008, 12171317 and 12250710674.

\newpage
\printbibliography
\end{document}